\makeatletter\@addtoreset{equation}{section}\makeatother
\definecolor{ao}{rgb}{0.0, 0.5, 0.0}
\newcommand{\jump}[1]{\lbrack\!\lbrack\,#1\,\rbrack\!\rbrack}
 \newtheorem{theorem}{Theorem}[section]
 \newtheorem{lemma}[theorem]{Lemma}
 \newtheorem{proposition}[theorem]{Proposition}
\newtheorem{remark}[theorem]{Remark}
 \newtheorem{corollary}[theorem]{Corollary}
\newtheorem{assumption}[theorem]{Assumption}
 \newtheorem{definition}[theorem]{Definition}
\newcommand{\mesh}{\mathcal{T}}
\newcommand{\meshs}{{\mesh_*}}
\newcommand{\edge}{\mathcal{E}}
\newcommand{\edgeE}{{\mathcal{E}_E}}
\newcommand{\nodes}{\mathcal{N}}
\newcommand{\nodesE}{{\mathcal{N}_E}}
\newcommand{\RC}{{\mathcal{C}}}
\newcommand{\V}{\mathbb{V}}
\newcommand{\Vmesh}{\V_\mesh}
\newcommand{\VE}{{\V_E}}
\newcommand{\VdE}{{\V_{\partial E}}}
\newcommand{\Vmeshz}{\V_\mesh^0}
\newcommand{\Vmeshs}{\V_\meshs}
\newcommand{\cB}{{\mathcal{B}}}
\newcommand{\B}{a}
\newcommand{\Bmesh}{\mathcal{B}_\mesh}
\newcommand{\Bmeshs}{\mathcal{B}_\meshs}
\newcommand{\amesh}{a_\mesh}
\newcommand{\mmesh}{m_\mesh}
\newcommand{\aE}{a_E}
\newcommand{\mE}{m_E}
\newcommand{\Smesh}{S_\mesh}
\newcommand{\sE}{s_E}
\newcommand{\SE}{S_E}
\newcommand{\Pimesh}{{\Pi^\nabla_\mesh}}
\newcommand{\PiE}{{\Pi^\nabla_E}}
\newcommand{\Imesh}{\mathcal{I}_\mesh}
\newcommand{\IE}{\mathcal{I}_E}
\newcommand{\Pimeshz}{{\Pi^0_\mesh}}
\newcommand{\PiEz}{{\Pi^0_E}}
\newcommand{\umesh}{u_\mesh}
\newcommand{\umeshs}{u_\meshs}
\newcommand{\etamesh}{\eta_\mesh}
\newcommand{\rmesh}{r_\mesh}
\newcommand{\jmesh}{j_\mesh}
\newcommand{\vvvert}{|\!|\!|}
\newcommand{\data}{\mathcal{D}}
\def\hh{{\mathsf{h}}}
\newcommand{\ACu}{\mathcal{A}}
\newcommand{\ACA}{\mathbb{A}}
\newcommand{\ACc}{\mathbb{C}}
\newcommand{\ACf}{\mathbb{F}}
\newenvironment{algotab}
{\par\begin{samepage}%
\begin{tabbing}\ttfamily%
 \hspace*{5mm}\=\hspace{3ex}\=\hspace{3ex}\=\hspace{3ex}\=\hspace{3ex}%
\=\hspace{3ex}\=\hspace{3ex}\=\hspace{3ex}\=\hspace{3ex}\kill}%
{\end{tabbing}\end{samepage}}
\author[1]{L. Beir\~ao da Veiga \thanks{lourenco.beirao@unimib.it}}
\author[2]{C. Canuto \thanks{claudio.canuto@polito.it}}
\author[3]{R. H. Nochetto \thanks{rhn@math.umd.edu}}
\author[4]{G. Vacca \thanks{giuseppe.vacca@uniba.it}}
\author[5]{M. Verani \thanks{marco.verani@polimi.it}}
\affil[1]{Dipartimento di Matematica e Applicazioni,
Universit\`a degli Studi di Milano Bicocca,
Via Roberto Cozzi 55 - 20125 Milano, Italy}
\affil[2]{Dipartimento di Scienze Matematiche G.L. Lagrange,
 Politecnico di Torino, 
 Corso Duca degli Abruzzi 24 - 10129 Torino, Italy}
\affil[3]{Department of Mathematics and Institute
for Physical Science and Technology, 
University of Maryland, 
College Park - 20742, MD, USA}
\affil[4]{Dipartimento di Matematica, 
Universit\`a degli Studi di Bari, 
Via Edoardo Orabona 4  - 70125 Bari, Italy}
\affil[5]{MOX-Laboratory for Modeling and Scientific Computing,  Dipartimento di Matematica, Politecnico di Milano, 
Piazza Leonardo da Vinci 32 - 20133 Milano, Italy}
\title{Adaptive VEM for variable data: convergence and optimality}
\begin{document}
\maketitle
\

\begin{abstract}
  We design an adaptive virtual element method (AVEM) of lowest order over triangular meshes with hanging nodes in
  2d, which are treated as polygons.
  AVEM hinges on the stabilization-free a posteriori error estimators recently derived in \cite{paperA}.
  The crucial property, that also plays a central role in this paper, is that the stabilization term can be made
  arbitrarily small relative to the a posteriori error estimators upon increasing the stabilization
  parameter. Our AVEM concatenates two modules, {\tt GALERKIN} and {\tt DATA}. The former deals with
  piecewise constant data and is shown in \cite{paperA} to be a contraction between consecutive iterates. The latter
  approximates general data by piecewise constants to a desired accuracy. AVEM is shown to be convergent and quasi-optimal, in terms
  of error decay versus degrees of freedom, for solutions and data belonging to appropriate approximation classes.
  Numerical experiments illustrate
  the interplay between these two modules and provide computational evidence of optimality.
  \end{abstract}

%

\section{Introduction}\label{sec:intro}

\bigskip

Virtual element methods (VEMs) are a new paradigm for the conforming discretization of partial differential equations
(PDEs)
over polytopal meshes. They were introduced a few years ago and have seen a rapid development with an increasing
number of applications ever since \cite{volley, autostop, BBMR:2016}. Virtual element functions are
continuous piecewise polynomials on the skeleton of the polytopal mesh and are extended inside the elements in a
convenient way that avoids their explicit manipulation. This flexibility allows for global regularity, say continuity
in the context of second order PDEs, but requires dealing with projection operators and stabilization of the resulting 
discrete bilinear form to be coercive (or more generally to satisfy a discrete inf-sup condition). If the PDE has
variable data $\data = (A,c,f)$, as in our prototype boundary value problem
\begin{equation}\label{eq:prototype}
- \nabla\cdot \left(A \nabla u \right) + c u = f \quad  \text{ in } \Omega\,, \qquad u = 0 \quad \text{on } \partial\Omega\,,
\end{equation}
then $\data$ has to be further approximated to formulate the discrete counterpart of \eqref{eq:prototype}.
This is well understood in the a priori
analysis of VEMs, which deliver optimal convergence rates under minimal regularity assumptions on $\data$ and for
rather simple and practical choices of the stabilization term.

The a posteriori error analysis of VEMs approximations of \eqref{eq:prototype} initiated in \cite{daVeigaManzini:2015,Cangiani:17,BerroneBorio:2017},
along with suitable upper and lower error estimates for variable data $\data$.
The stabilization term and the residual error estimator of \cite{Cangiani:17},
which is the one more relevant to us, turn
out to be of the same order but the former is not bounded above by the energy error. This is problematic to
study convergence of any adaptive VEM (AVEM for short). We have recently tackled this crucial issue in
\cite{paperA} and shown that the stabilization term can be made arbitrarily small relative to the error
estimator upon increasing the stabilization parameter. This property is valid in 2d on newest-vertex bisection
meshes made of triangles with hanging nodes and a fixed maximal {\it global index},
which limits the level of hanging nodes. Hence,
triangles with multiple nodes are viewed as polygons for the VEM approach. This severe mesh restriction
is crucial to relate the actual VEM mesh $\mesh$ with the largest conforming submesh $\mesh^0$ of $\mesh$
and their approximation properties. 
Moreover, this leads to stabilization-free a posteriori error estimates, derived in \cite{paperA},  and
facilitates the convergence analysis of AVEM, which is the ultimate objective of this paper. We are not aware
of similar studies for AVEM even though convergence is a fundamental mathematical question of practical significance.

In contrast, the convergence and optimality analyses of adaptive finite element methods (AFEMs)
constitute a mature research field
for elliptic PDEs such as \eqref{eq:prototype}; we refer to the surveys \cite{NSV:09,NochettoVeeser:12} as well as
\cite{Axioms2014} for
details. A common approach in the AFEM literature is to assume that the linear and bilinear forms associated
with \eqref{eq:prototype} can be computed exactly. The role of quadrature is not assessed a posteriori and,
as a consequence,
the resulting AFEMs are not fully practical unless data $\data$ is piecewise polynomial. This leads to the
usual one-loop AFEMs which iterate the modules
\begin{equation}\label{eq:adaptive-loop}
\texttt{SOLVE} \,\,
\longrightarrow \,\,
\texttt{ESTIMATE} \,\,
\longrightarrow \,\,
\texttt{MARK} \,\,
\longrightarrow \,\,
\texttt{REFINE}.
\end{equation}
A valid and practical alternative is to first approximate $\data$ by piecewise polynomials to a desired
accuracy, and next run \eqref{eq:adaptive-loop} for such approximate data to achieve a comparable level of precision.
This two-step AFEM was first proposed by R. Stevenson \cite{Stevenson2007}, and further explored in
\cite{BonitoDeVoreNochetto,CohenDeVoreNochetto}.

Dealing with approximate data $\data$ is inherent to the formulation of VEMs and their
basic definition. It is thus natural in this context to think of two-step AVEMs. This is precisely
our intent in this paper, in which we design an AVEM for \eqref{eq:prototype} in two stages.
We first assume that $\data$ is
piecewise constant and introduce a one-step AVEM, the so-called {\tt GALERKIN} module,
which is shown in \cite{paperA} to possess
a contraction property between consecutive
adaptive iterations.  We next consider
variable data $\data$ and design a two-step AVEM that consists of a concatenation of the modules
{\tt DATA} and {\tt GALERKIN} in the spirit of \cite{BonitoDeVoreNochetto,CohenDeVoreNochetto,Stevenson2007}.
Given an initial mesh $\mesh_0$ and parameters $\varepsilon_0,\omega >0$,
AVEM sets $k=0$ and iterates

\medskip
{
\begin{algotab}
  \>  $[\widehat\mesh_{k},\widehat\data_{k}]={\tt DATA} \, (\mesh_k, \data, \omega \, \varepsilon_k)$ \\
  \>  $[\mesh_{k+1},u_{k+1}]={\tt GALERKIN} \, (\widehat{\mesh}_{k},\widehat\data_{k},\varepsilon_k)$ \\
  \>  $\varepsilon_{k+1}=\tfrac12 {\varepsilon_k}; ~ k \leftarrow k+1$
\end{algotab}
}
\medskip

\noindent
The module {\tt DATA} approximates $\data=(A,c,f)$ in the
spaces $\big((L^\infty(\Omega))^{2\times 2}, L^\infty(\Omega), L^2(\Omega)\big)$ by piecewise constant data
$\widehat\data_k$ on an admissible refinement
$\widehat\mesh_k$ of $\mesh_k$ to accuracy $\omega \, \varepsilon_k$.
The pair $(\widehat{\mesh}_{k},\widehat\data_{k})$
is taken by {\tt GALERKIN} to run an inner loop, with piecewise constant data $\widehat\data_{k}$ and initial mesh
$\widehat{\mesh}_{k}$, that creates the next mesh-solution pair $(\mesh_{k+1},u_{k+1})$.
The module {\tt GALERKIN} stops as soon as the error tolerance $\varepsilon_k$ is reached, which takes a finite
number of iterations because {\tt GALERKIN} is a contraction between consecutive iterates.
It is worth noticing that, in the absence of
this stopping test, {\tt GALERKIN} would converge to the solution of \eqref{eq:prototype}
corresponding to the perturbed data $\widehat\data_k$, which is not the desired solution $u$ of \eqref{eq:prototype}.
The relative resolution of the modules {\tt DATA} and {\tt GALERKIN} is critical and is
governed by the parameter $\omega > 0$.
In our numerical experiments we observe that $\omega=1$ is an adequate choice.

It is clear from its definition that this two-step AVEM converges. Concerning its optimality, we show that the number of
iterations of {\tt GALERKIN} is independent of the iteration counter $k$ and its complexity is dictated by the
approximation classes of the solution $u$ and data $\data$.
This requires $\omega$ to be sufficiently small, or equivalently that the
perturbed solution of \eqref{eq:prototype} with data $\widehat\data_k$ is much closer to $u$ than the error
tolerance $\varepsilon_k$; this is in the spirit of \cite{BonitoDeVoreNochetto,Stevenson2007}.
We also prove that the complexity of {\tt DATA} is given
by suitable approximation classes of $\data=(A,c,f)$ in the spaces
$\big((L^\infty(\Omega))^{2\times 2}, L^\infty(\Omega), L^2(\Omega)\big)$.
Altogether, this yields the following optimal
decay estimate for the energy error in terms of the number of degrees of freedom $\#\mesh_k$
\begin{equation}\label{eq:error-decay}
|u-u_k|_{1,\Omega} \leq  C(u,\data) \, \big( \#\mesh_k \big)^{-s},
\end{equation}
where $s>0$ is the worse decay rate between those of the near-best approximations errors for $u$ and for $\data$; typically $s=\frac12$ in dimension 2.

This paper is organized as follows. We present the weak formulation of \eqref{eq:prototype} in
Section \ref{sec:ingr} and recall the VEM basic ingredients in Section \ref{sec:VEMprelim}. We
discuss VEM for piecewise constant data in Section \ref{sec:discrete-pb}, including the
stabilization-free a posteriori
error estimates from \cite{paperA}. In Section \ref{sec:AVEM-pcwconstant} we design {\tt GALERKIN}, and
recall its fundamental contraction property from  \cite{paperA}.  We deal with variable data in Section \ref{sec:variable-data},
which entails a perturbation estimate for \eqref{eq:prototype}, the design of {\tt DATA}, and eventually of {\tt AVEM} for general data.
Section \ref{sec:complex-Gal} analyzes the computational cost of {\tt GALERKIN}, showing that the number of sub-iterations inside a call to {\tt GALERKIN} is 
uniformly bounded. Section \ref{sec:quasi-optimality} is devoted to the study of the quasi-optimality of AVEM: approximation classes for the solution and data are introduced, 
and the rate decay of the error in the energy norm versus the number of degrees of freedom is estimated in terms of these classes. 
Section \ref{sec:approx-data} completes the analysis, with the study of the decay of data approximation errors.
We document the interplay between the modules {\tt DATA} and {\tt GALERKIN} with several
illuminating numerical experiments in Section \ref{sec:experiments}. It is important to realize that
for mesh refinement to maintain bounded global indices, and thus admissible meshes,
further refinement beyond the marked elements
might be necessary.  In Section \ref{sec:admissible} we design and study a
procedure to make meshes admissible in the sense that the global index is uniformly bounded for all $k$.
This procedure hinges on the bisection algorithm and is of somewhat intrinsic interest. We prove that it
is optimal in terms of degrees of freedom, very much in the spirit of the completion algorithm for
conforming bisection meshes by Binev, Dahmen, and DeVore \cite{BDD:04}; see also 
\cite{NSV:09,NochettoVeeser:12,Stevenson:08}. 
We finally draw conclusions in Section \ref{S:conclusions}.

\section{The continuous problem}
\label{sec:ingr}
In a polygonal domain $\Omega \subset \mathbb{R}^2$, consider the second-order Dirichlet boundary-value problem
\begin{equation}\label{eq:pde}
- \nabla\cdot \left(A \nabla u \right) + c u = f \quad  \text{ in } \Omega\,, \qquad u = 0 \quad \text{on } \partial\Omega\,,
\end{equation}
with data $\data=(A,c,f)$, where $A \in (L^\infty(\Omega))^{2\times 2}$ is symmetric and uniformly positive-definite in $\Omega$, $c \in L^\infty(\Omega)$ is non-negative in $\Omega$, and $f \in L^2(\Omega)$. 
The variational formulation of the problem is
\begin{equation}\label{eq:pde-var}
u \in \V \, : \quad   \cB(u,v) = (f,v)_\Omega  \qquad \forall v \in \V \,,
\end{equation}
with $\V := H^1_0(\Omega)$ and 
$\cB(u,v):=\B(u,v) + m(u,v)$
where 
$$
\B(u,v):= \int_\Omega ( A \nabla u) \cdot \nabla v \, ,  \quad 
m(u,v) := \int_\Omega c \, u \, v 
$$ 
are the bilinear forms associated with \eqref{eq:pde}. Let $\vvvert \cdot \vvvert=\sqrt{\mathcal{B}(\cdot,\cdot)}$ be the energy norm, which satisfies  
\begin{equation}\label{norm:equiv}
c_\cB \vert v \vert_{1,\Omega}^2\leq \vvvert v \vvvert^2\leq c^\cB \vert v \vert^2_{1,\Omega}\quad \forall v \in \V\,,
\end{equation}
for suitable constants $0 <c_\cB \leq c^\cB$.

\section{VEM preliminaries}\label{sec:VEMprelim}

In view of the adaptive discretization of the problem, let us fix an initial conforming partition $\mesh_0$ of $\overline{\Omega}$ made of triangular elements. Let us denote by $\mesh$ any refinement of $\mesh_0$ obtained by a finite number of successive {\it newest-vertex bisections} \cite{BDD:04,NSV:09,NochettoVeeser:12,Stevenson:08}; the triangulation $\mesh$ need not be conforming, since hanging nodes may be generated by the refinement. Let $\nodes$ denote the set of nodes of $\mesh$, i.e., the collection of all vertices of the triangles in $\mesh$; a node $z \in \nodes$ is {\it proper} if it is a vertex of all triangles containing it; otherwise, it is a {\it hanging node}. Thus,  ${\cal N}={\cal P}\cup{\cal H}$ is partitioned into the union of the set ${\cal P}$ of proper nodes and the set ${\cal H}$ of hanging nodes.

Given an element $E \in \mesh$, let $\nodesE$ be the set of nodes sitting on $\partial E$; it contains the three vertices and, possibly, some hanging nodes.  If the cardinality $|\nodesE|=3$, $E$ is said a {\it proper triangle} of $\mesh$; if $|\nodesE|>3$, then according to the VEM philosophy $E$ is not viewed as a triangle, but as a polygon having $|\nodesE|$ edges, some of which are placed consecutively on the same line; the set of all edges of $E$ is denoted by $\edgeE$. 
Note that if $e \subset \partial E \cap \partial E'$, then it is an edge for both elements; consequently, it is meaningful to define the {\it skeleton} of the triangulation $\mesh$ by setting $\edge = \edge_\mesh := \bigcup_{E \in \mesh} \edgeE$. Throughout the paper, we will set $h_E = |E|^{1/2}$ for an element and $h_e=|e|$ for an edge.

\medskip
The concept of {\it global index} of a hanging node, introduced in \cite{paperA}, will be crucial in the sequel. To define it, let us first observe that any hanging node $\bm{x} \in {\cal H}$ has been obtained through a newest-vertex bisection by halving an edge of a triangle in the preceding triangulation; denoting by  $\bm{x}', \bm{x}'' \in  {\cal N}$ the endpoints of such edge, let us set ${\mathbf B}(\bm{x})=\{\bm{x}', \bm{x}''\}$.

\begin{definition}[Global index of a node and a partition]\label{def:node-index}
The global index $\lambda$ of a node $\bm{x} \in {\cal N}$  is recursively defined as follows:
\begin{itemize}
\item If $\bm{x} \in {\cal P}$, then set $\lambda(\bm{x}):=0$;
\item If $\bm{x} \in {\cal H}$,  with $\bm{x}', \bm{x}'' \in  {\mathbf B}(\bm{x})$, then set $\lambda(\bm{x}):=\max\big(\lambda(\bm{x}'), \lambda(\bm{x}'')\big)+1$.
\end{itemize}
The global index of the partition $\mesh$ is defined as 
$\Lambda_{\mesh} := \displaystyle{\max_{\bm{x} \in {\cal N}} \lambda(\bm{x}) }$.
\end{definition}


\begin{definition}[$\Lambda$-admissible partitions]\label{def:Lambda-partitions}
Given a constant $\Lambda \geq 1$, a non-conforming partition $\mesh$ is said to be $\Lambda$-admissible  if 
$$
\Lambda_{\mesh} \leq \Lambda \,.
$$
\end{definition}

Starting from the initial conforming partition $\mesh_0$ (which is trivially $\Lambda$-admissible), all the subsequent non-conforming partitions generated by the module $\texttt{REFINE}$ in the sequel will remain $\Lambda$-admissible due to the algorithm $\texttt{CREATE\_ADMISSIBLE\_CHAIN}$ studied in Section \ref{sec:admissible}. We refer to
\cite{BonitoNochetto:10} for a similar algorithm in the context of dG approximations.

\begin{remark}\label{rem:bound-global-index}
{\rm
The condition that $\mesh$ is $\Lambda$-admissible has the following implications for each element $E \in \mesh$:
\begin{itemize}
\item If $L \subset \partial E$ is one of the three sides of the triangle $E$, then $L$ may contain at most $2^\Lambda-1$ hanging nodes; consequently, $|{\cal N}_E| \leq 3\cdot 2^\Lambda$.

\item If $e \subset \partial E$ is any edge, then $h_e \simeq h_E$, where the hidden constants only depend on the shape of the initial triangulation $\mesh_0$ and possibly on $\Lambda$.
\end{itemize}
}
\end{remark}

In the following $C$ will denote a generic positive constant independent of the mesh $\mesh$ but which may depend on $\Omega$, on the initial partition $\mesh_0$,
on the data $\data$ and on the constant $\Lambda$ (cf. Definition \ref{def:Lambda-partitions})
and that may change at each occurrence,
whereas the symbol $\lesssim$ will denote a bound up to $C$.

\subsection{VEM spaces and projectors}

Although the results of the present paper apply to a wider set of VEM spaces \cite{volley,projectors,BBMR:2016}, we prefer to focus the attention on the so-called {\it enhanced} VEM space.  We will be brief and refer to \cite{paperA} for a more detailed description which adopts the same notation. We start with the projector $\PiE : H^1(E) \to \mathbb{P}_1(E)$, which is is defined by the conditions
\begin{equation}\label{eq:def-PinablaE}
(\nabla (v - \PiE v), \nabla q_1)_E = 0 \quad \forall q_1 \in \mathbb{P}_1(E), \qquad  \int_{\partial E} (v-\PiE v) = 0 \, .
\end{equation} 
To introduce the space of discrete functions in $\Omega$ associated with $\mesh$,
for each element $E \in \mesh$ we define
\begin{equation}\label{vem:choice:2} 
\begin{aligned}
& \VdE := \{ v \in {\cal C}^0(\partial E) : v_{|e} \in \mathbb{P}_1(e) \ \forall e \in \edgeE \} \, ,
\\
& \VE := \big\{ v \in H^1(E) \ : \ v_{|\partial E} \in \VdE , \ \Delta v \in \mathbb{P}_1(E) \, ,
\int_E (v - \PiE v) q_1 = 0 \ \forall q_1 \in \mathbb{P}_1(E)
\big\} \, .
\end{aligned}
\end{equation}

Obviously $\mathbb{P}_1(E) \subseteq \VE$ and, if $E$ is a proper triangle, then $\VE = \mathbb{P}_1(E)$.
Once the local spaces $\VE$ are defined, we introduce the global discrete space
\begin{equation}\label{eq:def-VT}
\Vmesh := \{ v \in \V :  \ v_{|E} \in \VE \ \ \forall E \in \mesh \}\,.
\end{equation} 
Note that functions in $\Vmesh$ are piecewise affine on the skeleton $\edge$ and are globally continuous. A \emph{set of degrees of freedom} for the space $\Vmesh$ is given by the pointwise evaluation at all (internal) mesh vertices.

We also define the subspace of continuous, piecewise affine functions on $\mesh$ 
\begin{equation}\label{eq:def-VT0}
\Vmeshz := \{ v \in \V :  \ v_{|E} \in \mathbb{P}_1(E) \ \ \forall E \in \mesh \} \subseteq \Vmesh \, .
\end{equation} 
This subspace was crucial in \cite{paperA} to get a stabilization-free a posteriori error estimate,
  and will play an essential role in this paper as well to remove the stabilization term from several estimates.

The discretization of Problem \eqref{eq:pde} will involve the following global projection operators
\begin{equation}
\label{eq:def-Pinabla}
\Pimesh : \Vmesh \to \mathbb{P}_1(\mesh),
\qquad
\Imesh : \Vmesh \to \mathbb{P}_1(\mesh),
\qquad
\Pimeshz : L^2(\Omega) \to \mathbb{P}_1(\mesh),
\end{equation}
where $\mathbb{P}_1(\mesh)$ denotes the space of (discontinuous) piecewise linear polynomials over $\mesh$.
We define these operators in terms of their local counterparts. In fact, for each element $E \in \mesh$,
$\Pimesh$ restricts to the local elliptic projection operator $\PiE$ in
\eqref{eq:def-PinablaE}, $\Imesh$ restricts to the local
Lagrange interpolation operator $\IE : \VE \to \mathbb{P}_1(E)$ at the vertices of $E$, and $\PiEz$
restricts to the local $L^2$-orthogonal projection operator $\PiEz : L^2(E) \to \mathbb{P}_1(E)$.
It turns out that $\PiEz = \PiE$ on $\VE$, because of the definition \eqref{vem:choice:2} of the space $\VE$,
and that $\PiE$ is computable on $\VE$ in terms of the degrees of freedom \cite{volley,paperA}.
Furthermore, in view of the definition \eqref{eq:def-VT0} of $\Vmeshz$, $\Pimesh v = \Imesh v = v$
for all $v \in \Vmeshz$.

\section{A Virtual Element Method with piecewise constant data}\label{sec:discrete-pb}
In this section we briefly summarize the definition and certain properties of the virtual element discretization of \eqref{eq:pde-var} introduced in \cite{paperA} under the following assumption.
\begin{assumption}[coefficients and right-hand side of the equation]\label{ass:constant-coeff}
The data  $\data=(A,c,f)$ in \eqref{eq:pde} are constant in each element $E$ of $\mesh$.
\end{assumption}
For any $E \in \mesh$ we use the following notation: $A_E= A_{|E} \in \mathbb{R}^{2 \times 2}$, $c_E = c_{|E} \in \mathbb{R}$, $f_E = f_{|E} \in \mathbb{R}$.

 \subsection{The discrete problem}
 
Under the above assumption, we define  $\amesh, \mmesh : \Vmesh \times \Vmesh \to \mathbb{R}$ by
\begin{equation}\label{eq;def-aT}
\begin{alignedat}{2}
  &\amesh(v,w) : = \sum_{E \in \mesh} \aE(v,w) \, ,
  \qquad
  && \aE(v,w) := \int_E  (A_E \nabla \PiE v) \cdot \nabla \PiE w \, ,
  \\
  &\mmesh(v,w) : = \sum_{E \in \mesh} \mE(v,w) \, ,
  \qquad
  &&\mE(v,w)  :=  c_E \int_E \PiE v \, \PiE w  \, .
\end{alignedat}
\end{equation}
Next, for any $E \in \mesh$, we introduce the stabilization symmetric bilinear form $\sE : \VE \times \VE \to \mathbb{R}$ 
\begin{equation}\label{eq:stab-dofidofi}
\sE(v,w) = \sum_{i=1}^{\nodesE} v({\bm x}_i) w({\bm x}_i) \ ,
\end{equation}
with $\{ {\bm x}_i \}_{i=1}^{\nodesE}$ denoting the nodes of $E$.   This form controls the kernel of $\aE$ on $\VE / {\mathbb R}$ because it satisfies
\begin{equation}\label{eq:stab-sE}
c_s | v |_{1,E}^2 \leq \sE(v,v) \leq C_s | v |_{1,E}^2 \qquad \forall v \in \VE  / {\mathbb R} \,,
\end{equation}
for constants $C_s \geq c_s > 0$ independent of $E$; for a proof of \eqref{eq:stab-sE} we refer to \cite{BLR:2017,brenner-sung:2018}. Other choices for the stabilization form are available in the literature \cite{BLR:2017,brenner-sung:2018} and the results presented here easily extend to such cases.
With the local form $s_E$ at hand, we define the local and global stabilization forms
\begin{equation}\label{eq:def-SE}
\begin{aligned}
& \SE(v,w) := \sE(v-\IE v, w-\IE w) \qquad \forall \, v,w\in \VE \,, \\
& \Smesh(v,w) :=  \sum_{E \in \mesh}  \SE(v,w) \qquad \forall \, v,w\in \Vmesh \,.
\end{aligned}
\end{equation}
Note that from  \eqref{eq:stab-sE} we obtain
\begin{equation}\label{eq:stab-norm}
\Smesh(v,v) \simeq | v - \Imesh v|_{1,\mesh}^2 \qquad \forall v \in \Vmesh\,,
\end{equation}
where $ | \, \cdot \, |_{1,\mesh}$ denotes the broken $H^1$-seminorm over the mesh $\mesh$.

\medskip
\noindent
Finally, for all $v,w \in \Vmesh$ we define the complete bilinear form
\begin{equation}\label{eq:def-BT}
\Bmesh : \Vmesh \times \Vmesh \to \mathbb{R}\,, \qquad 
\Bmesh(v,w) := \amesh(v,w) + \mmesh(v,w ) + \gamma \Smesh(v,w) \,, 
\end{equation} 
where $\gamma \geq \gamma_0$ for some fixed $\gamma_0 >0$ is a stabilization constant independent of $\mesh$.
The following properties are an easy consequence of the definitions and bounds outlined above.

\begin{lemma}[properties of bilinear forms]\label{prop:bilin-forms}
The following properties are valid
 \begin{enumerate}[$\bullet$]
 \item For any $v \in \Vmesh$ and any $w \in \Vmeshz$, it holds
\begin{equation}\label{eq:propB1}
\amesh(v,w) = a(v,w)\,, 
\qquad \mmesh(v,w) = m(v,w)\,,
\qquad \Smesh(v,w) = 0\,.  
\end{equation}
\item The form $\Bmesh$ satisfies 
\begin{equation}\label{eq:propB2}
b\,  |v |_{1,\Omega}^2 \leq \Bmesh(v,v),  \qquad |\Bmesh(v,w)| \leq B |v|_{1,\Omega} |w|_{1,\Omega}\,, \qquad \forall v,w \in \Vmesh\,,  
\end{equation}
with continuity and coercivity constants $B \geq b >0$ independent of the triangulation $\mesh$.  
\end{enumerate}
\end{lemma}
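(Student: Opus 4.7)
For the identities in \eqref{eq:propB1}, the plan is to exploit the fact that $w \in \Vmeshz$ means $w|_E \in \mathbb{P}_1(E)$ on every $E \in \mesh$, so both $\PiE$ and $\IE$ leave $w$ unchanged on $E$. For the stiffness identity, I would observe that Assumption \ref{ass:constant-coeff} makes $A_E \nabla w$ a constant vector on $E$, hence the gradient of some $q_1 \in \mathbb{P}_1(E)$; the orthogonality \eqref{eq:def-PinablaE} then yields $\int_E \nabla(v - \PiE v) \cdot A_E \nabla w = 0$, so $\amesh(v,w) = \sum_E \int_E A_E \nabla v \cdot \nabla w = a(v,w)$. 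For the mass identity, I would use that $\PiE = \PiEz$ on $\VE$ together with the $L^2$-orthogonality of $\PiEz$ against $w|_E \in \mathbb{P}_1(E)$, giving $\int_E \PiE v \, w = \int_E v \, w$ and thus $\mmesh(v,w) = m(v,w)$. Finally, $\Smesh(v,w) = 0$ is immediate since $w - \IE w \equiv 0$ on each $E$.

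For the continuity in \eqref{eq:propB2}, I would bound each piece by Cauchy--Schwarz. Testing \eqref{eq:def-PinablaE} with $q_1 = \PiE v$ yields $|\PiE v|_{1,E} \leq |v|_{1,E}$, which produces $|\amesh(v,w)| \lesssim \|A\|_\infty |v|_{1,\Omega}|w|_{1,\Omega}$. Combining $\|\PiE v\|_{0,E} \leq \|v\|_{0,E}$ (since $\PiE = \PiEz$) with the Poincar\'e inequality on $H^1_0(\Omega)$ controls $|\mmesh(v,w)|$. For the stabilization, Cauchy--Schwarz on the symmetric form $\Smesh$, the equivalence \eqref{eq:stab-norm}, and the uniform $H^1$-stability of $\Imesh$ (which holds on $\Lambda$-admissible meshes, cf. Remark \ref{rem:bound-global-index}) give $|\Smesh(v,w)| \lesssim |v|_{1,\Omega}|w|_{1,\Omega}$.

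For coercivity, I would use ellipticity of $A$ to obtain $\amesh(v,v) \geq \alpha_0 \sum_E |\PiE v|^2_{1,E}$ (with $\alpha_0$ the ellipticity constant of $A$) together with $\mmesh(v,v) \geq 0$. The key step is the triangle inequality $|v|^2_{1,E} \leq 2|\PiE v|^2_{1,E} + 2|v - \PiE v|^2_{1,E}$ combined with the best-approximation property of $\PiE$ in the $H^1$-seminorm, which yields $|v - \PiE v|_{1,E} \leq |v - \IE v|_{1,E}$; the remainder is then controlled by $\Smesh(v,v)$ via \eqref{eq:stab-norm}. Summing over $E \in \mesh$ and using $\gamma \geq \gamma_0$ gives $\Bmesh(v,v) \geq b |v|^2_{1,\Omega}$ with $b$ depending only on $\alpha_0$, $\gamma_0$, and $c_s$.

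The main subtlety lies in the stiffness identity $\amesh(v,w) = a(v,w)$: it is precisely the piecewise constancy of $A$ (Assumption \ref{ass:constant-coeff}) that enables writing $A_E \nabla w$ as a gradient of a $\mathbb{P}_1$ polynomial and invoking the defining orthogonality of $\PiE$. Without this assumption the identity fails, which is exactly the reason the perturbation analysis of Section \ref{sec:variable-data} becomes necessary later.
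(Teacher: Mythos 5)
Your proof is correct, and it fills in exactly the argument the paper leaves implicit: the paper states this lemma without proof as "an easy consequence of the definitions and bounds outlined above," and your steps — $\Pi^\nabla_E$ and $\mathcal{I}_E$ reproducing $\mathbb{P}_1(E)$, the defining orthogonalities of $\Pi^\nabla_E$ and $\Pi^0_E$ together with the piecewise constancy and symmetry of $A$ for \eqref{eq:propB1}, and the projector stabilities plus \eqref{eq:stab-sE}--\eqref{eq:stab-norm} and $\gamma\ge\gamma_0$ for \eqref{eq:propB2} — are precisely the standard ingredients being invoked. The only cosmetic remark is that the uniform $H^1$-stability of $\Imesh$ you use for the continuity of $\Smesh$ is itself most directly obtained from \eqref{eq:stab-sE} (vertex values control the gradient of the linear interpolant), rather than from Remark \ref{rem:bound-global-index} alone.
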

Recalling \eqref{eq:def-BT}, direct consequence of \eqref{eq:propB1} is the following consistency result:
\begin{equation}\label{eq:consistency}
\Bmesh(v,w) = B(v, w) 
\qquad \forall v \in \Vmesh\,, \forall w \in \Vmeshz \,.  
\end{equation}
We now have all the ingredients to set the Galerkin discretization of Problem \eqref{eq:pde}:  
{\it find}
\begin{equation}\label{def-Galerkin}
{\umesh \in \Vmesh \, :} \quad \Bmesh(\umesh,v) = {\cal F}_{\mesh} (v) \qquad \forall v \in \Vmesh \, ,
\end{equation}
with discrete loading term
\begin{equation}\label{discr-rhs}
{\cal F}_{\mesh} (v) := \sum_{E \in \mesh} f_E \int_E  \PiE v \qquad \forall v \in H^1_0(\Omega) \, .
\end{equation}
{Combining \eqref{eq:propB2} with the Lax-Milgram Lemma, we obtain existence, uniqueness and stability of the solution $\umesh$ of \eqref{def-Galerkin}. Moreover, $\umesh$ satisfies the following orthogonality condition in the subspace $\V_\mesh^0$ \cite{volley,paperA}.}

\begin{lemma}[{Galerkin quasi-orthogonality}]\label{L:Galerkin-orthogonality}
The solutions $u$ of \eqref{eq:pde-var} and $\umesh$ of \eqref{def-Galerkin} satisfy
\begin{equation}\label{eq:PGO:1}
\cB(u-\umesh,v) = 0  \qquad\forall \, v\in \V_\mesh^0.
\end{equation}
\end{lemma}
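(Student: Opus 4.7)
The plan is to test both the continuous and discrete variational formulations against an arbitrary $v \in \Vmeshz$ and exploit the consistency properties recorded in Lemma \ref{prop:bilin-forms} together with Assumption \ref{ass:constant-coeff} on the data. Since $\Vmeshz \subseteq \Vmesh \subseteq \V$, such a $v$ is simultaneously an admissible test function in \eqref{eq:pde-var} and in \eqref{def-Galerkin}.

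First I would split $\cB(u-\umesh,v) = \cB(u,v) - \cB(\umesh,v)$ and rewrite the two terms separately. For the first term, the continuous variational equation \eqref{eq:pde-var} immediately gives $\cB(u,v) = (f,v)_\Omega$. For the second term, I would use the consistency identity \eqref{eq:consistency}, which is precisely the assertion that $\Bmesh(\cdot,v) = \cB(\cdot,v)$ on $\Vmesh$ whenever $v \in \Vmeshz$; this was derived in \eqref{eq:propB1} by noting that $\PiE v = v|_E$, $\IE v = v|_E$ for $v \in \Vmeshz$, so the projected bilinear forms coincide with the exact ones and the stabilization contribution vanishes. Invoking \eqref{def-Galerkin} then yields $\cB(\umesh,v) = \Bmesh(\umesh,v) = \mathcal{F}_\mesh(v)$.

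Next I would show that on $\Vmeshz$ the discrete loading term $\mathcal{F}_\mesh(v)$ coincides with the continuous one $(f,v)_\Omega$. Since $v|_E \in \mathbb{P}_1(E)$, the enhancement property gives $\PiE v = v|_E$, so that
\begin{equation*}
\mathcal{F}_\mesh(v) = \sum_{E \in \mesh} f_E \int_E \PiE v = \sum_{E \in \mesh} f_E \int_E v = \sum_{E \in \mesh} \int_E f\, v = (f,v)_\Omega,
\end{equation*}
where the third equality uses Assumption \ref{ass:constant-coeff}, i.e.\ $f|_E = f_E$. Combining the three identities gives $\cB(u-\umesh,v) = (f,v)_\Omega - (f,v)_\Omega = 0$, which is the claim.

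There is no real obstacle here; the result is essentially an unrolling of definitions. The only point that deserves care is the joint use of (i) $\PiE v = v$ on $\Vmeshz$ (so that both the projected bilinear forms and the projected right-hand side reduce to the exact quantities) and (ii) the piecewise constancy of the data, which is what allows the sum $\sum_E f_E \int_E v$ to be assembled back into the integral $\int_\Omega f v$. Any weakening of these hypotheses — variable data, or a test function outside $\Vmeshz$ — would introduce a consistency or quadrature error and destroy the exact orthogonality, which is exactly why the statement is restricted to $\Vmeshz$ and the setting of Assumption \ref{ass:constant-coeff}.
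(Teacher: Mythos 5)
Your proof is correct and is essentially the argument the paper relies on (the paper itself defers the proof to its references): testing both \eqref{eq:pde-var} and \eqref{def-Galerkin} with $v\in\V_\mesh^0$, invoking the consistency identity \eqref{eq:consistency}, and noting that $\mathcal{F}_\mesh(v)=(f,v)_\Omega$ since $\PiE v=v_{|E}$ and $f_{|E}=f_E$ under Assumption \ref{ass:constant-coeff}. Nothing is missing.
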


\subsection{An a posteriori error estimator}

Since we are interested in building adaptive discretizations, we rely on a posteriori error control.  Hereafter we present the residual-type a posteriori estimator introduced in \cite{paperA} as a variant of the one in \cite{Cangiani:17}. To this end, recalling that $\data = (A,c,f)$ denotes the set of piecewise constant data, for any $v \in \Vmesh$ and any element $E$ let us define the internal residual over $E$  
\begin{equation}\label{eq:estim1}
\rmesh(E;v,\data) :=   f_E \,    - \, c_E \, \PiE v\,.
\end{equation}  
Similarly, for any two elements $E_1, E_2 \in \mesh$ sharing an edge $e \in \edge_{E_1}\cap \edge_{E_2}$, let us define the jump residual over $e$
\begin{equation}\label{eq:estim2}
\jmesh(e;v,\data) := \jump{A \nabla \, \Pi^\nabla_{\mesh} v}_e = (A_{E_1} \nabla \, \Pi^\nabla_{E_1} v_{|E_1}) \cdot \boldsymbol{n}_1  +  (A_{E_2} \nabla \, \Pi^\nabla_{E_2} v_{|E_2}) \cdot \boldsymbol{n}_2 \,, 
\end{equation}  
where $\boldsymbol{n}_i$ denotes the unit normal vector to $e$ pointing outward with respect to $E_i$; set $\jmesh(e;v, \data) =0$ if $e \subset \partial\Omega$. Then, taking into account Remark \ref{rem:bound-global-index}, we define the local residual estimator associated with $E$
\begin{equation}\label{eq:estim3}
\etamesh^2(E;v,\data) := h_E^2 \Vert \rmesh(E;v,\data) \Vert_{0,E}^2 \ + \ \tfrac12 \sum_{e \in \edgeE} h_E \Vert \jmesh(e;v,\data) \Vert_{0,e}^2 \;.
\end{equation}
The residual estimator localized on some subset $\mathcal{S}\subseteq \mathcal{T}$ is
\begin{equation}\label{eq:estim_loc}
\etamesh^2(\mathcal{S};v,\data) := \sum_{E \in \mathcal{S}} \etamesh^2(E;v,\data) 
\end{equation}
and the global residual estimator is
\begin{equation}\label{eq:estim4}
\etamesh^2(v,\data) := \etamesh^2(\mesh;v,\data)  = \sum_{E \in \mesh} \etamesh^2(E;v,\data) \,.
\end{equation}

{Upper and lower a posteriori bounds of the energy error are} provided by the following result, whose proof can be found in \cite[Proposition 4.1 and Corollary 4.3]{paperA}.
\begin{proposition}[{a posteriori error estimates}] \label{prop:aposteriori} 
There exist constants $C_\text{apost} > c_\text{apost}>0$  depending on $\Lambda$ and $\data$ but independent of $u$, $\mesh$, $\umesh$ and {the stabilization parameter} $\gamma$, such that
\begin{equation}\label{eq:apost1}
\begin{split}
\vert u - \umesh \vert_{1, \Omega}^2  &\leq C_\text{apost} \left( \etamesh^2(\umesh,\data) 
+ \Smesh(\umesh, \umesh) \right) \,, \\
c_\text{apost} \ \etamesh^2(\umesh,\data) & \leq \vert u - \umesh \vert_{1, \Omega}^2 + \Smesh(\umesh, \umesh) \,.
\end{split}
\end{equation} 
\end{proposition}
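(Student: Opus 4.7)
The proof separates into reliability (upper bound) and efficiency (lower bound), following the residual-based a~posteriori paradigm of Verf\"urth adapted to the virtual element setting, with the novelty that every appearance of $u_\mesh$ inside an element must be replaced by its computable polynomial surrogate $\PiE u_\mesh$, and the resulting mismatch controlled via the stabilization term.

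For the \emph{upper bound}, the norm equivalence \eqref{norm:equiv} reduces the task to estimating $\mathcal{B}(u-u_\mesh,u-u_\mesh)$. Setting $w := u-u_\mesh$ and letting $w^0 \in \Vmeshz$ be a Scott--Zhang quasi-interpolant of $w$ into the conforming piecewise affine subspace, the Galerkin quasi-orthogonality of Lemma~\ref{L:Galerkin-orthogonality} gives $\mathcal{B}(w,w^0)=0$, so the energy error equals $\mathcal{B}(w,w-w^0) = (f,w-w^0)_\Omega - \mathcal{B}(u_\mesh,w-w^0)$. I would then replace $u_\mesh$ by $\PiE u_\mesh$ inside each element and integrate by parts element-by-element; since $A_E \nabla \PiE u_\mesh$ is constant, the interior divergence vanishes and the boundary contributions assemble into the jump residuals \eqref{eq:estim2} across internal edges, while the volume terms produce the interior residual $r_\mesh(E;u_\mesh,\data)=f_E-c_E\PiE u_\mesh$ of \eqref{eq:estim1}. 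Cauchy--Schwarz together with the standard local interpolation estimates for $w-w^0$ (with shape constants depending on $\Lambda$ through Remark~\ref{rem:bound-global-index}) bounds these contributions by $\etamesh(u_\mesh,\data)\,|w|_{1,\Omega}$. The replacement error produces an extra term proportional to $|u_\mesh - \PiE u_\mesh|_{1,\mesh}\,|w|_{1,\Omega}$; by stability of $\PiE$ relative to $\Imesh$ combined with \eqref{eq:stab-norm}, this quantity is equivalent to $\Smesh(u_\mesh,u_\mesh)^{1/2}\,|w|_{1,\Omega}$. Dividing through by $|w|_{1,\Omega}$ closes the reliability estimate; the constant depends on $\Lambda$ and on the ellipticity of $\data$ but is independent of $\gamma$, because the argument is carried out at the level of the continuous form $\mathcal{B}$ and never uses $\Smesh$ as a multiplier.

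For the \emph{lower bound}, I would use Verf\"urth's classical bubble-function technique. On each element $E$, test the continuous weak formulation against $r_\mesh(E;u_\mesh,\data)\,b_E$, where $b_E$ is the standard interior bubble (its zero extension lies in $\V$). Integration by parts, the equivalence of norms on $\mathbb{P}_1(E)$ through the bubble-weighted inner product, and inverse estimates yield $h_E\|r_\mesh(E;u_\mesh,\data)\|_{0,E} \lesssim |u-u_\mesh|_{1,E} + \SE(u_\mesh,u_\mesh)^{1/2}$, the last term again arising from the replacement of $u_\mesh$ by $\PiE u_\mesh$. Jump residuals are bounded analogously via edge bubbles supported in the union of the two adjacent elements, combined with a suitable extension inside each element. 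Summing over $E$ and $e\in\edge$ and invoking Remark~\ref{rem:bound-global-index} to bound the edge multiplicity per element completes the efficiency estimate.

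The principal technical obstacle throughout is the careful bookkeeping of the mismatch $u_\mesh - \PiE u_\mesh$, which does not arise in classical AFEM: every integration by parts, inverse inequality, or bubble-function test requires polynomial structure inside elements and thus triggers a correction of order $|u_\mesh-\Imesh u_\mesh|_{1,\mesh}$. The identification \eqref{eq:stab-norm} of this quantity with $\Smesh(u_\mesh,u_\mesh)^{1/2}$ is precisely what introduces the stabilization term on the right-hand side of both bounds. This motivates the paper's central theme: choosing $\gamma$ sufficiently large drives $\Smesh(u_\mesh,u_\mesh)$ below any prescribed fraction of $\etamesh^2(u_\mesh,\data)$, thereby turning these stabilization-contaminated bounds into genuinely stabilization-free a~posteriori estimates that fuel the convergence analysis of the module {\tt GALERKIN}.
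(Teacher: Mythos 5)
The paper does not prove this proposition itself but defers it to \cite{paperA} (Proposition 4.1 and Corollary 4.3 there), and your sketch follows essentially the same route as that cited proof: Galerkin quasi-orthogonality \eqref{eq:PGO:1} in the conforming subspace $\Vmeshz$, a quasi-interpolant with local stability, and elementwise integration by parts against the polynomial $\PiE \umesh$ for reliability, plus Verf\"urth bubble functions for efficiency, with the mismatch $\umesh-\Pimesh\umesh$ absorbed into $\Smesh(\umesh,\umesh)$ via the minimality of $\PiE$ and \eqref{eq:stab-norm}, and no $\gamma$-dependence since only $\cB$ is used. The one step you treat as routine but which carries the real technical weight is the construction of a locally stable Cl\'ement/Scott--Zhang operator onto $\Vmeshz$ on nonconforming meshes with hanging nodes --- exactly where $\Lambda$-admissibility (Remark \ref{rem:bound-global-index}) and the relation to the conforming companion mesh are used in \cite{paperA} --- but your overall argument and conclusions are correct.
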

{The stabilization term $\Smesh(\umesh, \umesh)$ and residual estimator $\etamesh^2(\umesh,\data)$ are,
unfortunately, of the same order \cite[Section 4.1]{paperA}. However, such difficulty is handled by the following crucial result, proved in \cite[Proposition 4.4]{paperA}, which relies on the subspace $\V_\mesh^0$ and Lemma \ref{L:Galerkin-orthogonality} (Galerkin quasi-orthogonality). This shows the importance of $\V_\mesh^0$.}
\begin{proposition}[bound of the stabilization term by the residual] \label{prop:bound-ST} There exists a constant $C_B >0$, depending on $\Lambda$ but independent of $\mesh$, $\umesh$ and {the stabilization parameter} $\gamma$, such that
\begin{equation}\label{eq:bound-ST}
\gamma^2 \Smesh(\umesh, \umesh) \leq C_B \, \etamesh^2(\umesh, \data) \,.
\end{equation}
\end{proposition}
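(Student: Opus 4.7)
The plan is to prove the bound $\gamma^2 \Smesh(\umesh, \umesh) \leq C_B\, \etamesh^2(\umesh,\data)$ by testing the discrete equation $\Bmesh(\umesh, v) = \mathcal{F}_\mesh(v)$ against a cleverly chosen $v \in \Vmesh$ that isolates the stabilization contribution, and then bounding the resulting right-hand side by the residual estimator times $\sqrt{\Smesh(\umesh,\umesh)}$. This architecture naturally produces the power $\gamma^2$ on the left-hand side.

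First I would exploit \eqref{eq:stab-norm} to write $\Smesh(\umesh,\umesh) \simeq |\umesh - \Imesh \umesh|_{1,\mesh}^2$, and then construct a ``repair'' $z_\mesh \in \Vmeshz$ of $\umesh$ satisfying
\begin{equation*}
|\umesh - z_\mesh|_{1,\Omega}^2 \lesssim \Smesh(\umesh,\umesh).
\end{equation*}
Here the $\Lambda$-admissibility hypothesis is essential: Remark \ref{rem:bound-global-index} caps the number of hanging nodes on any triangle side by $2^\Lambda-1$ and gives $h_e \simeq h_E$, which permits a bounded-complexity construction of $z_\mesh$ by setting $z_\mesh(x) = \umesh(x)$ at every proper node $x$ and letting $z_\mesh$ be the unique continuous piecewise-affine extension (so that hanging-node values are enslaved by edge linear interpolation, i.e., $z_\mesh$ agrees on each triangle with $\IE \umesh$).

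With such $z_\mesh$ at hand, set $v := \umesh - z_\mesh \in \Vmesh$. Since $z_\mesh \in \Vmeshz$, property \eqref{eq:propB1} gives $\Smesh(\umesh, z_\mesh)=0$, hence $\Smesh(\umesh, v) = \Smesh(\umesh,\umesh)$, and \eqref{def-Galerkin} yields
\begin{equation*}
\gamma\, \Smesh(\umesh,\umesh) = \mathcal{F}_\mesh(v) - \amesh(\umesh, v) - \mmesh(\umesh, v).
\end{equation*}
Next I would expand the right-hand side elementwise using \eqref{eq;def-aT} and \eqref{discr-rhs}. Since $\PiE \umesh \in \mathbb{P}_1(E)$ and $A_E$, $c_E$, $f_E$ are constant on $E$, the defining orthogonality of $\PiE$ gives $\int_E A_E \nabla \PiE \umesh \cdot \nabla v = \int_E A_E \nabla \PiE \umesh \cdot \nabla \PiE v$, while the $L^2$ orthogonality built into \eqref{vem:choice:2} gives $\int_E f_E \PiE v = \int_E f_E v$ and $\int_E c_E \PiE \umesh\, \PiE v = \int_E c_E \PiE \umesh\, v$. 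Elementwise integration by parts (using that $\nabla\cdot (A_E\nabla \PiE \umesh)=0$) then collapses the right-hand side to
\begin{equation*}
\sum_{E \in \mesh} \int_E \rmesh(E;\umesh,\data)\, v - \sum_{e \in \edge} \int_e \jmesh(e;\umesh,\data)\, v,
\end{equation*}
that is, precisely the ingredients of \eqref{eq:estim1}--\eqref{eq:estim4}. A weighted Cauchy--Schwarz together with standard local Poincaré/trace estimates (whose constants depend on $\Lambda$ via $h_e \simeq h_E$ and via the fact that $v$ vanishes at the proper vertices of every triangle) yields
\begin{equation*}
\gamma\, \Smesh(\umesh,\umesh) \lesssim \etamesh(\umesh,\data)\, |v|_{1,\mesh} \lesssim \etamesh(\umesh,\data)\, \sqrt{\Smesh(\umesh,\umesh)},
\end{equation*}
where the last step invokes $|v|_{1,\mesh} = |\umesh - z_\mesh|_{1,\Omega}$ combined with the design of $z_\mesh$. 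Dividing by $\sqrt{\Smesh(\umesh,\umesh)}$ and squaring delivers the stated inequality.

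The main obstacle I foresee is establishing the sharp bound $|\umesh - z_\mesh|_{1,\Omega}^2 \lesssim \Smesh(\umesh,\umesh)$ on the repair. Because $\umesh - z_\mesh$ vanishes at every proper vertex and is piecewise affine along each polygonal edge, the estimate reduces to an element-by-element scaling argument in which the edge ``wrinkles'' at the hanging nodes are controlled by the nodal gaps $(\umesh - \IE \umesh)(\bm{x}_i)$ encoded in $\sE$. The crucial quantitative input is $\Lambda$-admissibility: it guarantees that each triangle side carries at most $2^\Lambda-1$ hanging nodes with $h_e \simeq h_E$, so a local, discrete Friedrichs--Poincaré inequality with $\Lambda$-dependent constant applies. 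This is exactly why the final constant $C_B$ inherits a $\Lambda$-dependence.
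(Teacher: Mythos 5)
Your overall architecture is sound and appears close to the intended argument: this paper does not reprove the statement (it cites \cite[Proposition 4.4]{paperA}), but it does say explicitly that the proof relies on the subspace $\Vmeshz$ and on Lemma \ref{L:Galerkin-orthogonality}. Your steps (i) choosing a conforming repair $z_\mesh=\Imeshz\umesh\in\Vmeshz$ with $|\umesh-z_\mesh|_{1,\Omega}^2\lesssim\Smesh(\umesh,\umesh)$ (this is exactly \cite[Prop.~3.2]{paperA} combined with \eqref{eq:stab-norm}, also invoked in Step 5 of the proof of Lemma \ref{lemma:loc-up-bound}), (ii) the identity $\gamma\,\Smesh(\umesh,\umesh)={\cal F}_\mesh(v)-\amesh(\umesh,v)-\mmesh(\umesh,v)$ for $v=\umesh-z_\mesh$, and (iii) the elementwise rewriting of the right-hand side in terms of $\rmesh$ and $\jmesh$ via the orthogonalities defining $\PiE$ and the enhanced space, are all correct, and they do produce the right power $\gamma^2$ with a $\gamma$-independent constant. (A minor slip: $z_\mesh$ does not coincide elementwise with $\IE\umesh$ when a vertex of the triangle is a hanging node of $\mesh$; but nothing essential uses that identification.)

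The genuine gap is the final estimate: the bound $\sum_{E}\int_E \rmesh\, v-\sum_{e}\int_e \jmesh\, v\lesssim\etamesh(\umesh,\data)\,|v|_{1,\mesh}$ does not follow from ``standard local Poincar\'e/trace estimates'' applied to your $v$. To generate the weights $h_E$ and $h_E^{1/2}$ in \eqref{eq:estim3} you need $h_E^{-1}\|v\|_{0,E}$ and $h_e^{-1/2}\|v\|_{0,e}$ to be controlled by local $H^1$ seminorms of $v$; vanishing of $v$ at the proper nodes gives no such control, because point values do not bound $H^1$ functions in two dimensions, and because a triangle of a $\Lambda$-admissible mesh may have all three of its vertices hanging (this already happens for $\Lambda\ge 2$), so $\partial E$ need not contain any proper node at all. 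The missing ingredient --- and the reason Lemma \ref{L:Galerkin-orthogonality} is flagged as essential, which your proof never invokes --- is that the residual functional $w\mapsto{\cal F}_\mesh(w)-\amesh(\umesh,w)-\mmesh(\umesh,w)$ vanishes on $\Vmeshz$: for $w\in\Vmeshz$ it equals $(f,w)_\Omega-\cB(\umesh,w)=\cB(u-\umesh,w)=0$ by \eqref{eq:propB1} and \eqref{eq:PGO:1}. Hence you may replace $v$ by $v-I_{\rm Cl}v$, with $I_{\rm Cl}v\in\Vmeshz$ a Cl\'ement/Scott--Zhang quasi-interpolant, and the weighted Cauchy--Schwarz then follows from the interpolation bounds $\|v-I_{\rm Cl}v\|_{0,E}\lesssim h_E|v|_{1,\omega_E}$ and $\|v-I_{\rm Cl}v\|_{0,e}\lesssim h_e^{1/2}|v|_{1,\omega_e}$ over local patches (constants depending on $\Lambda$ through $h_e\simeq h_E$ and finite overlap), after which $|v|_{1,\Omega}\lesssim\Smesh(\umesh,\umesh)^{1/2}$ closes the argument exactly as you indicate. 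Without this correction, or an equivalent substitute, the crucial last display is unjustified.
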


Combining \eqref{eq:apost1} and \eqref{eq:bound-ST} gives rise to the following fundamental estimate
\cite[Corollary 4.5]{paperA}.
\begin{theorem}[{stabilization-free a posteriori error estimates}]\label{Corollary:stab-free}
Assume that the {stabilization} parameter $\gamma$ is chosen to satisfy $\gamma^2 > \displaystyle{\frac{C_B}{c_\text{apost}}}$. Then it holds
\begin{equation}\label{apost:stab-free}
C_L \, \etamesh^2(\umesh,\data)    \leq    \vert u - \umesh \vert_{1, \Omega}^2  \leq C_U \, \etamesh^2(\umesh,\data)  \, ,
\end{equation} 
with $C_L=c_\text{apost} - C_B \gamma^{-2}$ and 
$C_U= C_\text{apost} \, \left(1+ C_B \gamma^{-2} \right)$.
\end{theorem}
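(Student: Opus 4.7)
The statement is a direct algebraic combination of the two preceding results, namely Proposition \ref{prop:aposteriori} (the stabilized upper and lower a posteriori bounds) and Proposition \ref{prop:bound-ST} (the absorption of the stabilization by the residual estimator). The plan is simply to insert the inequality \eqref{eq:bound-ST} into the right-hand side of each of the two inequalities in \eqref{eq:apost1}, and then collect terms.

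For the upper bound on the energy error, I would start from the first inequality in \eqref{eq:apost1},
\begin{equation*}
|u-\umesh|_{1,\Omega}^2 \leq C_{\text{apost}}\bigl(\etamesh^2(\umesh,\data) + \Smesh(\umesh,\umesh)\bigr),
\end{equation*}
and substitute $\Smesh(\umesh,\umesh) \leq C_B\gamma^{-2}\,\etamesh^2(\umesh,\data)$ from \eqref{eq:bound-ST}. Factoring out $\etamesh^2(\umesh,\data)$ immediately yields the constant $C_U = C_{\text{apost}}(1+C_B\gamma^{-2})$, with no restriction on $\gamma$ beyond $\gamma\geq\gamma_0$.

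For the lower bound, I would start from the second inequality in \eqref{eq:apost1},
\begin{equation*}
c_{\text{apost}}\,\etamesh^2(\umesh,\data) \leq |u-\umesh|_{1,\Omega}^2 + \Smesh(\umesh,\umesh),
\end{equation*}
use \eqref{eq:bound-ST} on the stabilization term, and move it to the left-hand side:
\begin{equation*}
\bigl(c_{\text{apost}} - C_B\gamma^{-2}\bigr)\etamesh^2(\umesh,\data) \leq |u-\umesh|_{1,\Omega}^2.
\end{equation*}
This is the desired inequality with $C_L = c_{\text{apost}} - C_B\gamma^{-2}$. The threshold $\gamma^2 > C_B/c_{\text{apost}}$ in the hypothesis is precisely the condition that makes $C_L$ strictly positive, and hence the lower bound nontrivial.

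There is really no obstacle in this proof, since all the work has already been done upstream: Proposition \ref{prop:aposteriori} delivers the standard residual-based upper and lower bounds polluted by the stabilization term, and Proposition \ref{prop:bound-ST} (which uses the subspace $\Vmeshz$ and the Galerkin quasi-orthogonality of Lemma \ref{L:Galerkin-orthogonality}) controls the stabilization by the estimator with an extra factor $\gamma^{-2}$. The one conceptual point worth emphasizing is that the lower bound only becomes useful once $\gamma$ is chosen above the explicit threshold $\sqrt{C_B/c_{\text{apost}}}$, making $\etamesh(\umesh,\data)$ equivalent to the energy error up to constants that are independent of $\mesh$ and $\gamma$ (though they depend on $\Lambda$ and $\data$).
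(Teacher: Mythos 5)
Your proposal is correct and is exactly the argument the paper intends: the theorem is obtained by inserting the bound \eqref{eq:bound-ST} on $\Smesh(\umesh,\umesh)$ into both inequalities of Proposition \ref{prop:aposteriori} and collecting the factor $\etamesh^2(\umesh,\data)$, with the hypothesis $\gamma^2 > C_B/c_\text{apost}$ serving only to make $C_L$ positive. Both the constants $C_U = C_\text{apost}(1+C_B\gamma^{-2})$ and $C_L = c_\text{apost} - C_B\gamma^{-2}$ come out as in the statement, so there is nothing to add.
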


\section{AVEM for piecewise constant data}\label{sec:AVEM-pcwconstant}
In this section,  we recall from \cite{paperA} the Adaptive Virtual Element Method (AVEM) for approximating \eqref{eq:pde-var}  under 
Assumption \ref{ass:constant-coeff},  together with  its convergence property.  In particular,  AVEM for piecewise constant data is realized by a call to the module {\tt GALERKIN} described hereafter.  Given a $\Lambda$-admissible input mesh $\widehat\mesh$, piecewise constant input data $\data$ on $\widehat\mesh$ and a tolerance $\varepsilon>0$, the module 
\begin{equation}\label{module:_GAL}
 [\mesh,u_{\mesh}]={\tt GALERKIN}(\widehat{\mesh},\data,\varepsilon)
 \end{equation}
produces  a $\Lambda$-admissible bisection refinement $\mesh$ of
$\widehat{\mesh}$ and the Galerkin approximation $u_{\mesh} \in \Vmesh$ to the solution ${u}$ of problem \eqref{eq:pde} with piecewise constant data $\data$, such that 
\begin{equation}\label{aux:AVEM:estimate}
\vvvert u -u_{\mesh} \vvvert \leq C_G \, \varepsilon \,,
\end{equation}
with $C_G=\sqrt{c^\cB C_U}$, where $c^\cB$ is defined in \eqref{norm:equiv}  and $C_U$ is defined in \eqref{apost:stab-free}.
This is obtained by iterating the classical paradigm 
\begin{equation}\label{eq:paradigm}
\texttt{SOLVE} \,\,
\longrightarrow \,\,
\texttt{ESTIMATE} \,\,
\longrightarrow \,\,
\texttt{MARK} \,\,
\longrightarrow \,\,
\texttt{REFINE}
\end{equation}
producing a sequence of $\Lambda$-admissible meshes $\{\mesh_k\}_{k\geq 0}$, with $\mesh_0=\widehat\mesh$, and associated Galerkin solutions $u_k\in\V_{\mesh_k}$ to the problem \eqref{eq:pde} with data $\mathcal{D}$. The iteration stops as soon as $\eta_{\mesh_k} (u_k, \data) \leq \varepsilon$, which is possible thanks to the convergence result stated in Theorem \ref{prop:convergence-GALERKIN} below.

The {modules} in \eqref{eq:paradigm} are defined as follows: given piecewise constant data $\data$ on $\mesh_0$\,,

\medskip
\begin{enumerate}[$\bullet$]
\item $[u_\mesh]=\texttt{SOLVE}(\mesh, \data)$ produces the Galerkin solution on the mesh $\mesh$ for data $\data$;
\item $[\{\eta_\mesh(\, \cdot \, ;u_\mesh,\data)\}]=\texttt{ESTIMATE}(\mesh, u_\mesh)$ computes the local residual estimators \eqref{eq:estim3} on the mesh $\mesh$,  {which depend on the Galerkin solution $u_\mesh$ and data $\data$;}
\item $[\mathcal{M}] = \texttt{MARK}(\mesh, \{\eta_\mesh(\, \cdot \, ;u_\mesh,\data)\}, \theta)$  implements  the D{\"o}rfler criterion \cite{dorfler}, precisely for  {a given parameter $\theta \in (0,1)$ an almost minimal} set $\mathcal{M} \subset \mesh$ is found such that
\begin{equation}
\label{eq:dorfler}
\theta \, \eta_{\mesh}^2 (u_\mesh, \data) \leq \eta_{\mesh}^2( \mathcal{M}; u_\mesh, \data) \,;
\end{equation} 
\item $[\meshs]=\texttt{REFINE}(\mesh, \mathcal{M})$ produces a $\Lambda$-admissible refinement $\meshs$ of $\mesh$, obtained by  newest-vertex bisection of all the elements in $\mathcal{M}$ and, possibly, some other elements.
\end{enumerate}

\medskip
In the procedure \texttt{REFINE}, non-admissible hanging nodes,  i.e., hanging nodes with global index larger than $\Lambda$, might be created while refining elements in $\mathcal{M}$ through newest-vertex bisection.
Thus, in order to obtain a  $\Lambda$-admissible partition $\meshs$, \texttt{REFINE} possibly refines other elements in $\mesh$. This is accomplished by applying to each $E \in  \mathcal{M} $ a procedure, termed $\texttt{CREATE\_ADMISSIBLE\_CHAIN}(\mesh, E)$, 
which identifies and refines a chain of elements starting at $E$, thereby creating a $\Lambda$-admissible partition. The loop is as follows: 

\medskip
\begin{algotab}
\label{eq:refine}
  \>  $[\meshs]=\texttt{REFINE}(\mesh, \mathcal{M})$
  \\
  \>  $\quad  \text{for } E \in \mathcal{M} \cap \mesh$ 
  \\
  \>  $\quad \quad  [\mesh] = \texttt{CREATE\_ADMISSIBLE\_CHAIN}(\mesh, E)$ 
  \\
  \>  $\quad \text{end for }$
  \\
  \>  $\quad  \text{return}(\mesh)$
\end{algotab}
\medskip

 Due to the technical nature of the procedure $\texttt{CREATE\_ADMISSIBLE\_CHAIN}$, we postpone its description and analysis to Section \ref{sec:admissible-construction}. We state now a complexity estimate for $\texttt{REFINE}$, whose proof is given at the end of that section. This result is fundamental for our optimality analysis of AVEM in Section \ref{sec:quasi-optimality} and is similar in spirit to the original estimate for the bisection method by Binev, Dahmen, and DeVore \cite{BDD:04}; see also \cite{NSV:09,NochettoVeeser:12,Stevenson:08}.

\begin{theorem}[complexity of $\texttt{REFINE}$]\label{T:complexity-REFINE}
  Let $\mesh_0$ be an initial mesh with suitable initial labeling. Let $\mesh_k$ be a 
  $\Lambda$-admissible refinement of $\mesh_0$ by newest-vertex bisection created by successive calls
  $\mesh_{j+1} =$ {\tt REFINE($\mesh_j,\mathcal{M}_j)$} for $0\leq j \leq k-1$. Then there exists a universal constant
  $C_0>0$, solely depending on $\mesh_0$ and its labeling, such that
  \begin{equation}\label{eq:complexity-REFINE}
   \# \mesh_k - \#\mesh_0 \leq C_0 \sum_{j=0}^{k-1} \#\mathcal{M}_j.
  \end{equation}
\end{theorem}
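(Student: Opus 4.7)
The plan is to adapt the seminal complexity argument of Binev, Dahmen and DeVore~\cite{BDD:04} for conforming newest-vertex bisection to the present non-conforming setting governed by a global-index cap. Since each bisection replaces one triangle by two, the cardinality difference $\#\mesh_k - \#\mesh_0$ equals the total number of bisections performed across the iterations $j=0,\ldots,k-1$. The task is therefore to show that this total, which comprises both the direct bisections of the marked elements of each $\mathcal{M}_j$ and all additional bisections triggered inside \texttt{CREATE\_ADMISSIBLE\_CHAIN}, is bounded by a universal constant times $\sum_j \#\mathcal{M}_j$.

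First I would analyse a single call $\texttt{CREATE\_ADMISSIBLE\_CHAIN}(\mesh,E)$. Bisecting $E$ produces a new midpoint whose global index is larger than that of its parent edge; if this new index exceeds $\Lambda$, admissibility must be restored by bisecting the neighbour that contains the midpoint as a hanging node, and the construction cascades. By Remark~\ref{rem:bound-global-index} the triangles traversed by such a chain have diameters comparable to $h_E$ and carry hanging nodes whose indices saturate at $\Lambda$; the detailed description in Section~\ref{sec:admissible} will establish that the chain length is controlled in terms of $\Lambda$ and of the initial labeling of $\mesh_0$. This yields a local per-iteration bound of the form $\#\mesh_{j+1}-\#\mesh_j \lesssim C_1\,\#\mathcal{M}_j$ with $C_1=C_1(\Lambda,\mesh_0)$.

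The crucial step is to upgrade this local count into the additive estimate \eqref{eq:complexity-REFINE} with a constant $C_0$ independent of $k$ and of the past history of marks. For this I would follow the BDD framework: build the forest of all newest-vertex bisections performed during the run, assign to each triangle refined outside $\bigcup_j \mathcal{M}_j$ a charge against a marked ancestor or a nearby marked element through a labeling-compatible pairing, and verify that the total charge accumulated by any single marked triangle is bounded. The $\Lambda$-admissibility constraint bounds the spatial extent of each admissibility chain, while the suitable initial labeling of $\mesh_0$ prevents unbounded combinatorial propagation of bisection dependencies across iterations. Summing the bounded charges over $j$ produces the target bound with $C_0$ depending solely on $\mesh_0$ (through its labeling) and on $\Lambda$.

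The main obstacle is the coherent design of the charging scheme: one must prove that the same triangle is not over-charged across iterations, since a chain initiated at step $j$ may touch triangles that are themselves bisected or used as chain endpoints at a later step $j'>j$, and one must ensure that the interplay between admissibility cascades and the labeling-based bisection rules of \cite{BDD:04} remains compatible with the BDD bookkeeping. Reconciling these two independent mechanisms—geometric chain bounds from $\Lambda$-admissibility, and combinatorial bounds from the initial labeling—is the technical heart of the result, and its verification will rely on the explicit construction of $\texttt{CREATE\_ADMISSIBLE\_CHAIN}$ developed in Section~\ref{sec:admissible}.
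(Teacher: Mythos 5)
Your high-level plan (amortize the cost of the admissibility chains via a BDD-type charging argument) is indeed the route the paper takes: its proof simply invokes the machinery of \cite{NochettoVeeser:12}, Section 6.3, and verifies the two hypotheses that make that machinery work. However, your proposal contains a concrete false step. You claim a per-iteration bound $\#\mesh_{j+1}-\#\mesh_j \lesssim C_1\,\#\mathcal{M}_j$ with $C_1=C_1(\Lambda,\mesh_0)$, justified by asserting that each chain has length controlled by $\Lambda$ and the labeling and consists of triangles of diameter comparable to $h_E$. Both assertions are wrong: by Lemma \ref{L:chain} a single call to $\texttt{CREATE\_ADMISSIBLE\_CHAIN}(\mesh,E)$ may bisect up to $\ell(E)+1$ elements, and $\ell(E)$ grows without bound as the refinement proceeds, so no uniform constant $C_1(\Lambda,\mesh_0)$ exists — the paper states explicitly, right after Proposition \ref{prop:level-bound}, that a uniform per-call bound is false. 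Moreover, the levels strictly decrease along the chain (case (iii) of Definition \ref{def:ref-chain}), so the chain traverses progressively \emph{larger} elements, not elements of size $\simeq h_E$; and the $\Lambda$-cap plays no role in bounding either the chain length or its spatial extent.

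What actually rescues the argument — and what your sketch leaves unverified — are precisely the two properties the paper isolates: (i) every element $E'$ created by a call satisfies $\ell(E')\leq \ell(E)+1$, i.e.\ \eqref{eq:levels-chain} from Proposition \ref{prop:level-bound}; and (ii) the distance from $E$ to any element refined on its account is controlled by the mesh size associated with the level of that element, a consequence of the monotone decrease of levels along the chain (so the walk is dominated by a geometric series) that holds for bisection grids regardless of $\Lambda$-admissibility. These two facts are exactly the hypotheses under which the charging bookkeeping of Binev--Dahmen--DeVore, in the form presented in \cite{NochettoVeeser:12}, yields \eqref{eq:complexity-REFINE}; your "reconciling $\Lambda$-admissibility with the labeling" is not the technical heart, and the locality you attribute to $\Lambda$-admissibility must instead be derived from the level structure of the chain. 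As written, the proposal rests on an incorrect intermediate bound and does not supply the two estimates needed to run the amortized argument, so it has a genuine gap.
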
    
We point out that  a different procedure, termed $\texttt{MAKE\_ADMISSIBLE}$, was used in \cite{paperA} to generate a $\Lambda$-admissible refinement. While the implementation of this procedure is simpler than the one in $\texttt{CREATE\_ADMISSIBLE\_CHAIN}$, and works well in practice, only the latter guarantees the validity of the bound \eqref{eq:complexity-REFINE}.

At last, we state the following convergence result for {\tt GALERKIN} (cf. \cite[Theorem 5.1]{paperA}) with piecewise constant data.
\begin{theorem}[{convergence of {\tt GALERKIN}}]\label{prop:convergence-GALERKIN}
There exist constants $\beta>0$  {and $\alpha\in (0,1)$} such that, choosing  {the stabilization parameter $\gamma>0$ sufficiently large} in the Definition \ref{eq:def-BT}, the approximations $u_k \in {\mathbb V}_{\mesh_k}$ defined in {\tt GALERKIN} satisfy
\begin{eqnarray}
\vvvert u - u_{k} \vvvert^2+\beta \, \eta_{\mesh_{k}}^2(u_k, \data) \lesssim \alpha^k \,, \qquad k \geq 0\,.
\end{eqnarray}
\end{theorem}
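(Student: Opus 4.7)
The plan is to establish a contraction property for a combined quasi-error quantity of the form
\begin{equation*}
Q_k^2 := \vvvert u - u_k \vvvert^2 + \beta \, \eta_{\mesh_k}^2(u_k,\data),
\end{equation*}
showing that $Q_{k+1}^2 \leq \alpha Q_k^2$ for some $\alpha \in (0,1)$ and $\beta>0$, chosen consistently with a sufficiently large stabilization parameter $\gamma$. Iterating this inequality $k$ times immediately yields the geometric decay claimed in the theorem.

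The first ingredient is a quasi-Pythagorean bound for the energy error. Because $\mesh_{k+1}$ refines $\mesh_k$, one has the inclusion $\Vmesh_{\mesh_k}^0 \subseteq \Vmesh_{\mesh_{k+1}}^0$ of the conforming affine subspaces, so that Lemma~\ref{L:Galerkin-orthogonality} (Galerkin quasi-orthogonality) applied to $u_{k+1}$ yields $\cB(u-u_{k+1}, v)=0$ for every $v\in \Vmesh_{\mesh_{k+1}}^0$. Inserting a suitable element of $\Vmesh_{\mesh_{k+1}}^0$ close to $u_k$ and exploiting \eqref{eq:consistency} together with Proposition~\ref{prop:bound-ST}, I expect to obtain
\begin{equation*}
\vvvert u - u_{k+1} \vvvert^2 \leq \vvvert u-u_k\vvvert^2 - \tfrac12\vvvert u_{k+1}-u_k\vvvert^2 + C\gamma^{-2}\eta_{\mesh_k}^2(u_k,\data),
\end{equation*}
where the last term collects the pollution coming from the non-conforming character of $\Vmesh_k$ relative to $\Vmesh_k^0$ and is controllable by the bound of the stabilization in Proposition~\ref{prop:bound-ST}.

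The second ingredient is an estimator reduction estimate. Denote by $\refined_k \supseteq \mathcal{M}_k$ the set of elements of $\mesh_k$ refined by \texttt{REFINE}. For each $E\in\refined_k$, its children have area at most $|E|/2$, so $h$-weights in \eqref{eq:estim3} contract by a fixed factor. Combining this with standard Young-inequality manipulations to absorb the change from $u_k$ to $u_{k+1}$, and using continuity of $\PiE$ on the subspaces involved, I expect
\begin{equation*}
\eta_{\mesh_{k+1}}^2(u_{k+1},\data) \leq (1+\delta)\Bigl(\eta_{\mesh_k}^2(u_k,\data) - \tfrac12\eta_{\mesh_k}^2(\refined_k; u_k,\data)\Bigr) + C(\delta)\,\vvvert u_{k+1}-u_k\vvvert^2,
\end{equation*}
for arbitrarily small $\delta>0$. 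The D\"orfler property \eqref{eq:dorfler}, together with $\refined_k\supseteq\mathcal{M}_k$, then produces $\eta_{\mesh_k}^2(\refined_k;u_k,\data) \geq \theta\,\eta_{\mesh_k}^2(u_k,\data)$.

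Finally, I would combine the two inequalities by adding the second multiplied by $\beta$ to the first, and choose $\delta$, $\beta$, and $\gamma$ so that the cross terms $\vvvert u_{k+1}-u_k\vvvert^2$ cancel (the positive contribution from the estimator reduction absorbs the negative one from quasi-orthogonality), and so that the coefficient of $\eta_{\mesh_k}^2(u_k,\data)$ is strictly smaller than $\beta$; this requires the pollution constant $C\gamma^{-2}$ to be dominated by $\beta\theta/4$, hence $\gamma$ must be taken sufficiently large. The resulting inequality is precisely $Q_{k+1}^2\leq \alpha Q_k^2$ with some explicit $\alpha<1$. The hard part is the first step: standard AFEM arguments rely on genuine Galerkin orthogonality and nested discrete spaces, while here the VEM spaces $\Vmesh_{\mesh_k}$ are non-nested and orthogonality only holds on the conforming subspace $\Vmesh_{\mesh_k}^0$. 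Proposition~\ref{prop:bound-ST} is precisely what tames this mismatch for $\gamma$ large, and thus occupies the heart of the argument.
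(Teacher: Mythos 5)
Your outline is essentially the argument behind this result: note that the paper does not prove the theorem here but imports it from \cite[Theorem 5.1]{paperA}, whose proof (as reflected by the auxiliary results this paper quotes, e.g.\ the quasi-orthogonality of \cite[Corollary 5.8]{paperA}, the estimator perturbation/reduction lemmas \cite[Lemmas 5.2--5.3]{paperA}, together with Lemma \ref{L:Galerkin-orthogonality}, Proposition \ref{prop:bound-ST} and D\"orfler marking) follows exactly the contraction scheme you describe, with the stabilization pollution absorbed by taking $\gamma$ large. So your proposal is correct in approach and essentially the same as the paper's, modulo the routine bookkeeping of the extra $\gamma^{-2}\eta^2$ terms on both meshes in the quasi-Pythagorean and estimator-reduction steps.
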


\section{AVEM for general data}\label{sec:variable-data}
In this section we describe  {the two-step AVEM for general (non-piecewise constant) data and discuss
its convergence properties. We first state the regularity of data.

\begin{assumption}[regularity of data]\label{A:assumption-data}
The data $\data = (A,c,f)$ satisfies
\[
\data \in C^0(\mesh_0;\mathbb{R}^{2\times2}) \times L^\infty(\Omega) \times L^2(\Omega)  \, ,
\]
where $C^0(\mesh_0;\mathbb{R}^{2\times2})$ denotes the space of piecewise uniformly continuous tensor fields
over $\mesh_0$.
\end{assumption}
We will see below that the regularity of $c$ and $f$ can be weakened, but not that of $A$ unless we proceed as
in \cite{BonitoDeVoreNochetto}. We could assume $c\in L^q(\Omega)$ for $1 < q < \infty$ and
$f \in H^{-1}(\Omega)$, but we will not pursue this regularity much further.
We begin with a perturbation result for the solution of the exact problem.

\subsection{Data perturbation}
Let $\widehat{\mathcal{D}}=(\widehat{A},\widehat{c},\widehat{f})$ be the element-by-element average of $\mathcal{D}=(A,c,f)$ over a partition $\mesh$ of $\Omega$,  namely
\begin{equation}\label{eq:averages}
\widehat{A}\vert_E:=A_E=\frac{1}{\vert E\vert} \int_E A \qquad 
\widehat{c}\vert_E:=c_E=\frac{1}{\vert E\vert} \int_E c \qquad 
\widehat{f}\vert_E:=f_E=\frac{1}{\vert E\vert} \int_E f \qquad \forall \,E \in\mesh.
\end{equation}
If $\alpha>0$ is the smallest eigenvalue of $A=A({\bm x})$ for all ${\bm x}\in \Omega$, then for any $\xi \in \mathbb{R}^2$
\[
\xi\cdot A({\bm x})\xi \ge \alpha|\xi|^2 \quad\forall \, {\bm x}\in \Omega
\quad\Rightarrow\quad
\xi\cdot A_E \, \xi \ge \alpha|\xi|^2 \quad\forall \,E \in\mesh,
\]    
whence the smallest eigenvalue $\widehat{\alpha}$ of $\widehat{A}$ satisfies $\widehat{\alpha}\ge\alpha$; thus $\widehat{A}$ is uniformly SPD in $\Omega$.
We view $\widehat{\data}$ as a perturbation of $\data$ and consider the corresponding bilinear form $\widehat{\mathcal{B}}(\cdot,\cdot)=\widehat{a}(\cdot,\cdot)+\widehat{m}(\cdot,\cdot)$, with
$$
\widehat{a}(u,v)=\int_{\Omega} \widehat{A} \nabla u \cdot \nabla v,
\quad \widehat{m}(u,v)=\int_{\Omega} \widehat{c} u v \quad\forall \, u,v \in \V \,,
$$
and perturbed problem 
\begin{equation}\label{eq:pde-var:pert}
\widehat{u}\in \V \, : \quad
\widehat{\mathcal{B}}(\widehat{u},v)=(\widehat{f},v)\quad \forall v\in \V.
\end{equation} 
}

\begin{lemma}[continuous dependence on data]\label{lemma-data-approx}
There exists a constant 
$C>0$, depending on $\Omega$ and the mesh shape-regularity, such that for any $1 < q \leq\infty$ and $s \in [0,1]$ satisfying $s < 2(q-1)/q$ it holds
\begin{equation}\label{pert_estimate}
\vert u-\widehat{u}\vert_{1,\Omega}  {\leq \frac{1}{\alpha} \vert u \vert_{1,\Omega}} \left( \| A-\widehat{A}\|_{L^\infty(\Omega)} + \frac{C q}{2q-2-sq} \| \hh^s (c-\widehat{c}) \|_{L^{q}(\Omega)} \right) + C \| \hh ( f-\widehat{f}) \|_{L^2(\Omega)}\,,
\end{equation}
where the mesh density $\hh$ of $\mesh$ is the piecewise constant function satisfying $\hh|_E = h_E$ for all $E \in\mesh$.
\end{lemma}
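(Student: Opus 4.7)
The plan is classical: subtract the weak formulations \eqref{eq:pde-var} and \eqref{eq:pde-var:pert}, test the resulting error equation with $e:=u-\widehat{u}\in\V$, and use the uniform coercivity of $\widehat{\mathcal{B}}$ (inherited from $\widehat{A}(x)\xi\cdot\xi\geq\alpha|\xi|^2$ and $\widehat{c}\geq 0$, both of which follow from elementwise averaging) to obtain
\[
\alpha\,|e|_{1,\Omega}^2 \leq \widehat{\mathcal{B}}(e,e) = (f-\widehat{f}, e)_\Omega - \int_\Omega (A-\widehat{A})\nabla u\cdot\nabla e - \int_\Omega (c-\widehat{c})\,u\,e =: I_f + I_A + I_c.
\]
It then suffices to bound each of $|I_A|,|I_f|,|I_c|$ by a suitable multiple of $|e|_{1,\Omega}$ and divide through by $\alpha|e|_{1,\Omega}$.

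The $A$-term is handled by a direct $L^\infty$-Cauchy--Schwarz estimate, $|I_A|\leq\|A-\widehat A\|_{L^\infty(\Omega)}|u|_{1,\Omega}|e|_{1,\Omega}$. For the $f$-term I would exploit the key orthogonality $\int_E(f-\widehat f)\kappa=0$ for every constant $\kappa$ (since $\widehat f|_E$ is the $L^2$-mean of $f$) to replace $e$ by its oscillation $e-\bar e_E$ on each $E$; element-wise Poincar\'e--Wirtinger and Cauchy--Schwarz over $\mesh$ then yield $|I_f|\leq C\|\hh(f-\widehat f)\|_{L^2(\Omega)}|e|_{1,\Omega}$.

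The $c$-term is the delicate one and drives the stated pre-factor $Cq/(2q-2-sq)$. Using again the mean-zero property $\int_E(c-\widehat c)=0$, I would subtract the constant $\bar u_E\bar e_E$ on each element and expand
\[
\int_E (c-\widehat c)\,u e = \int_E (c-\widehat c)\bigl[(u-\bar u_E)(e-\bar e_E) + \bar e_E(u-\bar u_E) + \bar u_E(e-\bar e_E)\bigr],
\]
and then apply triple H\"older with exponents $(q,\,2/s,\,\beta)$, where $\beta=2q/(2q-2-sq)$ is the unique value making $1/q+s/2+1/\beta=1$; the hypothesis $s<2(q-1)/q$ is exactly what keeps $\beta$ finite. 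The two essential analytic ingredients are the scaled 2D Poincar\'e--Sobolev inequality $\|w-\bar w_E\|_{L^{2/s}(E)}\leq C\,h_E^s|w|_{1,E}$, which manufactures the $h_E^s$ weight that combines with $c-\widehat c$ to produce $\|\hh^s(c-\widehat c)\|_{L^q(\Omega)}$, and the global 2D Sobolev embedding $\|v\|_{L^\beta(\Omega)}\leq C_\beta|v|_{1,\Omega}$ for $v\in H^1_0(\Omega)$, whose best constant grows at most like $\beta\sim q/(2q-2-sq)$. The mixed summands involving $\bar u_E$ or $\bar e_E$ are controlled via the Jensen-type bound $|\bar v_E|\leq h_E^{-2/\beta}\|v\|_{L^\beta(E)}$, which makes the $h_E$-powers cancel exactly so that their contribution is of the same order as the diagonal summand. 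A discrete H\"older over $\mesh$, together with the elementary embedding $\ell^{2/s}\hookrightarrow\ell^2$ on finite sequences (valid since $s\leq 1$), finally assembles the local estimates into the global bound.

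The main obstacle is precisely this $q$-$s$ bookkeeping in $I_c$: the H\"older exponents must be tuned to balance the $h_E^s$ gain from the Poincar\'e step against the $\beta$-dependent loss in Sobolev, and to make the blow-up rate $Cq/(2q-2-sq)$ emerge naturally as $s\uparrow 2(q-1)/q$. Once all three bounds are in hand, collecting them in the coercivity inequality and dividing by $\alpha|e|_{1,\Omega}$ yields \eqref{pert_estimate}.
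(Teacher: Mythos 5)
Your overall architecture coincides with the paper's: subtract the two weak formulations, test with $e=u-\widehat u$, use $\widehat A\xi\cdot\xi\ge\alpha|\xi|^2$ and $\widehat c\ge 0$ for coercivity, treat the $A$-term by an $L^\infty$--Cauchy--Schwarz bound, and exploit the elementwise mean-zero property of $c-\widehat c$ and $f-\widehat f$ to pass to oscillations; moreover, in both arguments the factor $q/(2q-2-sq)$ comes from the same source, the embedding $H^1_0(\Omega)\hookrightarrow L^r(\Omega)$ with $r=2q/(2q-2-sq)$. Where you genuinely deviate is the mass term: the paper keeps the product $uv$ intact, bounds its oscillation by $h_E^s\,|uv|_{W^s_{q'}(E)}$, accumulates to $\|\hh^s(c-\widehat c)\|_{L^q(\Omega)}\,|uv|_{W^s_{q'}(\Omega)}$, and then uses the fractional embedding $W^1_{p'}(\Omega)\subset W^s_{q'}(\Omega)$ together with the product rule; you instead expand $ue-\bar u_E\bar e_E$ into three oscillation terms and use only integer-order local Poincar\'e--Sobolev inequalities plus the Jensen bound $|\bar v_E|\le h_E^{-2/\beta}\|v\|_{L^\beta(E)}$. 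Your exponent bookkeeping for the two cross terms is correct (indeed $2/q'-2/\beta=s$), so this is a legitimate, more elementary route that avoids fractional Sobolev spaces altogether.

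There is, however, a concrete gap in your diagonal term. The local inequality $\|w-\bar w_E\|_{L^{2/s}(E)}\le C\,h_E^s|w|_{1,E}$ is false at $s=0$ (it would require $H^1\hookrightarrow L^\infty$ in two dimensions), and for small $s>0$ its constant degenerates like $s^{-1/2}$ because the two-dimensional Sobolev constant for $L^{2/s}$ grows as $s\to 0$; the lemma, by contrast, allows $s=0$ and asserts a prefactor $Cq/(2q-2-sq)$ with $C$ depending only on $\Omega$ and shape regularity, which stays bounded as $s\to 0$ for fixed $q>1$. So, as written, your argument misses the endpoint and does not deliver the stated uniformity. The repair is easy: do not load the entire weight onto one factor --- for instance bound both oscillations in $L^{2q'}(E)$, gaining $h_E^{2/q'}\le C\,h_E^{s}$ since $s<2/q'$ and $h_E\lesssim 1$, or, at $s=0$, skip the oscillation device entirely and use $\|ue\|_{L^{q'}(E)}\le\|u\|_{L^{2q'}(E)}\|e\|_{L^{2q'}(E)}$ with the global Sobolev embedding. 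Finally, be careful with the phrase ``$\ell^{2/s}\hookrightarrow\ell^2$'': what you may use for free is the monotonicity $\|x\|_{\ell^{2/s}}\le\|x\|_{\ell^{2}}$; the reverse inequality on finite sequences costs a cardinality-dependent factor.
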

\begin{proof}
We write the difference between \eqref{eq:pde-var} and \eqref{eq:pde-var:pert} as follows:
 {
\begin{equation*}
\int_{\Omega} \left[(\widehat{A} \nabla(u-\widehat{u})) \cdot \nabla v + \widehat{c}(u-\widehat{u})v \right] =
\int_{\Omega} \left[((\widehat{A}-A) \nabla u) \cdot \nabla v + (\widehat{c}-c)u v\right] + \int_{\Omega} (f-\widehat{f})v \, .
\end{equation*}
Since $(\widehat{c},\widehat{f})$ are $L^2$-projections of $(c,f)$ on piecewise constants over $\mesh$, we readily obtain
\[
\int_{\Omega} \left[(\widehat{A} \nabla(u-\widehat{u})) \cdot \nabla v + \widehat{c}(u-\widehat{u})v \right]
= \int_{\Omega} \left[((\widehat{A}-A) \nabla u ) \cdot\nabla v + (\widehat{c}-c) (u v - \overline{uv})\right] + \int_{\Omega} (f-\widehat{f})(v-\overline{v}) \,,
\]
where the overbars denote the piecewise constant averages over $\mesh$.
Taking the test function $v=u-\widehat{u}\in\V$, and using the relation $\widehat{\alpha}\ge\alpha>0$ for
the smallest eigenvalues of $\widehat{A}$ and $A$,} standard arguments yield
\begin{equation}\label{newq-0}
\begin{aligned}
\alpha {\|\nabla v \|^2_{L^2(\Omega)}} & \leq 
\|(\widehat{A}-A)\nabla u\|_{L^2(\Omega)}\|\nabla v\|_{L^2(\Omega)} 
\\
& + \sum_{E\in\mesh} h_E^s \| c-\widehat{c} \|_{L^{q}(E)} |uv|_{W^{s}_{q'}(E)}
+ \sum_{E\in\mesh}  h_E \| f-\widehat{f} \|_{L^2(E)} \|\nabla v\|_{L^2(E)} \,,
\end{aligned}
\end{equation}
where we set $q'=q/(q-1)$. We now focus on the more involved mass term.
We start by observing that, by a standard H\"older inequality on sequences and Sobolev embeddings,
\begin{equation}\label{newq-1}
\sum_{E\in\mesh} h_E^s \| c-\widehat{c} \|_{L^{q}(E)} |uv|_{W^{s}_{q'}(E)} 
\leq\| \hh^s (c-\widehat{c}) \|_{L^{q}(\Omega)} |uv|_{W^{s}_{q'}(\Omega)} 
\leq C \| \hh^s (c-\widehat{c}) \|_{L^{q}(\Omega)} |uv|_{W^{1}_{p'}(\Omega)}  \, ,
\end{equation}
where $1/p' = 1/q' - s/2$ and $C$ depends on $\Omega$.
Consequently, for $r$ satisfying $1/r +1/2 = 1/p'$, we get
$$
|uv|_{W^{1}_{p'}(\Omega)} = |u \nabla v + v \nabla u |_{L^{p'}(\Omega)}
\leq\| u \|_{L^r(\Omega)} \| v \|_{H^1(\Omega)} +  \| v \|_{L^r(\Omega)} \| u \|_{H^1(\Omega)}.
$$
Combining the definitions of $r$ and $p'$ we easily obtain the explicit expression $r = 2 q' / (2 - sq') = {2q}/({2q-2-sq})$. 
Since $r \in [1,\infty)$, the Sobolev embedding $H^1(\Omega) \subseteq L^r(\Omega)$ and previous bound yield
\begin{equation}\label{newq-2}
|uv|_{W^{1}_{p'}(\Omega)} \leq C r \| \nabla u \|_{L^2(\Omega)} \| \nabla v \|_{L^2(\Omega)}
\end{equation}
with $C$ depending on $\Omega$.
Inequalities \eqref{newq-0}, \eqref{newq-1}, \eqref{newq-2} give
$$
\begin{aligned}
\alpha {\|\nabla v \|^2_{L^2(\Omega)}} & \leq \|(\widehat{A}-A)\nabla u\|_{L^2(\Omega)}\|\nabla v\|_{L^2(\Omega)} \\
& + C r \| \hh^s (c-\widehat{c}) \|_{L^{q}(\Omega)} \| \nabla u \|_{L^2(\Omega)} \| \nabla v \|_{L^2(\Omega)}
+ C \| \hh (f-\widehat{f}) \|_{L^{2}(\Omega)} \| \nabla v \|_{L^2(\Omega)} \,,
\end{aligned}
$$
from which we immediately deduce the asserted estimate \eqref{pert_estimate}.
\end{proof}

\begin{remark} {\rm 
The bound
$ \vert u \vert_{1,\Omega}\leq \frac{1}{\alpha}\| f\|_{H^{-1}(\Omega)}$
allows us to rewrite \eqref{pert_estimate} in terms of data
\begin{equation}\label{pert_estimate:2}
\vert u-\widehat{u}\vert_{1,\Omega} \leq \frac{1}{{\alpha}^2} \| f\|_{H^{-1}(\Omega)} \left( \| A-\widehat{A}\|_{L^\infty(\Omega)}
+ \frac{C q}{2q-2-sq} \| \hh^s (c-\widehat{c}) \|_{L^{q}(\Omega)} \right) + C \| \hh  (f-\widehat{f})  \|_{L^2(\Omega)}.
\end{equation}
}
\end{remark}

\begin{remark}{\rm 
A pair of relevant choices for $q$ in Lemma \ref{lemma-data-approx} are $q=\infty$, which allows us to take $s=1$, and $q=2$, which allows us to take any value of $s$ strictly smaller than one.  {Values of $q\le2$ can be taken, but note that the largest possible exponent $s$ tends to zero as $q \rightarrow 1$.}
}
\end{remark}

 {
\subsection{The module {\tt DATA}: piecewise constant approximation of data} \label{sec:data}
 Given $\data=(A,c,f)$ satisfying Assumption \ref{A:assumption-data}, a mesh $\mesh$ 
 and a tolerance $\varepsilon$, the module
\begin{equation}\label{eq:data-module}
[\widehat\mesh,\widehat\data]={\tt DATA}(\mesh, \data,\varepsilon)
\end{equation}
 produces a $\Lambda$-admissible bisection refinement $\widehat\mesh$ of
 $\mesh$  and  a piecewise constant approximation  $\widehat\data=(\widehat{A},\widehat{c},\widehat{f})$ of $\data$
 over $\widehat\mesh$ such that
 \begin{equation}\label{data:approximation}
 \| A-\widehat{A}\|_{L^\infty(\Omega)}+  \| \hh (c-\widehat{c}) \|_{L^\infty(\Omega)} + \| \hh (f-\widehat{f}) \|_{L^2(\Omega)} \leq \varepsilon \,,
\end{equation}
which controls the perturbation error according to Lemma \ref{lemma-data-approx} (continuous dependence on data).
In view of \eqref{data:approximation},} for any $E\in \widehat\mesh$ we introduce the following  local data error estimators
\begin{equation}
\label{eq:zeta-Acf-loc}
\begin{aligned}
\zeta_{\widehat \mesh}(E; A) := 
\Vert A - \widehat A\Vert_{L^\infty(E)} \,,
\quad
\zeta_{\widehat \mesh}(E; c)  := 
h_E \|  c-\widehat{c} \|_{L^{\infty}(E)}\,,
\quad
\zeta_{\widehat \mesh}(E; f)  := 
h_E \| f-\widehat{f} \|_{L^2(E)} \,,
\end{aligned}
\end{equation}
and the global data error estimators
\begin{equation}
\label{eq:zeta-Acf}
\begin{aligned}
\zeta_{\widehat \mesh}(A) :=
\Vert A - \widehat A\Vert_{L^\infty(\Omega)} \,,
\quad
\zeta_{\widehat \mesh}(c)  := 
\| \hh (c-\widehat{c}) \|_{L^{\infty}(\Omega)}\,,
\quad
 {\zeta_{\widehat \mesh}(f)  :=
\| \hh (f-\widehat{f}) \|_{L^2(\Omega)}} \,,
\end{aligned}
\end{equation}
and
\begin{equation}
\label{eq:zeta-data}
\zeta_{\widehat \mesh}(\data) := 
\zeta_{\widehat \mesh}(A) +
\zeta_{\widehat \mesh}(c) +
\zeta_{\widehat \mesh}(f) \,.
\end{equation}

The data error reduction is obtained by iterating the following  {loop}
\begin{equation}\label{eq:paradigm_data}
\texttt{PROJECT} \,\,
\longrightarrow \,\,
\texttt{ESTIMATE\_DATA} \,\,
\longrightarrow \,\,
\texttt{MARK\_DATA} \,\,
\longrightarrow \,\,
\texttt{REFINE} \, ,
\end{equation}
which produces a sequence of $\Lambda$-admissible meshes $\{\widehat\mesh_j\}_{j\geq 0}$, with $\widehat\mesh_0=\mesh$, and associated piecewise constant data $\widehat\data_j = (\widehat{A}_j,\widehat{c}_j,\widehat{f}_j)$ w.r.t. $\widehat\mesh_j$, that approximates the exact 
 data $\mathcal{D}$ until a $k \geq 0$ is found that satisfies $\zeta_{\widehat\mesh_k}(\data) \leq \varepsilon$. 

The  {modules} in \eqref{eq:paradigm_data} are defined as follows:

\medskip
\begin{enumerate}[$\bullet$]
\item $[\widehat\data]=\texttt{PROJECT}(\mesh, \data)$ computes the element-by-element average   $\widehat \data = (\widehat{A},\widehat{c},\widehat{f})$ of $\data$ over $\mesh$;
\item $[\{\zeta_\mesh(\, \cdot \, ;A)\}, \, 
\{\zeta_\mesh(\, \cdot \, ;c)\}, \,
\{\zeta_\mesh(\, \cdot \, ;f)\}]=\texttt{ESTIMATE\_DATA}(\mesh, \data, \widehat\data)$ computes the local data error estimators \eqref{eq:zeta-Acf-loc} on the mesh $\mesh$;
\item $[\mathcal{M}_{\data}] = \texttt{MARK\_DATA}(\mesh, \{\zeta_\mesh(\, \cdot \, ;A)\}, \, 
\{\zeta_\mesh(\, \cdot \, ;c)\}, \,
\{\zeta_\mesh(\, \cdot \, ;f)\},  \theta, \varepsilon)$  implements  the following marking criteria.
For the diffusion and the reaction terms $A$ and $c$ we apply the {\it greedy} strategy that selects
\[
\mathcal{M}_A := \{E \in \mesh \, :\,  \zeta_\mesh(E;A) \geq \tfrac13 \varepsilon \} \,,
\qquad
\mathcal{M}_c := \{E \in \mesh \, :\,  \zeta_\mesh(E;c) \geq \tfrac13 \varepsilon \} \,.
\]
For the load term  $f$, which accumulates in $\ell^2$ rather than $\ell^\infty$, we first check if $\zeta_\mesh(f) \geq \tfrac13 \varepsilon$, and if so we apply a {\it pseudo-greedy} stategy that, given a parameter $\theta \in (0,1)$,  selects
\begin{equation}
\label{eq:pseudogreedy-data}
\mathcal{M}_f := \{E \in \mesh \, :\,  \zeta_\mesh(E;f) \geq \theta \, \max_{E' \in \mesh}  \zeta_\mesh(E';f) \,.
\end{equation} 
 Finally, we let the marked set be $\mathcal{M}_\data := \mathcal{M}_A \cup \mathcal{M}_c \cup \mathcal{M}_f$. In Sect. \ref{sec:approx-data}, the optimality properties of the greedy and pseudo-greedy strategies will be assessed.
 
\item $[\widehat\mesh]=\texttt{REFINE}(\mesh, \mathcal{M}_\data)$ produces a $\Lambda$-admissible refinement $\widehat\mesh$ of $\mesh$, obtained by newest-vertex bisection of all the elements in $\mathcal{M}_\data$ and, possibly, some other elements.  This is the same procedure described in Section \ref{sec:AVEM-pcwconstant}, applied with $\mathcal{M}$ replaced by $\mathcal{M}_\data$.
\end{enumerate}

\medskip
 {Altogether, if $\widehat{u}$ denotes the exact solution of the perturbed problem \eqref{eq:pde-var:pert} with
  the output data $\widehat{\data}$ from \eqref{eq:data-module}, in view of \eqref{norm:equiv}
  and \eqref{pert_estimate:2} with $q=\infty$ there exists a constant $C_D$ depending on $\Omega$, data $\data$, and the shape-regularity constant of $\mesh_0$ such that {\tt DATA} delivers the error estimate
\begin{equation}\label{eq:pert-estimate}
\vvvert u - \widehat{u} \vvvert \leq C_D \, \varepsilon.
\end{equation}
}

\subsection{Realization of AVEM}

Hereafter, we propose an  {adaptive VEM (or AVEM) that concatenates the modules {\tt DATA} and {\tt GALERKIN}
introduced in \eqref{eq:data-module} and \eqref{module:_GAL}, respectively.
Concerning the latter module, its input now is a mesh $\widehat\mesh$ and piecewise constant data $\widehat\data$ on $\widehat\mesh$, while its output is a bisection refinement $\mesh$ of
$\widehat\mesh$ and the corresponding Galerkin approximation ${u}_{\mesh}$ 
to the exact solution $\widehat{u}$ of problem \eqref{eq:pde} with piecewise constant data $\widehat\data$. They satisfy \eqref{aux:AVEM:estimate}, namely
\begin{equation}\label{eq:AVEM:estimate}
   \vvvert \widehat{u} - {u}_{\mesh} \vvvert \leq C_G \, \varepsilon.
\end{equation}

\bigskip 
\noindent {\bf The module {\tt AVEM}}. Given an initial tolerance $\varepsilon_0>0$, a target tolerance ${\tt tol}$ and initial mesh $\mesh_0$, as well as a safety parameter $\omega \in (0,1]$, AVEM consists of the two-step algorithm:

\medskip
\begin{algotab}
  \>  $[\mesh, \umesh]=\texttt{AVEM}(\mesh_0, \varepsilon_0, \omega, {\tt tol})$
  \\
  \>  $\quad  k= 0$ 
  \\
  \>  $\quad \text{while } \varepsilon_k >  \tfrac12 {\tt tol}$
  \\
  \>  $\quad \quad  [\widehat\mesh_{k},\widehat{\data}_{k}]={\tt DATA}(\mesh_k, \data, \omega \, \varepsilon_k) $ 
  \\
  \>  $\quad \quad [\mesh_{k+1},u_{k+1}]={\tt GALERKIN}(\widehat{\mesh}_{k},\widehat{\data}_{k},\varepsilon_k)$ 
  \\
  \>  $\quad \quad  \varepsilon_{k+1}=\tfrac12 {\varepsilon_k}$
  \\
  \>  $\quad \quad  k \leftarrow k+1$
  \\
  \>  $\quad  \text{end while }$
  \\
  \>  $\quad  \text{return}(\mesh_k, u_k)$
\end{algotab}
\medskip

\begin{proposition}[convergence of \texttt{AVEM}]\label{P:convergence-AVEM}
For each $k\ge0$ the modules {\tt DATA} and {\tt GALERKIN} converge in a finite number of iterations.  
Moreover, there exists a constant $C_*$ depending solely on $\mesh_0$ such that the output of $[\mesh_{k+1},u_{k+1}]={\tt GALERKIN}(\widehat{\mesh}_{k},\widehat{\data}_{k},\varepsilon_k)$ satisfies
$\vvvert {u} -u_{k+1} \vvvert \leq C_* \varepsilon_k$ for all $k \geq 0$.    
Therefore, {\tt AVEM} stops after $K$ iterations, and delivers the estimate
\begin{equation*}
  \vvvert u -u_{K+1} \vvvert \leq C_* {\tt tol}.
\end{equation*}   
\end{proposition}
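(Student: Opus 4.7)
The plan is to combine the termination properties of the two subroutines {\tt DATA} and {\tt GALERKIN} with the perturbation estimate of Lemma \ref{lemma-data-approx} via a triangle inequality through the auxiliary exact solution of the perturbed problem \eqref{eq:pde-var:pert}.

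I would first address the finite termination of each call to the two subroutines. For {\tt GALERKIN}$(\widehat\mesh_k,\widehat\data_k,\varepsilon_k)$, the linear convergence statement in Theorem \ref{prop:convergence-GALERKIN} guarantees that the inner residual estimator decays geometrically in the inner loop counter, so the stopping criterion $\eta_{\mesh_j}(u_j,\widehat\data_k)\leq\varepsilon_k$ is met after finitely many iterations. For {\tt DATA}$(\mesh_k,\data,\omega\varepsilon_k)$, I would argue that Assumption \ref{A:assumption-data}, together with the fact that repeated bisection drives the local indicators $\zeta_{\widehat\mesh}(E;A)$, $\zeta_{\widehat\mesh}(E;c)$, $\zeta_{\widehat\mesh}(E;f)$ in \eqref{eq:zeta-Acf-loc} to zero (piecewise uniform continuity of $A$ on elements of $\mesh_0$, the prefactor $h_E$ in the $c$ and $f$ indicators, and absolute continuity of the Lebesgue integral for $f\in L^2(\Omega)$), implies that the global indicator $\zeta_{\widehat\mesh_j}(\data)$ is reduced below the threshold $\omega\varepsilon_k$ in finitely many rounds of the loop \eqref{eq:paradigm_data}. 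I expect this step to be the most delicate because the pseudo-greedy marking for $f$ over $L^2$ is less transparent than the greedy marking of $A$ and $c$ in $L^\infty$-type norms; the quantitative counterpart will be deferred to Section \ref{sec:approx-data}.

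Next, I would establish the per-iteration bound $\vvvert u-u_{k+1}\vvvert \leq C_*\varepsilon_k$. Let $\widehat u_k\in\V$ denote the exact solution of \eqref{eq:pde-var:pert} with the piecewise constant data $\widehat\data_k$ produced by {\tt DATA}. The exit condition of {\tt DATA} ensures $\zeta_{\widehat\mesh_k}(\data)\leq\omega\varepsilon_k$, so Lemma \ref{lemma-data-approx} (applied with $q=\infty$, $s=1$) combined with the norm equivalence \eqref{norm:equiv} yields the perturbation bound
\begin{equation*}
\vvvert u-\widehat u_k\vvvert \leq C_D\,\omega\,\varepsilon_k,
\end{equation*}
which is precisely \eqref{eq:pert-estimate}. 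On the other hand, by \eqref{eq:AVEM:estimate} the output of {\tt GALERKIN}$(\widehat\mesh_k,\widehat\data_k,\varepsilon_k)$ satisfies $\vvvert \widehat u_k-u_{k+1}\vvvert \leq C_G\,\varepsilon_k$. Since $\omega\in(0,1]$, the triangle inequality gives
\begin{equation*}
\vvvert u-u_{k+1}\vvvert \leq (C_D\,\omega + C_G)\,\varepsilon_k \leq (C_D+C_G)\,\varepsilon_k =: C_*\,\varepsilon_k,
\end{equation*}
and $C_*$ inherits from $C_D$ and $C_G$ the sole dependence on $\Omega$, $\data$, $\Lambda$, and the shape of $\mesh_0$, as claimed.

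Finally, the geometric update $\varepsilon_{k+1}=\tfrac12\varepsilon_k$ makes the outer while-loop terminate after a finite number of iterations. Let $K$ denote the last index for which the condition $\varepsilon_K>\tfrac12{\tt tol}$ holds, so that $\varepsilon_{K+1}=\tfrac12\varepsilon_K\leq\tfrac12{\tt tol}$ and hence $\varepsilon_K\leq{\tt tol}$. Applying the per-iteration estimate to the last executed body of the loop (index $k=K$) delivers the announced global bound $\vvvert u-u_{K+1}\vvvert \leq C_*\,\varepsilon_K \leq C_*\,{\tt tol}$, which completes the proof.
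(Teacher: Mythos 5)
Your proposal is correct and follows essentially the same route as the paper: finite termination of {\tt DATA} from Assumption \ref{A:assumption-data} (uniform continuity of $A$ on elements of $\mesh_0$ and the factor $\hh$ for $c,f$), finite termination of {\tt GALERKIN} from Theorem \ref{prop:convergence-GALERKIN}, and the triangle inequality through $\widehat u_k$ combining \eqref{eq:pert-estimate} (with $\omega\varepsilon_k\le\varepsilon_k$) and \eqref{eq:AVEM:estimate} to get $C_*=C_D+C_G$, followed by the same stopping-index argument $\tfrac12{\tt tol}<\varepsilon_K\le{\tt tol}$.
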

\begin{proof}
We recall that Assumption \ref{A:assumption-data} guarantees that $A$
is uniformly continuous in each element of the initial mesh $\mesh_0$.
Consequently, $\|A-\widehat{A}\|_{L^\infty(E)}$ can be made arbitrarily small upon
reducing $h_E$ for all $E\in\widehat{\mesh}_k$. Moreover, since $c\in L^\infty(\Omega)$ and
$f\in L^2(\Omega)$ in view of Assumption \ref{A:assumption-data}, the errors $ \| \hh (c-\widehat{c} )\|_{L^\infty(\Omega)}$ and
$ \|\hh (f-\widehat{f}) \|_{L^2(\Omega)}$ can also be made arbitrarily small because of
the factor $\hh$. This implies that {\tt DATA} converges to tolerance $\omega\varepsilon_k$ for every $k\ge0$
in a finite number of steps. The same is valid for {\tt GALERKIN}, this time due to Theorem \ref{prop:convergence-GALERKIN} (convergence of {\tt GALERKIN}), whence we deduce that each loop of {\tt AVEM} requires finite
iterations. Thus, the output $u_{k+1}$ satisfies
\[
\vvvert u -u_{k+1} \vvvert \leq\vvvert u - \widehat{u}_k \vvvert + \vvvert \widehat{u}_k -u_{k+1} \vvvert \le
\big( C_D + C_G \big) \varepsilon_k \quad\forall \, k \geq 0 \,,
\]
according to \eqref{eq:pert-estimate} with $\omega\varepsilon_k\le\varepsilon_k$ and \eqref{eq:AVEM:estimate}.
Finally, {\tt AVEM} terminates after $K$
loops, where $K$ satisfies $\frac{1}{2}\texttt{tol} < \varepsilon_{K}  \leq \texttt{tol}$, and the asserted estimate holds with $C_* = C_D + C_G$.
\end{proof}

This elementary proof gives neither information about the dependence of the number of sub-iterations within each
loop of {\tt AVEM} upon the iteration counter $k$,  nor insight whether  the error decays optimally in terms of degrees of freedom. Answers to these two questions will be provided in Section \ref{sec:complex-Gal} and Sections \ref{sec:quasi-optimality} and \ref{sec:approx-data}, respectively.  

}

{


\section{Computational cost of {\tt GALERKIN}} \label{sec:complex-Gal}

In the sequel, we aim at investigating the complexity of {\tt GALERKIN} within the {\tt AVEM} loops. To this end, we need some preparatory results. In order to facilitate the reader, we shall use the notation 
\begin{itemize}
\item $\mathsf{exact.sol}(\, \cdot \, )$, to indicate the exact solution to the boundary-value problem \eqref{eq:pde} with data prescribed by the argument,
\item $\mathsf{galerkin.sol}(\, \cdot \, , \, \cdot \, )$, to indicate the solution to the Galerkin problem \eqref{def-Galerkin} on the partition prescribed by the first argument, with data prescribed by the second argument.
\end{itemize}
Furthermore, for any $k \in \mathbb{N}$, let  
$(\widehat\mesh_{k},\widehat\data_{k})$ and
$(\mesh_{k+1},u_{k+1})$, respectively,  be the outputs of the module {\tt DATA}
and module {\tt GALERKIN} at iteration $k$ of {\tt AVEM}.
Then referring to \eqref{eq:def-VT}, \eqref{eq:def-Pinabla}, 
\eqref{eq:estim4},
\eqref{eq:averages}, 
we set the following notations:

\begin{table}[!h]
\centering
\begin{tabular}{cccccc}
\texttt{mesh}  
& \texttt{VEM space}
& \texttt{projection}
& \texttt{estimator} 
& \quad
& \texttt{piecewise constant data} 
\\
\vspace{1.25ex}
$\widehat{\mesh}_k$ 
& $\widehat{\V}_k := \V_{\widehat{\mesh}_k}$
& $\widehat{\Pi}^\nabla_k := \Pi^{\nabla}_{\widehat{\mesh}_k}$  
& $\widehat{\eta}_k := \eta_{\widehat{\mesh}_k}$
& \quad
& $\widehat{\data}_k  := (\widehat{A}_k, \widehat{c}_k, \widehat{f}_k)$
\\
\vspace{0.75ex}
$\mesh_k$ 
& ${\V}_k := \V_{{\mesh}_k}$
& ${\Pi}^\nabla_k := \Pi^{\nabla}_{\mesh_k}$  
& $\eta_k := \eta_{\mesh_k}$
& \quad
& $\widehat{\data}_{k-1}  := (\widehat{A}_{k-1}, \widehat{c}_{k-1}, \widehat{f}_{k-1}).$
\end{tabular}
\end{table}

\begin{lemma}[uniform boundedness of $u_k$]\label{unif-bound-gal}
For any $k \geq 1$, let $u_{k}=\mathsf{galerkin.sol}(\mesh_{k}, \widehat{\data}_{k-1})$ be the output of the module {\tt GALERKIN} at iteration $k-1$. Then it holds
\begin{equation}
\vert u_k\vert_{1,\Omega} \leq c_0 \, \Vert f \Vert_{0,\Omega}
\end{equation}
for a constant $c_0>0$ independent of $k$.
\end{lemma}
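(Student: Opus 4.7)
My plan is to test the Galerkin equation \eqref{def-Galerkin} for $u_k$ with $v=u_k$ itself and exploit the uniform coercivity of $\mathcal{B}_{\mesh_k}$ from \eqref{eq:propB2}. This gives
\[
b\,|u_k|_{1,\Omega}^2 \;\le\; \mathcal{B}_{\mesh_k}(u_k,u_k) \;=\; \mathcal{F}_{\mesh_k}(u_k)
\;=\; \sum_{E\in\mesh_k} (\widehat{f}_{k-1})_E \int_E \Pi^\nabla_E u_k,
\]
where the coercivity constant $b$ is uniform in $k$ because the ellipticity lower bound of $\widehat{A}_{k-1}$ is controlled by that of $A$ (see the discussion preceding \eqref{eq:pde-var:pert} showing $\widehat{\alpha}\ge\alpha$), and $\widehat{c}_{k-1}\ge0$.

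Next I would estimate the right-hand side. The key observation is that $\widehat{f}_{k-1}$ is the piecewise-constant $L^2$-projection of $f$ on $\widehat{\mesh}_{k-1}$ (hence also a piecewise-constant on the refinement $\mesh_k$), so by Jensen's inequality one has the unconditional $L^2$-stability bound $\|\widehat{f}_{k-1}\|_{0,\Omega}\le\|f\|_{0,\Omega}$. On the factor involving $u_k$, I would use the identity $\Pi^\nabla_E = \Pi^0_E$ on $\V_E$ (recalled just after \eqref{eq:def-Pinabla}), which makes $\Pi^\nabla_{\mesh_k} u_k$ the element-wise $L^2$-projection onto $\mathbb{P}_1(\mesh_k)$. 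This projection is a contraction in $L^2$, so $\|\Pi^\nabla_{\mesh_k} u_k\|_{0,\Omega}\le\|u_k\|_{0,\Omega}$. Combining with a Cauchy–Schwarz per element and summing yields
\[
\mathcal{F}_{\mesh_k}(u_k) \;\le\; \|\widehat{f}_{k-1}\|_{0,\Omega}\,\|\Pi^\nabla_{\mesh_k} u_k\|_{0,\Omega}
\;\le\; \|f\|_{0,\Omega}\,\|u_k\|_{0,\Omega}.
\]

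Finally, since $u_k\in\V_{\mesh_k}\subset \V=H^1_0(\Omega)$, the Poincaré–Friedrichs inequality gives $\|u_k\|_{0,\Omega}\le C_P|u_k|_{1,\Omega}$ with $C_P$ depending only on $\Omega$. Putting the three estimates together,
\[
b\,|u_k|_{1,\Omega}^2 \;\le\; C_P\,\|f\|_{0,\Omega}\,|u_k|_{1,\Omega},
\]
which after dividing by $|u_k|_{1,\Omega}$ (the case $u_k=0$ being trivial) yields the asserted bound with $c_0 := C_P/b$, a constant depending only on $\Omega$, $\mesh_0$, the ellipticity of $A$ and the stabilization parameter $\gamma$, but independent of $k$.

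No step presents a serious obstacle: the only point requiring mild care is ensuring the constant $b$ in \eqref{eq:propB2} is uniform across the perturbed problems, which is guaranteed by the uniform ellipticity $\widehat{\alpha}_{k-1}\ge\alpha$ and $\widehat{c}_{k-1}\ge0$ passed on through the averaging in \eqref{eq:averages}. The identification $\Pi^\nabla_E=\Pi^0_E$ on $\V_E$ is what makes the $L^2$-stability of $\Pi^\nabla_{\mesh_k} u_k$ immediate and avoids any dependence on $h_E$ or hanging-node constants.
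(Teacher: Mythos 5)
Your proof is correct and follows essentially the same route as the paper: test \eqref{def-Galerkin} with $v=u_k$, use the uniform coercivity of ${\cal B}_{\mesh_k}$, the bound $\Vert \widehat{f}_{k-1}\Vert_{0,\Omega}\le\Vert f\Vert_{0,\Omega}$, and a stability property of $\Pi^\nabla_k$. The only (harmless) difference is in the last step: you bound $\Vert \Pi^\nabla_k u_k\Vert_{0,\Omega}$ via the $L^2$-contraction property $\Pi^\nabla_E=\Pi^0_E$ on $\V_E$ combined with Poincar\'e, whereas the paper invokes the $H^1$-stability of $\Pi^\nabla_k$; both yield a constant independent of $k$.
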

\begin{proof}
Choosing $v=u_k=u_{\mesh_k}$ in \eqref{def-Galerkin} and noting that 
$\| \widehat{f}_{k-1} \|_{0,\Omega} \leq \| f \|_{0,\Omega}$, we get
\begin{equation*}\label{coerc-Galerkin-k}
 {\cal B}_{\mesh_k}(u_{k},u_{k}) = {\cal F}_{\mesh_k} (u_{k}) \leq \Vert f \Vert_{0,\Omega}  \Vert \Pi^\nabla_{k} u_{k} \Vert_{0,\Omega}  \,.
\end{equation*}
The result follows from the uniform $H^1$-coercivity of the form ${\cal B}_{\mesh_k}$ and the $H^1$-stability of the $\Pi^\nabla_k$ operator.
\end{proof}

\begin{lemma}[data perturbation of the error estimators]\label{lemma:pert_estimat}
For any $k \geq 1$, let $(\mesh_k,u_k)$ be the output of the module {\tt GALERKIN} at iteration $k-1$ of {\tt AVEM}, i.e.
$u_{k} = \mathsf{galerkin.sol}(\mesh_k, \widehat\data_{k-1})$. 
Let $(\widehat\mesh_{k},\widehat\data_{k})$ be the output of the module {\tt DATA} at iteration $k$ of {\tt AVEM}, and $u_{k,0} =\mathsf{galerkin.sol}(\widehat\mesh_{k}, \widehat\data_{k})$. 
Then it holds
\begin{equation}\label{data_pert_estimator}
 \widehat{\eta}_{k}^2(u_{k,0},\widehat\data_{k})
 \leq c_1 \eta_{k}^2(u_k,\widehat\data_{k-1}) + c_2  \epsilon_k^2 + c_3 \vert u_{k,0}-u_k\vert^2_{1,\Omega}
\end{equation}
for suitable positive constants $c_1,c_2,c_3$.
\end{lemma}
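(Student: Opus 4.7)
The plan is to pass from $\widehat{\eta}_k^2(u_{k,0},\widehat{\data}_k)$ to $\eta_k^2(u_k,\widehat{\data}_{k-1})$ by a chain of three triangle inequalities that successively perturb (i) the solution argument, (ii) the piecewise-constant data, and (iii) the mesh. Since the internal and jump residuals defined in \eqref{eq:estim1}--\eqref{eq:estim2} depend affinely on $v$ and linearly on $\data$, steps (i) and (ii) split $\widehat{\eta}_k^2(u_{k,0},\widehat{\data}_k)$ as $\widehat{\eta}_k^2(u_k,\widehat{\data}_{k-1})$ plus a ``solution remainder'' and a ``data remainder''; step (iii) then passes from the fine-mesh estimator $\widehat{\eta}_k^2(u_k,\widehat{\data}_{k-1})$ to its coarse-mesh version $\eta_k^2(u_k,\widehat{\data}_{k-1})$.

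The solution remainder contains $h_{E'}^2$-weighted $L^2$-norms of $\widehat{c}_k\,\widehat{\Pi}^\nabla_{E'}(u_{k,0}-u_k)$ and $h_{E'}$-weighted jump norms of $\widehat{A}_k\nabla\widehat{\Pi}^\nabla(u_{k,0}-u_k)$. Using $L^2$-stability of $\widehat{\Pi}^\nabla_{E'}$, a Poincar\'e inequality (applicable thanks to $\int_{\partial E'}(w-\widehat{\Pi}^\nabla_{E'}w)=0$ by \eqref{eq:def-PinablaE}), and a standard trace/inverse estimate for jumps of piecewise-constant gradients, this remainder is bounded by $|u_{k,0}-u_k|_{1,\Omega}^2$, yielding the $c_3$ term. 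The data remainder involves the differences $\widehat{A}_k-\widehat{A}_{k-1}$, $\widehat{c}_k-\widehat{c}_{k-1}$, $\widehat{f}_k-\widehat{f}_{k-1}$. Inserting $\pm(A,c,f)$ and invoking the termination criterion of {\tt DATA} at iterations $k$ and $k-1$ (and noting that $\widehat{\mesh}_k$ refines $\widehat{\mesh}_{k-1}$, so its mesh density is dominated by that of $\widehat{\mesh}_{k-1}$, and $\epsilon_{k-1}=2\epsilon_k$), combined with Lemma \ref{unif-bound-gal} to control $|u_k|_{1,\Omega}$, bounds this remainder by $\epsilon_k^2$, producing the $c_2$ term.

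Step (iii) is the nontrivial one: for fixed $u_k\in\V_{\mesh_k}$ and fixed piecewise-constant data, I must show $\widehat{\eta}_k^2(u_k,\widehat{\data}_{k-1})\lesssim\eta_k^2(u_k,\widehat{\data}_{k-1})$. Elements of $\mesh_k$ left unrefined in $\widehat{\mesh}_k$ contribute identically to the two estimators. On a refined $E\in\mesh_k$ with children $\{E'\}\subset\widehat{\mesh}_k$, I decompose $u_k|_E=\Pi^\nabla_E u_k + (u_k-\Pi^\nabla_E u_k)$; since $\Pi^\nabla_E u_k$ is affine on $E$ it reproduces the coarse internal residual (with the harmless improvement $h_{E'}\le h_E$) and produces no jump of $\widehat{\Pi}^\nabla u_k$ across edges newly created inside $E$. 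The remainder $u_k-\Pi^\nabla_E u_k$ is controlled by $|u_k-\IE u_k|_{1,E}$ through the minimizing property of $\Pi^\nabla_E$ in the $H^1$-seminorm, and via \eqref{eq:stab-norm}, globally by $S_{\mesh_k}(u_k,u_k)$. Proposition \ref{prop:bound-ST} then dominates this by $\gamma^{-2}\eta_k^2(u_k,\widehat{\data}_{k-1})$ and thus absorbs it into $c_1\eta_k^2(u_k,\widehat{\data}_{k-1})$.

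I expect step (iii) to be the main obstacle: the edges created inside refined elements of $\mesh_k$ generate jump contributions in $\widehat{\eta}_k^2$ that have no analog in $\eta_k^2$, so the classical AFEM monotonicity-under-refinement argument does not apply directly. The resolution is precisely the VEM stabilization bound of Proposition \ref{prop:bound-ST}, which for sufficiently large $\gamma$ allows the coarse-mesh residual to dominate these intrinsically virtual contributions; concatenating steps (i)--(iii) then yields the claim with $c_1=c_1(\Lambda,\gamma,\data)$, $c_2=c_2(\Lambda,\data,\|f\|_{0,\Omega})$, and $c_3=c_3(\Lambda,\data)$.
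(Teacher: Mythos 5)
Your proposal is correct and takes essentially the same route as the paper's proof: a triangle-inequality splitting of the fine-mesh estimator into a solution-change part (bounded by $\vert u_{k,0}-u_k\vert^2_{1,\Omega}$ via stability of the projectors, which is the content of \cite[Lemma 5.3]{paperA}), a data-change part (bounded by $\varepsilon_k^2$ using the {\tt DATA} tolerances $\omega\varepsilon_k$ and $\omega\varepsilon_{k-1}=2\omega\varepsilon_k$ together with Lemma \ref{unif-bound-gal}), and a coarse-to-fine mesh-change part whose newly created interior jumps and coarse/fine projector mismatch are controlled by $S_{\mesh_k}(u_k,u_k)$ and then absorbed into $\eta_k^2(u_k,\widehat{\data}_{k-1})$ through Proposition \ref{prop:bound-ST}, exactly as in the paper. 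The only differences are cosmetic: you perform the three perturbations in a different order and sketch directly the two steps that the paper delegates to \cite[Lemmas 5.2 and 5.3]{paperA}.
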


\begin{proof}
\newcommand{\Pimeshk}{{\Pi^\nabla_{k}}}
\newcommand{\Pimeshh}{{\widehat{\Pi}^\nabla_{k}}}
\newcommand{\PiEmeshk}{{\Pi^{\nabla,E}_{\mesh_k}}}
\newcommand{\PiEmeshh}{{\Pi^{\nabla,E}_{\hat{\mesh}_{k+1}}}}

We introduce the following notation
\[
\begin{aligned}
\widehat{r}_k &:= \widehat{f}_k - \widehat{c}_k \Pimeshh u_k\, 
&\qquad \widehat{j}_{k} &:= \jump{\widehat{A}_{k} \nabla \,  \Pimeshh u_k}_{\widehat{\cal{E}}_k}\, ,\\
r_{k} &:= \widehat{f}_{k-1}- \widehat{c}_{k-1} \Pimeshk u_k\, 
&\qquad 
{j}_{k} &:= \jump{\widehat{A}_{k-1} \nabla \,  \Pimeshk u_k}_{\widehat{\cal{E}}_k}\,,
\end{aligned}
\]
and we observe that it holds
\begin{align}
\widehat{r}_{k} &= r_k + \widehat{c}_{k} (\Pimeshk u_k - \Pimeshh u_k)+  (\widehat{f}_{k}-\widehat{f}_{k-1}) + (\widehat{c}_{k-1}- \widehat{c}_{k})\Pimeshk u_k  \, , \label{intermed:1}\\
\widehat{j}_{k} &= j_k +  \jump{\widehat{A}_{k} \nabla \,  (\Pimeshh - \Pimeshk) u_k}_{\widehat{\cal{E}}_k} + \jump{(\widehat{A}_{k}-\widehat{A}_{k-1})\nabla \Pimeshk u_k}_{\widehat{\cal{E}}_k}\,.\label{intermed:2} 
\end{align}
We distinguish between refined and unrefined elements. 
Let us start from refined elements and let $E$ be an  element of $\mesh_k$ which is split into  $E_1,\ldots,E_{n} \in \widehat{\mesh}_{k}$, where $n$ depends on $E$ and it holds $\min_{1\leq i \leq n} h_{E_i} \leq h_E/2$. 
Hence, we have
\[
\begin{aligned}
\sum_{i=1}^n h_{E_i}^2 \|\widehat{r}_{k}\|_{E_i}^2 &\lesssim
\sum_{i=1}^n h_{E_i}^2 \left( \|{r}_{k}\|_{E_i}^2 + \|\widehat{c}_{k} (\Pimeshk u_k - \Pimeshh u_k) \|_{E_i}^2
\right)
\\
&\quad 
+ \sum_{i=1}^n h_{E_i}^2 
\left(\| \widehat{f}_{k}-\widehat{f}_{k-1}\|_{E_i}^2 +\| (\widehat{c}_{k-1}-\widehat{c}_{k})\Pimeshk u_k\|_{E_i}^2 \right)=: I +II\,  ,
\\
\sum_{i=1}^n \sum_{e\in\mathcal{E}_{E_i}} h_{E_i}
\| \widehat{j}_{k}\|^2_e &\lesssim
\sum_{i=1}^n \sum_{e\in\mathcal{E}_{E_i}}\left( 
h_{E_i} \|j_k\|_e^2 + 
h_{E_i} \|\jump{\widehat{A}_{k} \nabla \,  (\Pimeshh - \Pimeshk) u_k}\|_e^2 \right)
\\
&\quad +
\sum_{i=1}^n \sum_{e\in\mathcal{E}_{E_i}}\left ( h_{E_i} \| \jump{(\widehat{A}_{k}-\widehat{A}_{k-1})\nabla \Pimeshk u_k}\|_e^2\right)=: III +IV.
\end{aligned}
\]
Adapting to $I$ and $III$ the same reasoning as in the proof of \cite[Lemma 5.2]{paperA} we get 
\begin{equation}
 \widehat{\eta}_{k}^2(E; u_{k},\widehat{\data}_{k})
 \lesssim  \eta_{k}^2(E;{u}_{k}, \widehat{\data}_{k-1})
 + S_{\mesh_k(E)}(u_k,u_k) + II + IV\,.
\end{equation}
By employing  \cite[Lemma 5.3]{paperA} 
we get 
\begin{equation}
 \widehat{\eta}_{k}^2(E;u_{k,0},\widehat{\data}_{k})
 \lesssim  \eta_{k}^2(E;{u}_{k},\widehat{\data}_{k-1})
 + S_{\mesh_{k}(E)}(u_k,u_k) + \vert u_{k,0} - u_k\vert_{1,{\mesh(E)}}^2 + II+IV \,.
\end{equation}
The sum  $II+IV$ can be bounded using  H\"older's inequality, the trace inequality together with 
\eqref{data:approximation}, the stability property of $\Pi^\nabla$ and Lemma \ref{unif-bound-gal}, obtaining
$$
II+IV \lesssim \varepsilon_k^2 \,.
$$
\\
On unrefined elements $E$,  we note that  $\Pi^{\nabla, E}_k u_k = \widehat{\Pi}^{\nabla, E}_k u_k$. 
Hence, employing \eqref{intermed:1}-\eqref{intermed:2} together with \cite[Lemma 5.3]{paperA}, and estimating the terms $II + IV$ as before,
we have 
\begin{equation}
 \widehat{\eta}_{k}^2(E; u_{k,0},\widehat{\data}_{k})
 \lesssim  \eta_{k}^2(E;{u}_{k},\widehat{\data}_{k-1}) + \varepsilon_k^2 
 +  \vert u_{k,0} - u_k\vert_{1,E}^2.
\end{equation}
Finally, summing over $E$ and employing \eqref{eq:bound-ST}, we have
\begin{equation}
 \widehat{\eta}_{k}^2(u_{k,0},\widehat{\data}_{k})
 \lesssim \eta_{k}^2(u_k,\widehat{\data}_{k-1}) +  \varepsilon_k^2 + \vert u_{k,0} -u_k\vert^2_{1,\Omega}.
\end{equation}

\end{proof}

\begin{proposition}[computational cost of  {\tt{GALERKIN}}]\label{prop:compl_gal} For any $k \in \mathbb{N}$, the number $J_k$ of sub-iterations inside the call to {\tt{GALERKIN}} at iteration $k$ of {\tt AVEM} is bounded independently of $k$.
\end{proposition}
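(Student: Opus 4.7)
The plan is to reduce the bound on $J_k$ to a uniform bound on the initial estimator of the $k$-th call to \texttt{GALERKIN}. Let $u_{k,0}:=\mathsf{galerkin.sol}(\widehat\mesh_k,\widehat\data_k)$ be the first inner iterate, and index the iterates of \texttt{GALERKIN} at step $k$ by $j=0,\dots,J_k$ with $u_{k,J_k}=u_{k+1}$. Iterating the one-step contraction behind Theorem \ref{prop:convergence-GALERKIN} from $u_{k,0}$ yields
\[
\beta\,\widehat\eta_{k,j}^2(u_{k,j},\widehat\data_k) \;\leq\; C\alpha^j\Bigl(\vvvert \widehat u_k - u_{k,0}\vvvert^2 + \beta\,\widehat\eta_k^2(u_{k,0},\widehat\data_k)\Bigr),
\]
and the a posteriori upper bound of Theorem \ref{Corollary:stab-free} controls the parenthesis by $(C_U+\beta)\widehat\eta_k^2(u_{k,0},\widehat\data_k)$. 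Since \texttt{GALERKIN} stops when $\widehat\eta_{k,J_k}\le\varepsilon_k$, the leading contribution to $J_k$ is $\log\!\bigl(\widehat\eta_k^2(u_{k,0},\widehat\data_k)/\varepsilon_k^2\bigr)/\log(1/\alpha)$, so it suffices to establish the uniform bound $\widehat\eta_k^2(u_{k,0},\widehat\data_k) \lesssim \varepsilon_k^2$ with a constant independent of $k$.

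For this I will invoke Lemma \ref{lemma:pert_estimat}, which gives
\[
\widehat\eta_k^2(u_{k,0},\widehat\data_k) \;\leq\; c_1\,\eta_k^2(u_k,\widehat\data_{k-1}) + c_2\,\varepsilon_k^2 + c_3\,|u_{k,0}-u_k|^2_{1,\Omega}.
\]
The first summand is bounded by $4c_1\varepsilon_k^2$ via the stopping criterion of \texttt{GALERKIN} at step $k-1$, and the second is trivially $O(\varepsilon_k^2)$. I handle the third summand through the triangle inequality via the exact solution $u$,
\[
|u_{k,0}-u_k|_{1,\Omega} \;\lesssim\; \vvvert \widehat u_k - u_{k,0}\vvvert + \vvvert \widehat u_k - u\vvvert + \vvvert u - u_k\vvvert,
\]
where \eqref{eq:pert-estimate} applied to $\widehat\data_k$ bounds $\vvvert \widehat u_k - u\vvvert$ by $C_D\omega\varepsilon_k$ and Proposition \ref{P:convergence-AVEM} at step $k-1$ bounds $\vvvert u-u_k\vvvert$ by $2C_*\varepsilon_k$. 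The remaining term $\vvvert \widehat u_k - u_{k,0}\vvvert$ is the main obstacle, since invoking the a posteriori upper bound here would produce a circular dependence on $\widehat\eta_k^2(u_{k,0},\widehat\data_k)$ with a constant we cannot control.

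To bypass this circularity I will establish a Cea-type best-approximation bound in the conforming subspace $\V_{\widehat\mesh_k}^0$. For every $w\in\V_{\widehat\mesh_k}^0$, the splitting $w-u_{k,0}=(w-\mathcal{I}_{\widehat\mesh_k} u_{k,0})+(\mathcal{I}_{\widehat\mesh_k} u_{k,0}-u_{k,0})$ places the first piece in $\V_{\widehat\mesh_k}^0$, where the Galerkin quasi-orthogonality of Lemma \ref{L:Galerkin-orthogonality} annihilates its contribution, while the second piece is controlled by $C\gamma^{-1}\widehat\eta_k(u_{k,0},\widehat\data_k)$ through \eqref{eq:stab-norm}, Proposition \ref{prop:bound-ST}, and Theorem \ref{Corollary:stab-free}. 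For $\gamma$ sufficiently large, as already required by Theorem \ref{prop:convergence-GALERKIN}, this term can be absorbed on the left, yielding
\[
\vvvert \widehat u_k - u_{k,0}\vvvert \;\leq\; C\,\vvvert \widehat u_k - w\vvvert \qquad \forall\,w\in\V_{\widehat\mesh_k}^0.
\]
Taking $w=\mathcal{I}_{\mesh_k} u_k\in\V_{\mesh_k}^0\subseteq\V_{\widehat\mesh_k}^0$, splitting via $u$ once more, and bounding $\vvvert u_k-\mathcal{I}_{\mesh_k} u_k\vvvert$ by $C\gamma^{-1}\eta_k(u_k,\widehat\data_{k-1})\lesssim\varepsilon_k$ through \eqref{eq:stab-norm}--\eqref{eq:bound-ST}, produces $\vvvert \widehat u_k - u_{k,0}\vvvert \lesssim \varepsilon_k$. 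Substituting back, $|u_{k,0}-u_k|^2_{1,\Omega}\lesssim\varepsilon_k^2$, hence Lemma \ref{lemma:pert_estimat} delivers $\widehat\eta_k^2(u_{k,0},\widehat\data_k)\lesssim\varepsilon_k^2$, and the contraction from the first paragraph gives a uniform bound on $J_k$.
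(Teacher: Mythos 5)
Your overall architecture is sound and, in its skeleton, coincides with the paper's: reduce $J_k$ to a uniform bound $\widehat\eta_k^2(u_{k,0},\widehat\data_k)\lesssim\varepsilon_k^2$ via the contraction of Theorem \ref{prop:convergence-GALERKIN} plus the stopping test, and invoke Lemma \ref{lemma:pert_estimat} with the stopping criterion of the previous {\tt GALERKIN} call. Where you genuinely diverge is in controlling $|u_{k,0}-u_k|_{1,\Omega}$ (equivalently $\vvvert\widehat u_k-u_{k,0}\vvvert$): the paper introduces the auxiliary ``enhanced'' solution $u_k^{\rm en}=\mathsf{galerkin.sol}(\widehat\mesh_k,\widehat\data_{k-1})$, treats $u_{k,0}-u_k^{\rm en}$ as a discrete data-perturbation problem (using \eqref{data:approximation} and Lemma \ref{unif-bound-gal}) and $u_k^{\rm en}-u_k$ via the quasi-orthogonality between nested Galerkin solutions from \cite{paperA}, staying entirely at the discrete level; you instead pass through the exact solutions $u$ and $\widehat u_k$, using Proposition \ref{P:convergence-AVEM} and \eqref{eq:pert-estimate}, and close with a C\'ea-type quasi-best-approximation bound in $\Vmeshz$ derived from Lemma \ref{L:Galerkin-orthogonality}. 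Your route is viable (it is essentially Lemma \ref{lemma:q-opt} restricted to conforming competitors), and it correctly identifies and avoids the circularity of using the a posteriori upper bound on $\vvvert\widehat u_k-u_{k,0}\vvvert$; once the C\'ea bound yields $\vvvert\widehat u_k-u_{k,0}\vvvert\lesssim\varepsilon_k$, note that the lower bound in Theorem \ref{Corollary:stab-free} already gives $\widehat\eta_k(u_{k,0},\widehat\data_k)\lesssim\varepsilon_k$ directly, so in your route the detour through Lemma \ref{lemma:pert_estimat} becomes superfluous.

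Two points need repair. First, and most concretely: $\Imesh$ maps into the \emph{discontinuous} space $\mathbb{P}_1(\mesh)$ (see \eqref{eq:def-Pinabla}), so $w-\mathcal{I}_{\widehat\mesh_k}u_{k,0}$ is in general \emph{not} an element of $\V^0_{\widehat\mesh_k}$, and likewise $\mathcal{I}_{\mesh_k}u_k\notin\V^0_{\mesh_k}$; as written, the quasi-orthogonality of Lemma \ref{L:Galerkin-orthogonality} cannot be applied to that piece, and \eqref{eq:stab-norm} only controls the broken distance to the discontinuous interpolant. The fix is to use the conforming interpolant $\Imeshz$ at the proper nodes and to invoke \cite[Prop.~3.2]{paperA} (exactly as the paper does in the equivalence-of-classes proof and in Step 5 of Lemma \ref{lemma:loc-up-bound}) to pass from $|v-\Imeshz v|_{1,\Omega}$ to $|v-\Imesh v|_{1,\mesh}$, after which \eqref{eq:stab-norm}, \eqref{eq:bound-ST} and Theorem \ref{Corollary:stab-free} apply as you intend. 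Second, the absorption in your C\'ea bound requires the \emph{lower} estimate of Theorem \ref{Corollary:stab-free} together with $C\gamma^{-1}C_L^{-1/2}$ small, i.e.\ a largeness threshold on $\gamma$ that is not literally ``the one already required by Theorem \ref{prop:convergence-GALERKIN}''; this is harmless within the paper's standing ``$\gamma$ sufficiently large'' framework, but it should be stated as an additional requirement (or avoided altogether by quoting Lemma \ref{lemma:q-opt}, which needs no extra condition on $\gamma$), whereas the paper's route via $u_k^{\rm en}$ sidesteps both issues.
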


\begin{proof}

\newcommand{\Pimeshk}{{\Pi^\nabla_{k}}}
\newcommand{\Pimeshh}{{\widehat{\Pi}^\nabla_{k}}}
\newcommand{\PiEmeshk}{{\Pi^{\nabla,E}_{\mesh_k}}}
\newcommand{\PiEmeshh}{{\Pi^{\nabla,E}_{\hat{\mesh}_{k+1}}}}

We proceed in several steps. 
For any $k \in \mathbb{N}$, let  
$(\widehat\mesh_{k},\widehat\data_{k})$ and
$(\mesh_{k+1},u_{k+1})$ be the output respectively of the module {\tt DATA}
and module {\tt GALERKIN} at iteration $k$ of {\tt AVEM}.
We will use the following functions:
\begin{equation}\label{eq:def-many-functions}
\begin{aligned}
\widehat{u}_{k-1} &= \mathsf{exact.sol}(\widehat{\data}_{k-1}) \in \V 
& \qquad
\widehat{u}_{k} &= \mathsf{exact.sol}(\widehat{\data}_{k}) \in \V 
\\
{u}_{k}&=\mathsf{galerkin.sol}(\mesh_{k}, \widehat{\data}_{k-1}) 
\in \V_{k}
& \qquad  
{u}_{k+1}&=\mathsf{galerkin.sol}(\mesh_{k+1}, \widehat{\data}_{k}) 
\in \V_{k+1}
\\
{u}_{k}^{\rm en} &= \mathsf{galerkin.sol}(\widehat{\mesh}_{k}, \widehat{\data}_{k-1}) \in \widehat{\V}_k 
& \qquad  
u_{k,0} &=\mathsf{galerkin.sol}(\widehat{\mesh}_{k}, \widehat{\data}_{k}) 
\in \widehat{\V}_k \,,
\end{aligned}
\end{equation}
where the suffix ``en" stands for ``enhanced" (i.e., ${u}_{k}^{\rm en}$ is computed with the same data as $u_{k}$, but on a finer mesh).

\bigskip
\noindent{\bf{Step 1}}. Estimate of 
$\vert \widehat{u}_{k}  -  u_{k,0} \vert_{1,\Omega}$. This a consequence of the a posteriori error upper bound
\begin{equation*}
\vert  \widehat{u}_{k}  -  u_{k,0} \vert_{1,\Omega}^2\leq C_U \, \widehat{\eta}^2_{k}(u_{k,0},\widehat{\data}_{k})
\end{equation*}
given in Theorem \ref{Corollary:stab-free}.\\

\noindent{\bf Step 2}. Estimate of $\widehat{\eta}^2_{k}(u_{k,0},\widehat{\data}_{k})$.  Lemma \ref{lemma:pert_estimat} gives
 \begin{equation*}
 \widehat{\eta}^2_{k}(u_{k,0},\widehat{\data}_{k})
 \leq c_1 \eta_{k}^2(u_k, \widehat{\data}_{k-1}) + c_2  \varepsilon_k^2 + c_3 \vert u_{k,0}-u_k\vert^2_{1,\Omega}
\end{equation*}
which, in view of the input tolerance $\varepsilon_k$ appearing in the module {\tt{GALERKIN}}, implies
\begin{equation}\label{aux:eta-bound}
 \widehat{\eta}^2_{k}(u_{k,0},\widehat{\data}_{k})
\leq  c_4 \varepsilon_k^2 + c_3  \vert u_{k,0}-u_k\vert^2_{1,\Omega}
 \end{equation}
for some $c_4>0$. It remains to estimate $\vert u_{k,0}-u_k\vert^2_{1,\Omega}$ which, invoking the triangle inequality,  reduces to 
 \begin{equation}\label{aux:triang-ineq}
  \vert u_{k,0}-u_k\vert_{1,\Omega} \lesssim
  \vert u_{k,0}-u_k^{\rm en}\vert_{1,\Omega} +  \vert u_k^{\rm en} - u_k\vert_{1,\Omega} \,.
 \end{equation}
We observe that the difference $u_{k,0}-u_{k}^{\rm en}$ between the two Galerkin solutions in  $\widehat{\V}_{k}$ is the solution of the following variational problem: for any $v \in \widehat{\V}_{k}$ it holds
\[
\begin{aligned}
\int_{\Omega} &\widehat{A}_{k-1} \nabla\Pimeshh (u_{k,0}-u_{k}^{\rm en}) \cdot \nabla \Pimeshh v +
\int_{\Omega} \widehat{c}_{k-1} \Pimeshh (u_{k,0}-u_{k}^{\rm en})  \Pimeshh v +
S_{\widehat{\mesh}_k}(u_{k,0}-u_{k}^{\rm en}, v) =
\\
&=
\int_{\Omega} (\widehat{A}_{k-1} - \widehat{A}_k) \nabla\Pimeshh u_{k,0} \cdot \nabla \Pimeshh v +
\int_{\Omega} (\widehat{c}_{k-1}  - \widehat{c}_k) \Pimeshh u_{k,0}  \Pimeshh v +
\int_{\Omega} (\widehat{f}_{k}  - \widehat{f}_{k-1})  \Pimeshh v \,.
\end{aligned}
\]
 Taking $v=u_{k,0}- u_{k}^{\rm en}$,  employing on the left-hand side the uniform coercivity of the discrete bilinear term, and using on the right-hand side the triangle inequality, the Cauchy-Schwarz inequality together with \eqref{data:approximation}, and Lemma \ref{unif-bound-gal}, we get
\begin{equation}\label{aux:1:complexity}
 \vert u_{k,0}-u_{k}^{\rm en} \vert_{1,\Omega} \leq c_5 (\epsilon_{k}+\varepsilon_{k-1})=3 c_5 \varepsilon_{k}
 \end{equation}
for a proper choice of $c_5>0$.
In order to estimate $\vert u_{k}^{\rm en} - u_k\vert_{1,\Omega}$,  
we preliminary note that  $\widehat{\mesh}_{k}$ is a refinement of $\mesh_k$.  Hence, invoking \cite[Corollary 5.8]{paperA} 
we have 
$$ 
\vvvert \widehat{u}_{k-1} -   u_{k}^{\rm en} \vvvert^2 + \vvvert  u_{k}^{\rm en} - u_k \vvvert^2 \leq (1+4\delta)\vvvert \widehat{u}_{k-1} -u_k \vvvert^2
$$
which, in view of \eqref{norm:equiv}, yields
\begin{equation}\label{eq:w-u-k1}
  |u_{k}^{\rm en}- u_k|_{1,\Omega} \leq c_6  |\widehat{u}_{k-1} -u_k |_{1,\Omega}
\end{equation}
for some $c_6>0$. On the other hand, from Theorem \ref{Corollary:stab-free}, we have 
\begin{equation}\label{eq:w-u-k2}
|\widehat{u}_{k-1} - u_k |_{1,\Omega} \leq \sqrt{C_U} \, \eta_{k}(u_k,\widehat{\data}_{k-1})\leq \sqrt{C_U} \varepsilon_{k-1} =  2 \sqrt{C_U} \varepsilon_{k}\,.
\end{equation}
Thus, from eqs. \eqref{aux:triang-ineq}-\eqref{eq:w-u-k2},    we obtain 
\begin{equation}
 \vert u_{k,0} -u_k\vert_{1,\Omega}\leq (3c_5+ 2\sqrt{C_U} {c_6})\varepsilon_{k}
\end{equation}
and, employing \eqref{aux:eta-bound}, we arrive at 
$$ 
\widehat{\eta}_{k}^2(u_{k,0},\widehat{\data}_k) \leq c_7 \varepsilon_k^2
$$
for some $c_7>0$.

\noindent{\bf Step 3}. Estimate of the total error $\xi^2_{\widehat{\mesh}_k}(u_{k,0})$, where, referring to Theorem \ref{prop:convergence-GALERKIN}, for any refinement $\mesh_*$ of $\widehat{\mesh}_k$ and for any $v \in \V_{\mesh_*}$ we set
\[
\xi^2_{\mesh_*}(v):=\vert \widehat{u}_{k} - v\vert^2_{1, \Omega} +
\beta  \, \eta^2_{\mesh_*}(v, \widehat{\data}_k) \,.
\] 
Because of Steps $1$ and $2$ we have 
\[
\xi^2_{\widehat{\mesh}_k}(u_{k,0})
\leq c_7(C_U + \beta)\varepsilon_k^2=: c_8 \, \varepsilon_k^2.
\] 

\noindent{\bf Step 4}. Bound on $J_k$. Each consecutive iterate $(\mesh_{k,j},u_{k,j})$ inside {\tt GALERKIN} starting with 
$(\mesh_{k,0},u_{k,0}) = (\widehat{\mesh}_{k}, u_{k,0})$
satisfies the contraction property in Theorem \ref{prop:convergence-GALERKIN} (cf. \cite[Theorem 5.1]{paperA}).
Therefore
\[
\xi^2_{{\mesh}_{k,j}}({u}_{k,j}) \lesssim 
\alpha^j \, \xi^2_{\widehat{\mesh}_k}(u_{k,0}) \leq 
\alpha^j \, c_9 \, \varepsilon_k^2 \,,
\]
for some $c_9 >0$.
Since $J_{k}$ is the smallest value for which 
\[
\eta_{{\mesh}_{k,J_k}}({u}_{k,J_k},\widehat{\data}_{k})\leq \varepsilon_k 
\]
we have  
\[
\eta_{{\mesh}_{k,J_k-1}}({u}_{k,J_k-1},\widehat{\data}_{k})> \varepsilon_k\,.
\]
Concatenating the last two ingredients gives
\[ 
\varepsilon_k^2 \leq  \frac1\beta \, \xi^2_{{\mesh}_{k,J_k-1}}({u}_{k,J_k-1})\leq \alpha^{J_ {k}-1} \frac{c_9}{\beta} \varepsilon_k^2 \,.
\]
This in turn implies 
\[
\Big(\frac{1}{\alpha}\Big)^{J_{k}-1}\leq \frac{c_9}{\beta} \quad \Rightarrow \quad J_{k} \leq 1 + \frac{ \log ({c_9}/{\beta})}{\log(1/{\alpha})}=: J \,.
\]
We see that the upper bound $J$ of $J_{k}$ is independent of $k$. This concludes the proof.

\end{proof}


%

\section{Quasi-optimal cardinality  of AVEM} 
\label{sec:quasi-optimality}
\def\qnt{{\mathbb E}} 
\def\err{\varepsilon}

The main purpose of this section is to prove, under suitable assumptions on the solution $u$ and data $\data$, the bound \eqref{eq:error-decay} announced in the Introduction, namely the existence of constants $C(u,\data) >0$ and $s \in (0, \frac12]$ such that
\begin{equation}\label{eq:optimality bound}
|u-u_k|_{1,\Omega} \leq C(u,\data) \, \big( \#\mesh_k \big)^{-s} \,.
\end{equation}
To this end, we introduce in Sect. \ref{sec:approx-classes} certain approximation classes for functions in $\V$ and for data, tailored on the decomposition of $\Omega$ into $\Lambda$-admissible non-conforming partitions, and we assume that the solution and the data of Problem \eqref{eq:pde} belong to some of these classes. In Sect. \ref{sec: eps-approx}, we investigate the approximability properties of certain perturbations of the exact solution, namely exact solutions of \eqref{eq:pde} with perturbed coefficients. Next, in Sect. \ref{sec:optim-mark}, we consider a refinement $\meshs$ of a partition $\mesh$, and give conditions under which an optimal D\"orfler marking property holds. This allows us to prove in Sect. \ref{sec:compl-gal} an optimal estimate of the cardinality of the marked set in a call to {\tt GALERKIN}. At last, in Sect. \ref{subsec:optimality-AVEM}, we apply these results to establish the desired estimate on the rate of decay of the error produced by AVEM.

\subsection{Approximation classes}\label{sec:approx-classes}
We first introduce two families of approximation classes for a function $v \in \V$, and we show they coincide. Subsequently, we define approximation classes for the operator coefficients $A\in (L^\infty(\Omega))^{2\times 2}$ and $c \in L^\infty(\Omega)$, and for the forcing $f \in L^2(\Omega)$.

\subsubsection{Approximation classes for $v \in \V$}
We start by defining the following quantity for $v \in \V$ and $v_\mesh \in \V_\mesh$
\begin{equation}\label{eq:quant}
\qnt_\mesh^2(v,v_\mesh) :=  \vvvert v - v_\mesh \vvvert^2 + |v_\mesh - {\cal I}_\mesh v_\mesh |_{1,\mesh}^2 \, .
\end{equation}
It is worthy to observe that for $v_\mesh^0 \in \V_\mesh^0$ it obviously holds
\begin{equation}\label{eq:obs:zero}
\qnt_\mesh^2(v,v_\mesh^0) = \vvvert v - v_\mesh^0 \vvvert^2 \, .
\end{equation}


\begin{lemma}[quasi-best approximation]\label{lemma:q-opt}
Let $u$ and $u_\mesh$ be the solutions of problem \eqref{eq:pde-var} and problem \eqref{def-Galerkin}, respectively, with piecewise constant data.
There exists a constant $C^\dagger >0$, independent of $u$ and the mesh $\mesh$, such that
\begin{equation}
\qnt_\mesh^2(u,u_\mesh) \leq C^\dagger \qnt_\mesh^2(u,v_\mesh) \qquad \forall v_\mesh \in \V_\mesh \, .
\end{equation}
\end{lemma}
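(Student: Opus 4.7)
The plan is to prove a Strang-type quasi-best estimate by combining coercivity of $\Bmesh$ with a VEM consistency bound between $\Bmesh$ and $\cB$, and then to control the stabilization contribution in $\qnt_\mesh^2(u,u_\mesh)$ separately.

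First I would set $w := u_\mesh - v_\mesh \in \Vmesh$, invoke coercivity of $\Bmesh$ from \eqref{eq:propB2}, and use the Galerkin identity \eqref{def-Galerkin} to write
\[
b\,|w|_{1,\Omega}^2 \leq \Bmesh(w,w) = \mathcal{F}_\mesh(w) - \Bmesh(v_\mesh,w).
\]
Since the data is piecewise constant and $\int_E \PiE \varphi = \int_E \varphi$ by the volumetric moment condition in \eqref{vem:choice:2}, it follows that $\mathcal{F}_\mesh(w) = (f,w)_\Omega = \cB(u,w)$. The right-hand side then splits as
\[
\cB(u-v_\mesh,w) \,-\, \bigl[\Bmesh(v_\mesh,w) - \cB(v_\mesh,w)\bigr],
\]
the first addend being bounded by continuity of $\cB$ as $c^\cB\,|u-v_\mesh|_{1,\Omega}|w|_{1,\Omega}$.

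The core of the argument is to control the consistency mismatch in the second addend. Exploiting the projection identity $(\nabla v,\nabla w)_E = (\nabla\PiE v,\nabla\PiE w)_E + (\nabla(v-\PiE v),\nabla(w-\PiE w))_E$ obtained from \eqref{eq:def-PinablaE}, together with the analogous $L^2$-orthogonality for the mass form (using $\PiEz=\PiE$ on $\VE$), Cauchy--Schwarz, and the piecewise-constant nature of $A$ and $c$, one reaches
\[
|\Bmesh(v_\mesh,w)-\cB(v_\mesh,w)| \lesssim \sum_E|v_\mesh-\PiE v_\mesh|_{1,E}|w-\PiE w|_{1,E} + \gamma\,\Smesh(v_\mesh,v_\mesh)^{1/2}\Smesh(w,w)^{1/2}.
\]
The main obstacle here, and the technical heart of the proof, is the bound $|v-\PiE v|_{1,E} \lesssim |v-\IE v|_{1,E}$ for $v \in \VE$, derived from the polynomial-consistency identity $\IE v - \PiE v = -\PiE(v-\IE v)$ (valid because both $\IE$ and $\PiE$ fix $\mathbb{P}_1(E)$) and the $H^1$-stability of $\PiE$. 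Combined with \eqref{eq:stab-sE}--\eqref{eq:stab-norm} and the companion estimate $\Smesh(w,w)^{1/2} \lesssim |w|_{1,\Omega}$ (stability of $\IE$ on $\VE$), this produces $|\Bmesh(v_\mesh,w)-\cB(v_\mesh,w)| \lesssim \Smesh(v_\mesh,v_\mesh)^{1/2}|w|_{1,\Omega}$.

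Dividing by $|w|_{1,\Omega}$ and applying the triangle inequality yields the Strang-type bound $|u-u_\mesh|_{1,\Omega}^2 \lesssim |u-v_\mesh|_{1,\Omega}^2 + \Smesh(v_\mesh,v_\mesh)$, which via \eqref{norm:equiv} and \eqref{eq:stab-norm} gives $\vvvert u-u_\mesh\vvvert^2 \lesssim \qnt_\mesh^2(u,v_\mesh)$. For the stabilization contribution to $\qnt_\mesh^2(u,u_\mesh)$, a triangle inequality on $\Smesh(\cdot,\cdot)^{1/2}$ followed by $\Smesh(u_\mesh - v_\mesh, u_\mesh - v_\mesh) \lesssim |u_\mesh-v_\mesh|_{1,\Omega}^2$ and reuse of the Strang bound yields $|u_\mesh-\Imesh u_\mesh|_{1,\mesh}^2 \simeq \Smesh(u_\mesh,u_\mesh) \lesssim \qnt_\mesh^2(u,v_\mesh)$. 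Summing the two pieces gives the claim with $C^\dagger$ depending on the coercivity/continuity constants of $\Bmesh$ and $\cB$, on the stabilization constants, and on $\Lambda$.
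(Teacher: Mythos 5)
Your proposal is correct and follows essentially the same route as the paper: coercivity of $\Bmesh$ plus the Galerkin and continuous equations, a split into the best-approximation term $\cB(u-v_\mesh,\cdot)$ and the consistency mismatch $\cB-\Bmesh$, which is then bounded by $|v_\mesh-\Imesh v_\mesh|_{1,\mesh}$ through the minimization/stability properties of $\PiE$ and the stabilization estimates. The only (inessential) difference is that the paper absorbs the term $|\err_\mesh-\Imesh\err_\mesh|_{1,\mesh}^2$ directly into the coercivity step for the combined quantity $\qnt_\mesh^2$, whereas you recover it afterwards via $\Smesh(w,w)\lesssim |w|_{1,\Omega}^2$ and a triangle inequality for $\Smesh^{1/2}$.
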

\begin{proof}
Let $\err_\mesh = u_\mesh - v_\mesh$. By the triangle inequality
\begin{equation}\label{eq:fin-1}
\qnt_\mesh^2(u,u_\mesh) \leq 2 \left(\vvvert u - v_\mesh \vvvert^2 + |v_\mesh - {\cal I}_\mesh v_\mesh |_{1,\mesh}^2
+ \vvvert \err_\mesh \vvvert^2 + |\err_\mesh - {\cal I}_\mesh \err_\mesh |_{1,\mesh}^2 \right) \, ,
\end{equation}
so that we only need to bound the last two terms.
First by the coercivity of the discrete bilinear form, then by recalling the discrete \eqref{def-Galerkin} and continuous \eqref{eq:pde-var} weak problems, we obtain
$$
\vvvert \err_\mesh \vvvert^2 + |\err_\mesh - {\cal I}_\mesh \err_\mesh |_{1,\mesh}^2 \le
C \Bmesh(\err_\mesh,\err_\mesh) = C \big( {\cal F}_{\mesh} (\err_\mesh) - \Bmesh(v_\mesh,\err_\mesh) \big)
= C \big( \cB(u,\err_\mesh) - \Bmesh(v_\mesh,\err_\mesh) \big) \, ,
$$
where we also used that ${\cal F}_{\mesh} (v) = (f,v)_\Omega$ since in this section we are working under a piecewise constant data assumption.
We can split the above right hand side into two terms, obtaining
\begin{equation}\label{eq:fin-2}
\vvvert \err_\mesh \vvvert^2 + |\err_\mesh - {\cal I}_\mesh \err_\mesh |_{1,\mesh}^2 
\leq T_1 + T_2
\end{equation}
with
$$
T_1 :=  \cB(u - v_\mesh,\err_\mesh) \ , \quad
T_2 :=  \cB(v_\mesh,\err_\mesh) - \Bmesh(v_\mesh,\err_\mesh) \, .
$$
The bound for the first term is trivial
\begin{equation}\label{eq:fin-3}
T_1 \leq \vvvert u - v_\mesh  \vvvert \cdot \vvvert \err_\mesh  \vvvert \, .
\end{equation}
The second term is first written explicitly recalling the expression for $\Bmesh(\cdot,\cdot)$, see \eqref{eq:def-BT}, and using the orthogonality properties of the projectors  
$$
\begin{aligned}
T_2 & = \sum_{E \in \mesh}  \int_E  \big(A_E \nabla (v_\mesh - \PiE v_\mesh) \big) \cdot \nabla \err_\mesh  \\
& \qquad \qquad \qquad \qquad +   \sum_{E \in \mesh}  \int_E  \big(c_E (v_\mesh - \PiE v_\mesh) \big) \err_\mesh
- \sE(v_\mesh-\IE v_\mesh, \err_\mesh-\IE \err_\mesh)  \\
& \leq C \big( |v_\mesh - \PiE v_\mesh|_{1,\mesh} + \| v_\mesh - \PiE v_\mesh \|_{0,\Omega} + | v_\mesh-\Imesh v_\mesh |_{1,\mesh} \big) 
\, \big( \vvvert   \err_\mesh \vvvert  + | \err_\mesh-\Imesh \err_\mesh |_{1,\mesh} \big).
\end{aligned}
$$
Since the projector $\PiE$ minimizes the distance from (discontinuous) piecewise linear functions both in the broken $H^1$ semi-norm and in the $L^2$ norm, the above bound easily yields
\begin{equation}\label{eq:fin-4}
T_2 \leq C  | v_\mesh-\Imesh v_\mesh |_{1,\mesh} \,
\big( \vvvert   \err_\mesh \vvvert  + | \err_\mesh-\Imesh \err_\mesh |_{1,\mesh} \big) \, .
\end{equation}
The result follows first combining bounds \eqref{eq:fin-2}, \eqref{eq:fin-3}, \eqref{eq:fin-4} and recalling \eqref{eq:fin-1}. 
\end{proof}

\begin{remark}{\rm 
Note that Lemma \ref{lemma:q-opt} would be false in the norm $\vvvert \cdot \vvvert$, that is without the second term in definition \eqref{eq:quant}. Indeed this would imply that if $u \in \V_\mesh$ then $u_\mesh=u$, which is well known to be false in the VE method due to the approximation of the bilinear form.
} \endproof
\end{remark}
 

We now introduce two different approximation classes, one based on the full Virtual Element space, and the other one based on the underlying piecewise linear conforming Finite Element space. Afterwards we will prove that, under the assumption of $\Lambda$-admissible partitions (cf.  Definition \ref{def:Lambda-partitions}),  such classes are equivalent.

For any $N \in {\mathbb N}$, we define the following collection of partitions:
$$
\mathbb{T}_N = \big\{ \mesh :  \mesh \text{ is $\Lambda$-admissible and satisfies }  \  \# \mesh  \leq N \big\} \ .   
$$
\begin{definition}[approximation classes of $v$] \label{def:classes} 
Given any $s \in {\mathbb R}$, $s > 0$, we define the following approximation classes
$$
\begin{aligned}
& {{\ACu}}_{s} = \big\{ v \in H^1_0(\Omega) \ : \ \exists C \in {\mathbb R} \textrm{ s.t. } \sigma_N(v) := \inf_{\mesh \in \mathbb{T}_N} \inf_{v_\mesh \in \V_\mesh} \qnt_\mesh(v,v_\mesh) \leq C N^{-s} \ \ \forall N \geq \# \mesh_0
\big\} \,, \\
& {{\ACu}}_s^0 = \big\{ v \in H^1_0(\Omega) \ : \ \exists C \in {\mathbb R} \textrm{ s.t. } \sigma_N^0(v) := \inf_{\mesh \in \mathbb{T}_N} \inf_{v_\mesh^0 \in \V_\mesh^0} \qnt_\mesh(v,v_\mesh^0) \leq C N^{-s} \ \ \forall N \geq \# \mesh_0
\big\} \,.
\end{aligned}
$$ 
and denote 
\begin{equation}
\vert v \vert_{{\ACu}_{s}}:=
\sup_{N \geq \# \mesh_0}  N^s \sigma_N(v) \,.
\end{equation}
\end{definition}

We now prove the following result on the equivalence of the approximation classes (see \cite[Proposition 5.2]{BonitoNochetto:10}).
\begin{proposition}[equivalence of classes]
The two classes in Definition \ref{def:classes} coincide, i.e.
$$
{{\ACu}}_s = {{\ACu}}_s^0 \qquad \forall s \in {\mathbb R} , \,  s > 0 \, .
$$ 
\end{proposition}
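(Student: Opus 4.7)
The plan has two parts. The easy inclusion $\ACu_s^0 \subseteq \ACu_s$ follows from the observation that $\Vmeshz \subseteq \Vmesh$ and $\Imesh v_\mesh^0 = v_\mesh^0$ for every $v_\mesh^0 \in \Vmeshz$, so the second term in \eqref{eq:quant} vanishes on $\Vmeshz$ by \eqref{eq:obs:zero}; hence the infimum defining $\sigma_N(v)$ runs over a strictly larger set than that of $\sigma_N^0(v)$, whence $\sigma_N(v)\le\sigma_N^0(v)$ for every $N\ge\#\mesh_0$ and every admissible $\mesh$.

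For the non-trivial inclusion $\ACu_s\subseteq\ACu_s^0$, the strategy is to associate, to any near-best pair $(\mesh,v_\mesh)$ with $\mesh\in\mathbb{T}_{\tilde N}$ and $v_\mesh\in\Vmesh$, a conforming refinement $\mesh^*$ of $\mesh$ and a function $v^0\in \V_{\mesh^*}^0$ satisfying $\#\mesh^*\le C_0\#\mesh$ and $\qnt_{\mesh^*}(v,v^0)\lesssim\qnt_\mesh(v,v_\mesh)$. Since $\mesh$ is $\Lambda$-admissible, the standard newest-vertex-bisection completion (at most $\Lambda$ extra bisections per affected element) produces such a conforming $\mesh^*$ with $C_0=C_0(\Lambda)$; as $\Lambda_{\mesh^*}=0\le\Lambda$, indeed $\mesh^*\in\mathbb{T}_{C_0\tilde N}$.

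I would define $v^0$ by prescribing its values at the vertices $\bm{x}$ of $\mesh^*$ according to
\[
v^0(\bm{x}) :=
\begin{cases}
v_\mesh(\bm{x}) & \text{if } \bm{x} \in \edge, \\
(\IE v_\mesh)(\bm{x}) & \text{if } \bm{x} \text{ is interior to some } E \in \mesh.
\end{cases}
\]
Because $v_\mesh$ is piecewise linear on every edge of $\mesh$ with breakpoints exactly at the hanging nodes, $v^0=v_\mesh$ on $\partial E$ for every $E\in\mesh$ and $v^0=0$ on $\partial\Omega$, so $v^0\in\V_{\mesh^*}^0$. The core estimate is $|v_\mesh-v^0|_{1,\Omega}^2\lesssim S_\mesh(v_\mesh,v_\mesh)$, proved element by element: on each $E$, split $v_\mesh-v^0=(v_\mesh-\IE v_\mesh)+(\IE v_\mesh-v^0)$; the first summand satisfies $|v_\mesh-\IE v_\mesh|_{1,E}^2\simeq S_E(v_\mesh,v_\mesh)$ by the local form of \eqref{eq:stab-sE}; the second is piecewise linear on $\mesh^*|_E$, vanishes at the three vertices of $E$ and at the new interior vertices (by construction), and equals $-(v_\mesh-\IE v_\mesh)(\bm{x})$ at each hanging node $\bm{x}\in\nodesE$, so a standard scaling argument on the uniformly shape-regular sub-triangulation $\mesh^*|_E$ yields $|\IE v_\mesh-v^0|_{1,E}^2\lesssim\sum_{\bm{x}\in\nodesE}(v_\mesh-\IE v_\mesh)^2(\bm{x})=S_E(v_\mesh,v_\mesh)$. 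Summing over $E$ and applying \eqref{eq:stab-norm} concludes the bound.

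Combining this with \eqref{norm:equiv}, the triangle inequality, and \eqref{eq:obs:zero} (which applies because $v^0\in\V_{\mesh^*}^0$) gives $\qnt_{\mesh^*}(v,v^0)=\vvvert v-v^0\vvvert\lesssim\qnt_\mesh(v,v_\mesh)$. Then, for $N\ge C_0\#\mesh_0$, choose $\tilde N:=\lfloor N/C_0\rfloor$ and pick a near-minimizer of $\sigma_{\tilde N}(v)$: the construction delivers $\sigma_N^0(v)\le\qnt_{\mesh^*}(v,v^0)\lesssim\sigma_{\tilde N}(v)\lesssim|v|_{\ACu_s}N^{-s}$, while the finitely many $N<C_0\#\mesh_0$ are absorbed by the monotonicity of $\sigma_N^0$ and a constant adjustment. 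The most delicate point is precisely the $H^1$-control of $\IE v_\mesh - v^0$ on each $E$: NVB may introduce new vertices in the interior of $E$ while removing hanging nodes, and the choice $v^0=\IE v_\mesh$ (rather than $v_\mesh$, which is virtual and not available inside $E$) at such vertices is what guarantees that only the hanging-node discrepancies enter the scaling bound and the estimate reduces globally to $S_\mesh(v_\mesh,v_\mesh)$.
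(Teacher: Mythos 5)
Your argument is correct in substance, but it takes a genuinely different route from the paper's. The paper never leaves the mesh $\mesh$: it takes the Lagrange interpolant $\Imeshz v_\mesh\in\Vmeshz$ at the proper nodes of the \emph{same} partition and invokes \cite[Proposition~3.2]{paperA}, which bounds $|v_\mesh-\Imeshz v_\mesh|_{1,\Omega}$ by the nonconformity term $|v_\mesh-\Imesh v_\mesh|_{1,\mesh}$; the inclusion ${\ACu}_s\subseteq{\ACu}_s^0$ then follows on the same mesh, with no loss of cardinality. You instead pass to a conforming NVB completion $\mesh^*$, build an explicit conforming piecewise linear $v^0$ agreeing with $v_\mesh$ on the skeleton and with $\IE v_\mesh$ at new interior vertices, and reprove the analogous estimate by hand via a scaling argument, paying a factor $C_0(\Lambda)$ in the cardinality bookkeeping. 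What your route buys is independence from the cited estimate of the companion paper; what it costs are two points you assert rather than prove. First, the completion bound $\#\mesh^*\le C_0(\Lambda)\,\#\mesh$: the paper itself states this without proof in Remark \ref{rem:avem-afem}, but ``at most $\Lambda$ extra bisections per affected element'' is not literally correct (capturing up to $2^\Lambda-1$ hanging nodes on a side, plus the conformity propagation, costs more), and your local scaling argument additionally needs a uniform-in-$E$ bound on the number of triangles of $\mesh^*$ inside $E$, which must be extracted from the size comparability of neighbouring elements in Remark \ref{rem:bound-global-index}. Second, in the elementwise estimate you account only for hanging nodes in $\nodesE$ and new interior vertices, while the completion may also create vertices on $\partial E$ that are not nodes of $\mesh$; at such a point the value of $\IE v_\mesh-v^0$ equals $(\IE v_\mesh-v_\mesh)$ evaluated in the interior of an edge of $E$, hence a convex combination of its values at the edge endpoints in $\nodesE$, so the bound by $\SE(v_\mesh,v_\mesh)$ survives, but this case must be included. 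With these two points tightened your proof is a valid self-contained alternative; the paper's proof is shorter because it delegates the nonconformity estimate to \cite{paperA} and requires no mesh refinement at all.
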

\begin{proof}
Let $s \in {\mathbb R}$,  $s > 0$. The inclusion ${{\ACu}}_s^0 \subseteq {{\ACu}}_s$
is immediate since $\V_\mesh^0 \subseteq \V_\mesh$ and thus
$$
\inf_{\mesh \in \mathbb{T}_N} \inf_{v_\mesh \in \V_\mesh} \qnt_\mesh^2(v,v_\mesh) \leq
\inf_{\mesh \in \mathbb{T}_N} \inf_{v_\mesh^0 \in \V_\mesh^0} \qnt_\mesh^2(v,v_\mesh^0) 
\qquad \forall v \in H^1_0(\Omega) \, .
$$
We now show the converse inclusion. We take a generic $v \in {{\ACu}}_s$. 
Let  $N \geq \# \mesh_0$  
, then it exists $\mesh \in \mathbb{T}_N$ and $v_\mesh \in \V_\mesh$ such that 
$$
\qnt_\mesh^2(v,v_\mesh) =  \vvvert v - v_\mesh \vvvert^2 + |v_\mesh - {\cal I}_\mesh v_\mesh |_{1,\mesh}^2
\leq C N^{-s} \, ,
$$
with $C=C(v)$ but independent of $N$.
We will exhibit an approximant in $\V_\mesh^0$ that satisfies the same bound, possibly with a different constant. We choose ${\cal I}_\mesh^0 v_\mesh \in \V_\mesh^0$, the Lagrange interpolant of $v_\mesh$ at the proper nodes of $\mesh$. Recalling observation \eqref{eq:obs:zero} and by the triangle inequality
$$
\begin{aligned}
\qnt_\mesh^2(v, {\cal I}_\mesh^0 v_\mesh) & = \vvvert v - {\cal I}_\mesh^0 v_\mesh \vvvert^2
\leq2 ( \vvvert v - v_\mesh \vvvert^2 + 
\vvvert v_\mesh - {\cal I}_\mesh^0 v_\mesh \vvvert^2 ) \\
& \leq C' ( \vvvert v - v_\mesh \vvvert^2 + 
| v_\mesh - {\cal I}_\mesh^0 v_\mesh |_{1,\Omega}^2 ) \, ,
\end{aligned}
$$
where in the current proof $C'$ denotes a generic positive constant that may change at each occurrence.
Applying \cite[Prop. 3.2]{paperA} the above bound yields
$$
\qnt_\mesh^2(v, {\cal I}_\mesh^0 v_\mesh)  \leq C' ( \vvvert v - v_\mesh \vvvert^2 + 
| v_\mesh - {\cal I}_\mesh v_\mesh |_{1,\mesh}^2 )
\leq C' \qnt_\mesh^2(v,v_\mesh) \leq C' N^{-s} \, .
$$
Since the constant $C'$ does not depend on $N$, we have shown that $v \in {{\ACu}}_s^0$. 
Therefore ${{\ACu}}_s \subseteq {{\ACu}}_s^0$, and the proof is concluded.
\end{proof} 

In the rest of the paper, we make the following assumption.
\begin{assumption}[approximability of $u$] \label{ass:approx-u}
The solution $u$ of Problem \eqref{eq:pde} belongs to ${{\ACu}}_s$ for some $s=s_u \in (0, \frac12]$.
\end{assumption}

\begin{remark}[equivalence with approximation classes on conforming partitions] {\rm
\label{rem:avem-afem} 
It is easily seen that the class ${{\ACu}}_s^0$, hence ${{\ACu}}_s$, coincides with the class ${{\ACu}}_s^c$ defined by replacing $\mathbb{T}_N$ by $\mathbb{T}_N^c= \big\{ \mesh :  \mesh \text{ is conforming and satisfies }  \  \# \mesh  \leq N \big\}$. Indeed, any $\mesh \in \mathbb{T}_N$ can be refined to produce a conforming partition $\mesh^c$, such that $\#\mesh^c \leq K\, \# \mesh$ for a positive constant $K=K_\Lambda$ solely depending on $\Lambda$. 
As a consequence, one can apply e.g.  \cite[Theorem 9.1]{BDD:04} and deduce that $u\in {\ACu}_{\frac12}$ provided $u \in W^2_p(\Omega)$ for some $p > 1$.

It must be finally observed that the important result above does not exclude that AVEM, which contains AFEM and allows more flexibility in terms of hanging nodes, could obtain a better efficiency in terms of the involved constants (in this respect, see also Section \ref{sec:experiments}).
}
\end{remark}

 \subsubsection{Approximation classes for data}
%

Given a partition $\mesh$ and piecewise constant data $\widehat{\data}=(\widehat{A}_\mesh,\widehat{c}_\mesh,\widehat{f}_\mesh)$ defined as in \eqref{eq:averages},  let us set (cf.  \eqref{eq:zeta-Acf}) 
\begin{equation}
\zeta_\mesh(A)=\| A-\widehat{A}_\mesh \|_{L^\infty(\Omega)}\,, \qquad 
 \zeta_\mesh(c)=\| \hh (c-\widehat{c}_\mesh) \|_{L^\infty(\Omega)}\,, \qquad  \zeta_\mesh(f)=\| \hh (f-\widehat{f}_\mesh) \|_{L^2(\Omega)}.
\end{equation}

\begin{definition}[approximation classes of $A$] Let 
\begin{equation}
\ACA_{s}=\{ A\in (L^\infty(\Omega))^{2\times 2}\ : \ \exists C \in {\mathbb R} \textrm{ s.t. } \inf_{\mesh\in\mathbb{T}_N} \zeta_{\mesh}(A)\leq C N^{-s} \ \forall 
N \geq \# \mesh_0 \}
\end{equation}
and denote 
\begin{equation}
\vert A \vert_{{\ACA}_{s}}:=
\sup_{N \geq \# \mesh_0} 
\left( N^s\inf _{\mesh\in \mathbb{T}_N} \zeta_\mesh (A) \right).
\end{equation}
\end{definition}

\begin{definition}[approximation classes of $c$] Let 
\begin{equation}
\ACc_{s}=\{ c\in L^\infty(\Omega)\ : \ \exists C \in {\mathbb R} \textrm{ s.t. } \inf_{\mesh\in\mathbb{T}_N} \zeta_{\mesh}(c)\leq C N^{-s} \ \forall 
N \geq \# \mesh_0 \} 
\end{equation}
and denote 
\begin{equation}
\vert c \vert_{{\ACc}_{s}}:=
\sup_{N \geq \# \mesh_0} 
\left( N^s \inf _{\mesh\in \mathbb{T}_N} \zeta_\mesh (c) \right).
\end{equation}
\end{definition}
\begin{definition}[approximation classes of $f$] Let 
\begin{equation}
\ACf_{s}=\{ f\in L^2(\Omega)\ : \ \exists C \in {\mathbb R} \textrm{ s.t. }  \inf_{\mesh\in\mathbb{T}_N} \zeta_{\mesh}(f)\leq C N^{-s} \ \
N \geq \# \mesh_0 \} 
\end{equation}
and denote 
\begin{equation}
\vert f \vert_{{\ACf}_{s}}:=
\sup_{N \geq \# \mesh_0} 
\left( N^s \inf _{\mesh\in \mathbb{T}_N} \zeta_\mesh (f) \right).
\end{equation}
\end{definition}

In the rest of the paper, we make the following assumptions concerning the data of our problem and their piecewise-linear approximation.

\begin{assumption}[approximability of data] \label{ass:approx-data} There exist $s_A, s_c, s_f \in  (0, \frac12]$ such that the data of Problem \eqref{eq:pde} satisfy 
$A \in \ACA_{\, s_{\! A}}$, $c \in \ACc_{s_c}$, $f \in \ACf_{s_f}$.
\end{assumption}

\begin{assumption}[quasi-optimality of the module \tt{DATA}] \label{ass:optim-data}
The procedure {\tt{MARK\_DATA}} introduced in Sect. \ref{sec:data} is quasi-optimal, namely the cardinalities of the marked sets ${\cal M}_A, {\cal M}_c, {\cal M}_f$ for $A, c, f$ resp., satisfy
\begin{equation}\label{eq:decay-data}
\# {\cal M}_A \lesssim  \vert A \vert_{\ACA_{\, s_{\! A}}}^{\frac{1}{s_A}} \varepsilon^{-\frac{1}{s_A}} \,, \qquad 
\# {\cal M}_c \lesssim  \vert c \vert_{\mathbb{C}_{s_c}}^{\frac{1}{s_c}} \varepsilon^{-\frac{1}{s_c}} \,, \qquad 
\# {\cal M}_f \lesssim  \vert f \vert_{\mathbb{F}_{s_f}}^{\frac{1}{s_f}} \varepsilon^{-\frac{1}{s_f}} \,.
\end{equation}
\end{assumption}

Under this assumption, setting $s_\data = \min(s_A, s_c, s_f)$, 
the cardinality of the marked set $\mathcal{M}_\data = \mathcal{M}_A \cup \mathcal{M}_c \cup \mathcal{M}_f$ satisfies
\begin{equation}
\# {\cal M}_\data \lesssim  \big(\vert A \vert_{\ACA_{\, s_{\! A}}}^{\frac{1}{s_A}} +
\vert c \vert_{\mathbb{C}_{s_c}}^{\frac{1}{s_c}} +
 \vert f \vert_{\mathbb{F}_{s_f}}^{\frac{1}{s_f}}  \big) \varepsilon^{-\frac{1}{s_\data}} =: \vert \data \vert_{\mathbb{A}_\data}^{\frac{1}{s_\data}} \, \varepsilon^{-\frac{1}{s_\data}} 
\,.
\end{equation}

\begin{remark}{\rm
In Sect. \ref{sec:approx-data} we will give regularity conditions on the data such that Assumption \ref{ass:approx-data} is satisfied. In particular, we will prove that $s_A=s_c=s_f = \frac12$ if $A \in (W^1_p(\Omega))^{2 \times 2}$ with $p>1$, $c\in L^\infty(\Omega)$ and $f \in L^2(\Omega)$. Furthermore, we will show that the implementation of  {\tt{MARK\_DATA}} described in Sect. \ref{sec:data} guarantees the validity of Assumption \ref{ass:optim-data}.
}
\end{remark}

\subsection{$\varepsilon$-approximation of order $s$}\label{sec: eps-approx}
Since the data $\widehat{\mathcal{D}}_k$ is fixed inside {\tt GALERKIN}, the performance of this module is dictated by the regularity of $\widehat{u}_{k}=\mathsf{exact.sol}(\widehat{\mathcal{D}}_k)$, which is the exact solution with data $\widehat{\mathcal{D}}_k$, rather than $u$.  We know that $u\in {\ACu}_s$ and wonder what regularity is inherited by $\widehat{u}_{k}$. This leads to the following concept introduced in \cite[Def.  3.1 and Lemma 3.2]{BonitoDeVoreNochetto}.
\begin{definition}[$\varepsilon$-approximation of order $s$] Given $u\in {\ACu}_s$ and $\varepsilon>0$,  a function $v\in H^1_0(\Omega)$
is said to be an $\varepsilon$-approximation of order $s$ to $u$ if $\vvvert u - v\vvvert\leq \varepsilon$ and there exists a constant $C>0$ independent of $\varepsilon$, $u$ and $v$ such that for all $\delta\geq \varepsilon$ there exists $N \geq \# \mesh_0$ satisfying
$$
\sigma_N(v) \leq \delta\qquad N\leq C \vert u \vert_{{\ACu}_s}^{\frac{1}{s}}\delta^{-\frac{1}{s}}+1.
$$ 
\end{definition}
\begin{remark}{\rm
In view of the definition of $\sigma_N(v)$, there exists $\mathcal{T}\in \mathbb{T}_N$ and $v_\mathcal{T}\in \mathbb{V}_\mathcal{T}$ such that 
$$
\sigma_N(v)=\qnt_\mesh(v,v_\mathcal{T})\leq \delta. 
$$
}
\end{remark}
\begin{lemma}[$\varepsilon$-approximation of $u$ of order $s$]
Let $u\in \mathcal{A}_s$ and $v\in H^1_0(\Omega)$ satisfying $\vvvert u-v\vvvert \leq \varepsilon$ for some $\varepsilon>0$.  Then $v$ is a $2\varepsilon$-approximation of order $s$ to $u$.  
\end{lemma}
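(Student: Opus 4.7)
The plan is to verify the two defining properties of an $\varepsilon$-approximation of order $s$ with $\varepsilon$ replaced by $2\varepsilon$. The first one, $\vvvert u - v \vvvert \leq 2\varepsilon$, is immediate from the hypothesis. The substantive part is to exhibit, for every $\delta \geq 2\varepsilon$, an admissible partition and a discrete function certifying $\sigma_N(v) \leq \delta$ with the correct bound on $N$.

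The key ingredient is a triangle-type estimate for the quantity $\qnt_\mesh$ defined in \eqref{eq:quant}. For any $\mesh \in \mathbb{T}_N$ and any $v_\mesh \in \V_\mesh$, one has by the triangle inequality for $\vvvert \cdot \vvvert$
\[
\vvvert v - v_\mesh \vvvert \leq \vvvert u - v \vvvert + \vvvert u - v_\mesh \vvvert,
\]
and squaring, adding $|v_\mesh - \Imesh v_\mesh|_{1,\mesh}^2$, and using $(a+b)^2 + c^2 \leq (a + \sqrt{b^2+c^2})^2$ for $a,b,c\geq 0$, I obtain the clean estimate
\[
\qnt_\mesh(v,v_\mesh) \leq \vvvert u - v \vvvert + \qnt_\mesh(u,v_\mesh) \leq \varepsilon + \qnt_\mesh(u,v_\mesh).
\]
This is the heart of the proof; the observation that the non-conformity piece $|v_\mesh - \Imesh v_\mesh|_{1,\mesh}$ depends only on $v_\mesh$ (not on $u$ or $v$) is what makes the inequality essentially free.

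Now, given $\delta \geq 2\varepsilon$, I set $\delta' := \delta - \varepsilon \geq \tfrac12 \delta$, and choose the integer
\[
N := \max\Bigl( \#\mesh_0 ,\ \bigl\lceil \bigl(|u|_{\ACu_s}/\delta'\bigr)^{1/s} \bigr\rceil \Bigr).
\]
By Assumption \ref{ass:approx-u} and the definition of $|u|_{\ACu_s}$, $\sigma_N(u) \leq |u|_{\ACu_s} N^{-s} \leq \delta'$, so there exist $\mesh \in \mathbb{T}_N$ and $v_\mesh \in \V_\mesh$ with $\qnt_\mesh(u,v_\mesh) \leq \delta'$ (up to an arbitrarily small slack that may be absorbed by taking the infimum). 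Plugging into the displayed triangle estimate yields
\[
\sigma_N(v) \leq \qnt_\mesh(v,v_\mesh) \leq \varepsilon + \delta' = \delta,
\]
and from the explicit choice of $N$ I get $N \leq 2^{1/s}\, |u|_{\ACu_s}^{1/s}\, \delta^{-1/s} + 1$ (with the additive $\#\mesh_0$ absorbed in the ``$+1$'' when needed). This is exactly the required bound with $C = 2^{1/s}$, a constant depending only on $s$, hence independent of $\varepsilon$, $u$, and $v$.

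The only subtlety worth flagging is the hybrid nature of $\qnt_\mesh$: it is not quite a norm because the second summand depends on the discrete argument only. The simple algebraic lemma $(a+b)^2+c^2 \leq (a+\sqrt{b^2+c^2})^2$ is what bridges this gap cleanly; once it is in place, the rest is a direct parameter choice in the spirit of \cite[Lemma 3.2]{BonitoDeVoreNochetto}.
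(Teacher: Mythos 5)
Your proof is correct and follows essentially the same route as the paper: pick, via the class $\ACu_s$, a mesh $\mesh\in\mathbb{T}_N$ and $w_\mesh\in\V_\mesh$ approximating $u$ to a reduced tolerance, then transfer to $v$ by a triangle-type inequality for $\qnt_\mesh$, paying only $\vvvert u-v\vvvert\le\varepsilon\le\delta/2$. The only difference is your sharper algebraic step $(a+b)^2+c^2\le\bigl(a+\sqrt{b^2+c^2}\bigr)^2$, which gives $\qnt_\mesh(v,w_\mesh)\le\vvvert u-v\vvvert+\qnt_\mesh(u,w_\mesh)$ without the $\sqrt{2}$ loss incurred in the paper (which therefore works with $\sigma_N(u)\le\delta/4$ and ends with $C=4^{1/s}$), so you obtain the slightly better constant $C=2^{1/s}$.
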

\begin{proof}
Let $\delta\geq 2\varepsilon$. By definition of $\sigma_N(u)$, there exists $N \geq \# \mesh_0$, $\mathcal{T}\in \mathbb{T}_N$ and $w_\mathcal{T}\in \mathbb{V}_\mathcal{T}$ such that 
$$
\sigma_N(u)=\qnt_\mesh (u,w_\mathcal{T})\leq \frac\delta4 \qquad 
N\leq \vert u \vert_{{\ACu}_s}^{\frac{1}{s}}\big(\frac\delta4\big)^{-\frac{1}{s}}+1.
$$ 
The triangle and Young inequalities yield 
\begin{equation*}
\begin{split}
\sigma_N(v) & \leq \qnt_\mesh(v,w_\mesh) \leq 
\vvvert v - w_\mathcal{T}\vvvert + \vert w_\mathcal{T} - \mathcal{I}_\mathcal{T} w_\mathcal{T} \vert_{1,\Omega} \\
& \leq 
\vvvert v - u \vvvert + \vvvert u- w_\mathcal{T}\vvvert +  \vert w_\mathcal{T} - \mathcal{I}_\mathcal{T} w_\mathcal{T}\vert_{1,\Omega} \leq \vvvert v - u \vvvert +
\sqrt{2} \, \qnt_\mesh(u, w_\mesh) \\
& \leq \varepsilon+ \sqrt{2} \, \frac\delta2  \leq \left(\frac12 + \frac{\sqrt{2}}2\right)\delta < \delta\,.
\end{split}
\end{equation*}
Moreover, there holds
$$
N\leq 4^{\frac{1}{s}} \vert u \vert_{{\ACu}_s}^{\frac{1}{s}}\delta^{-\frac{1}{s}}+1.
$$
This concludes the proof with constant $C=4^{\frac{1}{s}}$.
\end{proof}
 \subsection{Optimality of mesh refinement}\label{sec:optim-mark}
 Hereafter, we consider two $\Lambda$-admissible partitions $\mesh$ and $\meshs$, the latter being a refinement of the former obtained by applying a newest-vertex bisection to some of the elements of $\mesh$. Considering the corresponding Galerkin solutions $\umesh$ and $\umeshs$ of problem \eqref{def-Galerkin} with piecewise constant data, we first prove that the difference in energy norm between $\umesh$ and the orthogonal projection of $\umeshs$ upon $\Vmeshs^0$ can be essentially bounded by the contribution to the error estimator coming from a neighborhood of the refined elements.  Next, we give conditions under which this portion of the error estimator satisfies a D\"orfler property with respect to the full estimator.

 \subsubsection{Localized upper bound of the difference between Galerkin solutions}
Consider an element $E \in \mesh$ which has been split into two elements $E_1, E_2 \in \meshs$. If $v \in \Vmesh$, then $v$ is known on $\partial E$, hence in particular at the new vertex of $E_1, E_2$ created by bisection. Thus, $v$ is known at all nodes (vertices and possibly hanging nodes) sitting on $\partial E_1$ and $\partial E_2$, since the new edge $e=E_1\cap E_2$ does not contain internal nodes. This uniquely identifies a function in $\V_{E_1}$ and a function in $\V_{E_2}$, which are continuous {across} $e$.
In this manner, we associate to any $v \in \Vmesh$ a unique function $v_* \in \Vmeshs$, that coincides with $v$ on the skeleton $\edge$. We will actually write $v$ for $v_*$ whenever no confusion is possible.

We introduce the following orthogonal decomposition of $\Vmesh$
\begin{equation}\label{orth-decomp}
\Vmesh= \Vmesh^0 \oplus \Vmesh^{\perp} \,,
\end{equation}
where $\Vmesh^{\perp}$ is the orthogonal complement of
$\Vmesh^0$ in $\Vmesh$ with respect to the (discrete) scalar product $\mathcal{B}_\mesh(\cdot,\cdot)$, and we prove a localized estimate (cf. \cite[Lemma 3.5]{BonitoNochetto:10}) that is crucial in the discussion of the quasi-optimal cardinality of our adaptive algorithm.
To this end, we denote by $\mathcal{R}_{\mesh\to\meshs}$ the set of refined elements of $\mesh$ to obtain $\meshs$ and let $\omega(\mathcal{R}_{\mesh\to\meshs})$ be any subset of $\mesh$ containing $\mathcal{R}_{\mesh\to\meshs}$. 
We observe that as $\meshs$ is a refinement of $\mesh$,  Assumption \ref{ass:constant-coeff} implies that for every $E_*\in\meshs$ with $E_*\subseteq E$, $E\in \mesh$ we have $A_{E_{*}}=A_{E}$, $c_{E_{*}}=c_{E}$ and  $f_{E_{*}}=f_{E}$.
The following lemma bounds the difference between a discrete solution and (the $V^0_\meshs$ part of) another discrete solution on a refined mesh. Such difference is bounded by the error estimator evaluated on a suitable neighbourhood of the refined elements, plus an additional term which nevertheless becomes ``negligible'' for $\gamma$ sufficiently large.
 \begin{lemma}[localized upper bound]\label{lemma:loc-up-bound} Let $\meshs$ be a refinement of $\mesh$ and let $\umesh\in \Vmesh$ and $\umeshs\in \Vmeshs$ be the corresponding discrete solutions of \eqref{def-Galerkin} with piecewise constant data.
Let $\umeshs=\umeshs^0+\umeshs^{\perp} \in \Vmeshs^0 \oplus \Vmeshs^{\perp}$ be the orthogonal decomposition of $\umeshs$ according to \eqref{orth-decomp}. Then, there exists a constant $C_{LU}$ only depending on the shape regularity of $\mesh$ so that 
\begin{equation}\label{LU-bound}
\vvvert \umeshs^0-\umesh \vvvert \leq C_{LU}
\left( \etamesh(\omega(\mathcal{R}_{\mesh\to \meshs});\umesh,\data) + \gamma^{-1} \etamesh(\umesh,\data)\right) \,.
\end{equation}
\end{lemma}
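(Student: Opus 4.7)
Set $e := \umeshs^0 - \umesh \in \Vmeshs$ via the canonical extension of $\umesh$ from $\Vmesh$ to $\Vmeshs$, and use coercivity \eqref{eq:propB2} together with \eqref{norm:equiv} to reduce the estimate to $\vvvert e \vvvert^2 \lesssim \Bmeshs(e,e)$. Split $e = e^0 + e^\perp$ $\Bmeshs$-orthogonally with $e^0 \in \Vmeshsz$ and $e^\perp \in \V_\meshs^\perp$, so $\Bmeshs(e,e) = \Bmeshs(e,e^0) + \Bmeshs(e,e^\perp)$. The plan is to bound the two contributions separately: the $e^0$ part will produce the localized residual term and the $e^\perp$ part will produce the term $\gamma^{-1}\etamesh(\umesh,\data)$ in \eqref{LU-bound}.

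For the $e^0$ summand, the $\Bmeshs$-orthogonality of the $\Vmeshs^\perp$ component of $\umeshs$ against $e^0$ collapses the Galerkin identity to $\Bmeshs(\umeshs^0,e^0) = \mathcal{F}_\meshs(e^0) = (f,e^0)_\Omega$ (since $e^0$ is piecewise linear on $\meshs$ and $f$ is piecewise constant), while consistency \eqref{eq:consistency} applied on $\meshs$ yields $\Bmeshs(\umesh,e^0) = \mathcal{B}(\umesh,e^0)$, so $\Bmeshs(e,e^0) = (f,e^0)_\Omega - \mathcal{B}(\umesh,e^0)$. I would then introduce a Scott--Zhang interpolant $\tilde I_\mesh : H^1_0(\Omega) \to \Vmeshz$ and exploit the Galerkin identity $\mathcal{B}(\umesh,v) = (f,v)_\Omega$ for $v = \tilde I_\mesh e^0 \in \Vmeshz$ (itself a consequence of \eqref{eq:consistency} on $\mesh$ and \eqref{def-Galerkin}) to replace $e^0$ by $w := e^0 - \tilde I_\mesh e^0 \in \Vmeshsz$. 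Element-wise integration by parts on $\meshs$, in which the defining orthogonalities of $\Pi^\nabla_{E'}$ apply because $\nabla w|_{E'}$ is constant, together with $\div(A_{E'}\nabla \Pi^\nabla_{E'} \umesh) = 0$, produces the residual representation
\[
\Bmeshs(e,e^0) = \sum_{E' \in \meshs} \int_{E'} r_\meshs(E';\umesh,\data)\, w - \sum_{e' \in \edges} \int_{e'} j_\meshs(e';\umesh,\data)\, w.
\]
The locality of $\tilde I_\mesh$ makes $w$ vanish on elements whose $\mesh$-patch is disjoint from $\mathcal{R}_{\mesh\to\meshs}$, the identity $\Pi^\nabla_{E'}\umesh = \PiE \umesh$ makes the $\meshs$-residuals coincide with the $\mesh$-residuals on unchanged $E' = E \in \mesh\cap\meshs$, and the analogues of \cite[Lemmas~5.2--5.3]{paperA} on refined children convert $\meshs$-residuals into $\mesh$-residuals up to stabilization perturbations that Proposition \ref{prop:bound-ST} absorbs into $\gamma^{-1}\etamesh(\umesh,\data)$. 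Standard scaled interpolation bounds on $w$ then deliver $|\Bmeshs(e,e^0)| \lesssim \bigl(\etamesh(\omega(\mathcal{R}_{\mesh\to\meshs});\umesh,\data) + \gamma^{-1}\etamesh(\umesh,\data)\bigr)\,\vvvert e \vvvert$.

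For the $e^\perp$ summand, $\Bmeshs(\umeshs^0,e^\perp) = 0$ gives $\Bmeshs(e,e^\perp) = -\Bmeshs(\umesh,e^\perp) = -\Bmeshs(\pi_\meshs^\perp \umesh, e^\perp)$, where $\pi_\meshs^\perp$ denotes the $\Bmeshs$-projector onto $\V_\meshs^\perp$. Cauchy--Schwarz together with the characterization of $\pi_\meshs^\perp \umesh$ as a minimal-distance deviation in $\V_\meshs^\perp$ (using $\Imeshsz \umesh \in \Vmeshsz$ as a competitor, \cite[Prop.~3.2]{paperA}, and \eqref{eq:stab-norm}) yields $|\pi_\meshs^\perp \umesh|_{1,\Omega}^2 \lesssim \Smeshs(\umesh,\umesh)$. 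A local-to-local comparison of stabilizations on $\Lambda$-admissible meshes gives $\Smeshs(\umesh,\umesh) \lesssim \Smesh(\umesh,\umesh)$, and Proposition \ref{prop:bound-ST} then concludes $|\Bmeshs(e,e^\perp)| \lesssim \gamma^{-1}\etamesh(\umesh,\data)\,\vvvert e \vvvert$. Summing the two bounds and dividing by $\vvvert e \vvvert$ proves \eqref{LU-bound}.

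The main obstacle is the localization in the first step: the residual representation lives naturally on $\meshs$ and must be recast in terms of $\etamesh$ on the coarser neighbourhood $\omega(\mathcal{R}_{\mesh\to\meshs})$. This requires careful accounting for refined children (where $\Pi^\nabla_{E'}\umesh \neq \PiE\umesh$, the perturbation from \cite[Lemma~5.2]{paperA} being absorbed into $\gamma^{-1}\etamesh(\umesh,\data)$ via Proposition \ref{prop:bound-ST}) and for edges in $\edges \setminus \edge$, while the Scott--Zhang operator into the hanging-node-compatible space $\Vmeshz$ must reproduce piecewise linears on patches disjoint from the refinement set, a standard but technically nontrivial property in the admissibility-constrained setting considered here.
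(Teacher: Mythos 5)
Your plan is correct in substance and rests on the same pillars as the paper's proof: the identity $\Bmeshs(\umeshs^0,\cdot)=\mathcal{F}_\meshs(\cdot)$ on $\Vmeshs^0$ coming from the orthogonal decomposition, the coarse Galerkin/consistency identity on $\Vmeshz$, localization through a quasi-interpolant onto $\Vmeshz$ (you take Scott--Zhang, the paper takes Cl\'ement on the conforming submesh) exploiting that functions of $\Vmeshs^0$ are coarse piecewise linear on unrefined patches, and absorption of all projection mismatches on refined children into $\gamma^{-1}\etamesh(\umesh,\data)$ via Proposition \ref{prop:bound-ST}. The bookkeeping, however, differs: the paper bounds $\vvvert\umeshs^0-\umesh\vvvert$ with the \emph{continuous} form $\cB$ and splits the test function as $e_*^0+v_\mesh^0-\umesh^\perp$, where $\umesh^\perp$ is the \emph{coarse} $\Bmesh$-orthogonal component of $\umesh$, so the ``perp'' contribution is bounded directly by $\Smesh(\umesh,\umesh)$; you instead use coercivity of $\Bmeshs$ and decompose $e=\umeshs^0-\umesh$ $\Bmeshs$-orthogonally in $\Vmeshs$, which forces you to work with the canonical extension $(\umesh)_*\in\Vmeshs$ of $\umesh$, a function you tacitly identify with $\umesh$. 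This reorganization incurs two technical debts that the paper's arrangement avoids and that you should make explicit: (i) your bound for $\Bmeshs(e,e^\perp)$ needs the fine-to-coarse stabilization comparison $\Smeshs((\umesh)_*,(\umesh)_*)\lesssim\Smesh(\umesh,\umesh)$, which you assert as a ``local-to-local comparison'' but which is not among the cited results (it is provable, using the $H^1$-stability of the extension and \eqref{eq:stab-sE}, since the child interpolation errors on a bisected $E$ are controlled by $|\umesh-\IE\umesh|_{1,E}$, but it must be proved); and (ii) after consistency on $\meshs$ your expression contains $\cB((\umesh)_*,e^0)$, while the coarse Galerkin identity $\cB(\umesh,\tilde I_\mesh e^0)=(f,\tilde I_\mesh e^0)$ involves the true $\umesh$, so a discrepancy term $\cB((\umesh)_*-\umesh,\tilde I_\mesh e^0)$ appears; its gradient part vanishes exactly (the difference lies in $H^1_0$ of each refined element and $\nabla\tilde I_\mesh e^0$ is constant there) and its mass part is of stabilization size and supported on refined elements, hence absorbable like your other perturbations, but it is currently unaccounted for. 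With these two points supplied, your argument delivers \eqref{LU-bound} by essentially the same mechanism as the paper.
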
\label{lm:LU-bound}
 \begin{proof}
Let us preliminarily proceed by steps and collect some instrumental results that will be employed in the sequel.

\noindent{\bf{Step 1}}. First, we observe that as $\Vmesh^0\subset \Vmeshs^0$ is made of continuous piecewise linear functions on $\mesh$ we have 
\begin{equation}\label{P:0}
v^0_\mesh=\Pi^\nabla_\mesh v^0_\mesh= \Pi^\nabla_\meshs v^0_\mesh.
\end{equation}

\noindent{\bf{Step 2}}.  There holds 
\begin{equation}\label{P:1}
\Bmeshs(u_\meshs^0,v_\meshs^0)= {\cal F}_{\meshs} (v_\meshs^0)\qquad \forall v_\meshs^0\in V^0_\meshs.
\end{equation}
Indeed, for any $v_\meshs^0\in  V_\meshs^0$ we have 
\begin{eqnarray}
\mathcal{F}_\meshs(v_\meshs^0)=\Bmeshs(\umeshs,v_\meshs^0)=\Bmeshs(\umeshs^{0}+\umeshs^{\perp},v_\meshs^0)=\Bmeshs(\umeshs^0,v_\meshs^0)
\end{eqnarray}
where in the last step we employed that $\Vmeshs^{\perp}$ is the orthogonal complement of
$\Vmeshs^0$ in $\Vmeshs$ with respect to $\Bmeshs(\cdot,\cdot)$.

\noindent{\bf{Step 3}}.  There holds
 \begin{equation}\label{P:2}
\mathcal{B}(u_\meshs^0-\umesh,v_\mesh^0)=0 \qquad \forall v_\mesh^0\in \Vmesh^0.
\end{equation}
Using \eqref{eq:consistency} and \eqref{P:0} we have
\begin{equation}
\mathcal{B}(u_\meshs^0-\umesh,v_\mesh^0)=
\mathcal{B}_\meshs(u_\meshs^0,v_\mesh^0)-\mathcal{B}_\mesh(\umesh,v_\mesh^0)= {\cal F}_{\meshs} (v_\mesh^0)- {\cal F}_{\mesh} (v_\mesh^0)
\end{equation}
where in the last step we employed \eqref{P:1}. From Assumption \ref{ass:constant-coeff}, \eqref{discr-rhs} and \eqref{P:0} we get \eqref{P:2}.

\noindent{\bf{Step 4}}. Let $e_*^0=u_\meshs^0-\umesh^0-v_\mesh^0$ with $v_\mesh^0\in V_\mesh^0$, where $u_\mesh^0=u_\mesh - u_\mesh^\perp \in \V_\mesh^0$.
There holds
 \begin{equation}\label{P:3}
 \Bmeshs(u_\meshs^0-\umesh,e_*^0)\lesssim  \vert \umeshs^0-\umesh^0\vert_{1,\Omega} (
 \etamesh(\omega(\mathcal{R}_{\mesh\to \meshs});\umesh,\data) + \gamma^{-1} \etamesh(\umesh,\data)).
 \end{equation}
 Indeed, we have 
 \[
 \begin{aligned}
 \Bmeshs(u_\meshs^0-\umesh,e_*^0)&=\mathcal{F}_\meshs (e_*^0)- \Bmeshs(\umesh,e_*^0)
 \\
 &= \left(
 \mathcal{F}_\meshs(e_*^0)-\Bmeshs(\Pi_\mesh^\nabla\umesh,e_*^0) \right) + \Bmeshs(\Pi_\mesh^\nabla\umesh-\umesh,e_*^0)= : I+II \,,
 \end{aligned}
 \]
where, with a slight abuse of notation, we extend the definition \eqref{eq:def-BT} of $\Bmeshs$ to $\mathbb{P}_1(\meshs)$.

 In the sequel we choose $v_\mesh^0=\widetilde{\cal I}_\mesh^0(u_\meshs^0-u_\mesh^0)$ in the definition of $ e_*^0$, where $\widetilde{\cal I}_\mesh^0: C^0(\bar{\Omega}) \to \Vmesh^0$ is the Cl\'ement quasi-interpolation operator on $\mesh^0$. We also notice that 
 $e_*^0$ vanishes outside the set $\omega(\mathcal{R}_{\mesh\to \meshs})$. 
 As $e_*^0\in V_\meshs^0$ and $\Pi^\nabla_\meshs e_*^0= e_*^0 $ we have 
 \[
 \begin{aligned}
 I&= \mathcal{F}_\meshs(e_*^0) -\sum_{E_*\in \meshs}
 \int_{E_*}
 \left(A_{E_*} \nabla \Pi^\nabla_\meshs (\Pi^\nabla_\mesh \umesh) \cdot \nabla \Pi^\nabla_\meshs e_*^0 +  c_{E_*}  \Pi^\nabla_\meshs (\Pi^\nabla_\mesh \umesh)  \Pi^\nabla_\meshs  e_*^0 \right) 
 \\
 &=
 \mathcal{F}_\mesh(e_*^0) -\sum_{E\in\mesh}\sum_{E_*\in \meshs, E_*\subseteq E}
 \int_{E_*}
\left( A_{E_*}  \nabla \Pi^\nabla_\mesh \umesh \cdot \nabla e_*^0 +  c_{E_*}  \Pi^\nabla_\mesh \umesh \, e_*^0  \right)
\\
  &=
 \sum_{E\in \omega(\mathcal{R}_{\mesh\to \meshs})}
 \int_{E}
 \left(f_E e_*^0 -
 A_{E}  \nabla \Pi^\nabla_\mesh \umesh \cdot \nabla e_*^0 -  
 c_{E}  \Pi^\nabla_\mesh \umesh \, e_*^0 \right)
 \end{aligned}
 \]
 where we employed the properties of the enhanced space 
 \eqref{vem:choice:2}. 
Integrating by parts, employing the Cauchy-Schwarz inequality together with the vanishing property of $e_*^0$ and the interpolation error estimate for ${\cal I}_\mesh^0$, we obtain
 \begin{equation}\label{P:3:step1}
 I\lesssim \vert \umeshs^0-\umesh^0\vert_{1,\Omega} \,
 \etamesh(\omega(\mathcal{R}_{\mesh\to \meshs});\umesh,\data).
 \end{equation}
 On the other hand, again  as $e_*^0\in V_\meshs^0$ and $\Pi^\nabla_\meshs e_*^0= e_*^0 $, we have
 \begin{equation}
 \label{P:3:step2}
 \begin{aligned}
 II&=\sum_{E_*\in \meshs}
 \int_{E_*}
 \left(A_{E_*} \nabla \Pi^\nabla_\meshs (\Pi^\nabla_\mesh-I) \umesh \cdot \nabla \Pi^\nabla_\meshs e_*^0 +  c_{E_*} \Pi^\nabla_\meshs (\Pi^\nabla_\mesh-I) \umesh  \, \Pi^\nabla_\meshs  e_*^0 \right)
 \\
 &=\sum_{E_*\in \meshs}
 \int_{E_*}
\left( A_{E_*} \nabla  (\Pi^\nabla_\mesh-I) \umesh \cdot \nabla e_*^0 +  c_{E_*} (\Pi^\nabla_\mesh-I) \umesh  \,  e_*^0 \right) 
\\
 &= \sum_{E\in \mesh}\sum_{E_*\in \meshs, E_*\subseteq E} \int_{E_*}
\left( A_{E_*} \nabla  (\Pi^\nabla_\mesh-I) \umesh \cdot \nabla e_*^0 + c_{E_*}  (\Pi^\nabla_\mesh-I) \umesh  \,  e_*^0 \right)
\\
  &= \sum_{E\in \mesh} \int_{E}
 \left( A_{E} \nabla  (\Pi^\nabla_\mesh-I) \umesh \cdot \nabla e_*^0 + c_{E}  (\Pi^\nabla_\mesh-I) \umesh  \,  e_*^0 \right)
 \\
&\lesssim  \Smesh(\umesh,\umesh)^{1/2}  \vert \umeshs^0-\umesh^0\vert_{1,\Omega}\lesssim \gamma^{-1} \etamesh(\umesh,\data) \vert \umeshs^0-\umesh^0\vert_{1,\Omega}  \,.
 \end{aligned}
 \end{equation}
 where in the last step we used \eqref{eq:bound-ST}. 
 The thesis follows combining \eqref{P:3:step1}-\eqref{P:3:step2}.
 
 \noindent{\bf{Step 5}}.  Let $\umesh=\umesh^0+\umesh^{\perp}$ be the orthogonal decomposition \eqref{orth-decomp}. There holds
 \begin{equation}\label{P:4}
 \vvvert u_\mesh^\perp\vvvert \lesssim \Smesh(\umesh,\umesh)^{1/2} \lesssim \gamma^{-1} \etamesh(\umesh,\data) \,.
 \end{equation}
 Indeed, we have 
 \begin{equation*}
 \begin{aligned}
 \Bmesh(u_\mesh^\perp,u_\mesh^\perp)&=\inf_{w_\mesh^0\in V_\mesh^0} \Bmesh(u_\mesh-w_\mesh^0,u_\mesh-w_\mesh^0)\leq \Bmesh(u_\mesh-I_\mesh^0 u_\mesh,u_\mesh-I_\mesh^0 u_\mesh)
 \lesssim  \Smesh(u_\mesh,u_\mesh)
 \end{aligned}
 \end{equation*}
 where in the last inequality we employed the continuity of $\Bmesh(\cdot,\cdot)$ in combination with 
 \cite[Prop. 3.2]{paperA}. 
 The coercivity of  $\Bmesh(\cdot,\cdot)$ together with \eqref{eq:stab-norm} and \eqref{norm:equiv}, and the bound \eqref{eq:bound-ST} yield the result.

\smallskip
At this point, we have collected all ingredients to prove \eqref{LU-bound}.
From the coercivity of $\mathcal{B}(\cdot,\cdot)$ and employing \eqref{P:2} we get
\begin{equation}
\label{Step:1}
\begin{aligned}
\vvvert u_\meshs^0-\umesh\vvvert^2 &\lesssim 
\mathcal{B}(u_\meshs^0-\umesh,u_\meshs^0-\umesh)=
\mathcal{B}(u_\meshs^0-\umesh,e_*^0)+
\mathcal{B}(u_\meshs^0-\umesh,v_\mesh^0-u_\mesh^\perp)\nonumber\\
&=\mathcal{B}(u_\meshs^0-\umesh,e_*^0)-\mathcal{B}(u_\meshs^0-\umesh,u_\mesh^\perp)=: III + IV.
\end{aligned}
\end{equation}
Employing the consistency of $\Bmeshs(\cdot,\cdot)$ (cf. \eqref{eq:consistency}) together with \eqref{P:3} we get
\begin{equation}\label{Step:2}
III=\Bmeshs(u_\meshs^0-\umesh,e_*^0)\lesssim 
\vert \umeshs^0-\umesh^0 \vert_{1,\Omega} (
 \etamesh(\omega(\mathcal{R}_{\mesh\to \meshs});\umesh,\data) + \gamma^{-1}\etamesh(\umesh,\data)).
\end{equation}
On the other hand, employing the continuity of $\mathcal{B}(\cdot,\cdot)$ in combination with \eqref{P:4} we obtain
\begin{equation}\label{Step:3}
IV \lesssim \vvvert u_\meshs^0-\umesh\vvvert \vvvert u_\mesh^\perp \vvvert \lesssim  \vvvert u_\meshs^0-\umesh\vvvert \, \gamma^{-1} \etamesh(\umesh,\data) \,.
\end{equation}
We now observe that $\vert \umeshs^0-\umesh^0 \vert_{1,\Omega} = \vert \umeshs^0-\umesh + u_\mesh^\perp \vert_{1,\Omega} \lesssim \vvvert u_\meshs^0-\umesh\vvvert + \gamma^{-1} \etamesh(\umesh,\data))$. 
Concatenating \eqref{Step:1}-\eqref{Step:3}, we easily conclude the proof of Lemma \ref{lemma:loc-up-bound}.
 \end{proof}
 
 \subsubsection{Optimal marking}
 We first recall two instrumental results that will be useful in the sequel. From \cite[Corollary 4.3]{paperA} we have the global error bound
\begin{equation}\label{global:lower:bound}
C_{GL} \eta^2_\mesh(\umesh,\data)\leq \vvvert u -\umesh\vvvert^2 + \vert \umesh - I_\mesh \umesh\vert_{1,\mesh}^2 = \qnt_\mesh^2(u,u_\mesh)\,.
\end{equation} 
Moreover, we observe that \eqref{eq:stab-norm} and \eqref{eq:bound-ST} yield
\begin{equation}\label{aux:3:OM}
\vert \umesh -I_\mesh \umesh\vert_{1,\mesh}^2 \leq \tilde{C}_B \gamma^{-2} \eta_\mesh^2(\umesh,\data).
\end{equation}
 In order to derive a quasi-optimal decay of the total error, we define
 $$
 \gamma^2_*:=\frac{2C_{LU} + \tilde{C}_B}{C_{GL}} \qquad \theta_*(\gamma):=\frac{C_{GL}-\gamma^{-2}(2C_{LU}+\tilde{C}_B)}{2C_{LU}}
 $$ 
 for $\gamma>\gamma_*$, where $C_{LU}$ is given by Lemma \ref{lemma:loc-up-bound}. Notice that $\gamma>\gamma_*$ yields  $\theta_*>0$ and if $C_{GL}<2C_{LU}$ then $\theta_*<1$. Moreover we make the following assumption.
\begin{assumption}[module \texttt{MARK}]
The set of marked elements produced by the module \texttt{MARK} has minimal cardinality and the marking parameter satisfies $\theta\in (0,\theta_*)$. 
\end{assumption}
In order to simplify the notation, we let $0<\mu<1/2$ be defined by 
 $$
 \mu(\gamma,\theta):=\frac{C_{GL}-\gamma^{-2}(2C_{LU}+\tilde{C}_B)}{2C_{GL}}(1-\frac{\theta}{\theta_*}) \qquad \forall\gamma>\gamma_*, \quad 0<\theta<\theta_*.
 $$
 We now prove the analogous of \cite[Lemma 5.4]{BonitoNochetto:10}.
 \begin{lemma}[optimal marking]\label{lm:optimal_marking}Let $\meshs$ be a refinement of $\mesh$ and and  let $\umesh\in V_\mesh$ and $\umeshs\in V_\meshs$ the corresponding discrete solutions of \eqref{def-Galerkin}. In addition, assume
 \begin{equation}\label{assumption:mu}
 \qnt_\mesh^2(u,u^0_\meshs) \leq \mu \, \qnt_\mesh^2(u,u_\mesh)
 \end{equation}
 where $\umeshs=\umeshs^0+\umeshs^{\perp}$ is the orthogonal decomposition \eqref{orth-decomp}. Then, for $\gamma>\gamma_*$ and $\theta\in (0,\theta_*(\gamma))$, the set $\omega(\mathcal{R}_{\mesh\to\meshs})$ satisfies a D\"orfler marking property
 $$ \eta^2_\mesh(\omega(\mathcal{R}_{\mesh\to\meshs});\umesh,\data)\geq \theta \eta^2_\mesh(\umesh,\data).$$
 \end{lemma}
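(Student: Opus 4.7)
The plan is to combine three ingredients: the global lower bound \eqref{global:lower:bound}, which relates $\eta_\mesh^2(u_\mesh,\data)$ to $\qnt_\mesh^2(u,u_\mesh)$; the localized upper bound of Lemma \ref{lemma:loc-up-bound}, which controls $\vvvert u_\meshs^0 - u_\mesh \vvvert$ by $\eta_\mesh$ on $\omega(\mathcal{R}_{\mesh\to\meshs})$ plus a $\gamma^{-1}$-small global remainder; and the quasi-orthogonality hypothesis \eqref{assumption:mu}. Heuristically, the hypothesis forces $u_\meshs^0$ to be strictly closer to $u$ than $u_\mesh$, so the discrete gap $\vvvert u_\meshs^0 - u_\mesh \vvvert$ must recover a definite fraction of $\vvvert u - u_\mesh \vvvert$; the other two ingredients will then transfer this fraction to a lower bound on $\eta^2_\mesh(\omega(\mathcal{R}_{\mesh\to\meshs}); u_\mesh, \data)$.

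First I would use the elementary inequality $(a+b)^2 \leq 2a^2 + 2b^2$ to obtain
\[
\vvvert u - u_\mesh \vvvert^2 \leq 2 \vvvert u - u_\meshs^0 \vvvert^2 + 2 \vvvert u_\meshs^0 - u_\mesh \vvvert^2,
\]
and then invoke \eqref{eq:obs:zero} to identify $\vvvert u - u_\meshs^0 \vvvert^2 = \qnt_\mesh^2(u, u_\meshs^0)$. Adding $|u_\mesh - I_\mesh u_\mesh|^2_{1,\mesh}$ to both sides and applying \eqref{assumption:mu} produces
\[
(1 - 2\mu)\, \qnt_\mesh^2(u, u_\mesh) \leq 2 \vvvert u_\meshs^0 - u_\mesh \vvvert^2 + |u_\mesh - I_\mesh u_\mesh|^2_{1, \mesh},
\]
which is meaningful because $\mu < 1/2$. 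Combining this with \eqref{global:lower:bound} on the left-hand side, the squared form of Lemma \ref{lemma:loc-up-bound} on the first right-hand term, and \eqref{aux:3:OM} on the second right-hand term then yields an inequality of the form
\[
C_{GL}(1-2\mu)\, \eta^2_\mesh(u_\mesh, \data) \leq 2 C_{LU}\, \eta^2_\mesh(\omega(\mathcal{R}_{\mesh\to\meshs}); u_\mesh, \data) + (2 C_{LU} + \tilde{C}_B)\, \gamma^{-2}\, \eta^2_\mesh(u_\mesh, \data).
\]

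To conclude, I would absorb the last right-hand term into the left and rearrange. Using the defining identity $2 C_{LU}\theta_*(\gamma) = C_{GL} - \gamma^{-2}(2 C_{LU}+\tilde{C}_B)$, together with $2\mu C_{GL} = 2 C_{LU}\theta_*(1 - \theta/\theta_*)$, a short calculation gives
\[
C_{GL}(1-2\mu) - (2 C_{LU}+\tilde{C}_B)\, \gamma^{-2} = 2 C_{LU}\, \theta,
\]
from which the Dörfler inequality $\eta^2_\mesh(\omega(\mathcal{R}_{\mesh\to\meshs}); u_\mesh, \data) \geq \theta \, \eta^2_\mesh(u_\mesh, \data)$ follows immediately after dividing by $2 C_{LU}$. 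The main obstacle is precisely this bookkeeping of constants: the assumptions $\gamma > \gamma_*$ and $\theta < \theta_*(\gamma)$ are exactly what is needed so that the coefficient of $\eta^2_\mesh(u_\mesh,\data)$ on the left stays strictly positive after absorbing the $\gamma^{-2}$-term, and the particular choice of $\mu(\gamma,\theta)$ is what makes the remaining slack equal to $2 C_{LU}\theta$ on the nose, producing the Dörfler parameter $\theta$ without loss.
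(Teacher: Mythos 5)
Your argument is correct and follows essentially the same route as the paper's proof: combine the lower bound \eqref{global:lower:bound} with hypothesis \eqref{assumption:mu}, bound $\vvvert u - \umesh\vvvert^2 - 2\vvvert u - u_\meshs^0\vvvert^2 \leq 2\vvvert \umeshs^0-\umesh\vvvert^2$ via the triangle inequality and Lemma \ref{lemma:loc-up-bound}, absorb the stabilization contribution using \eqref{aux:3:OM}, and rearrange using the definitions of $\mu$ and $\theta_*$ to produce exactly the D\"orfler constant $\theta$. Your treatment of the squared form of \eqref{LU-bound} involves the same (harmless, absorbable) constant bookkeeping as the paper's own display \eqref{aux:2:OM}, so there is no genuine gap.
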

 \begin{proof}
Since $0<\mu<1/2$, employing \eqref{global:lower:bound} and \eqref{assumption:mu} we get 
\begin{equation}\label{aux:1:OM}
(1-2\mu)C_{GL} \eta^2_\mesh(\umesh,\data) \leq 
\vvvert u - \umesh\vvvert^2 - 2 \vvvert u - u_\meshs^0\vvvert^2 + \vert \umesh -I_\mesh \umesh\vert_{1,\mesh}^2
\end{equation}
where we used $I_\meshs u_{\meshs}^0=u_{\meshs}^0$.
From the triangle inequality and \eqref{LU-bound} we obtain 
\begin{equation}\label{aux:2:OM}
\vvvert u - \umesh\vvvert^2 - 2 \vvvert u - u_\meshs^0\vvvert^2 \leq 2  \vvvert \umesh - u_\meshs^0\vvvert^2 \leq 2C_{LU}
\left( \etamesh^2(\omega(\mathcal{R}_{\mesh\to \meshs});\umesh,\data) + \gamma^{-2} \etamesh^2(\umesh,\data)\right).
\end{equation}
Combining \eqref{aux:1:OM}-\eqref{aux:3:OM} we get
\begin{equation*}
(1-2\mu) C_{GL} \eta_\mesh^2(\umesh,\data) \leq 2 C_{LU}
\etamesh^2(\omega(\mathcal{R}_{\mesh\to \meshs});\umesh,\data) + \gamma^{-2}(2C_{LU}+\tilde{C}_B) \eta_\mesh^2(\umesh,\data)
\end{equation*}
which implies, employing the definition of $\mu$ and $\theta_*$, the desired estimate
$$
\etamesh^2(\omega(\mathcal{R}_{\mesh\to \meshs});\umesh,\data) \geq
\frac{1}{2C_{LU}}\left( (1-2\mu)C_{GL} - \gamma^{-2}(2C_{LU}+ \tilde{C}_B)\right) \eta^2_\mesh(\umesh,\data)\geq
\theta  \eta^2_\mesh(\umesh,\data).
$$ 
The proof is concluded.
\end{proof}
 
\subsection{Complexity of {\tt GALERKIN}}\label{sec:compl-gal}
In this section, we rely on the notation introduced in Sect. \ref{sec:complex-Gal}, in particular those in \eqref{eq:def-many-functions}.
We assume that the pair $(\widehat{\mesh}_k,\widehat{\data}_k)$ transferred by {\tt DATA} to {\tt GALERKIN} at iteration $k$ satisfies 
$$
\eta_{\widehat{\mathcal{T}}_k}(u_{k,0},\widehat{\data}_k)=:\widehat{\varepsilon}_k>\varepsilon_k\, ,
$$
for otherwise {\tt GALERKIN} is skipped. 
On the other hand, combining \eqref{norm:equiv} with the stabilization free a posteriori error estimates 
\eqref{apost:stab-free},  we can write
\begin{equation}
\widehat{C}_L^2 \, \eta_{\widehat{\mathcal{T}}_k}^2(u_{k,0},\widehat{\data}_k)   \leq    \vvvert \widehat{u}_k - u_{k,0}  \vvvert^2  \leq \widehat{C}_U^2 \, \eta_{\widehat{\mathcal{T}}_k}^2(u_{k,0},\widehat{\data}_k)
\end{equation} 
with ${\widehat{C}}_L^2:=c_\mathcal{B} C_L$ and 
${\widehat{C}}_U^2:=c^\mathcal{B} C_U$.
Therefore, we get the lower bound 
$$
\vvvert \widehat{u}_k - u_{k,0}   \vvvert \geq 
\widehat{C}_L \widehat{\varepsilon}_k>\widehat{C}_L \varepsilon_k.
$$
On the other hand,   from \eqref{pert_estimate:2} and  \eqref{data:approximation} it follows that  {\tt DATA} provides a perturbed  exact solution $\widehat{u}_k\in H^1_0(\Omega)$ satisfying 
$$
\vvvert u- \widehat{u}_k \vvvert \leq D \omega \varepsilon_k = \frac{D\omega}{\widehat{C}_L}{\widehat{C}_L} \varepsilon_k
$$
for a suitable constant $D>0$.  Let 
$$
\omega:= \frac{\sqrt{\mu} \widehat{C}_L}{2D} \,,
$$
which implies 
$$
\vvvert u- \widehat{u}_k\ \vvvert \leq
\frac{\sqrt{\mu}}{2} \widehat{C}_L \varepsilon_k.
$$
In view of Proposition \ref{prop:compl_gal} (computational cost of {\tt GALERKIN}) the module {\tt GALERKIN} performs a number of iterations $J_k$ bounded uniformly in $k$ by $J$.  For each such iteration $j$ we have a mesh $\mathcal{T}_{k,j}$ and a Galerkin solution $u_{k,j}\in \mathbb{V}_{\mathcal{T}_{k,j}}$ so that for $0\leq j<J_k$
\begin{eqnarray}
&&\mathcal{T}_{k,0}=\widehat{\mathcal{T}}_k \,, \nonumber\\
&&  \eta_{\mathcal{T}_{k,j}}(u_{k,j},\widehat{\data}_k)>\varepsilon_k \,, \nonumber\\
&& \qnt_{\mathcal{T}_{k,j}}(\widehat{u}_k,u_{k,j})\geq 
\vvvert \widehat{u}_k - u_{k,j}\vvvert \geq \widehat{C}_L 
\eta_{\mathcal{T}_{k,j}}(u_{k,j},\widehat{\data}_k)>\widehat{C}_L \varepsilon_k\,.\nonumber
\end{eqnarray}
Let $\mathcal{M}_{k,j}$ be the marked set within $\mathcal{T}_{k,j}$ using the D\"orfler strategy.
\begin{lemma}[cardinality of marked sets]
If $u\in{\ACu}_s$ and $\omega=\frac{\sqrt{\mu}\widehat{C}_L}{2D}$, then there exists a constant $C_0>0$ such that 
$$
\#\mathcal{M}_{k,j}\leq C_0 \vert u\vert_{{\ACu}_s}^{\frac{1}{s}}\varepsilon^{-\frac{1}{s}}\,, \qquad  0\leq j <J_k.
$$
\end{lemma}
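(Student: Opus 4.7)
The strategy is to construct a reference refinement $\mathcal{T}_*$ of $\mathcal{T}_{k,j}$ of optimal cardinality on which the hypothesis of Lemma \ref{lm:optimal_marking} holds, and then invoke the minimality of the D\"orfler set produced by \texttt{MARK}. The whole argument mirrors the AFEM template of \cite{BonitoNochetto:10}, but on the perturbed exact solution $\widehat u_k$ rather than $u$.

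\textbf{Step 1 ($\varepsilon$-approximation and overlay).} The choice $\omega=\frac{\sqrt{\mu}\widehat{C}_L}{2D}$ yields $\vvvert u-\widehat{u}_k\vvvert\le\tfrac{\sqrt{\mu}}{2}\widehat{C}_L\varepsilon_k$, so the preceding lemma shows that $\widehat{u}_k$ is a $\sqrt{\mu}\widehat{C}_L\varepsilon_k$-approximation of order $s$ to $u$. I would pick a tolerance $\delta$ equal to a sufficiently small (but $k$-independent) multiple of $\varepsilon_k$ and, using the equivalence $\mathcal{A}_s=\mathcal{A}_s^0$, obtain a $\Lambda$-admissible partition $\mathcal{T}_\delta$ with $\#\mathcal{T}_\delta\lesssim |u|_{\mathcal{A}_s}^{1/s}\varepsilon_k^{-1/s}$ and a function $v_\delta\in\mathbb{V}_{\mathcal{T}_\delta}^0$ satisfying $\vvvert \widehat{u}_k-v_\delta\vvvert\le\delta$. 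Next I would let $\mathcal{T}_*:=\mathcal{T}_{k,j}\oplus\mathcal{T}_\delta$ be the smallest $\Lambda$-admissible common bisection refinement, for which the overlay/completion machinery underlying Theorem \ref{T:complexity-REFINE} yields $\#\mathcal{T}_*-\#\mathcal{T}_{k,j}\lesssim \#\mathcal{T}_\delta$.

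\textbf{Step 2 (verifying the hypothesis of Lemma \ref{lm:optimal_marking}).} Let $u_*=u_*^0+u_*^\perp$ be the orthogonal decomposition of the Galerkin solution on $\mathcal{T}_*$ with data $\widehat{\data}_k$. Since $v_\delta\in\mathbb{V}_{\mathcal{T}_\delta}^0\subseteq\mathbb{V}_{\mathcal{T}_*}^0$, hence $\mathcal{I}_{\mathcal{T}_*}v_\delta=v_\delta$, Lemma \ref{lemma:q-opt} gives $\qnt_{\mathcal{T}_*}^2(\widehat{u}_k,u_*)\le C^\dagger\delta^2$. Coupling Theorem \ref{Corollary:stab-free} with \eqref{P:4} then yields
\begin{equation*}
\vvvert \widehat{u}_k-u_*^0\vvvert^2\le 2\vvvert \widehat{u}_k-u_*\vvvert^2+2\vvvert u_*^\perp\vvvert^2\lesssim \eta_{\mathcal{T}_*}^2(u_*,\widehat{\data}_k),
\end{equation*}
and the global lower bound \eqref{global:lower:bound} combined with the previous display gives $\vvvert \widehat{u}_k-u_*^0\vvvert^2\lesssim C^\dagger\delta^2$. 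Since the call to \texttt{GALERKIN} has not yet terminated we have $\eta_{\mathcal{T}_{k,j}}(u_{k,j},\widehat{\data}_k)>\varepsilon_k$, whence $\qnt_{\mathcal{T}_{k,j}}^2(\widehat{u}_k,u_{k,j})>C_{GL}\varepsilon_k^2$. A small enough choice of $\delta$ in Step 1 therefore forces $\vvvert \widehat{u}_k-u_*^0\vvvert^2\le \mu\,\qnt_{\mathcal{T}_{k,j}}^2(\widehat{u}_k,u_{k,j})$, i.e.\ the hypothesis \eqref{assumption:mu} of Lemma \ref{lm:optimal_marking}.

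\textbf{Step 3 (minimality) and main obstacle.} Lemma \ref{lm:optimal_marking} then implies that $\omega(\mathcal{R}_{\mathcal{T}_{k,j}\to\mathcal{T}_*})$ fulfills the D\"orfler criterion with parameter $\theta\in(0,\theta_*(\gamma))$. Because \texttt{MARK} returns $\mathcal{M}_{k,j}$ of minimal cardinality for this same criterion, we conclude
\begin{equation*}
\#\mathcal{M}_{k,j}\le \#\omega(\mathcal{R}_{\mathcal{T}_{k,j}\to\mathcal{T}_*})\lesssim \#\mathcal{T}_*-\#\mathcal{T}_{k,j}\lesssim \#\mathcal{T}_\delta\lesssim |u|_{\mathcal{A}_s}^{1/s}\varepsilon_k^{-1/s},
\end{equation*}
which proves the claim with $C_0$ depending on $\Lambda$, $\mu$, $\gamma$, and shape regularity of $\mesh_0$. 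The principal difficulty is to coordinate all constants so that Step 2 closes: the perturbation factor $D$ hidden in $\omega$, the quasi-best constant $C^\dagger$ of Lemma \ref{lemma:q-opt}, and the a posteriori constants must collectively drive $\vvvert\widehat{u}_k-u_*^0\vvvert$ below the $\mu$-threshold, which in turn requires $\gamma$ to be large enough that $\vvvert u_*^\perp\vvvert$, controlled by $\gamma^{-1}$ via \eqref{P:4}, is absorbed. A secondary technical point is justifying that the overlay $\mathcal{T}_{k,j}\oplus\mathcal{T}_\delta$ is still $\Lambda$-admissible with an overlay bound comparable to the conforming bisection case; this rests on the properties of \texttt{CREATE\_ADMISSIBLE\_CHAIN} analyzed in Section \ref{sec:admissible}.
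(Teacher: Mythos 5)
Your overall architecture (construct a reference refinement $\mathcal{T}_\delta$ from the approximation class of the perturbed solution $\widehat u_k$, overlay with $\mathcal{T}_{k,j}$, verify the hypothesis \eqref{assumption:mu} of Lemma \ref{lm:optimal_marking}, and invoke minimality of \texttt{MARK} plus the overlay count) is the right template and matches the paper. The genuine gap is in Step 2, and you have in fact pointed at it yourself without resolving it: you propose to choose $\delta$ as a ``sufficiently small ($k$-independent) multiple of $\varepsilon_k$'' so that the inflated bound $\vvvert \widehat u_k - u_*^0\vvvert \le C'\delta$ (with $C'$ collecting $C^\dagger$, $\widehat C_U$, $C_{GL}^{-1}$, $\gamma^{-2}$ from Lemma \ref{lemma:q-opt}, Theorem \ref{Corollary:stab-free}, \eqref{global:lower:bound} and \eqref{P:4}) drops below $\sqrt{\mu}\,\widehat C_L\varepsilon_k$. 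But the notion of $\varepsilon$-approximation of order $s$ only guarantees the existence of a mesh $\mathcal{T}_\delta$ with $\#\mathcal{T}_\delta\lesssim |u|_{\ACu_s}^{1/s}\delta^{-1/s}$ for tolerances $\delta \ge \sqrt{\mu}\,\widehat C_L\varepsilon_k$ (twice the perturbation error $\vvvert u-\widehat u_k\vvvert\le\tfrac{\sqrt\mu}{2}\widehat C_L\varepsilon_k$); below that threshold $\widehat u_k$ inherits no approximability from $u\in\ACu_s$ at all. Since your closing condition reads $C'\delta\le\sqrt{\mu}\,\widehat C_L\varepsilon_k$, i.e. $\delta\le\sqrt{\mu}\,\widehat C_L\varepsilon_k/C'$ with $C'>1$ in general, it is incompatible with the availability condition $\delta\ge\sqrt{\mu}\,\widehat C_L\varepsilon_k$, and with the $\omega$ prescribed in the statement you cannot simply shrink $\delta$ (nor silently shrink $\omega$, which is fixed here and reused in the optimality theorem).

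The paper closes this loop by avoiding any constant inflation between $\delta$ and the quantity entering \eqref{assumption:mu}. It sets $\delta:=\sqrt{\mu}\,\qnt_{\mathcal{T}_{k,j}}(\widehat u_k,u_{k,j})$, which is automatically $\ge\sqrt{\mu}\,\widehat C_L\varepsilon_k$ because the stopping test has not been met, so the $\varepsilon$-approximation property applies at exactly this tolerance; it then takes on the overlay $\mathcal{T}_*=\mathcal{T}_{k,j}\oplus\mathcal{T}_\delta$ not your orthogonal component $u_*^0$ of the VEM solution, but the conforming Galerkin solution in $\V^0_{\mathcal{T}_*}$, whose energy error is monotone under refinement (best approximation in $\vvvert\cdot\vvvert$), giving
\begin{equation*}
\qnt_{\mathcal{T}_*}(\widehat u_k,u^0_{\mathcal{T}_*})=\vvvert \widehat u_k-u^0_{\mathcal{T}_*}\vvvert\le\vvvert \widehat u_k-u^0_{\mathcal{T}_\delta}\vvvert\le\delta=\sqrt{\mu}\,\qnt_{\mathcal{T}_{k,j}}(\widehat u_k,u_{k,j})
\end{equation*}
with constant exactly one, so \eqref{assumption:mu} holds without any tuning of $C^\dagger$, $\widehat C_U$ or $\gamma$. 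If you want to keep your route through Lemma \ref{lemma:q-opt} and the splitting $u_*=u_*^0+u_*^\perp$, you would have to either enlarge $\gamma$ and re-balance $\mu,\theta_*$ so that the accumulated constant is $\le 1$ (which is not available for the fixed $\mu(\gamma,\theta)$ of Sect. \ref{sec:optim-mark}), or redefine $\omega$ with the extra factor $1/C'$ — both changes to the statement being proved. I also note your secondary concerns are unproblematic: the overlay bound $\#\mathcal{T}_*\le\#\mathcal{T}_{k,j}+\#\mathcal{T}_\delta-\#\mathcal{T}_0$ and the $\Lambda$-admissibility of overlays are exactly what Sect. \ref{sec:admissible-overlay} provides, so no appeal to \texttt{CREATE\_ADMISSIBLE\_CHAIN} is needed there.
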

\begin{proof}
Fix $0\leq j < J_k$ and set 
$$
\delta:= \sqrt{\mu}\qnt_{\mathcal{T}_{k,j}}(\widehat{u}_k,u_{k,j})=\sqrt{\mu}
\left(
\vvvert \widehat{u}_k  - u_{k,j}\vvvert +
\vert u_{k,j}- \mathcal{I}_{\mathcal{T}_{k,j} u_{k,j}}\vert_{1,\Omega}
\right)
$$
whence
$$
\delta \geq \sqrt{\mu}\widehat{C}_L\varepsilon_k. 
$$
Since $\vvvert u-\widehat{u}_k \vvvert \leq \frac{\sqrt{\mu}}{2} \widehat{C}_L \varepsilon_k$,
we deduce that $\widehat{u}_k $ is an
$\sqrt{\mu} \widehat{C}_L \varepsilon_k$-approximation of order $s$ to $u$.  Therefore, there exist an admissible mesh $\mathcal{T}_\delta$ such that 
$$ 
\qnt_{\mathcal{T}_{\delta}}(\widehat{u}_k,
u_{\mathcal{T}_\delta}^0)\leq \delta \qquad 
\#\mathcal{T}_\delta \lesssim \vert u \vert_{{\ACu}_s}^{\frac{1}{s}}\delta^{-\frac{1}{s}}
$$
where $u_{\mathcal{T}_\delta}^0\in \mathbb{V}_{\mathcal{T}_\delta}^0$ because ${\ACu}_s={\ACu}_s^0$.  This implies 
$$
\vvvert \widehat{u}_k -  u_{\mathcal{T}_\delta}^0 \vvvert= 
 \qnt_{\mesh_\delta}(\widehat{u}_k, u_{\mathcal{T}_\delta})\leq \delta.
$$
In order to compare with $\mathcal{T}_{k,j}$ we consider the overlay 
$\mathcal{T}_*=\mathcal{T}_{k,j}\oplus\mathcal{T}_\delta$, which satisfies
$$
\# \mathcal{T}_*\leq \mathcal{T}_{k,j} + \# \mathcal{T}_\delta - \#\mathcal{T}_0.
$$
Consider now $u_{\mathcal{T}_*}^0\in \mathbb{V}_{\mathcal{T}_*}^0$,  the Galerkin solution on the subspace of continuous piecewise linears  $\mathbb{V}_{\mathcal{T}_*}^0$.  Exploiting the monotonicity 
$$ 
\vvvert \widehat{u}_k - u_{\mathcal{T}_*}^0 \vvvert \leq \vvvert  \widehat{u}_k - u_{\mathcal{T}_\delta}^0 \vvvert \,,
$$ 
because $\mathcal{T}_*$ is a refinement of $\mathcal{T}_\delta$, we see that 
\begin{equation}
\qnt_{\mathcal{T}_*}( \widehat{u}_k,u_{\mathcal{T}_*}^0)= \vvvert  \widehat{u}_k - u_{\mathcal{T}_*}^0 \vvvert 
\leq 
\vvvert  \widehat{u}_k - u_{\mathcal{T}_\delta}^0 \vvvert\leq \delta = \sqrt{\mu}\qnt_{\mathcal{T}_{k,j}}(\widehat{u}_k,u_{k,j})\,.
\end{equation}
Applying Lemma \ref{lm:optimal_marking} (optimal marking) to $\mathcal{T}_*$ and $\mathcal{T}_{k,j}$ we infer that the refined set $R_{k,j}=R_{\mathcal{T}_{k,j}\to\mathcal{T}_*}$ satisfies D\"orfler marking with parameter $0<\theta<\theta^*$ and stabilization constant $\gamma>\gamma_*$.  In addition, 
$$
\#R_{k,j}=\#\mathcal{T}_*-\#\mathcal{T}_{k,j}.
$$
Since our D\"orfler marking involves a minimal set 
$\mathcal{M}_{k,j}$,  we deduce  
$$
\#\mathcal{M}_{k,j}\leq \# R_{k,j} \leq
\# \mathcal{T}_\delta - \#\mathcal{T}_0
\lesssim \vert u \vert_{{\ACu}_s}^{\frac{1}{s}}\delta^{-\frac{1}{s}}\lesssim
 \vert u \vert_{{\ACu}_s}^{\frac{1}{s}}\varepsilon_k^{-\frac{1}{s}}.
$$
This concludes the proof.
\end{proof}
\begin{corollary}[complexity of {\tt GALERKIN}] If $u\in {\ACu}_s$ and $\omega=\frac{\sqrt{\mu}\widehat{C}_L}{2D}$,  the number of marked elements $\mathcal{M}_k$ within a call to {\tt GALERKIN} satisfies
$$ 
\#\mathcal{M}_k\leq J C_0 \vert u \vert_{{\ACu}_s}^{\frac{1}{s}}\varepsilon_k^{-\frac{1}{s}}.
$$ 
\end{corollary}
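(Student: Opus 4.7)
The plan is to write $\mathcal{M}_k$ as the disjoint union (over sub-iterations of the inner loop of {\tt GALERKIN}) of the sets $\mathcal{M}_{k,j}$, apply the preceding lemma to bound each $\#\mathcal{M}_{k,j}$, and then use the uniform bound on the number of sub-iterations established in Proposition \ref{prop:compl_gal} (computational cost of {\tt GALERKIN}) to conclude. This is a short combinatorial step once the two main ingredients are in place, so I do not expect any genuine obstacle here; the real work is already done in Proposition \ref{prop:compl_gal} and in the preceding lemma on cardinality of marked sets.

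First I would recall the setup: within the $k$-th outer call to {\tt GALERKIN}, the inner loop produces a sequence of pairs $(\mathcal{T}_{k,j},u_{k,j})$ with $0\le j<J_k$, starting from $(\mathcal{T}_{k,0},u_{k,0})=(\widehat{\mathcal{T}}_k,u_{k,0})$, and at each step the D\"orfler module marks a subset $\mathcal{M}_{k,j}\subset\mathcal{T}_{k,j}$. The total marked set within the call is $\mathcal{M}_k=\bigcup_{j=0}^{J_k-1}\mathcal{M}_{k,j}$, so by subadditivity
\[
\#\mathcal{M}_k \;\le\; \sum_{j=0}^{J_k-1}\#\mathcal{M}_{k,j}\,.
\]

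Next I would invoke the preceding lemma (cardinality of marked sets): under the hypothesis $u\in\ACu_s$ and the specific choice $\omega=\frac{\sqrt{\mu}\,\widehat{C}_L}{2D}$, every inner marked set satisfies
\[
\#\mathcal{M}_{k,j} \;\le\; C_0\,|u|_{\ACu_s}^{1/s}\,\varepsilon_k^{-1/s}\qquad \forall\,0\le j<J_k\,,
\]
and crucially this bound is uniform in $j$ because $\varepsilon_k$ (not $\varepsilon_{k,j}$) is the tolerance passed to {\tt GALERKIN}. Finally, by Proposition \ref{prop:compl_gal}, $J_k\le J$ with $J$ independent of $k$, so combining with the previous display yields
\[
\#\mathcal{M}_k \;\le\; J_k\,\cdot\, C_0\,|u|_{\ACu_s}^{1/s}\,\varepsilon_k^{-1/s} \;\le\; J\,C_0\,|u|_{\ACu_s}^{1/s}\,\varepsilon_k^{-1/s}\,,
\]
which is the claim.
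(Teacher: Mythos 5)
Your proposal is correct and follows essentially the same route as the paper: sum the cardinalities of the inner marked sets $\mathcal{M}_{k,j}$, bound each by the preceding lemma, and use the uniform bound $J_k\le J$ from Proposition \ref{prop:compl_gal}. The only cosmetic difference is that you use subadditivity of the union where the paper writes the total count as the sum over sub-iterations, which changes nothing in the conclusion.
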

\begin{proof}
Use that $\# \mathcal{M}_k=\sum_{j=0}^{J_k-1} \#\mathcal{M}_{k,j}$ and the previous lemma.
\end{proof}

\subsection{Quasi-optimality of {\tt AVEM}}\label{subsec:optimality-AVEM}
We finally address the quasi-optimality 
of the $2$-loop method {\tt AVEM}, by proving the announced bound \eqref{eq:optimality bound}.

\begin{theorem}[quasi-optimality of {\tt AVEM}]\label{Texact:optimality-AVEM}
  Let Assumptions \ref{ass:approx-u}, \ref{ass:approx-data}, and \ref{ass:optim-data} hold true.
Then,  there exist constants $\theta_*, \omega_* < 1$ and $\gamma_*\ge 1$
  such that for all $\theta < \theta_*$, $\omega < \omega_*$, and $\gamma\ge\gamma_*$
  there holds
  \begin{equation*}
   \vvvert u -u_k \vvvert \leq C(u,\data) \big(\#\mesh_k\big)^{-s}  \quad 1\leq k \leq K+1,
  \end{equation*}
  where $0 < s = \min\{s_u, s_\data\} = \min\{s_u, s_A, s_c, s_f\} \leq\frac12$.
  \end{theorem}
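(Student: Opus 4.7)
The plan is to control $\#\mesh_k$ by summing marked-element counts over all calls to {\tt DATA} and {\tt GALERKIN} before iteration $k$, invoking the {\tt REFINE} complexity bound \eqref{eq:complexity-REFINE}, then inverting to obtain the rate $(\#\mesh_k)^{-s}$ which combines with Proposition \ref{P:convergence-AVEM} to close the argument.

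First, at each AVEM iteration $i$, let $\mathcal{M}_{\data,i}$ be the cumulative set marked during $[\widehat{\mesh}_i,\widehat{\data}_i]={\tt DATA}(\mesh_i,\data,\omega\varepsilon_i)$, and $\mathcal{M}_i^{\rm G}$ the cumulative set marked across the $J_i\le J$ sub-iterations of $[\mesh_{i+1},u_{i+1}]={\tt GALERKIN}(\widehat{\mesh}_i,\widehat{\data}_i,\varepsilon_i)$. Assumption \ref{ass:optim-data} yields $\#\mathcal{M}_{\data,i}\lesssim |\data|_{\mathbb{A}_\data}^{1/s_\data}(\omega\varepsilon_i)^{-1/s_\data}$, and the Corollary on complexity of {\tt GALERKIN} yields $\#\mathcal{M}_i^{\rm G}\lesssim J\,|u|_{\ACu_{s_u}}^{1/s_u}\varepsilon_i^{-1/s_u}$, provided $\gamma\ge\gamma_*$ (for Lemma \ref{lm:optimal_marking}), $\theta<\theta_*$ (for D\"orfler optimal marking), and $\omega=\omega_*:=\sqrt{\mu}\widehat{C}_L/(2D)$ (so that $\widehat{u}_i$ is an $\varepsilon$-approximation of order $s_u$ to $u$ as in Section \ref{sec: eps-approx}). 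Setting $s:=\min\{s_u,s_\data\}$, we obtain a single bound $\#\mathcal{M}_{\data,i}+\#\mathcal{M}_i^{\rm G}\lesssim C(u,\data)\,\varepsilon_i^{-1/s}$.

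Next I apply Theorem \ref{T:complexity-REFINE} and sum over $i=0,\ldots,k-1$ to get $\#\mesh_k-\#\mesh_0\lesssim C(u,\data)\sum_{i=0}^{k-1}\varepsilon_i^{-1/s}$. Since $\varepsilon_i=2^{k-1-i}\varepsilon_{k-1}$, the sum is a geometric series bounded by $\tfrac{2^{1/s}}{2^{1/s}-1}\,\varepsilon_{k-1}^{-1/s}$, whence $\varepsilon_{k-1}\lesssim C(u,\data)(\#\mesh_k)^{-s}$ (absorbing the constant $\#\mesh_0$ term). Finally, Proposition \ref{P:convergence-AVEM} provides $\vvvert u-u_k\vvvert\leq C_*\varepsilon_{k-1}$ for every $1\leq k\leq K+1$, which combined with the preceding inverse bound gives the claim.

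The main obstacle is the first step: obtaining a marked-set bound inside {\tt GALERKIN} that is uniform in $k$. This requires that every intermediate iterate of every call to {\tt GALERKIN} belongs to a quasi-optimal approximation class, and the only information available at iteration $i$ concerns the perturbed problem with data $\widehat{\data}_i$, whose solution is $\widehat{u}_i\neq u$. The key observation is that choosing $\omega<\omega_*$ makes $\vvvert u-\widehat{u}_i\vvvert\leq\tfrac{\sqrt\mu}{2}\widehat{C}_L\varepsilon_i$, which via the $\varepsilon$-approximation Lemma of Section \ref{sec: eps-approx} transfers the membership $u\in\ACu_{s_u}$ to $\widehat{u}_i$ with a controlled constant; this is precisely what enables Lemma \ref{lm:optimal_marking} at every sub-iteration and yields the Corollary. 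Once this uniform-in-$k$ control of constants is secured, the remaining steps reduce to routine geometric summation and inversion.
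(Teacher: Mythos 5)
Your proposal is correct and follows essentially the same route as the paper: bound the marked sets per {\tt AVEM} loop via Assumption \ref{ass:optim-data} for {\tt DATA} and the complexity corollary for {\tt GALERKIN} (whose uniformity in $k$ rests, exactly as you note, on the $\varepsilon$-approximation property secured by the choice of $\omega$), then apply Theorem \ref{T:complexity-REFINE}, sum the geometric series $\sum_j \varepsilon_j^{-1/s}$, invert, and conclude with the a priori bound $\vvvert u-u_k\vvvert\lesssim\varepsilon_{k-1}$. The only difference is cosmetic: the paper re-derives the error bound by triangle inequality and handles the $\#\mesh_0$ offset via $\#\mesh_1\ge c_0\,\#\mesh_0$, while you cite Proposition \ref{P:convergence-AVEM} and absorb $\#\mesh_0$ into the constant.
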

\begin{proof}
We know that the number of marked elements $N_k(u)$ within {\tt GALERKIN} satisfies 
$$
N_k(u)\lesssim \vert u \vert^{\frac{1}{s_u}}_{{\ACu}_{s_u}}\varepsilon_k^{-\frac{1}{s_u}}
$$ 
with $s_u\leq \frac{1}{2}$.  Moreover,  by Assumption  \ref{ass:optim-data} the number of marked elements $N_k(\data)$ within {{\tt DATA}} satisfies 
$$
N_k(\data)\lesssim \vert \data \vert_{\mathbb{A}_{s_\data}}^{\frac{1}{s_\data}} \, \varepsilon_k^{-\frac{1}{s_\data}} 
$$ 
with $s_\data \leq \frac12$.  Upon termination,  {\tt DATA} and {\tt GALERKIN} give 
\begin{equation*}
\begin{split}
\vvvert u-\widehat{u}_k \vvvert  & \leq D \omega \varepsilon_k = D\frac{\sqrt{\mu}\widehat{C}_L}{2D}\varepsilon_k < \widehat{C}_U \varepsilon_k \,, \\[5pt]
\vvvert \widehat{u}_k-u_{k+1}\vvvert
& \leq \widehat{C}_U \eta_{\mathcal{T}_{k+1}}(u_{k+1},\data_k)\leq \widehat{C}_U \varepsilon_k \,,
\end{split}
\end{equation*}
because $\mu<1$.  This implies by triangle inequality 
\begin{equation}\label{aux:1:opt-vem}
\vvvert u - u_{k+1} \vvvert \leq 2 \widehat{C}_U \varepsilon_k.
\end{equation}
In addition, the total number of marked elements in the $j$-th loop of {\tt AVEM}
is
$$
N_j(\data)+N_j(u)\leq C_1(\vert u \vert_{{\ACu}_{s_u}}^{\frac{1}{s_u}} +\vert \data \vert_{\mathbb{A}_{s_\data}}^{\frac{1}{s_\data}}
)\, \varepsilon_j^{-\frac{1}{s}} \,.
$$
Therefore,  the total amount of elements created by $k$ loops of {\tt AVEM}, besides those in $\mathcal{T}_0$,  obey the expression
\begin{eqnarray*}
\#\mathcal{T}_{k+1}-\#\mathcal{T}_{0}
\leq C_0 \sum_{j=0}^{k-1} \big( N_j(\data) + N_j(u) \big) \leq C_0 C_1 
(\vert u \vert_{{\ACu}_{s_u}}^{\frac{1}{s_u}} +\vert \data \vert_{\mathbb{A}_{s_\data}}^{\frac{1}{s_\data}}
)\sum_{j=0}^{k-1}\varepsilon_j^{-\frac{1}{s}}.
\end{eqnarray*}
Since $\varepsilon_0=1$,  $\varepsilon_j=2^{-j}$ and
$$
\sum_{j=0}^{k-1}(2^{-\frac{1}{s}})^j \leq \frac{1}{1-2^{-1/s}}
$$
we deduce
\begin{equation}\label{aux:2:opt-vem}
\#\mathcal{T}_{k+1}-\#\mathcal{T}_0 \leq C
(\vert u \vert_{{\ACu}_{s_u}}^{\frac{1}{s_u}} +\vert \data \vert_{\mathbb{A}_{s_\data}}^{\frac{1}{s_\data}}
)\, \varepsilon_k^{-\frac{1}{s}}
\end{equation}
with $C=\frac{C_0C_1}{1-2^{-1/s}}$.  Since the first refined mesh satisfies $\#\mathcal{T}_1 \geq c_0 \, \#\mathcal{T}_0$ for some $c_0>1$, it holds $\#\mathcal{T}_{k+1} \leq \frac{c_0}{c_0-1}( \#\mathcal{T}_{k+1} -\#\mathcal{T}_0)$. Combining this with \eqref{aux:2:opt-vem} and \eqref{aux:1:opt-vem}  yields the thesis.
\end{proof}

\begin{remark} {\rm
The thresholds $\theta_*, \omega_*$ play no role in Proposition \ref{P:convergence-AVEM} but are critical
in Theorem \ref{Texact:optimality-AVEM}. The former takes care of the gap between $C_L$ and $C_U$ in the a posteriori
bounds \eqref{apost:stab-free}, and is well documented in the optimality analysis of AFEMs
\cite{BonitoNochetto:10,NochettoVeeser:12,NSV:09,Stevenson2007}. The latter guarantees that the perturbation
error \eqref{eq:pert-estimate} is much smaller than $\varepsilon_k$ and enables {\tt GALERKIN} to learn the
regularity of $u$ from $\widehat{u}_{\widehat{\mesh}_k}$ \cite{BonitoDeVoreNochetto,Stevenson2007}.
}
\end{remark}

\section{Data approximation: cardinality properties}\label{sec:approx-data}
In this section, we provide sufficient regularity conditions for data $\data=(A,c,f)$ to belong to the approximation classes introduced earlier and present algorithms for their approximation.

\subsection{Greedy algorithm: definition and performance}
We start with a constructive approximation estimate for a generic function $g: \Omega \to \mathbb{R}$ of class $W^s_p(\Omega)$ and next apply it to $\data$.

Let $1 \leq p,q \leq \infty$, $0 \leq s \leq 1$ be so that
$$
\textsf{sob}(W^s_p(\Omega))=s-\frac{2}{p} \ \geq \ \textsf{sob}({L^q}(\Omega))=0-\frac{2}{q}, 
$$
whence
\begin{equation}\label{eq:sob}
s-\frac2p+\frac2q \geq 0\,.
\end{equation}
Let $E \in \mesh$ be a generic element, and let 
$$
g_E :=\frac1{|E|} \int_E g
$$
denote the mean value of $g$ on $E$. Polynomial approximation theory yields
\begin{equation*}
\Vert g-g_E \Vert_{L^q(E)} \lesssim h_E^{s-2/p+2/q} \vert g \vert_{W^s_p(E)}\,.
\end{equation*}
In view of the application to $\data$, it is convenient to consider the weighted $L^q(E)$-norm instead, namely for $0 \leq t \leq 1$
\begin{equation}\label{eq:error-estimate-g}
\zeta_\mesh(E;g) :=h_E^t \Vert g-g_E \Vert_{L^q(E)} \lesssim h_E^r \vert g \vert_{W^s_p(E)}\,, \qquad \text{with \ } r:=t+s-\frac2p+\frac2q \,.
\end{equation}

Given a tolerance $\delta >0$, we consider the algorithm
\begin{algotab}
  \>  $[\mesh] = \texttt{GREEDY}(\mesh, \delta)$

  \\
  \>  $\quad \text{while }$ 
  $\mathcal{M}=\{E\in\mesh:\,  \zeta_{\mesh}(E; g)>\delta \}\not=\emptyset$
  \\
  \>  $\quad \quad  \mesh={\tt REFINE}(\mesh, \mathcal{M}) $ 
   \\
  \>  $\quad  \text{end while }$
  \\
  \>  $\quad  \text{return}(\mesh)$
\end{algotab}
\medskip
The following properties are valid for the global weighted error
$$
\zeta_\mesh(g) := \left( \sum_{E \in \mesh} \zeta_\mesh^q(E;g) \right)^\frac1q \, ,
$$
with the usual interpretation $\zeta_\mesh(g) := \max_{E \in \mesh} \zeta_\mesh(E;g)$ for $q=\infty$. 

\begin{proposition}[performance of $\texttt{GREEDY}$]\label{prop:greedy}
If $r>0$, then $\tt{GREEDY}$ terminates in a finite number of steps. The output partition $\mesh$ satisfies the estimates
\begin{align}
\zeta_\mesh(g) & \leq \delta \, (\#\mesh)^{\frac1q} \,, \label{eq:greedy-A} \\
\delta &\lesssim \vert g\vert_{W^s_p(\Omega)} (\#\mesh - \#\mesh_0)^{-\frac{1}{q}-\frac{t+s}2}\,. \label{eq:greedy-B}
\end{align}
\end{proposition}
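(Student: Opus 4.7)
The proof splits naturally into three parts: termination, the global estimator bound \eqref{eq:greedy-A}, and the cardinality bound \eqref{eq:greedy-B}.

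\emph{Termination and \eqref{eq:greedy-A}.} The per-element estimate \eqref{eq:error-estimate-g} combined with the monotonicity $|g|_{W^s_p(E)}\le|g|_{W^s_p(\Omega)}$ (from $p$-additivity of the Slobodeckij seminorm when $p<\infty$, trivial when $p=\infty$) gives $\zeta_{\mesh}(E;g)\le C h_E^r |g|_{W^s_p(\Omega)}$. Since $r>0$ and each bisection halves $h_E^2$, after a uniformly bounded number of bisections of each initial element one has $\zeta_{\mesh}(E;g)\le\delta$ for every $E$, so $\mathcal{M}=\emptyset$ and the loop stops. At that moment, summing $q$-th powers yields $\zeta_\mesh(g)^q\le\delta^q\#\mesh$, which is \eqref{eq:greedy-A} (the case $q=\infty$ being trivial).

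\emph{Cardinality bound \eqref{eq:greedy-B}.} The crux is to control $\sum_j\#\mathcal{M}_j$ for the sequence of partitions $\mesh_0,\dots,\mesh_J$ produced by $\texttt{GREEDY}$; Theorem \ref{T:complexity-REFINE} will then supply $\#\mesh-\#\mesh_0\le C_0\sum_j\#\mathcal{M}_j$. Set $T:=\bigcup_{j<J}\mathcal{M}_j$ (a disjoint union, since once an element is marked it is bisected and never reappears) and group $T$ by bisection level, $T=\bigsqcup_n T_n$, so that $h_E\simeq 2^{-n/2}$ for $E\in T_n$. Two bounds on $\#T_n$ are available. The \emph{combinatorial} one, $\#T_n\lesssim 2^n$, follows from the binary structure of the refinement forest rooted at $\mesh_0$. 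The \emph{regularity-based} one exploits the fact that every $E\in T$ was marked: \eqref{eq:error-estimate-g} forces $|g|_{W^s_p(E)}\gtrsim\delta\,2^{nr/2}$. When $p<\infty$, same-level cells of the bisection forest have pairwise disjoint interiors, so summing the $p$-th powers and invoking the $p$-additivity of $|g|_{W^s_p(\cdot)}$ produces $\#T_n\lesssim(|g|_{W^s_p(\Omega)}/\delta)^p\,2^{-nrp/2}$. When $p=\infty$, the bound $|g|_{W^s_\infty(E)}\le|g|_{W^s_\infty(\Omega)}$ directly caps the admissible levels at $n\le n^*\lesssim(2/r)\log_2(|g|_{W^s_\infty(\Omega)}/\delta)$.

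Taking the minimum of the two bounds at each $n$ (respectively, summing only the combinatorial bound up to $n^*$ when $p=\infty$), both geometric series are dominated at the balance level, giving $\#T\lesssim(|g|_{W^s_p(\Omega)}/\delta)^{1/\alpha}$. An elementary computation using $r=t+s-2/p+2/q$ identifies the resulting exponent with $1/\alpha$, where $\alpha=1/q+(t+s)/2$, uniformly in $p\in[1,\infty]$; inverting and applying Theorem \ref{T:complexity-REFINE} then gives \eqref{eq:greedy-B}. The main obstacle of the proof is precisely this sharp balance: the regularity bound alone would yield the weaker rate $1/p$, and the improvement relies on playing it against the combinatorial bound $\#T_n\lesssim 2^n$ level by level, in the spirit of the Binev--Dahmen--DeVore complexity analysis of greedy algorithms and of the refinement-completion bound in Theorem \ref{T:complexity-REFINE}.
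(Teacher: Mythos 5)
Your proof is correct and follows essentially the same route as the paper's: termination and \eqref{eq:greedy-A} by the elementwise estimate and summation, then \eqref{eq:greedy-B} by sorting all marked elements into levels, playing the measure-based (combinatorial) bound against the regularity-based bound obtained by accumulating the marking inequality in $\ell^p$, balancing the two geometric series at the crossover level, and invoking Theorem \ref{T:complexity-REFINE}. The only difference is cosmetic: you treat $p=\infty$ as a separate (level-capping) case, while the paper handles it implicitly within the same $\ell^p$ accumulation.
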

\begin{remark}[error decay in $\texttt{GREEDY}$]\label{rem:greedy}
 {\rm
Assuming $\#\mesh \geq c_0 \#\mesh_0$ for some $c_0>1$, and concatenating \eqref{eq:greedy-A} and \eqref{eq:greedy-B} yields
\begin{equation}\label{eq:error-decay-greedy}
\zeta_\mesh(g) \ \lesssim \ \vert g\vert_{W^s_p(\Omega)} (\#\mesh)^{-\frac{t+s}2}\,. 
\end{equation}
}
\end{remark}

\bigskip
\noindent {\em Proof of Proposition \ref{prop:greedy}}. We proceed in several steps.
\begin{itemize}
\item[(i)] {\em Termination}. Since $r>0$,  {\tt GREEDY} stops in finite steps $k$, producing $k$ subsequent refinements $\mesh_1, \dots, \mesh_k$ of $\mesh$.  Upon termination, it holds $\zeta_{\mesh_k}(E;g) \leq \delta$ for all $E \in \mesh_k$, whence \eqref{eq:greedy-A} follows.

\item[(ii)] {\em Counting}. Let
$\mathcal{M}=\mathcal{M}_0\cup\ldots\cup\mathcal{M}_{k-1}$ be the set of marked elements.  We reorganize $\mathcal{M}$ by size: let $\mathcal{P}_j$ be the set of elements $E\in\mathcal{M}$ such that 
$$ 
2^{-(j+1)}\leq \vert E\vert < 2^{-j} \,, \qquad \text{namely} \qquad  2^{-\frac{j+1}{2•}}\leq h_E < 2^{-\frac{j}{2}}. 
$$ 
Since $\tt{REFINE}$ uses bisection, the elements of $\mathcal{P}_j$ are disjoint, whence 
$$
2^{-(j+1)} \#\mathcal{P}_j \leq \vert \Omega\vert \qquad \text{i.e., } \qquad \#\mathcal{P}_j\leq \vert \Omega \, \vert 2^{j+1}.
$$
On the other hand,  $E\in \mathcal{P}_j$ (with $E \in \mesh_i$ for some $i$) implies
$$ 
\delta < \zeta_{\mesh_i}(E; g) \lesssim h_E^r \vert g\vert_{W^s_p(E)}\leq 2^{-\frac{jr}{2}}\vert g\vert_{W^s_p(E)}\,.
$$
In view of the summability of the right-hand side, we now accumulate these inequalities in the $\ell^p$ norm
$$
\delta^p \#\mathcal{P}_j 
\lesssim 2^{-\frac{j rp}{2}}\vert  g\vert_{W^s_p(\Omega)}^p.
$$
This gives an alternative bound 
$$
\#\mathcal{P}_j\lesssim \delta^{-p} 2^{-\frac{jrp}{2}}\vert g\vert^p_{W^s_p(\Omega)}.
$$
\item[(iii)] {\em Summing up}. Adding over $j$ we obtain 
$$
\#\mathcal{M}=\sum_j \#\mathcal{P}_j\lesssim \sum_{j\leq j_0} \vert \Omega\vert \, 2^{j+1} + \sum_{j>j_0} \delta^{-p} 2^{-\frac{jrp}{2}} \vert g\vert_{W^s_p(\Omega)}^p \,,
$$
where $j_0$ corresponds to the crossover of the two series, namely
$$  
\vert \Omega\vert \, 2^{j_0+1} \simeq \delta^{-p} 2^{-\frac{j_0rp}{2}} \vert g\vert_{W^s_p(\Omega)}^p \,.
$$
This implies 
$$
2^{j_0(1+\frac{rp}{2})}\simeq \vert \Omega\vert^{-1} \vert g\vert_{W^s_p(\Omega)}^p \delta^{-p} \,,
$$
and
$$
1+\frac{rp}{2}=1+\frac{p}{2} \left(t+s-\frac2p+\frac2q \right) = \frac{p}2(t+s) +\frac{p}{q} = p \, w\,,  \qquad \text{with } \quad w:= \frac12(t+s)+\frac1q \,. 
$$
We thus deduce
$$
2^{j_0}\simeq \vert\Omega\vert^{-\frac{1}{pw}}\vert g\vert_{W^s_p(\Omega)}^{\frac{1}{w}} \delta^{-\frac1w}
$$
and the two series amount to the same sum
$$
\#{\cal M} \lesssim \vert\Omega\vert^{1-\frac{1}{pw}}\vert g\vert_{W^s_p(\Omega)}^{\frac{1}{w}} \delta^{-\frac1w} \,.
$$
\item[(iv)] {\em Complexity}.  Apply finally the estimate \eqref{eq:complexity-REFINE} that controls the number of elements in $\mesh_k$ in terms of $\mathcal{M}$:
$$
\#\mesh_k-\#\mesh_0\lesssim \#\mathcal{M}\lesssim  \vert\Omega\vert^{1-\frac{1}{pw}}\vert g\vert_{W^s_p(\Omega)}^{\frac{1}{w}} \delta^{-\frac1w} \,.
$$
This in turn yields
$$
\delta \lesssim \vert\Omega\vert^{w-\frac{1}{p}}\vert g\vert_{W^s_p(\Omega)} ( \#\mesh_k-\#\mesh_0)^{-w} \,,
$$
which is the asserted inequality \eqref{eq:greedy-B} in view of the definition of $w$. This concludes the proof.  \qquad \qquad $\square$
\end{itemize}

We now apply Proposition \ref{prop:greedy} to data $\data=(A,c,f)$, starting with $A$. In this case, we have
$$
t=0, \quad q=\infty, \quad r= s-\frac2p >0 \,.
$$
This allows for $s=1, p>2$ (i.e., $A \in (W^1_p(\Omega))^{2 \times 2}$), or $s>0, p=\infty$ (i.e., $A \in (W^s_\infty(\Omega))^{2 \times 2}=(C^{0,s}(\bar{\Omega}))^{2 \times 2}$, the space of H\"older-continuous tensor fields of exponent $s$).
\begin{corollary}[approximation of $A$]\label{cor:approx-A} If $A \in (W^s_p(\Omega))^{2 \times 2}$ with $0 < s \leq 1$ and $p >\frac2s$, then
\begin{equation} \label{eq:approx-A}
\Vert A - \widehat{A}_\mesh \Vert_{L^\infty(\Omega)} \lesssim \vert\Omega\vert^{\frac{s}2-\frac{1}{p}}\vert A \vert_{W^{s}_p(\Omega)} ( \#\mesh)^{-\frac{s}2} \,. 
\end{equation}
Thus, $A$ belongs to the approximation class $\ACA_{\frac{s}2}$, and the {\tt{GREEDY}} algorithm provides a quasi-optimal approximation of $A$.
\end{corollary}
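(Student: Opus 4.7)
The plan is to specialize Proposition \ref{prop:greedy} and Remark \ref{rem:greedy} to the setting relevant for $A$, with the parameter choices
\[
t = 0, \qquad q = \infty,
\]
so that $\zeta_\mesh(E;A) = \|A-\widehat A\|_{L^\infty(E)}$ agrees with the definition \eqref{eq:zeta-Acf-loc} and $\zeta_\mesh(A) = \|A-\widehat A\|_{L^\infty(\Omega)}$ as in \eqref{eq:zeta-Acf}. First I would check that the Sobolev condition \eqref{eq:sob} holds strictly, namely $r = t + s - 2/p + 2/q = s - 2/p > 0$, which is exactly the standing hypothesis $p > 2/s$; this also ensures that the local polynomial approximation bound \eqref{eq:error-estimate-g} holds componentwise for the entries of $A$, with $g$ replaced by each entry and $|g|_{W^s_p(E)}$ by a seminorm equivalent to $|A|_{W^s_p(E)}$ after taking a fixed matrix norm.

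Next I would invoke Proposition \ref{prop:greedy} applied entrywise to $A$ (or equivalently to a scalar surrogate obtained by taking the operator norm of $A-\widehat A$ and bounding it by the Frobenius norm of entries, all of which are equivalent in $\mathbb{R}^{2\times 2}$). With $t=0$, $q=\infty$, the estimate \eqref{eq:greedy-A} becomes $\zeta_\mesh(A) \leq \delta$, and \eqref{eq:greedy-B} reads $\delta \lesssim |A|_{W^s_p(\Omega)} \bigl(\#\mesh - \#\mesh_0\bigr)^{-s/2}$ since $(t+s)/2 + 1/q = s/2$. Under the mild condition $\#\mesh \geq c_0 \#\mesh_0$ for some $c_0 > 1$ (Remark \ref{rem:greedy}), the two bounds combine into
\[
\|A - \widehat A_\mesh\|_{L^\infty(\Omega)} \;\lesssim\; |\Omega|^{s/2 - 1/p}\,|A|_{W^s_p(\Omega)}\,(\#\mesh)^{-s/2},
\]
where the factor $|\Omega|^{s/2-1/p}$ is the one that appears explicitly from the summation argument in step (iii) of the proof of Proposition \ref{prop:greedy} (with $w = s/2$ and the $|\Omega|^{w-1/p}$ prefactor).

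From this error decay, membership $A \in \ACA_{s/2}$ follows immediately from the definition of the approximation class, taking $|A|_{\ACA_{s/2}} \lesssim |\Omega|^{s/2-1/p}\,|A|_{W^s_p(\Omega)}$. Quasi-optimality of {\tt GREEDY} then reduces to observing that for a target accuracy $\delta$, the algorithm produces a mesh whose cardinality is, by \eqref{eq:greedy-B}, controlled by the best possible rate $\delta^{-1/(s/2)}$ compatible with the class $\ACA_{s/2}$. The main (and only) subtlety will be handling the tensor-valued nature of $A$: I would either run {\tt GREEDY} on each entry and take the union of the marked sets, which multiplies the constant by $4$ but preserves the rate, or mark directly with the matrix $L^\infty$ error, which is what the corollary's notation suggests and which slots straight into the scalar framework of Proposition \ref{prop:greedy} once one verifies that the pointwise approximation estimate \eqref{eq:error-estimate-g} is valid for the matrix norm. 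Neither requires new ideas beyond the scalar argument.
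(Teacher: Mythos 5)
Your proposal is correct and follows exactly the paper's route: the corollary is obtained by specializing Proposition \ref{prop:greedy} and Remark \ref{rem:greedy} with $t=0$, $q=\infty$, noting $r=s-\tfrac2p>0$ from $p>\tfrac2s$, and reading off the rate $(\#\mesh)^{-s/2}$ with the prefactor $|\Omega|^{s/2-1/p}$. Your extra remark on handling the tensor-valued $A$ (entrywise versus matrix-norm marking) is a harmless refinement of a point the paper leaves implicit.
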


We next consider the reaction term $c$, for which we have 
$$
t=1, \quad q=\infty, \quad r= s-\frac2p +1 >0  \,.
$$
The latter inequality is surely satisfied if  condition \eqref{eq:sob} holds. Thus, we may take $s=1$, $p=2$ (i.e., $c \in H^1(\Omega)$), or $0 \leq s \leq 1$, $p=\infty$ (i.e., $c \in W^s_\infty(\Omega)$).
\begin{corollary}[approximation of $c$]\label{cor:approx-c} If $c \in W^s_p(\Omega)$ with $0 \leq s \leq 1$ and $p \geq \frac2{s}$, then
\begin{equation} \label{eq:approx-c}
\Vert \hh (c - \widehat{c}_\mesh) \Vert_{L^\infty(\Omega)} \lesssim \vert\Omega\vert^{\frac{1+s}2-\frac1p}\vert c \vert_{W^{s}_p(\Omega)} ( \#\mesh)^{-\frac{1+s}2} \,. 
\end{equation}
Thus, $c$ belongs to the approximation class $\ACc_{\frac{1+s}2}$, and the {\tt{GREEDY}} algorithm provides a quasi-optimal approximation of $c$.
\end{corollary}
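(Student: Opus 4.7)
The plan is to apply Proposition \ref{prop:greedy} (performance of \texttt{GREEDY}) with the specialization $g=c$, $t=1$, and $q=\infty$. With these choices, the local weighted quantity of \eqref{eq:error-estimate-g} reads $h_E \Vert c-c_E \Vert_{L^\infty(E)}$, which coincides exactly with the local data estimator $\zeta_\mesh(E;c)$ of \eqref{eq:zeta-Acf-loc}, and its global max-aggregate is $\Vert \hh(c-\widehat{c}_\mesh)\Vert_{L^\infty(\Omega)} = \zeta_\mesh(c)$. In this way the greedy machinery is addressing precisely the quantity appearing in the definition of $\ACc_{s_c}$.

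First, I would verify the crucial parameter $r=t+s-\tfrac{2}{p}+\tfrac{2}{q}=1+s-\tfrac{2}{p}$ is positive. Under the assumption $p\ge 2/s$ (with the convention $p=\infty$ if $s=0$) we have $2/p\le s$, hence $r\ge 1>0$, so the hypothesis of Proposition \ref{prop:greedy} is met and the algorithm terminates in finitely many steps on any input tolerance $\delta>0$. Then I would chain the two estimates \eqref{eq:greedy-A} and \eqref{eq:greedy-B}: with $q=\infty$ the former gives $\zeta_\mesh(c)\le \delta$, while the latter, for the exponent $w=\tfrac{t+s}{2}+\tfrac{1}{q}=\tfrac{1+s}{2}$, yields
\[
\delta \;\lesssim\; |\Omega|^{\frac{1+s}{2}-\frac{1}{p}}\,|c|_{W^s_p(\Omega)}\,(\#\mesh-\#\mesh_0)^{-\frac{1+s}{2}}\,.
\]
Under the standard mild assumption that the output satisfies $\#\mesh\ge c_0\#\mesh_0$ for some $c_0>1$ (as in Remark \ref{rem:greedy}), we absorb $\#\mesh-\#\mesh_0$ into $\#\mesh$ up to a multiplicative constant, obtaining \eqref{eq:approx-c}.

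Finally, to conclude membership in $\ACc_{(1+s)/2}$ and quasi-optimality of \texttt{GREEDY}, I would note that the output of \texttt{GREEDY} is by construction $\Lambda$-admissible (through the module \texttt{REFINE}) and hence lies in $\mathbb{T}_N$ with $N=\#\mesh$; therefore $\inf_{\mesh'\in\mathbb{T}_N}\zeta_{\mesh'}(c)$ is no larger than the value delivered by \texttt{GREEDY}, matching the rate $N^{-(1+s)/2}$ with constant controlled by $|\Omega|^{\frac{1+s}{2}-\frac{1}{p}}|c|_{W^s_p(\Omega)}$. I do not anticipate any serious obstacle: the argument is essentially a specialization of Proposition \ref{prop:greedy}, and the main subtlety is checking that the choice $t=1, q=\infty$ reproduces the weighted $L^\infty$-norm used in the definition \eqref{eq:zeta-Acf} of $\zeta_\mesh(c)$ and that the range of admissible $(s,p)$ in Proposition \ref{prop:greedy} covers the corollary's hypothesis $p\ge 2/s$.
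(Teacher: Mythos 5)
Your proposal is correct and follows essentially the same route as the paper: the corollary is obtained precisely by specializing Proposition \ref{prop:greedy} to $g=c$ with $t=1$, $q=\infty$ (so that the weighted quantity in \eqref{eq:error-estimate-g} coincides with $\zeta_\mesh(c)$ and $r=1+s-\tfrac{2}{p}\geq 1>0$ under $p\geq \tfrac{2}{s}$), and then concatenating \eqref{eq:greedy-A} and \eqref{eq:greedy-B} under the assumption $\#\mesh\geq c_0\,\#\mesh_0$, exactly as in Remark \ref{rem:greedy}. Your closing observation that the \texttt{GREEDY} output is $\Lambda$-admissible, hence lies in $\mathbb{T}_N$ with $N=\#\mesh$, which yields membership in $\ACc_{\frac{1+s}{2}}$, matches what the paper leaves implicit.
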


We conclude with the forcing term $f$, for which we have
$$
t=1, \quad q=2, \quad r=s-\frac2p +2 > 0 \,.
$$
Again, the latter inequality is implied by \eqref{eq:sob}. Admissible cases are $0 \leq s \leq 1$, $p=2$ (i.e., $f \in H^s(\Omega)$), or $s=1$, $p=1$ (i.e., $f \in W^1_1(\Omega)$).
\begin{corollary}[approximation of $f$]\label{cor:approx-f} If $f \in W^s_p(\Omega)$ with $0 \leq s \leq 1$ and $p \geq \frac2{s+1}$, then
\begin{equation} \label{eq:approx-f}
\Vert \hh (f - \widehat{f}_\mesh) \Vert_{L^2(\Omega)} \lesssim \vert\Omega\vert^{\frac{s}2+1-\frac1p}\vert f \vert_{W^{s}_p(\Omega)} ( \#\mesh)^{-\frac{1+s}2} \,. 
\end{equation}
Thus, $f$ belongs to the approximation class $\ACf_{\frac{1+s}2}$, and the {\tt{GREEDY}} algorithm provides a quasi-optimal approximation of $f$.
\end{corollary}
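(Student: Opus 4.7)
The plan is to apply Proposition \ref{prop:greedy} and Remark \ref{rem:greedy} to the function $g=f$ with the parameter choice $t=1$ and $q=2$, which matches the weighting and the accumulation norm used in the definitions \eqref{eq:zeta-Acf-loc}--\eqref{eq:zeta-Acf} of $\zeta_{\mesh}(f)$. With these values the exponent appearing in \eqref{eq:error-estimate-g} becomes $r=t+s-\tfrac{2}{p}+\tfrac{2}{q}=2+s-\tfrac{2}{p}$. The Sobolev-type condition \eqref{eq:sob} reduces here to $s-\tfrac{2}{p}+1\ge 0$, which is exactly the hypothesis $p\ge\tfrac{2}{s+1}$; this yields $r\ge 1>0$, so the abstract machinery of Proposition \ref{prop:greedy} is available and $\texttt{GREEDY}$ terminates on a $\Lambda$-admissible mesh $\mesh$.

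The key identification I would then make is that the global quantity $\zeta_{\mesh}(f)$ of Proposition \ref{prop:greedy} with $t=1$, $q=2$ coincides with the left-hand side of \eqref{eq:approx-f}:
\[
\zeta_{\mesh}(f)=\Big(\sum_{E\in\mesh} h_E^{2}\,\|f-f_E\|_{L^2(E)}^{2}\Big)^{1/2}=\|\hh(f-\widehat{f}_{\mesh})\|_{L^2(\Omega)}.
\]
Next I would combine the two bounds \eqref{eq:greedy-A} and \eqref{eq:greedy-B} under the mild assumption $\#\mesh\ge c_0\,\#\mesh_0$ of Remark \ref{rem:greedy}, so that $\#\mesh-\#\mesh_0\simeq\#\mesh$. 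Crucially, to recover the explicit $|\Omega|$-dependence stated in \eqref{eq:approx-f} one cannot use only the statement of Proposition \ref{prop:greedy} but must invoke the sharper intermediate estimate derived in step (iii) of its proof, namely $\delta\lesssim |\Omega|^{w-1/p}|f|_{W^s_p(\Omega)}(\#\mesh-\#\mesh_0)^{-w}$, with $w=\tfrac{t+s}{2}+\tfrac{1}{q}=1+\tfrac{s}{2}$. Concatenating this with \eqref{eq:greedy-A} yields
\[
\|\hh(f-\widehat{f}_{\mesh})\|_{L^2(\Omega)}\lesssim|\Omega|^{1+\tfrac{s}{2}-\tfrac{1}{p}}|f|_{W^s_p(\Omega)}(\#\mesh)^{\tfrac{1}{2}-w}=|\Omega|^{1+\tfrac{s}{2}-\tfrac{1}{p}}|f|_{W^s_p(\Omega)}(\#\mesh)^{-\tfrac{1+s}{2}},
\]
which is precisely \eqref{eq:approx-f}. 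The membership $f\in\ACf_{(1+s)/2}$ is then immediate from the definition of the class, and the quasi-optimality of $\texttt{GREEDY}$ for $f$ is inherited from Proposition \ref{prop:greedy}, since the rate $(\#\mesh)^{-(1+s)/2}$ saturates the a priori bound \eqref{eq:error-estimate-g} on each element.

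There is no substantive obstacle here: the proof is essentially a direct specialization of the general greedy analysis, structurally parallel to Corollaries \ref{cor:approx-A} and \ref{cor:approx-c}. The only delicate points are arithmetic rather than conceptual: (i) verifying that the hypothesis $p\ge\tfrac{2}{s+1}$ is equivalent to the Sobolev embedding \eqref{eq:sob} for $(t,q)=(1,2)$ (and automatically forces $r>0$), and (ii) tracking the exponent $w=1+s/2$ through the proof of Proposition \ref{prop:greedy} so that the $|\Omega|^{1+s/2-1/p}$ factor in \eqref{eq:approx-f} is recovered rather than absorbed into the hidden constant.
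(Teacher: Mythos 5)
Your proposal is correct and follows essentially the same route as the paper: the corollary is obtained by specializing Proposition \ref{prop:greedy} and Remark \ref{rem:greedy} to $g=f$ with $t=1$, $q=2$, checking $r=s-\tfrac2p+2>0$ under $p\ge\tfrac{2}{s+1}$, and reading off the rate $w-\tfrac1q=\tfrac{1+s}{2}$. Your observation that the explicit $\vert\Omega\vert^{\frac s2+1-\frac1p}$ factor comes from the intermediate estimate in step (iii)--(iv) of the proof of Proposition \ref{prop:greedy} (rather than from the bare statement, where it is hidden in $\lesssim$) is accurate and consistent with how the paper states the corollary.
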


\begin{remark}[rates of convergence]\label{rem:rates} {\rm
We see that the most critical data term is $A$, whose approximation error decays, according to \eqref{eq:approx-A}, with rate $-\frac{s}2$ ($0 < s \leq 1$) provided $A \in W^s_p(\Omega)$. If $s=1$, $p>2$, we get the best possible rate $-\frac12$. 

On the other hand, data $c$ and $f$ lead to a rate $- \frac{1+s}2 < - \frac12$ for any regularity $c, f \in W^s_p(\Omega)$ with $0 < s \leq 1$. This is observed in the numerical experiments of Sect. \ref{sec:experiments}.
If instead, $c$ and $f$ have minimal regularity for our {\tt AVEM} to make sense, namely $c\in L^\infty(\Omega)$, $f \in L^2(\Omega)$, then the convergence rates are $-\frac12$ for both data (i.e., $s=0$).
}
\end{remark}

\subsection{A pseudo-greedy strategy for $f$}

Since the local error estimators $\zeta_{ \mesh}(E; f) = h_E \| f-\widehat{f} \|_{L^2(E)}$ accumulate in $\ell^2$, the  
threshold $\delta$ of $\texttt{GREEDY}$ is not directly related to the desired tolerance $\varepsilon$. In fact,
all $\zeta_{ \mesh}(E; f)$ could be rather small relative to $\varepsilon$ and yet $\zeta_{ \mesh}(f) = \| \hh (f-\widehat{f}) \|_{L^2(\Omega)} > \frac13 \varepsilon$. A practical choice is $\delta = \max_{T\in\mesh}\zeta_{ \mesh}(E; f)$, but the ensuing algorithm is inefficient. We propose a minor modification of $\texttt{GREEDY}$ with similar properties as D\"orfler's algorithm that hinges on the maximum strategy. We describe the algorithm for a generic function $f \in W^s_p(\Omega)$ in the general setting presented at the beginning of this section, then we restrict the result to the forcing $f$ of Corollary \ref{cor:approx-f}.

\smallskip
Given $\theta \in (0,1)$ and a tolerance $\delta >0$, consider the algorithm
\begin{algotab}
  \>  $[\mesh] = \texttt{P-GREEDY}(\mesh, \delta)$
  \\
  \>  $\quad \text{while } \zeta_{ \mesh}(f) > \delta $ 
  \\
  \> $\quad \quad \mathcal{M}=\{E\in\mesh:\,  \zeta_{\mesh}(E; f) \geq \theta \, \displaystyle{\max_{E' \in \mesh} \zeta_{\mesh}(E'; f)} \} $
  \\
  \>  $\quad \quad  \mesh={\tt REFINE}(\mesh, \mathcal{M}) $ 
   \\
  \>  $\quad  \text{end while }$
  \\
  \>  $\quad  \text{return}(\mesh)$
\end{algotab}
\medskip

The following statement is the counterpart of Proposition \ref{prop:greedy} and Remark \ref{rem:greedy} for $\texttt{P-GREEDY}$.

\begin{proposition}[performance of $\texttt{P-GREEDY}$]\label{prop:p-greedy}
Let $r$ be defined in \eqref{eq:error-estimate-g}, and suppose $r>0$. Then $\tt{P-GREEDY}$ terminates in a finite number of steps. The output partition $\mesh$ satisfies the estimates
\begin{equation}\label{eq:p-greedy}
\zeta_\mesh(f)  \leq \delta \qquad \text{and } \qquad \zeta_\mesh(f) \ \lesssim \ \vert f \vert_{W^s_p(\Omega)} (\#\mesh)^{-\frac{t+s}2}\,. 
\end{equation}
\end{proposition}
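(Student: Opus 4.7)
The plan is to mirror the three-stage analysis of Proposition \ref{prop:greedy} and modify it to accommodate the maximum marking strategy. Termination is proved exactly as in Step (i) there: since $r > 0$, the bound $\zeta_{\mesh_k}(E;f) \lesssim h_E^r \vert f \vert_{W^s_p(E)}$ forces the $\ell^q$-norm $\zeta_{\mesh_k}(f)$ to fall below $\delta$ in finitely many iterations, and the first estimate $\zeta_\mesh(f) \leq \delta$ is immediate from the exit condition of the while loop.

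The key new ingredient is a uniform per-element lower bound for marked $E$, replacing the threshold $\zeta_{\mesh_k}(E;f) > \delta$ that \texttt{GREEDY} enjoys. Set $\delta_k := \max_{E' \in \mesh_k} \zeta_{\mesh_k}(E';f)$ and $N := \#\mesh$. Before termination we have $\zeta_{\mesh_k}(f) > \delta$, which combined with the trivial $\ell^q$-to-$\ell^\infty$ bound $\zeta_{\mesh_k}(f)^q \leq \#\mesh_k \, \delta_k^q \leq N \delta_k^q$ yields $\delta_k > \delta N^{-1/q}$. Hence every marked element obeys $\zeta_{\mesh_k}(E;f) \geq \theta \delta_k > \theta \delta N^{-1/q}$, and this quantity plays the role of $\delta$ in the counting argument.

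Following Steps (ii)-(iii) of Proposition \ref{prop:greedy}, the plan is to partition $\mathcal{M} := \bigcup_k \mathcal{M}_k$ by size classes $\mathcal{P}_j := \{E \in \mathcal{M} : 2^{-(j+1)} \leq \vert E \vert < 2^{-j}\}$ and exploit the two competing bounds $\#\mathcal{P}_j \lesssim 2^{j+1}$ (disjointness) and $\#\mathcal{P}_j \lesssim \theta^{-p} \delta^{-p} N^{p/q} 2^{-jrp/2} \vert f \vert_{W^s_p(\Omega)}^p$ (raising the new threshold bound to the $p$-th power and summing). Balancing them at the crossover exponent $1+rp/2 = pw$ with $w := \frac{t+s}{2} + \frac{1}{q}$ produces $\#\mathcal{M} \lesssim \vert f \vert_{W^s_p(\Omega)}^{1/w} \delta^{-1/w} N^{1/(qw)}$. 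Invoking the complexity bound \eqref{eq:complexity-REFINE} yields the implicit inequality $N - \#\mesh_0 \lesssim \vert f \vert_{W^s_p(\Omega)}^{1/w} \delta^{-1/w} N^{1/(qw)}$; since $1/(qw) < 1$, one absorbs $N^{1/(qw)}$ via Young's inequality and uses the identity $1/\big(w(1-\tfrac{1}{qw})\big) = \tfrac{2}{t+s}$ to conclude $N \lesssim \#\mesh_0 + \vert f \vert_{W^s_p(\Omega)}^{2/(t+s)} \delta^{-2/(t+s)}$. Provided $\#\mesh \geq c_0 \#\mesh_0$ for some $c_0 > 1$, this rearranges to $\delta \lesssim \vert f \vert_{W^s_p(\Omega)} (\#\mesh)^{-(t+s)/2}$, and since $\zeta_\mesh(f) \leq \delta$ the second estimate in \eqref{eq:p-greedy} follows.

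The main obstacle is structural: the max-marking does not directly furnish a per-element threshold proportional to $\delta$, and the correct replacement $\theta \delta N^{-1/q}$ introduces an implicit dependence on the unknown final cardinality $N$, which must then be unwound by the Young-type absorption above. Beyond this point the argument parallels Proposition \ref{prop:greedy} and presents no further difficulty.
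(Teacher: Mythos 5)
Your argument is correct, but it takes a genuinely different route from the paper's at the one point where \texttt{P-GREEDY} differs from \texttt{GREEDY}, namely the choice of a uniform per-element lower bound for marked elements. The paper fixes $\mu:=\max_{E\in\mesh_{k-1}}\zeta_{\mesh_{k-1}}(E;f)$ on the penultimate mesh, uses the fact that refinement does not increase element estimators to get $\zeta_{\mesh_i}(E;f)\geq\theta\mu_i\geq\theta\mu$ for every marked $E$ at every iteration, runs the counting argument of Proposition \ref{prop:greedy} to obtain $\mu\lesssim |f|_{W^s_p(\Omega)}(\#\mesh-\#\mesh_0)^{-w}$, and concludes via the chain $\zeta_\mesh(f)\leq\delta<\zeta_{\mesh_{k-1}}(f)\leq\mu\,(\#\mesh)^{1/q}$; no implicit inequality in $\#\mesh$ ever appears. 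You instead extract the threshold $\theta\delta N^{-1/q}$ (with $N=\#\mesh$ the final cardinality) directly from the stopping test via the crude $\ell^q$-to-$\ell^\infty$ bound, which makes the threshold depend on the unknown $N$; you then close the loop by combining the resulting bound $\#\mathcal{M}\lesssim|f|_{W^s_p(\Omega)}^{1/w}\delta^{-1/w}N^{1/(qw)}$ with \eqref{eq:complexity-REFINE} and absorbing $N^{1/(qw)}$, using $w-\tfrac1q=\tfrac{t+s}{2}$ to land exactly on $\delta\lesssim|f|_{W^s_p(\Omega)}(\#\mesh)^{-\frac{t+s}{2}}$, whence \eqref{eq:p-greedy} follows from $\zeta_\mesh(f)\leq\delta$. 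Both proofs are sound: yours dispenses with the monotonicity of the element estimators under refinement (which the paper invokes), at the price of the implicit-inequality/absorption step, which additionally requires $qw>1$, i.e.\ $t+s>0$; this is harmless since in the degenerate case $t+s=0$ the asserted decay rate is zero and the estimate is trivial, and in the relevant application to the forcing term one has $t=1$. Also note that your absorption step is plain algebraic rearrangement (divide by $N^{1/(qw)}$ after using $N-\#\mesh_0\gtrsim N$); invoking Young's inequality is not needed, though it works as well.
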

\begin{proof}
Since the proof is similar to that of Proposition \ref{prop:greedy}, we only report the new ingredients. Let $\mesh_1, \dots, \mesh_k$ be the sequence of refinements produced by $\texttt{P-GREEDY}$, and  ${\cal M}_1, \dots {\cal M}_k$ be the sequence of marked elements, with ${\cal M}= {\cal M}_1 \cup \dots \cup {\cal M}_k$. Set 
$$
\mu_i := \max \{ \zeta_{\mesh_i}(E;f) : E \in \mesh_i \} \  (1 \leq i \leq k) \qquad \text{and } \qquad \mu :=\mu_{k-1} \,.
$$
Then, it holds
\begin{equation}\label{eq:pgreedy-1}
\zeta_{\mesh_k}(f) \leq \delta < \zeta_{\mesh_{k-1}}(f) \leq \mu (\# \mesh_{k-1})^\frac1q \leq \mu (\# \mesh_{k})^\frac1q .
\end{equation}
On the other hand, since $\texttt{REFINE}$ does not increase the element estimators, one has $\mu_i \geq \mu$ for any $i$, whence
$$
\zeta_{\mesh_i}(E,f) \geq \theta \, \mu_i \geq \theta \, \mu \qquad \forall E \in {\cal M}_i, \quad \forall i\,.
$$
Let us introduce the partition of ${\cal M}$ into disjoint subsets ${\cal P}_j$ as in the proof of Proposition \ref{prop:greedy}. If $E \in {\cal P}_j$, denoting by $i$ the index such that $E \in {\cal M}_i$, we get
$$
\theta \, \mu \leq \zeta_{\mesh_i}(E,f) \lesssim h^r_E \vert f \vert_{W^s_p(\Omega)} \leq 2^{-\frac{jr}2} \vert f \vert_{W^s_p(\Omega)}\,,
$$
whence
$$
\#\mathcal{P}_j\lesssim \theta^{-p} \mu^{-p}  2^{-\frac{jrp}{2}}\vert f\vert^p_{W^s_p(\Omega)}.
$$
As in the proof of Proposition \ref{prop:greedy}, this yields
$$
\mu \lesssim \vert f \vert_{W^s_p(\Omega)} ( \#\mesh_k-\#\mesh_0)^{-w} \,,
$$
and we conclude using \eqref{eq:pgreedy-1} and the bound $\#\mesh_k \geq c_0 \#\mesh_0$ for $c_0>1$.
\end{proof}

If the forcing $f \in W^s_p(\Omega)$ with $0 \leq s \leq 1$ and $p \geq \frac2{s+1}$, as in Corollary \ref{cor:approx-f}, then \eqref{eq:p-greedy} reads $\zeta_\mesh(f) \ \lesssim \ \vert f \vert_{W^s_p(\Omega)} (\#\mesh)^{-\frac{1+s}2}$, i.e, $\texttt{P-GREEDY}$ provides a quasi-optimal approximation of $f$ with convergence rate $- \frac{1+s}{2}$. In particular, if $f\in L^2(\Omega)$, then $\zeta_\mesh(f) \ \lesssim \ \Vert f \Vert_{L^2(\Omega)} (\#\mesh)^{-\frac{1}2}$.

\section{Numerical results}\label{sec:experiments}

In this section we present a numerical experiment to confirm the convergence and optimality properties of the {$2$-step algorithm \texttt{AVEM}.
We consider problem \eqref{eq:pde} in the L-shaped} domain $\Omega = (-1, 1)^2\setminus([0, 1]\times [-1, 0])$,  with diffusion term $A = a I$, where
\[
\begin{aligned}
a(x, y) &= 1 + \exp\bigl(-50 ((x + 0.5)^2 + (y+0.5)^2) \bigr)
+ \exp\bigl(-50 ((x + 0.5)^2 + (y-0.5)^2) \bigr) \,,
\end{aligned}
\]
and reaction term
\[
\begin{aligned}
c(x, y) &= 1 + \exp\bigl(-50 ((x + 0.5)^2 + y^2) \bigr)
+ \exp\bigl(-50 (x^2 + (y-0.5)^2) \bigr) \,;
\end{aligned}
\]
 {note that the Gaussians in the definition of $a$ and $c$ have the same intensity but are located
in different places within $\Omega$ (see Figures \ref{fig:A} and \ref{fig:c}).}
The load term $f$ and the Dirichlet
boundary conditions are chosen in accordance with the analytical solution
\[
u(x, y) = r^{\frac{2}{3}} \sin \bigl(  2\alpha/3\bigr) + 
\exp\bigl(-1000 ((x - 0.5)^2 + (y-0.5)^2) \bigr) \,,
\]
where $(r, \alpha)$ are the polar coordinates around the origin.
Notice that the exact solution $u$ is singular at the reentrant corner:  {it belongs to the Sobolev spaces $H(\Omega)^{\frac{5}{3}-\epsilon}$ with $\epsilon > 0$ and $W^2_p(\Omega)$ with $p>1$. It also exhibits a rapid transition of order $10^{-3/2}$ around the point $(0.5,0.5)$ due to the presence of a very narrow Gaussian. The three Gaussians are meant to test the performance of the module {\tt DATA}.

We utilize the following parameters in the numerical test
\[
\gamma = \texttt{1}, \quad \Lambda = \texttt{10}, \quad \theta_{\text{D{\"o}rfler}} = \texttt{0.5}, \quad \omega = \texttt{1},
\quad \theta_{\text{p-greedy}} = \texttt{sqrt(0.75)},
\quad \texttt{tol}=\texttt{0.125},
\]
where $\gamma$ is parameter of the \texttt{dofi-dofi} stabilization \eqref{eq:stab-dofidofi}, $\Lambda$ is
the bound for the global index of non-conforming partitions in Definition \ref{def:Lambda-partitions},
$\theta_{\text{D{\"o}rfler}}$ is the D{\"o}rfler marking parameter \eqref{eq:dorfler},
$\omega$ is the safety input parameter of {\tt DATA},
$\theta_{\text{p-greedy}}$ is the pseudo-greedy marking parameter \eqref{eq:pseudogreedy-data},
and {\tt tol} is the target tolerance of {\tt AVEM}.
We implement algorithm \texttt{AVEM} with a uniform structured triangular mesh $\mesh_0$
with diameter $h=\texttt{0.125}$ and initial tolerance $\epsilon_0=\texttt{1}$.
}

In order to estimate the VEM error between the exact solution $u$ and
the VEM solution $u_\mesh$, we consider the computable $H^1$-like error quantity:
\[
\texttt{H\textasciicircum 1-error} := 
\frac{ |u - \Pimesh u_\mesh|_{1, \mesh}}
{|u|_{1, \Omega}} \,.
\]
In Fig. \ref{fig:avem} (left) we display the estimator $\etamesh(\umesh,\data)$, 
the data error $\zeta_{\widehat \mesh}(\data)$ and the 
$\texttt{H\textasciicircum 1-error}$
obtained with algorithm \texttt{AVEM}.
In Fig. \ref{fig:avem} (right) we exhibit the data error $\zeta_{\widehat \mesh}(\data)$ and the addends
$\zeta_{\widehat \mesh}(A)$, $\zeta_{\widehat \mesh}(c)$,
$\zeta_{\widehat \mesh}(f)$ (cf. \eqref{eq:zeta-data} and \eqref{eq:zeta-Acf}). 
Notice that the number of iterations of the algorithm \texttt{AVEM} is $K=\log_2(\epsilon_0/\texttt{tol}) = 3$.

\begin{figure}[!htb]
\begin{center}
\includegraphics[scale=0.225]{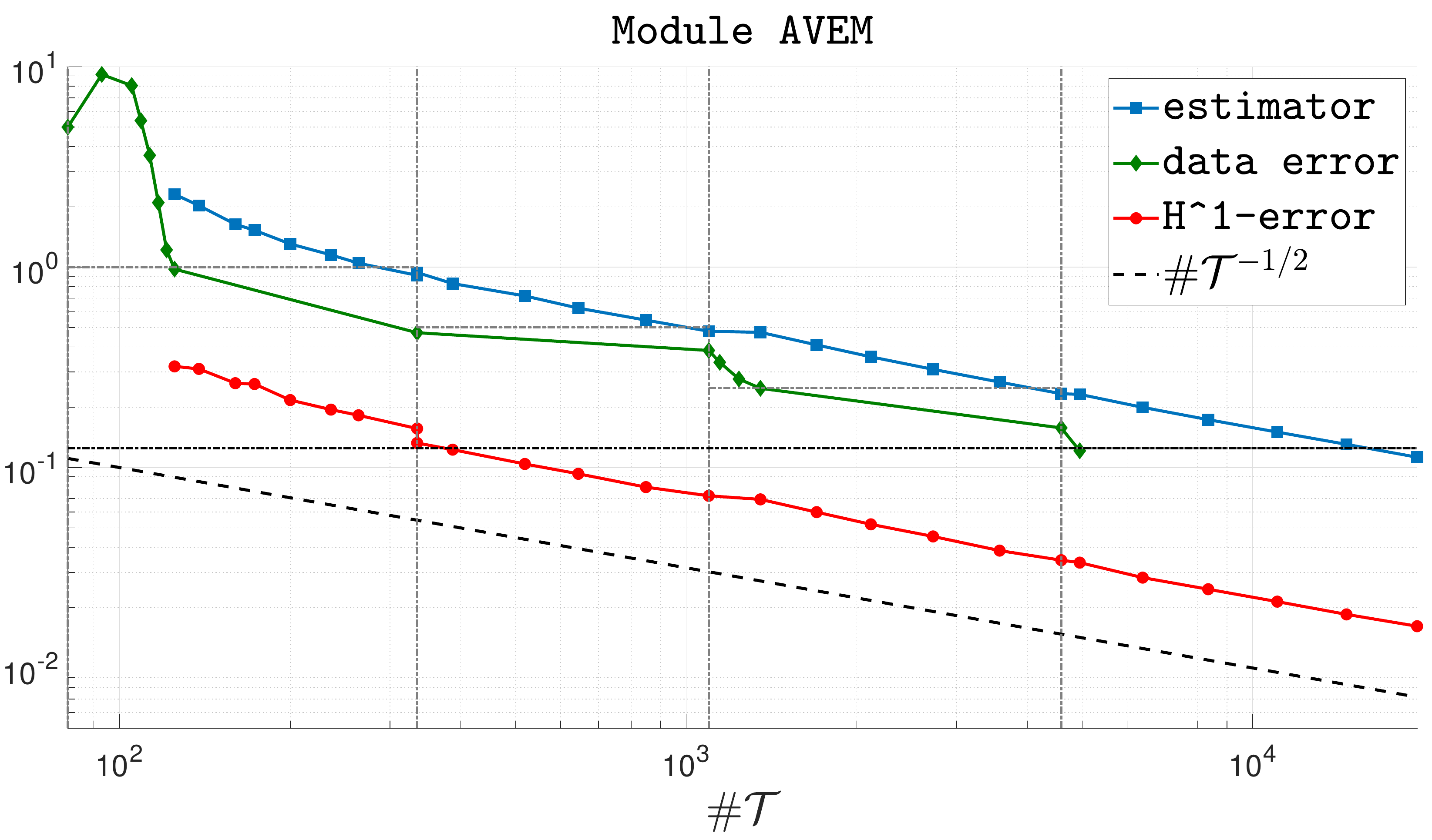}
\includegraphics[scale=0.225]{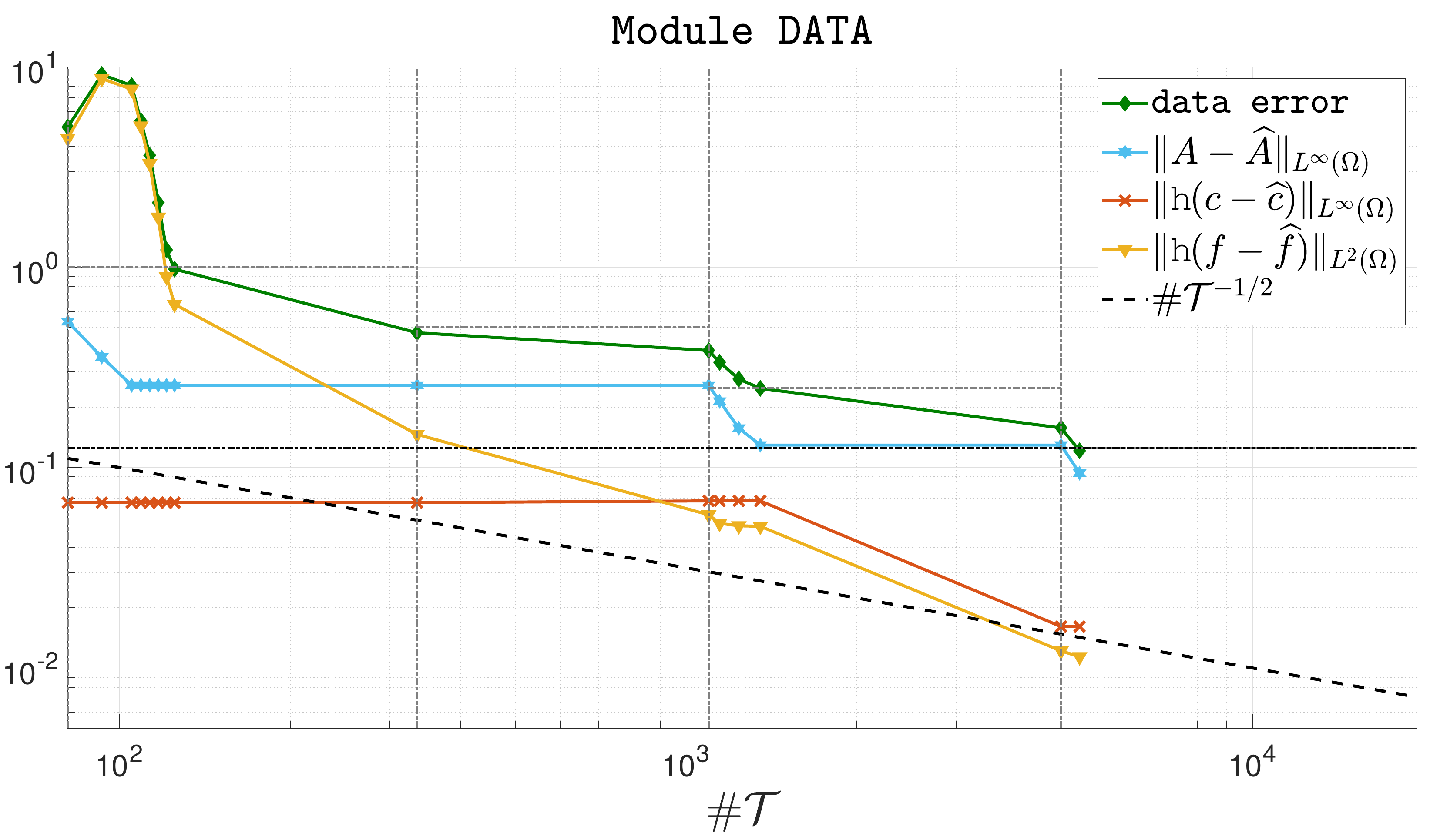}
\end{center}
\caption{{Left: 
estimator $\etamesh(\umesh,\data)$,
data error $\zeta_{\widehat\mesh}(\data)$,
$\texttt{H\textasciicircum 1-error}$
obtained with the algorithm \texttt{AVEM}. 
Right: data error $\zeta_{\widehat\mesh}(\data)$,
tensor error $\zeta_{\widehat\mesh}(A)$,
reaction error $\zeta_{\widehat\mesh}(c)$,
load error $\zeta_{\widehat\mesh}(f)$,
obtained with the algorithm \texttt{AVEM}. 
In both figures the optimal decay is indicated by the dashed line with slope $\texttt{-0.5}$.}}
\label{fig:avem}
\end{figure}

The predictions of  {Theorem \ref{prop:convergence-GALERKIN} (contraction property of {\tt GALERKIN})}
are confirmed: both the estimator $\etamesh(\umesh,\data)$ and the  $\texttt{H\textasciicircum 1-error}$ converge to zero and
the decay rate reaches asymptotically the theoretical optimal value $\# \mesh^{-1/2}$;
this corresponds to $s=1/2$ in Theorem \ref{Texact:optimality-AVEM} (optimality of {\tt AVEM}).  
Concerning data approximation, we observe from Fig. \ref{fig:avem} (right) that 
$\zeta_{\widehat \mesh}(\data)$ decays with rate $\# \mesh^{-1/2}$
dictated by $\zeta_{\widehat\mesh}(A)$, as predicted by Corollary  \ref{cor:approx-A} ,
while $\zeta_{\widehat \mesh}(c)$ and $\zeta_{\widehat \mesh}(f)$
exhibit a faster decay rate. This is due to regularity of $(c,f)$ beyond $L^\infty(\Omega)\times L^2(\Omega)$,
as predicted by Corollaries \ref{cor:approx-c} and \ref{cor:approx-f}.
We finally notice from Fig. \ref{fig:avem} that the module \texttt{DATA} is active for all $k$ except
$k=1$ because $\zeta_{\mesh_1}(\data) < \epsilon_1$.

\begin{figure}[!htb]
\begin{center}
\includegraphics[scale=0.225]{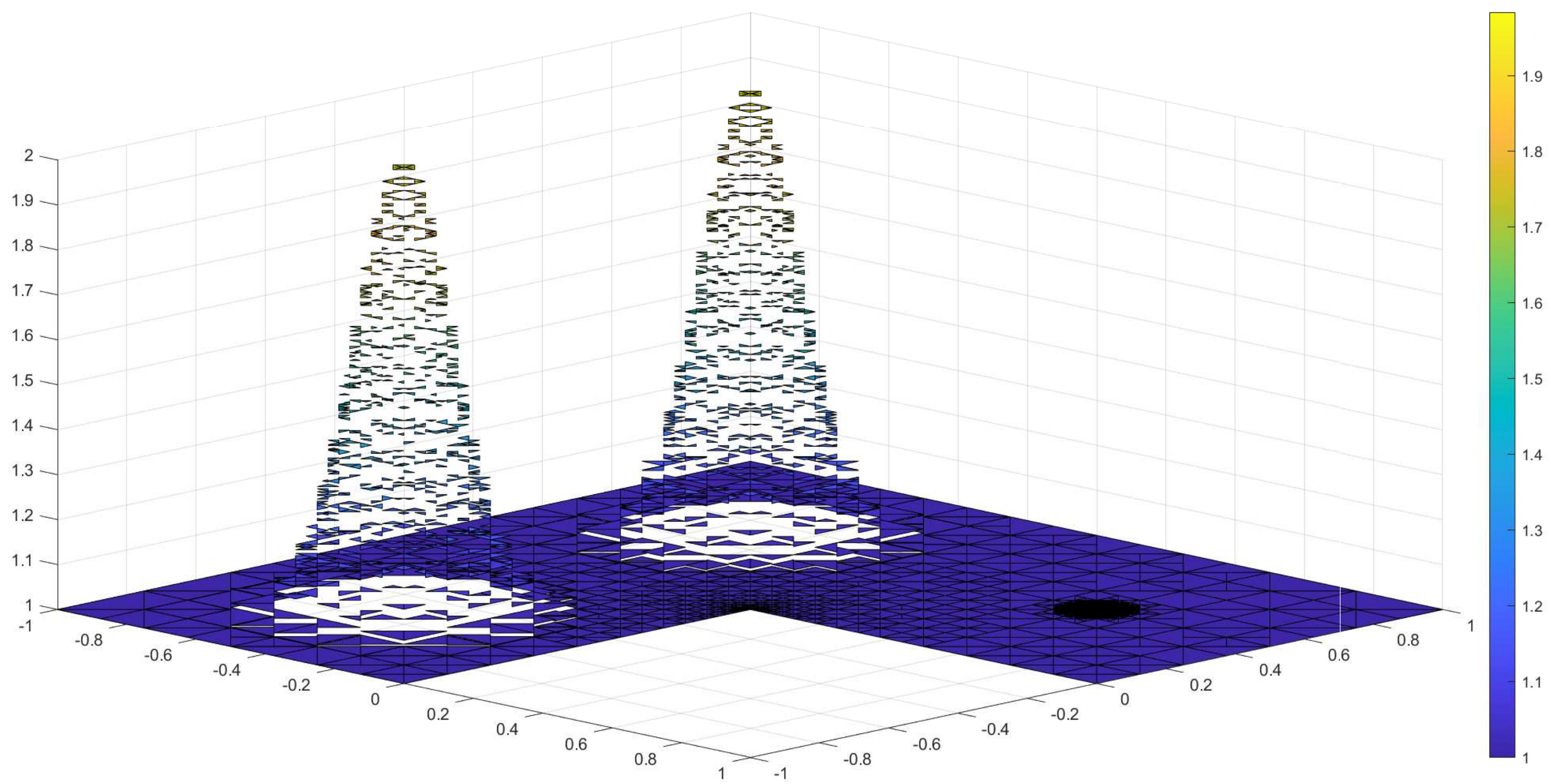}
\includegraphics[scale=0.225]{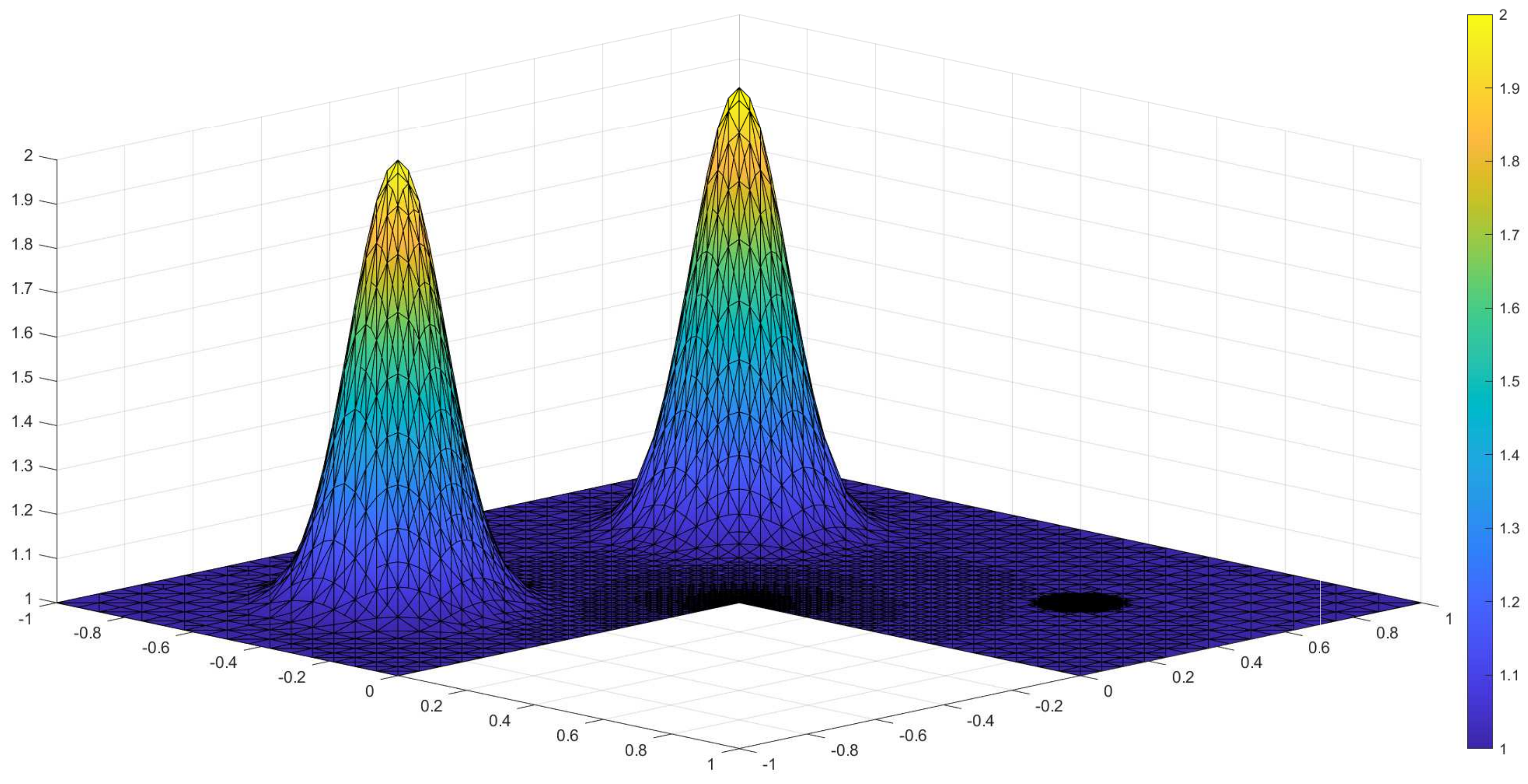}
\end{center}
\caption{ {Left: graph of  the piecewise constant approximation $\widehat{a}$ of $a$ (w.r.t. 
$\widehat\mesh_K$). 
Right: graph of the piecewise linear interpolant of $a$ (w.r.t. 
$\mesh_{K+1}$).}}
\label{fig:A}
\end{figure}

\begin{figure}[!htb]
\begin{center}
\includegraphics[scale=0.225]{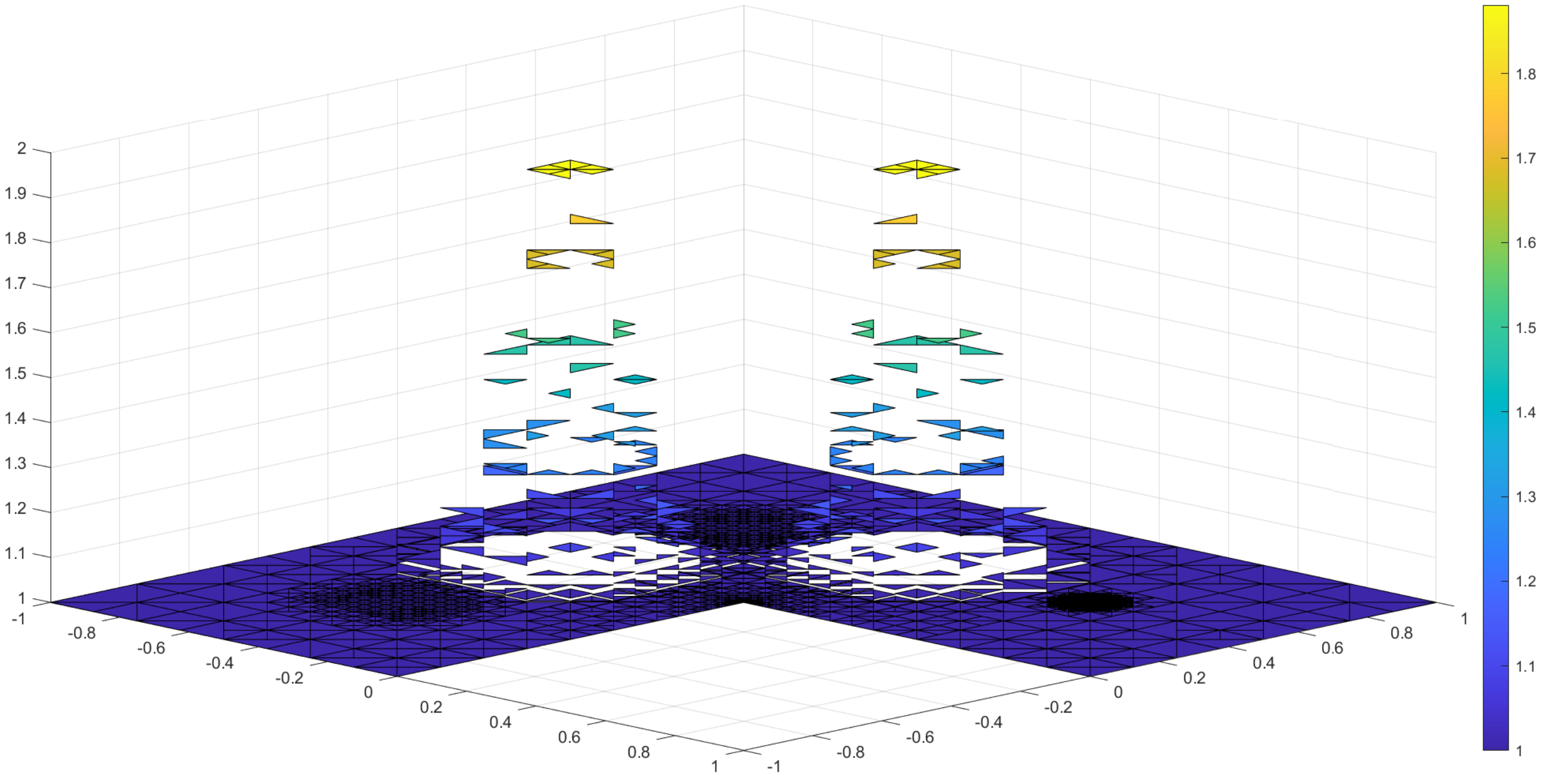}
\includegraphics[scale=0.225]{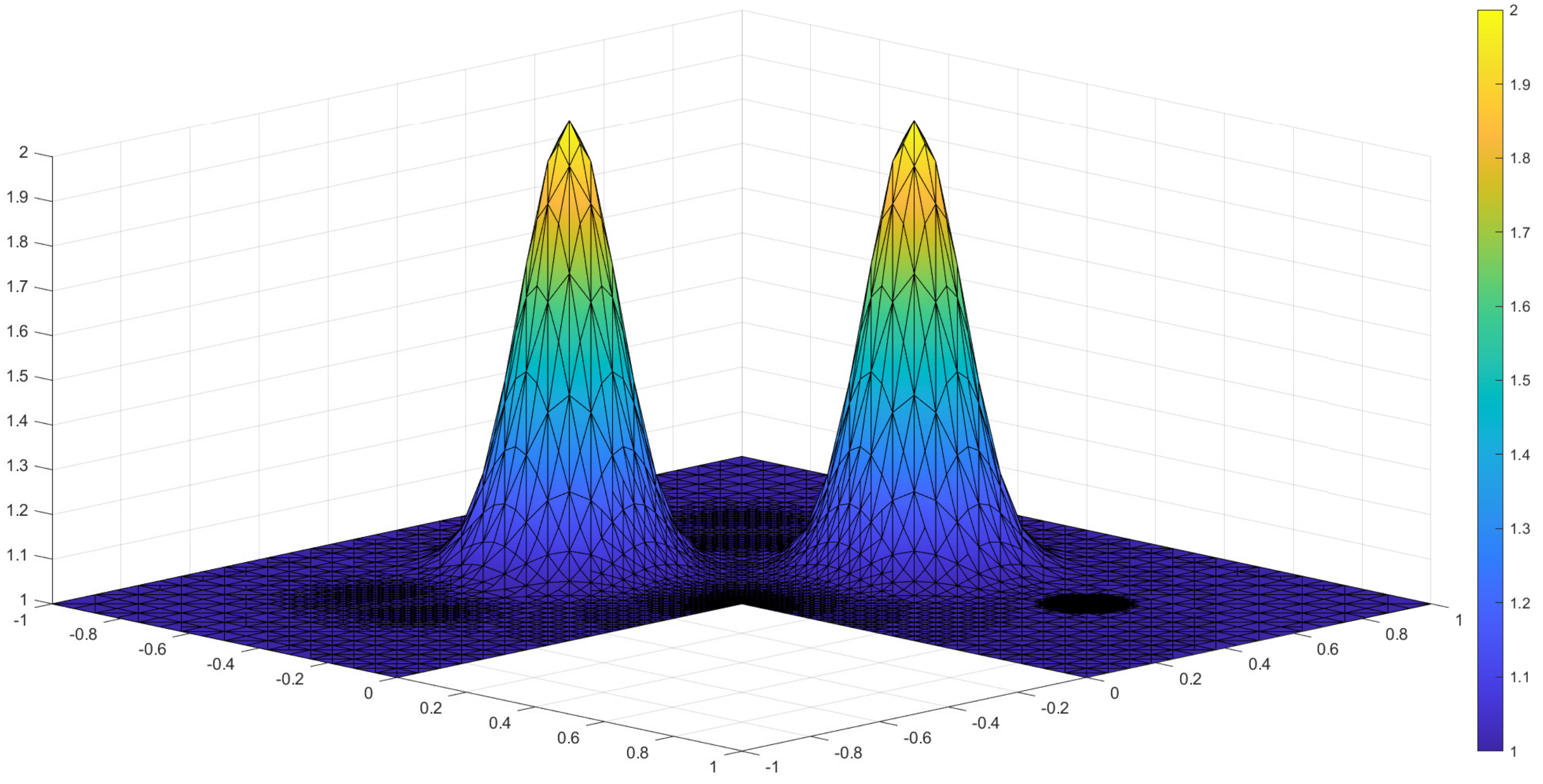}
\end{center}
\caption{ {Left: graph of the piecewise constant approximation  $\widehat{c}$ of $c$ (w.r.t. 
$\widehat\mesh_K$). 
Right: graph of the piecewise linear interpolant of $c$ (w.r.t. 
$\mesh_{K+1}$). Notice much coarser resolution than in Fig. \ref{fig:A}.}}
\label{fig:c}
\end{figure}

 {
  In order to highlight the different level of approximation of data $\data = (A,c,f)$ required by {\tt AVEM},
we display in Figs. \ref{fig:A}, \ref{fig:c} and \ref{fig:f} 
the graphs of the piecewise constant approximations $\widehat\data = (\widehat A,\widehat c,\widehat f)$ with
respect to the mesh $\widehat \mesh_K$ (left), and of the continuous piecewise linear counterparts 
with respect to the mesh $\mesh_{K+1}$ (right). Since the Gaussians in $a$ and $c$ are located in non-overlapping
subregions of $\Omega$, it is possible to see that {\tt AVEM} imposes a much finer resolution of $a$ than of $c$
in both meshes $\widehat \mesh_K$ and $\mesh_{K+1}$; this is due
to the extra factor $\hh$ in the definition \eqref{eq:zeta-Acf} of $\zeta_{\widehat\mesh}(c)$.
}

\begin{figure}[!htb]
\begin{center}
\includegraphics[scale=0.225]{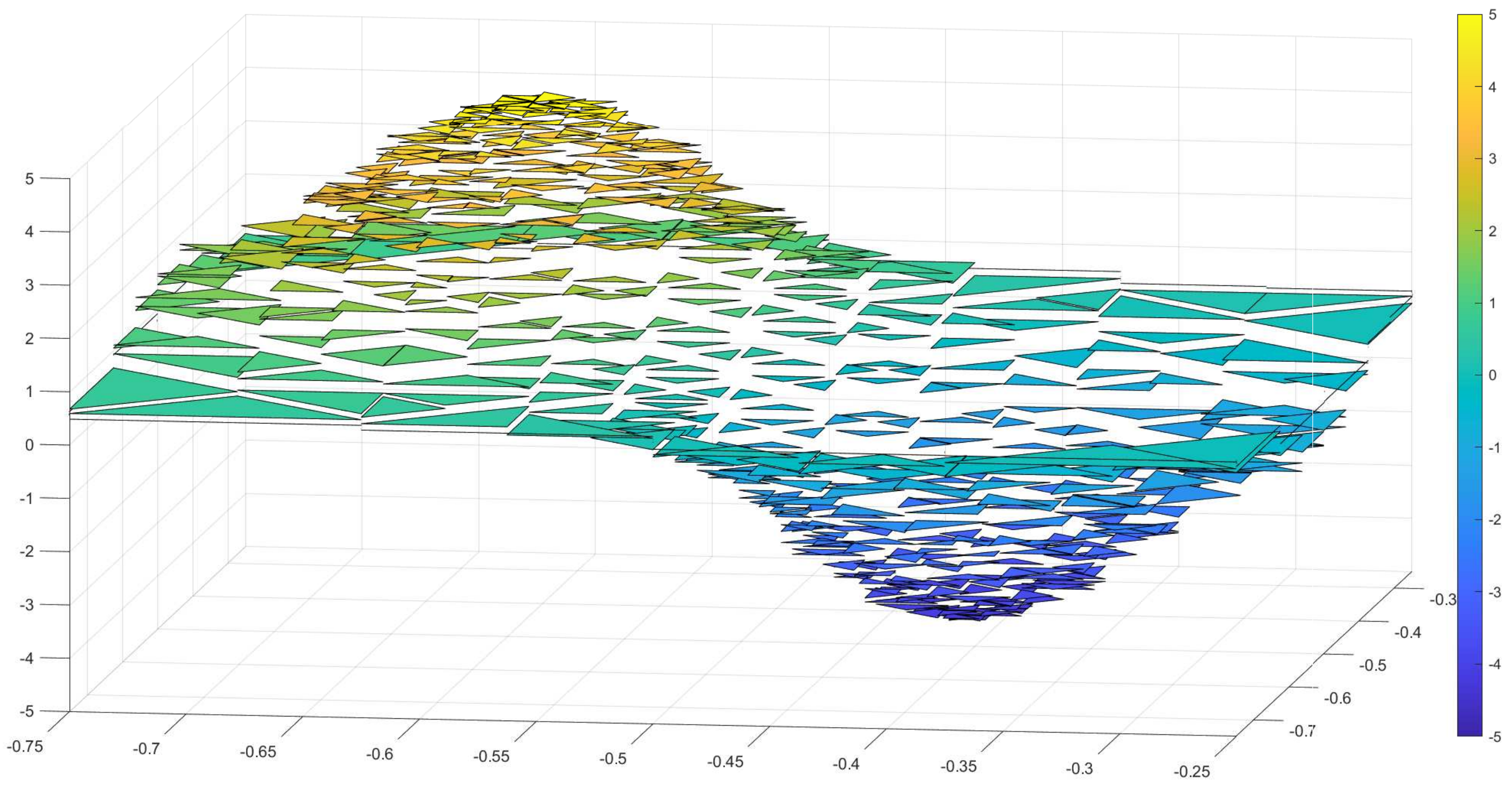}
\includegraphics[scale=0.225]{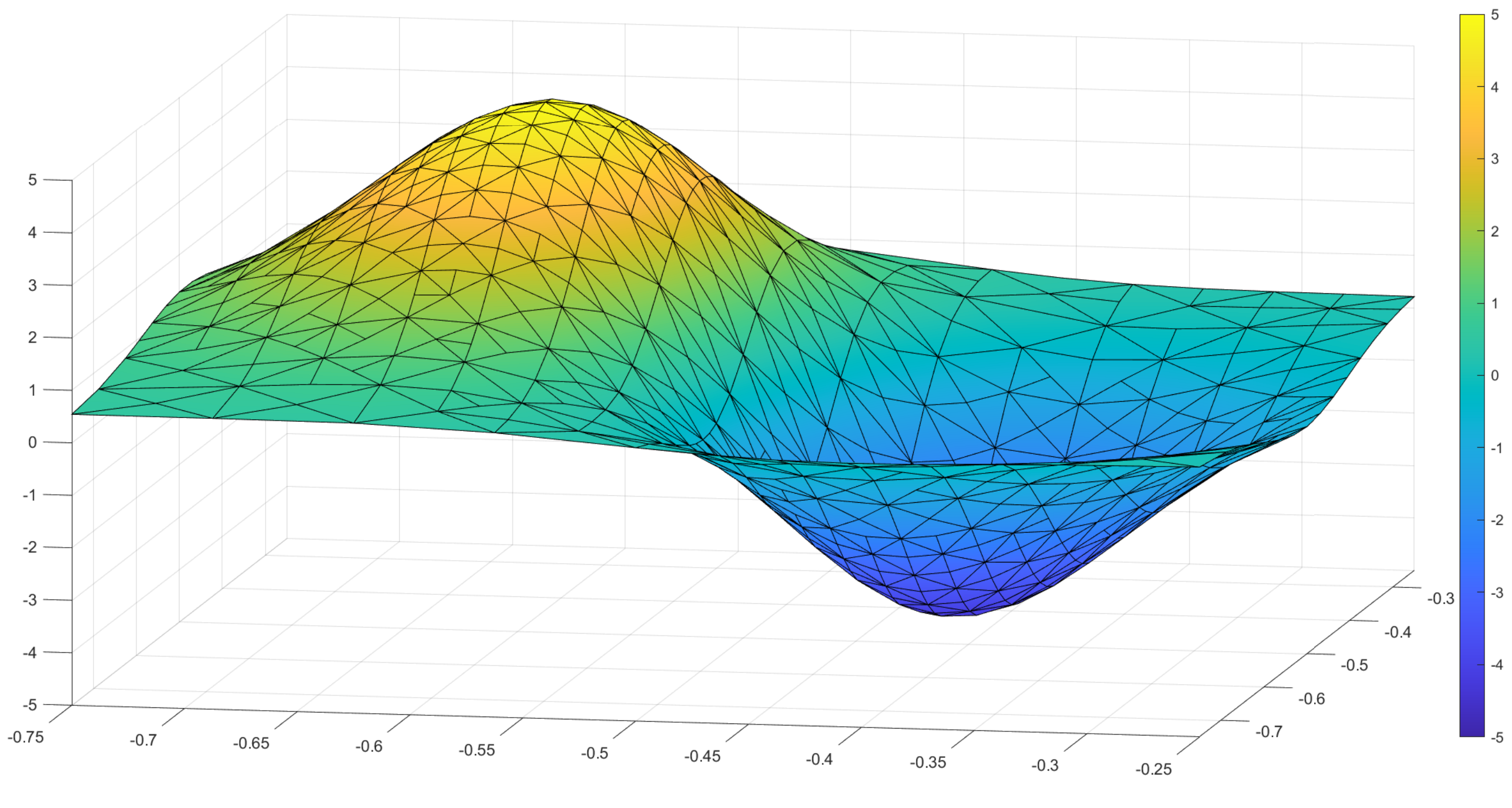}
\end{center}
\caption{ {Zoom to $(-0.75, -0.25)^2$ for the load term $f$. Left: graph of the piecewise constant
    approximation $\widehat{f}$ of $f$ (w.r.t. $\widehat\mesh_K$). 
Right: graph of the piecewise linear interpolant of $f$ (w.r.t. 
$\mesh_{K+1}$).}}
\label{fig:f}
\end{figure}

Finally in Figs. \ref{fig:mesh}, \ref{fig:mesh-gaussiana} and \ref{fig:mesh-origine} we compare
the grids $\widehat \mesh_K$ and $\mesh_{K+1}$ generated by the modules \texttt{DATA} and \texttt{GALERKIN}
upon termination of {\tt AVEM}.
The heat map on the rightmost pictures shows, for each element $E \in \mesh_{K+1}$, the number of newest-vertex bisections needed to create $E$ starting from $\widehat \mesh_K$, according to the colorbar in Fig. \ref{fig:colorbar}.
The number of nodes \texttt{N\_vertices} and elements \texttt{N\_elements} are 
\begin{align*}
& \texttt{N\_vertices}(\widehat \mesh_K) = \texttt{5030},
&&\texttt{N\_elements}(\widehat \mesh_K) = \texttt{9236},
\\
& \texttt{N\_vertices}(\mesh_{K+1}) = \texttt{19676},
&&\texttt{N\_elements}(\mesh_{K+1}) = \texttt{37244}.
\end{align*}
Furthermore, the number of polygons in $\widehat{\mesh}_K$ (elements with more than three vertices) is 
\texttt{730}: \texttt{723} quadrilaterals, \texttt{2} pentagons, \texttt{5} hexagons;
the number of polygons in $\mesh_{K+1}$ is 
\texttt{1920}: \texttt{1908} quadrilaterals, \texttt{16} pentagons, \texttt{4} hexagons.
In Fig. \ref{fig:mesh-gaussiana} we plot a zoom to $(0.35, 0.65)^2$ of the meshes
$\widehat \mesh_K$ and $\mesh_{K+1}$. 
We highlight for both meshes the presence of hexagons in this subregion. Moreover, looking at the vertices having maximum global index $\lambda$ sitting on the hexagons, we realize that the global indices are $\Lambda_{\widehat \mesh_K} = 2$ and  $\Lambda_{\mesh_{K+1}} = 3$.
It is worth noting that the threshold $\Lambda = \texttt{10}$ is never reached by \texttt{AVEM};
therefore, the condition of $\Lambda$-admissibility is not restrictive in practice.
We further notice that the Gaussian in $(0.5, 0.5)$ associated with $f$ is sufficiently resolved by \texttt{DATA}. 
In Fig. \ref{fig:mesh-origine} we present a zoom to $(-10^{-2}, 10^{-2})^2$ to examine
mesh refinement at the origin.
We see that the mesh $\mesh_{K+1}$ exhibits a rather strong grading at the reentrant corner, in accordance with the singularity of the exact solution.
Elements in $\mesh_{K+1}$ in this region need up to five newest-vertex bisection refinements relative to $\widehat \mesh_K$.

We close this section with the following observation.
From Figs. \ref{fig:mesh}, \ref{fig:mesh-gaussiana} and \ref{fig:mesh-origine} it can be appreciated how the presence of hanging nodes allows for quite abrupt and `steep' refinements where needed in order to approximate the data and the solution singularity. In this respect, a direct comparison with AFEM in terms of generated meshes can be found in \cite{paperA}. Such numerical results suggest that, although as shown in Remark \ref{rem:avem-afem} the approximation classes of AVEM and AFEM are the same, this added flexibility may be an important asset in adaptivity, especially in situations with more complex geometry. This aspect is worth further investigation, but is not within the scopes of the present contribution.

\begin{figure}
\begin{center}
\begin{overpic}[scale=0.2]{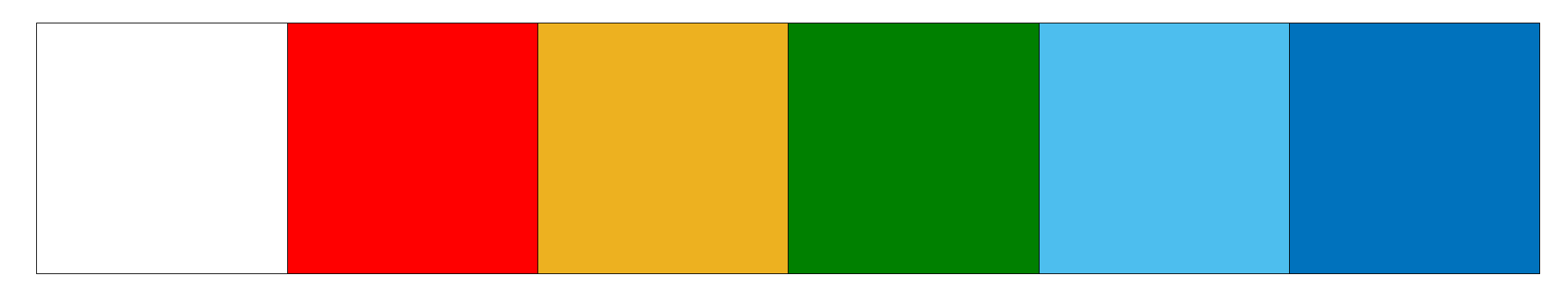}
\put(12,20){Number of bisections}
\put(8, -6){0}
\put(24,-6){1}
\put(40,-6){2}
\put(56,-6){3}
\put(72,-6){4}
\put(88,-6){5}
\end{overpic}
\end{center}
\caption{{Colorbar for the heat map in Figures \ref{fig:mesh}, \ref{fig:mesh-gaussiana} and \ref{fig:mesh-origine}.}}
\label{fig:colorbar}
\end{figure}
 
\begin{figure}[!htb]
\begin{center}
\includegraphics[scale=0.275]{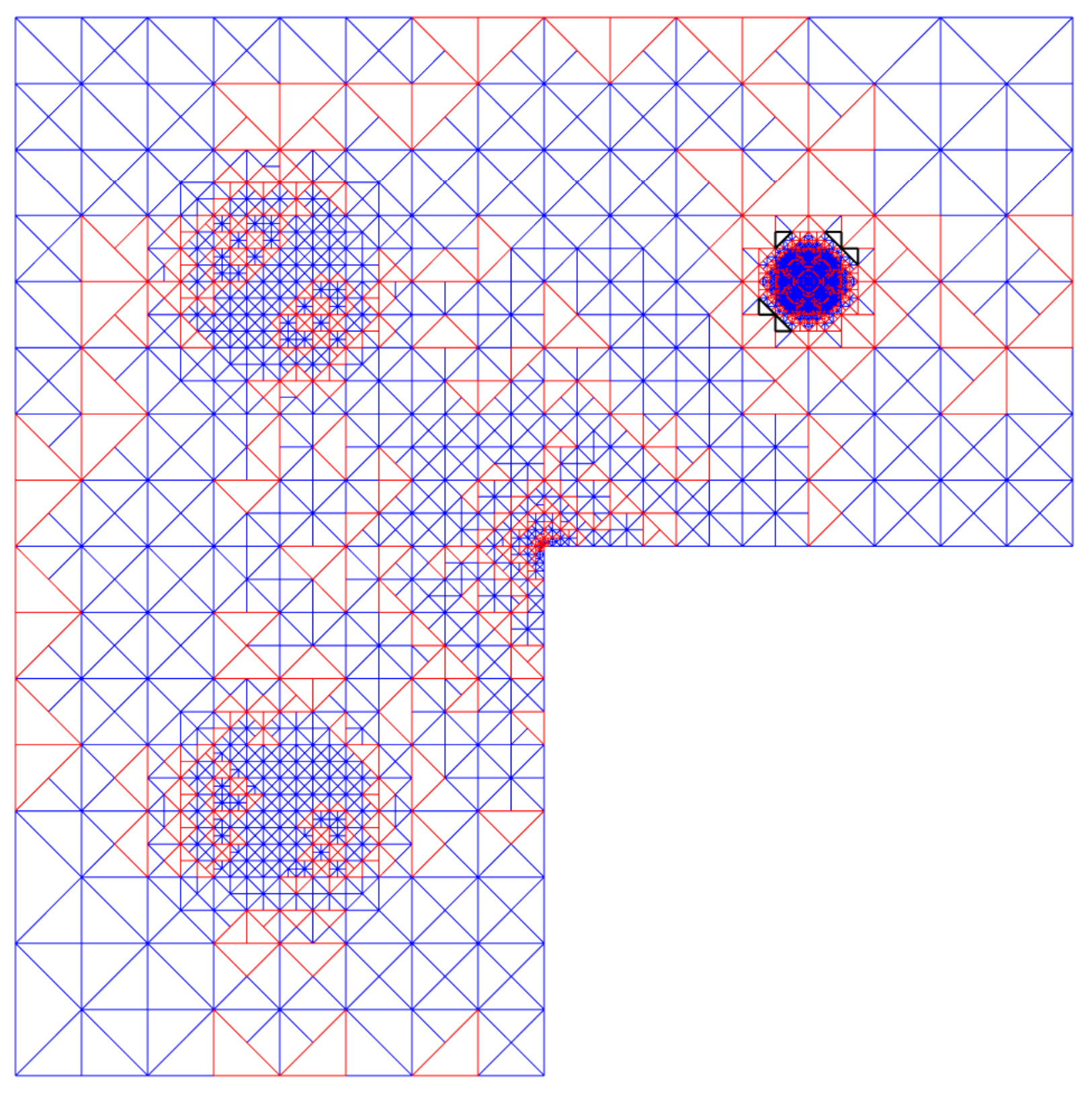}
\quad
\includegraphics[scale=0.275]{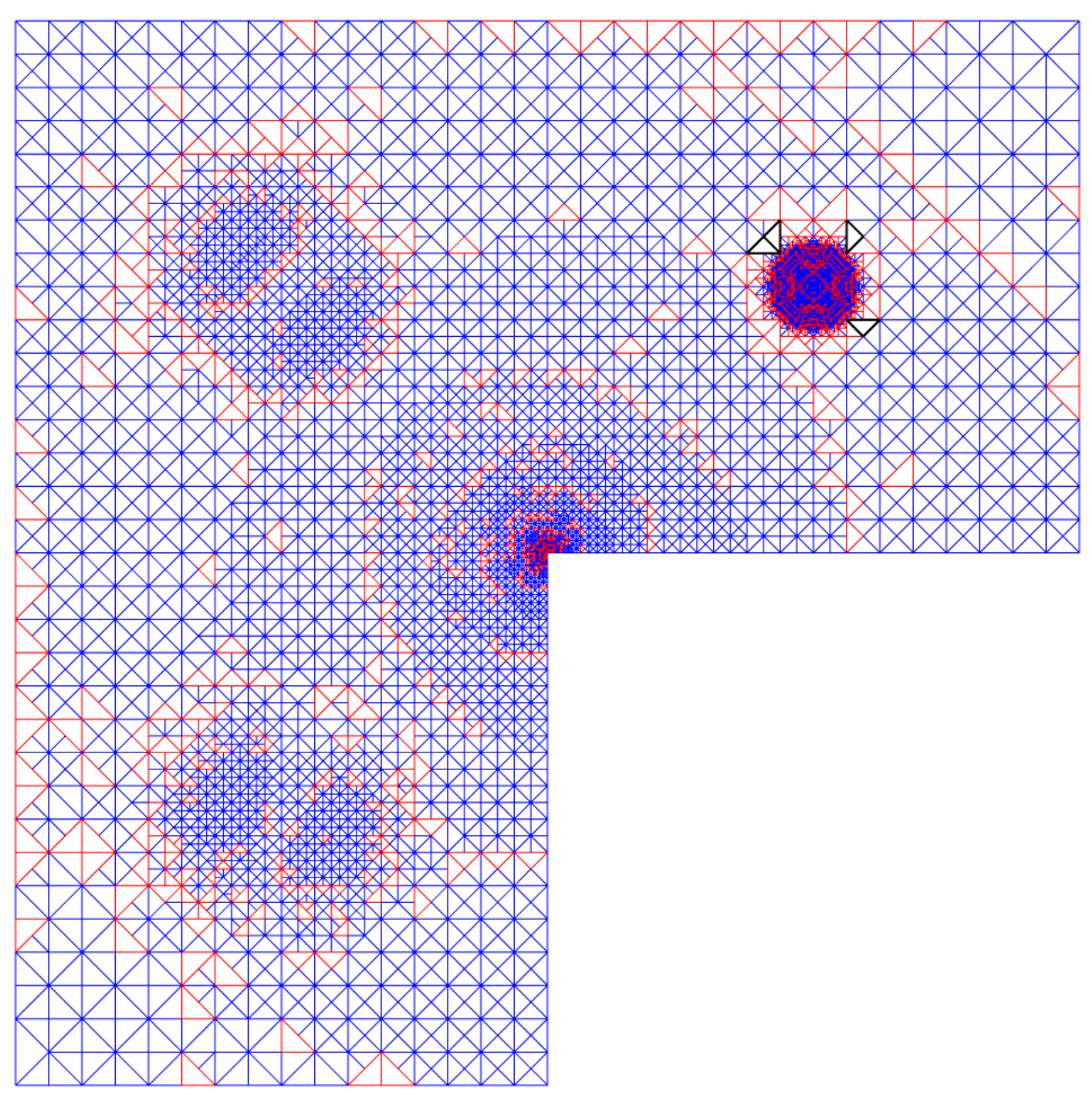}
\quad
\includegraphics[scale=0.275]{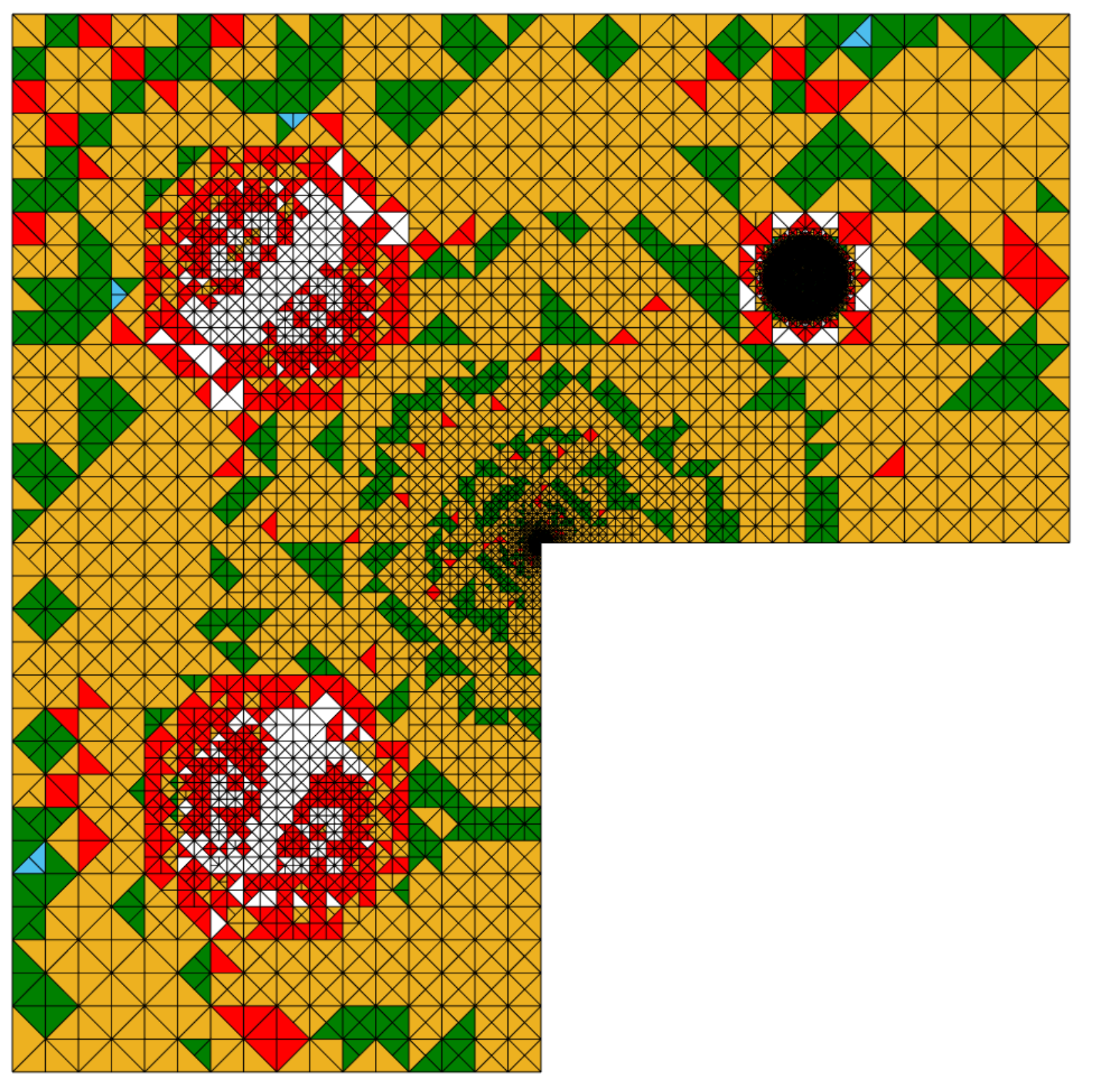}
\end{center}
\caption{ {Left: final grid  $\widehat\mesh_K$ generated by \texttt{DATA}.
Middle: final grid  $\mesh_{K+1}$ generated by \texttt{GALERKIN}. 
Mesh elements having more than three vertices (polygons) are drawn in red.
Right: heat map representing for each $E \in \mesh_{K+1}$ the number of newest-vertex bisection needed to generate $E$ starting from the mesh $\widehat\mesh_K$ (colorbar in Fig. \ref{fig:colorbar}).}}
\label{fig:mesh}
\end{figure}

\begin{figure}[!htb]
\begin{center}
\includegraphics[scale=0.275]{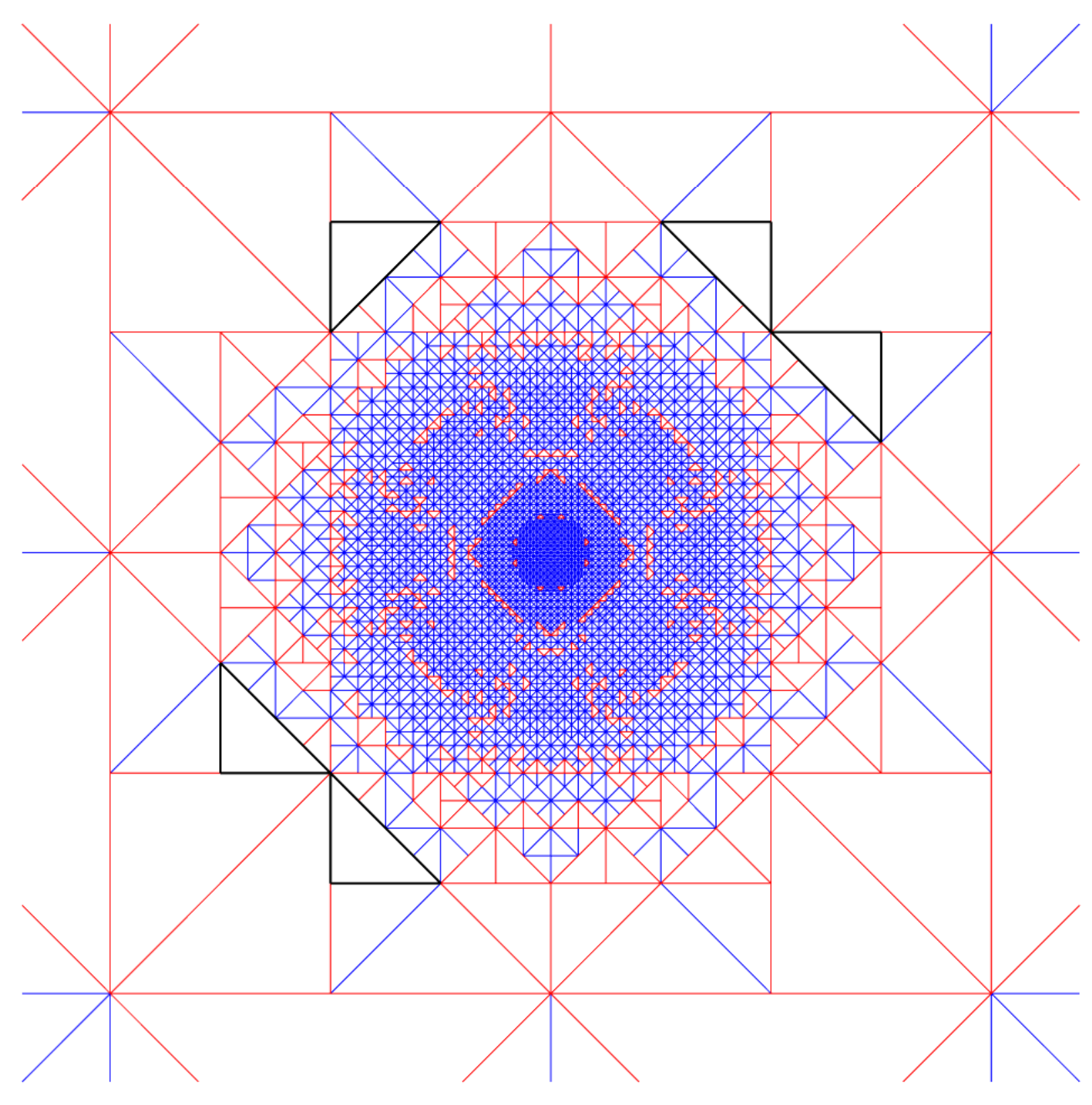}
\quad
\includegraphics[scale=0.275]{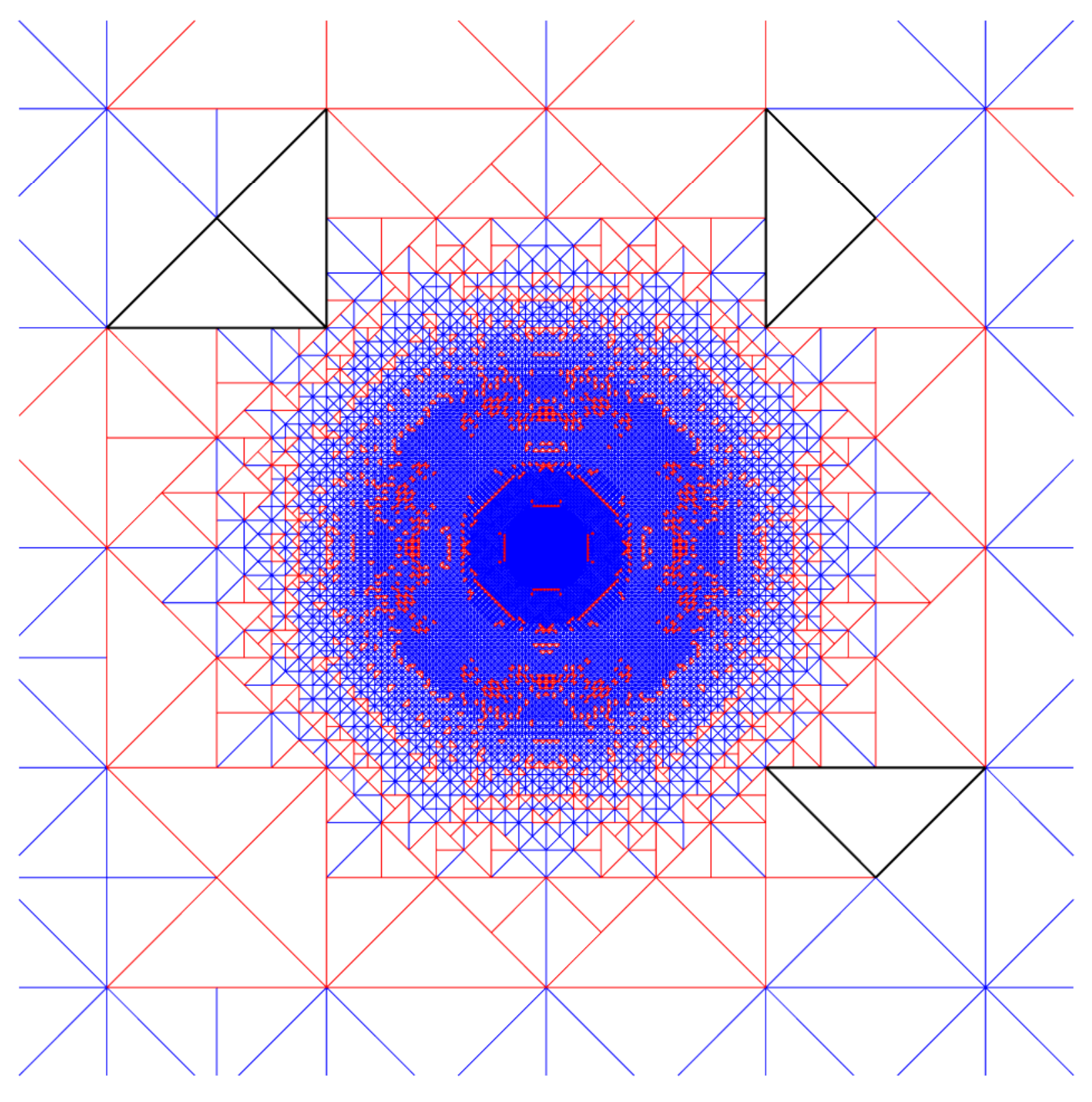}
\quad
\includegraphics[scale=0.275]{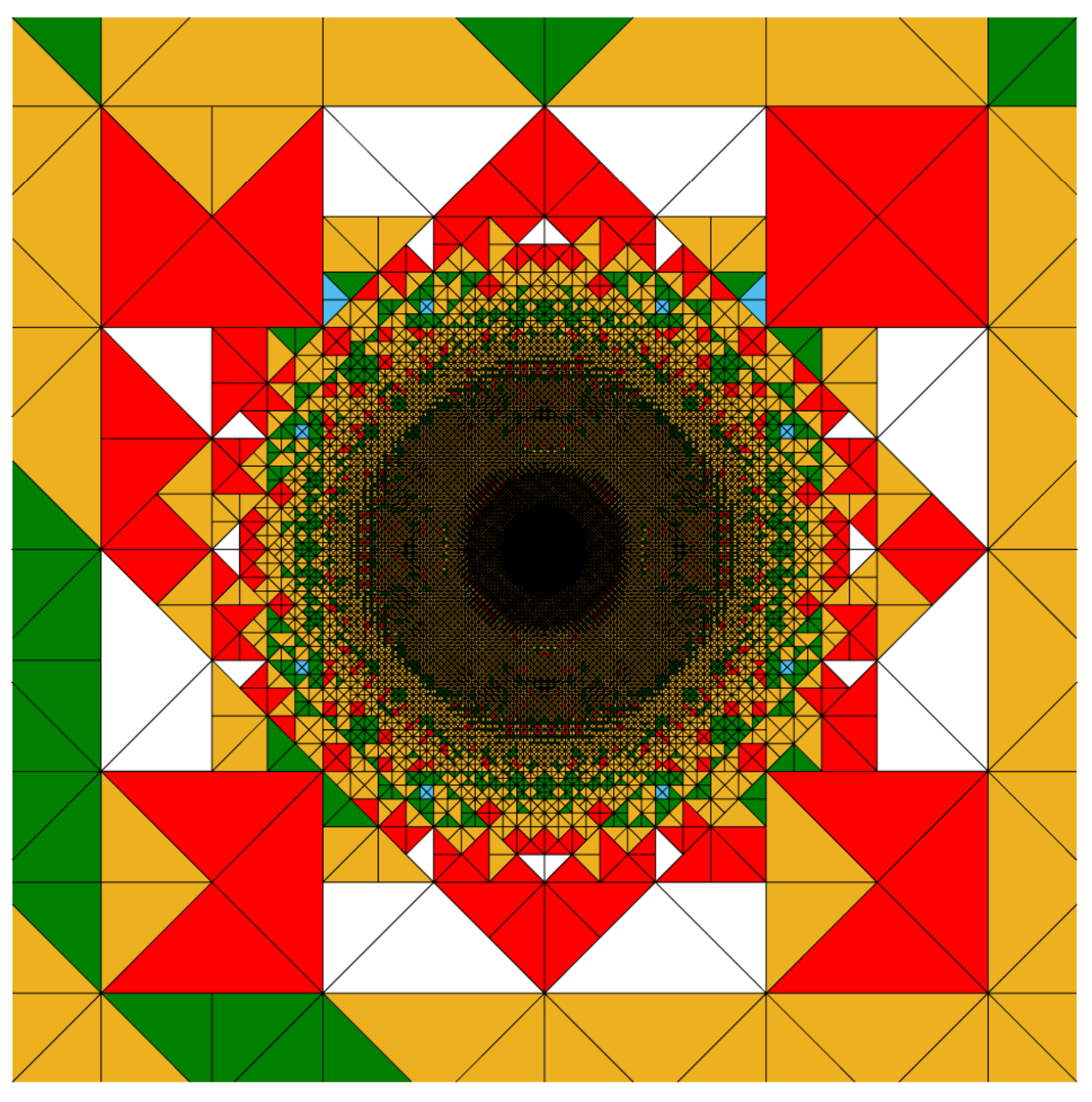}
\end{center}
\caption{ {Zoom to $(0.35, 0.65)^2$ related to $f$.
Left: final grid  $\widehat\mesh_K$ generated by \texttt{DATA}.
Middle: final grid  $\mesh_{K+1}$ generated by \texttt{GALERKIN}. 
Elements having more than three vertices (polygons) are drawn in red; elements drawn in black are hexagons.
Right: heat map representing for each $E \in \mesh_{K+1}$ the number of newest-vertex bisection needed to generate $E$ starting from the mesh $\widehat\mesh_K$ (colorbar in Fig. \ref{fig:colorbar}).}}
\label{fig:mesh-gaussiana}
\end{figure}

\begin{figure}
\begin{center}
\includegraphics[scale=0.275]{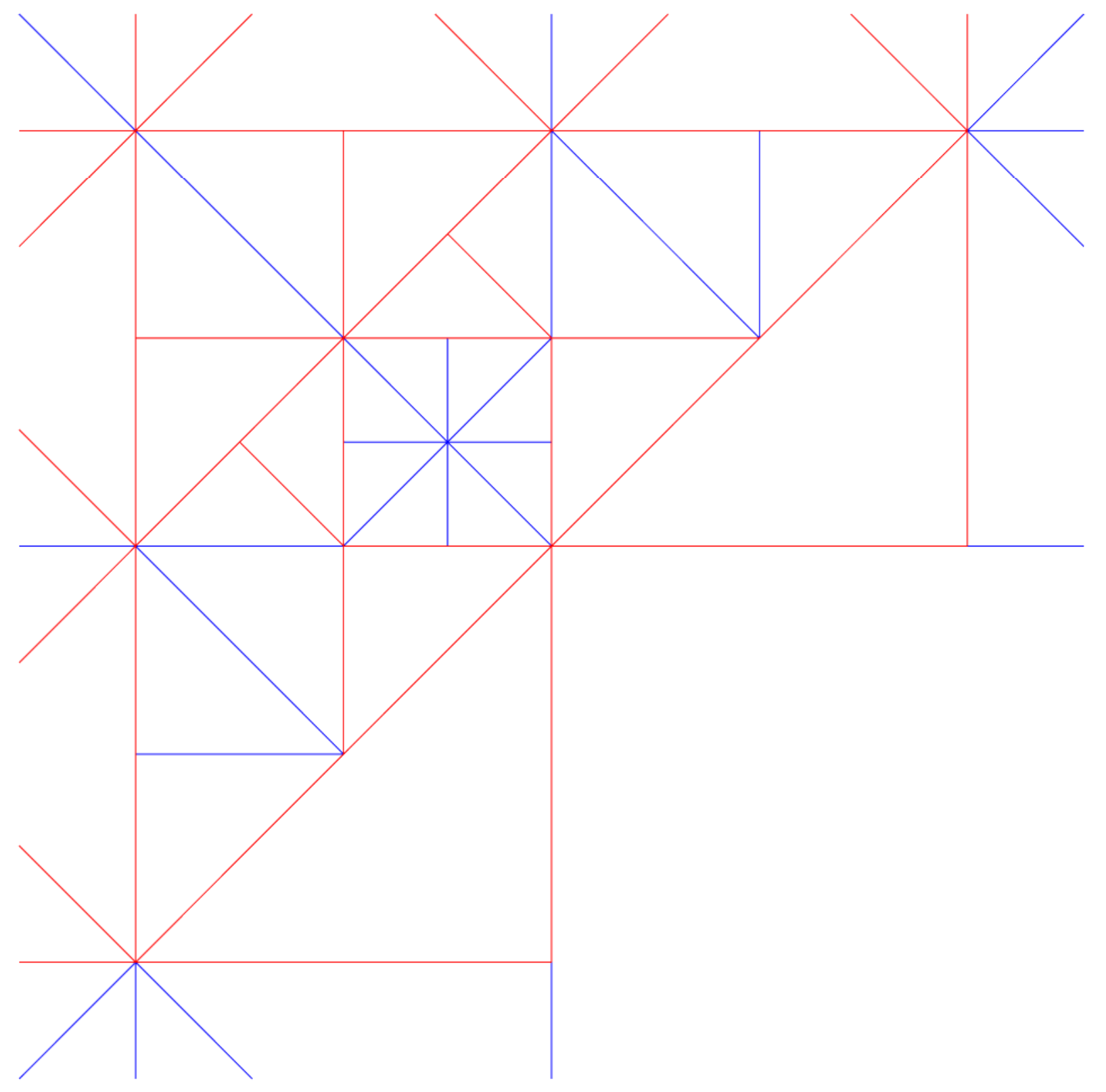}
\quad
\includegraphics[scale=0.275]{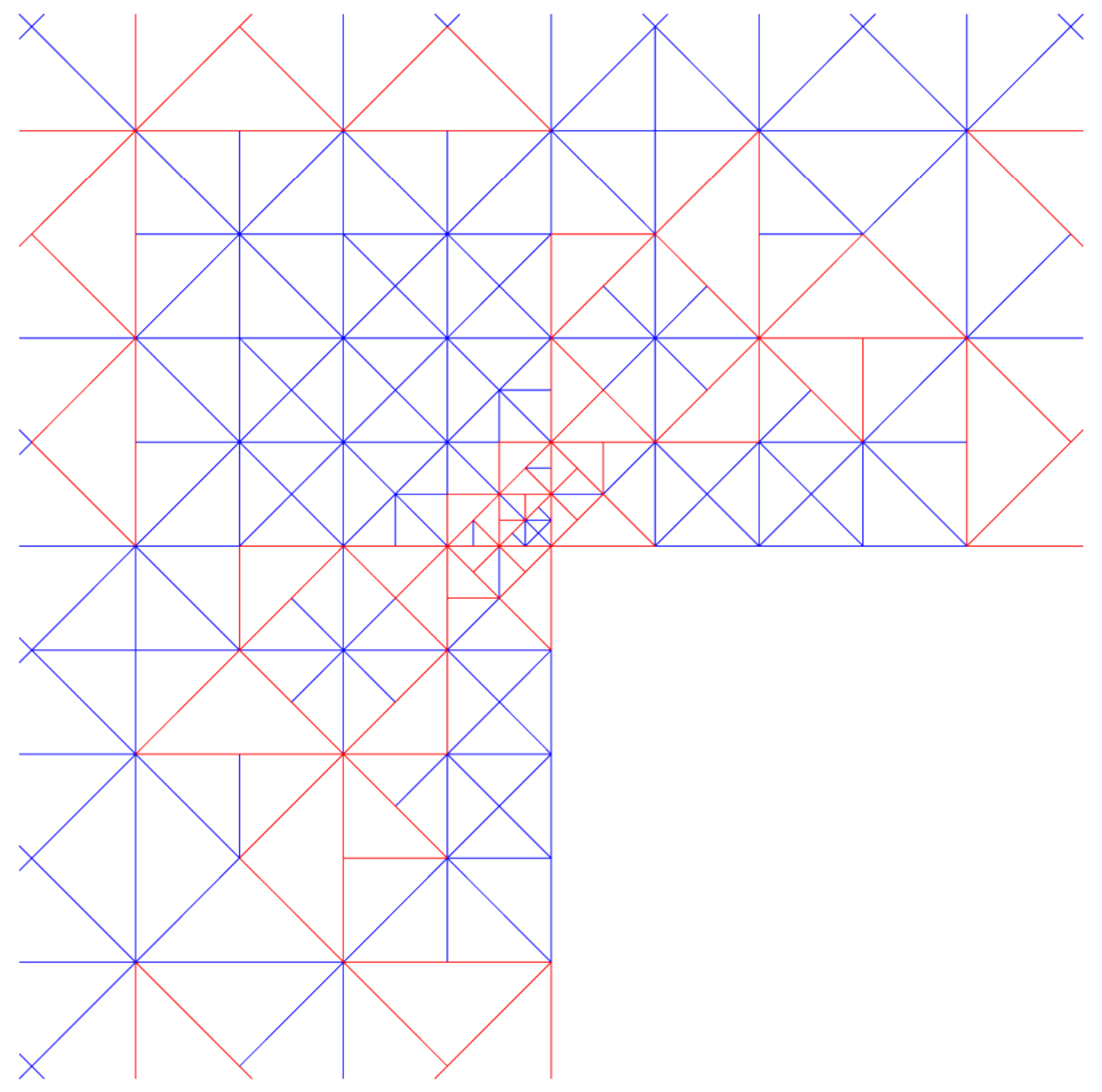}
\quad
\includegraphics[scale=0.275]{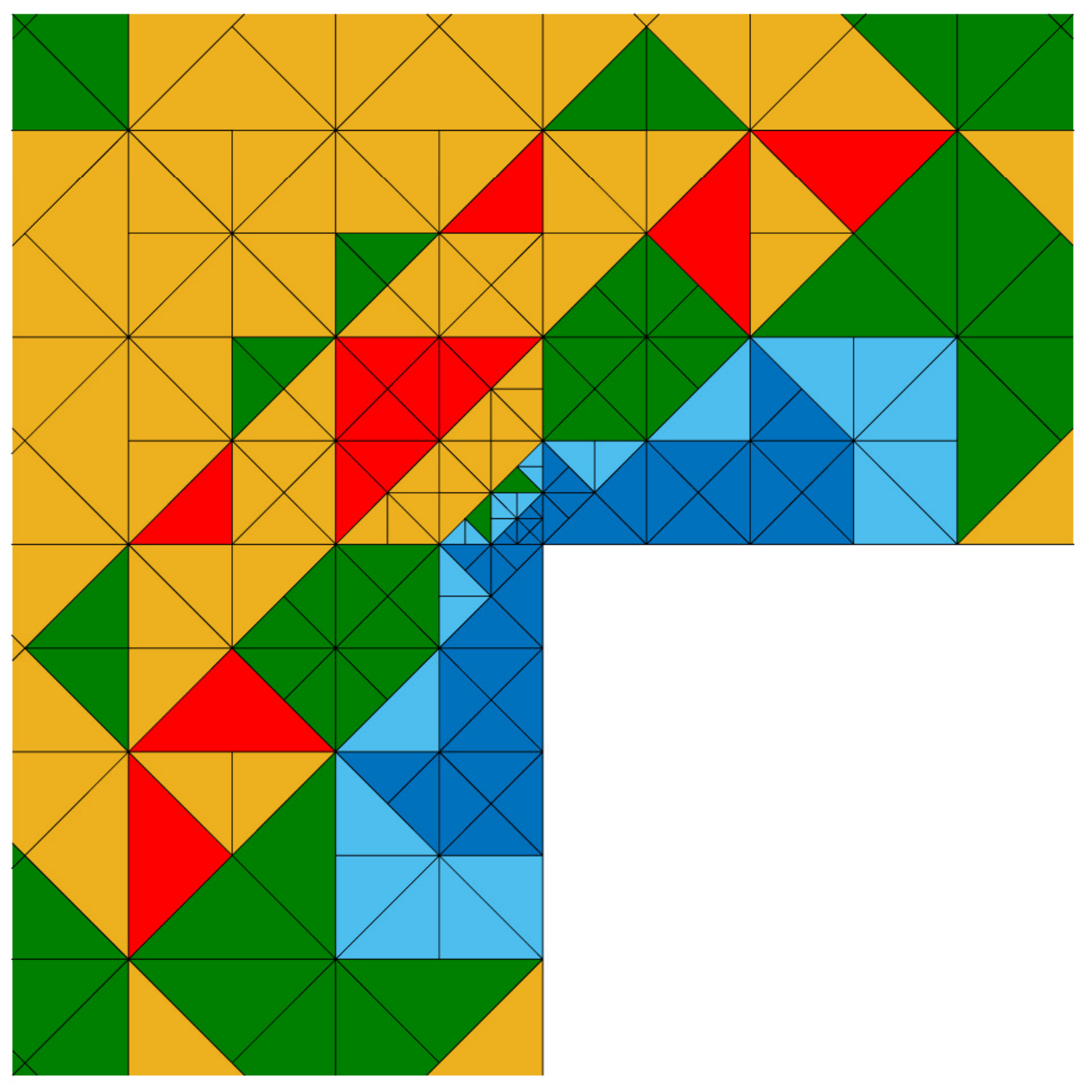}
\end{center}
\caption{ {Zoom to $(-10^{-2}, 10^{-2})^2$ to examine the origin.
Left: final grid  $\widehat\mesh_K$ generated by \texttt{DATA}.
Middle: final grid  $\mesh_{K+1}$ generated by \texttt{GALERKIN}. 
Elements having more than three vertices (polygons) are drawn in red.
Right: heat map representing for each $E \in \mesh_{K+1}$ the number of newest-vertex bisection needed to generate $E$ starting from the mesh $\widehat\mesh_K$ (colorbar in Fig. \ref{fig:colorbar}).}}
\label{fig:mesh-origine}
\end{figure}

\section{$\Lambda$-admissibility} \label{sec:admissible}

Our theory of {\tt AVEM} relies on the $\Lambda$-admissibility condition in Definition \ref{def:Lambda-partitions}.
Hereafter, we establish two results related to this concept: in Sect. \ref{sec:admissible-construction}, we show how to preserve $\Lambda$-admissibility while refining a mesh, whereas in Sect. \ref{sec:admissible-overlay} we prove that the overlay of two $\Lambda$-admissible meshes remains $\Lambda$-admissible.

\subsection{ {$\Lambda$-admissible mesh refinement}} \label{sec:admissible-construction}

In this section we introduce a constructive procedure that enforces $\Lambda$-admissibility at every stage of
{\tt AVEM} and study its complexity.
If $\mesh$ is a $\Lambda$-admissible refinement of $\mesh_0$ by newest-vertex bisection, the {\em level} of
an element $E \in \mesh$, denoted by $\ell(E)$, is the number of successive bisections
needed to generate $E$ from $\mesh_0$. Given $E \in \mesh$ marked for refinement, the procedure
$$
[\mesh_*] = \texttt{CREATE\_ADMISSIBLE\_CHAIN}(\mesh, E, \Lambda)
$$
generates a $\Lambda$-admissible refinement $\mesh_*$ of $\mesh$ upon bisecting $E$ and at most $\ell(E)+1$ other elements. To describe and analyze this procedure, we need some auxiliary notation and results.

Given any $E \in \mesh$, let us denote its newest vertex by $\bm{nv}(E)$, the edge opposite to $\bm{nv}(E)$ by ${\bm{oe}}(E)$, and the midpoint of ${\bm{oe}}(E)$ by $\bm{moe}(E)$. Furthermore, two elements $E', E'' \in \mesh$ are said {\em adjacent} if $e=E'\cap E''$ is an edge for at least one element, and are said {\em compatible} if they are adjacent and neither $\bm{nv}(E')$ nor $\bm{nv}(E'')$ belong to the line containing $e$ (see Fig. \ref{fig:compatible}, cases A and B).

\begin{figure}[h!]
\begin{center}
\begin{overpic}[scale=0.25]{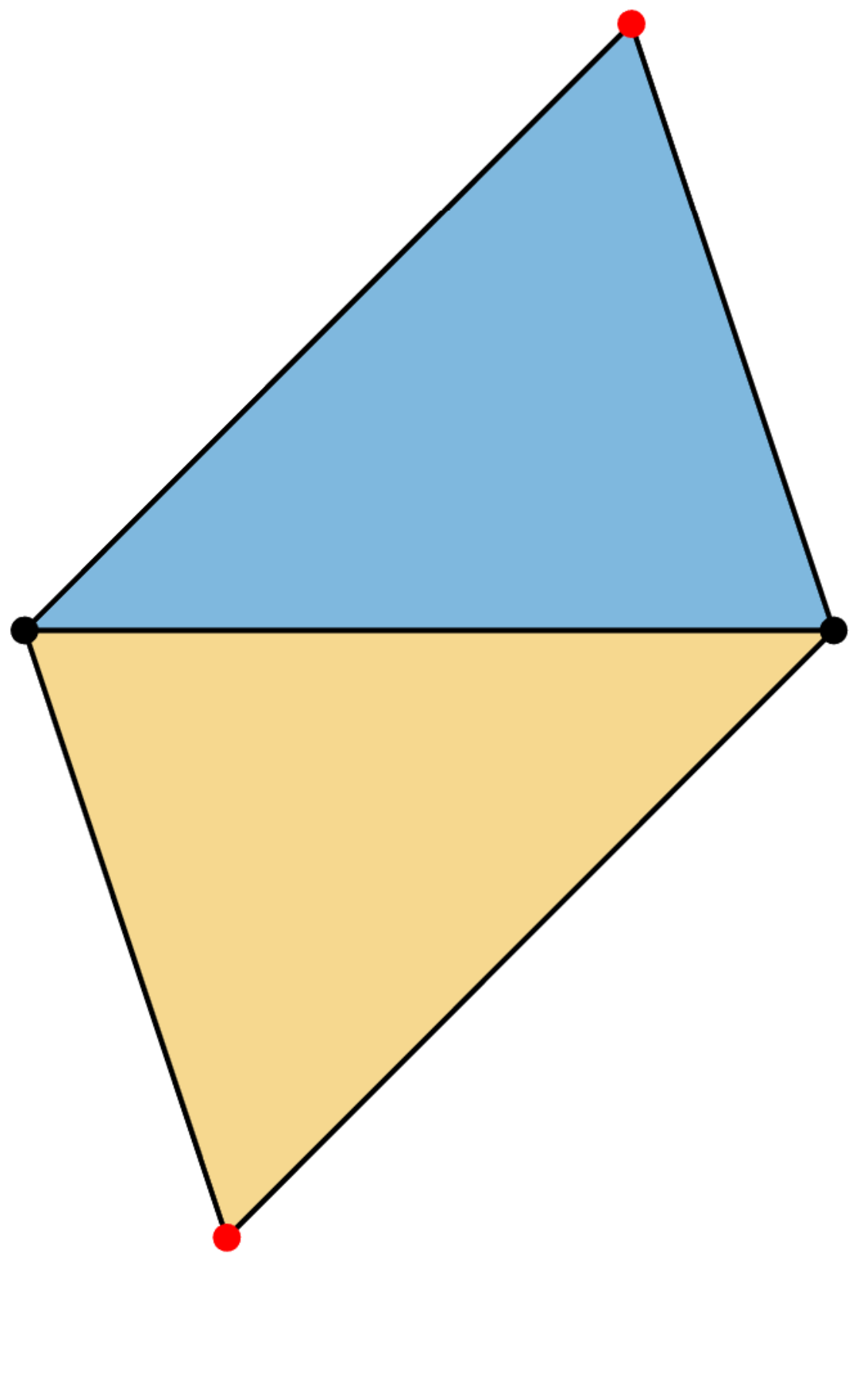}
\put(24,40){$E''$}
\put(30,70){$E'$}
\put(20,10){$\bm{nv}(E'')$}
\put(50,95){$\bm{nv}(E')$}
\put(20,0){\texttt{case A}}
\end{overpic}
\qquad \quad
\begin{overpic}[scale=0.25]{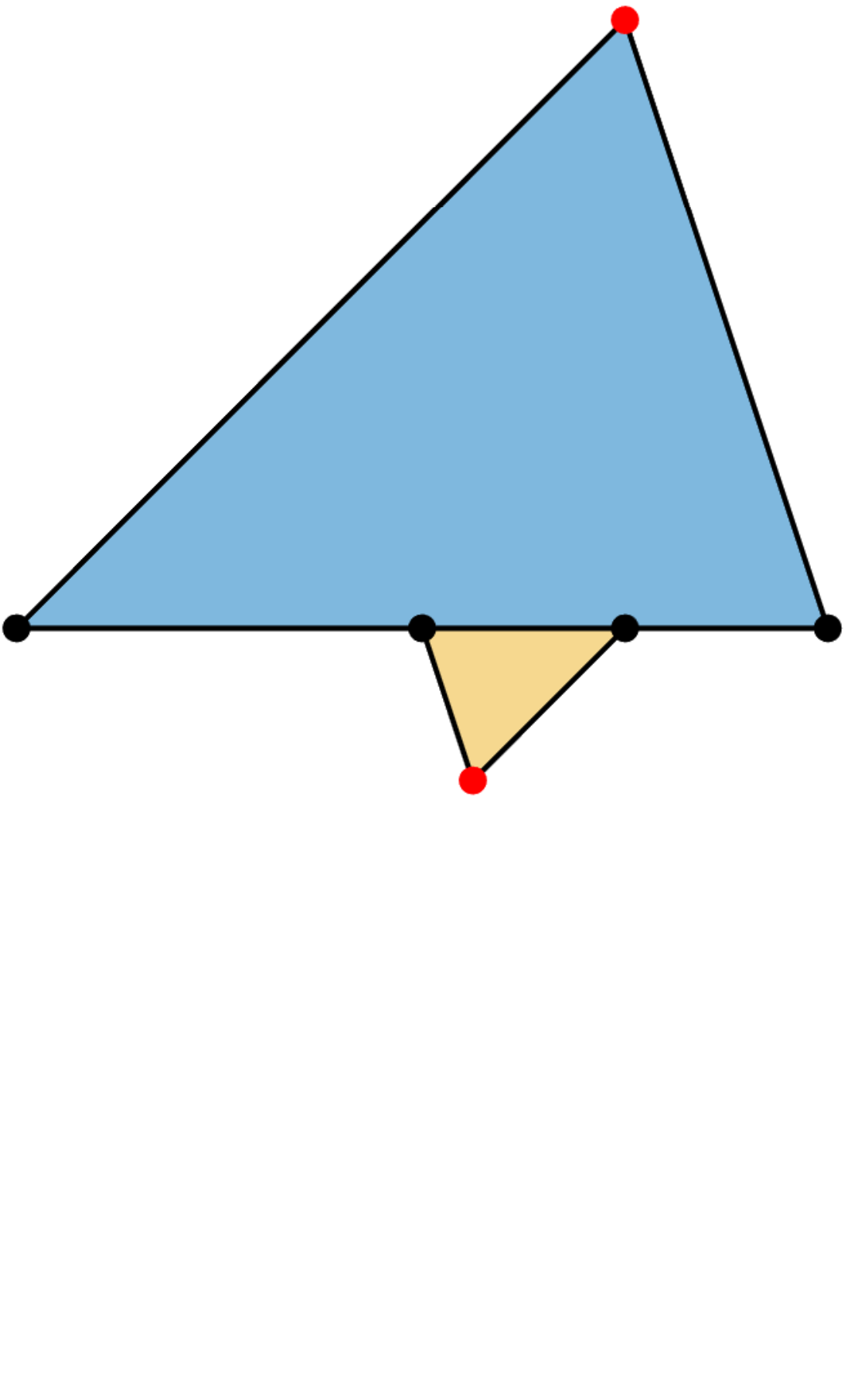}
\put(21,47){$E''$}
\put(30,70){$E'$}
\put(38,42){$\bm{nv}(E'')$}
\put(50,95){$\bm{nv}(E')$}
\put(20,0){\texttt{case B}}
\end{overpic}
\qquad \quad
\begin{overpic}[scale=0.25]{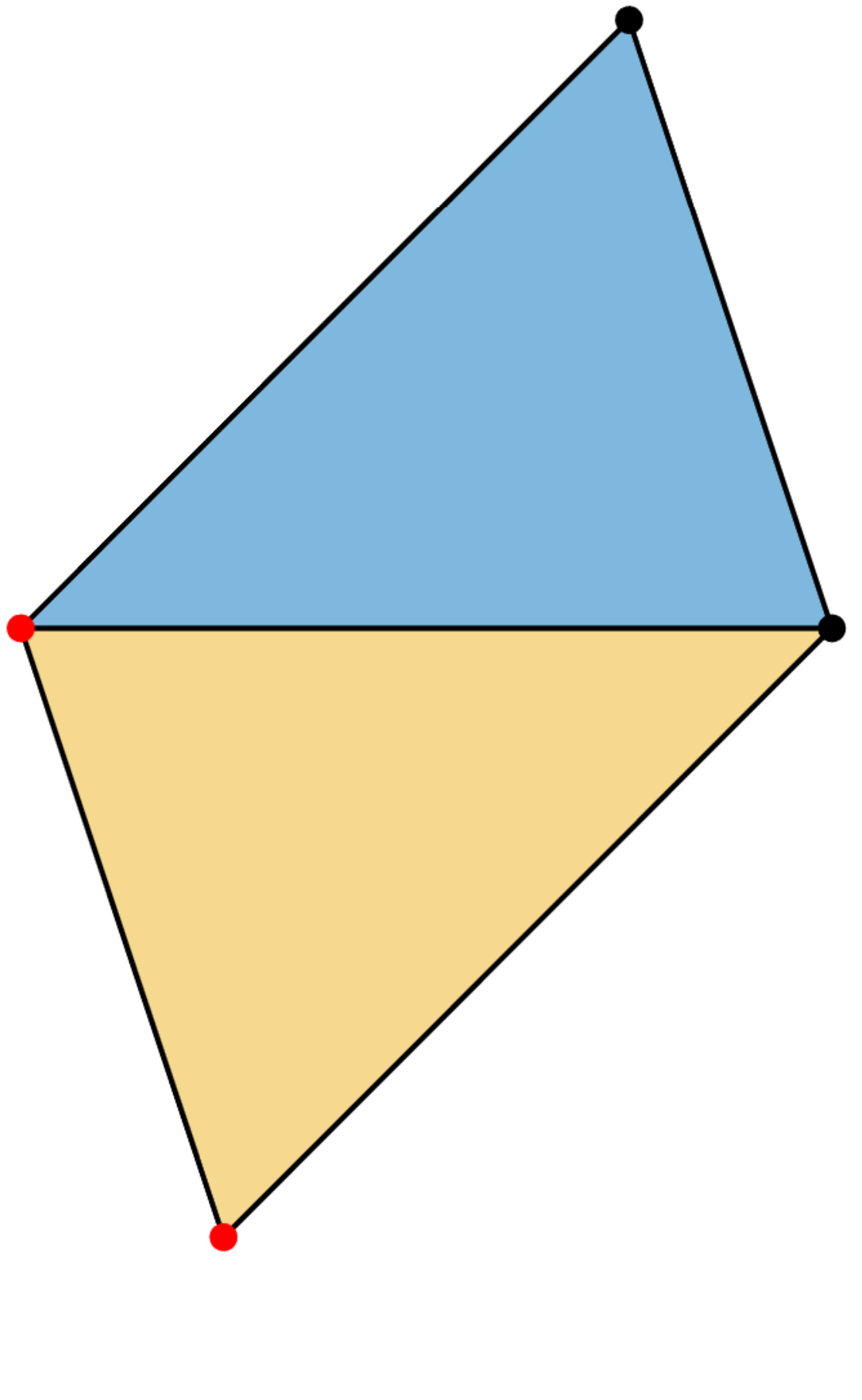}
\put(24,40){$E''$}
\put(30,70){$E'$}
\put(20,10){$\bm{nv}(E'')$}
\put(-17,62){$\bm{nv}(E')$}
\put(20,0){\texttt{case C}}
\end{overpic}
\qquad \quad
\begin{overpic}[scale=0.25]{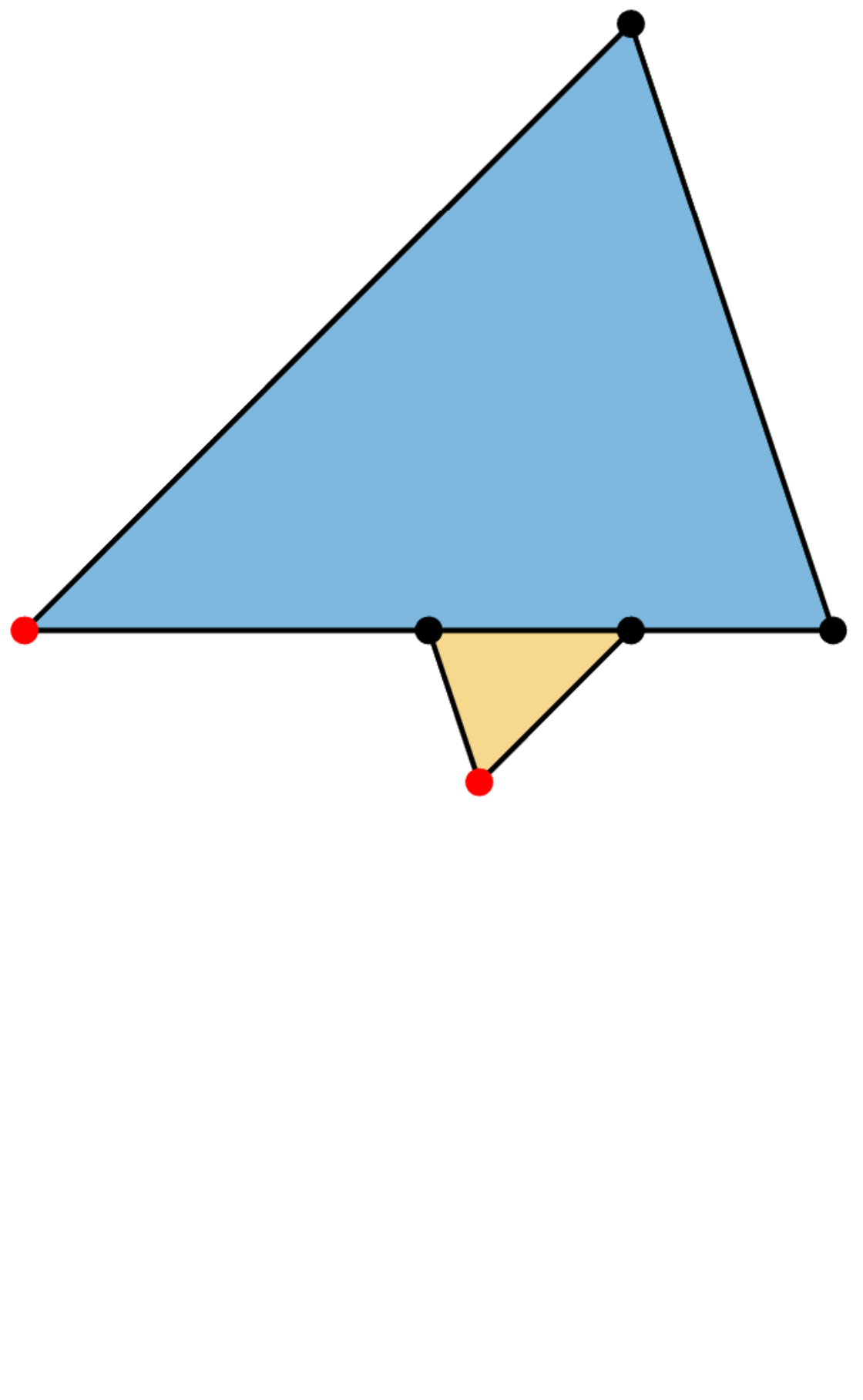}
\put(21,47){$E''$}
\put(30,70){$E'$}
\put(38,42){$\bm{nv}(E'')$}
\put(-17,62){$\bm{nv}(E')$}
\put(20,0){\texttt{case D}}
\end{overpic}
\end{center}
\caption{ {The elements $E'$ and $E''$ are adjacent in cases A to D. They are compatible in cases A and B,
    and non-compatible in cases C and D.}}
\label{fig:compatible}
\end{figure}

Denote by $\mathbb{T}$ the infinite tree obtained by successive bisections of the root partition $\mesh_0$. The following result is  {well-known \cite{BDD:04,BonitoNochetto:10,NSV:09,NochettoVeeser:12,Stevenson:08}.}

 {
\begin{lemma}[levels of elements sharing a full edge]\label{lem:levels}
Assume that $E, E' \in \mathbb{T}$ share a full edge $e=E\cap E'$. Then
$$
| \ell(E)-\ell(E') | \leq 1\,.
$$
\end{lemma}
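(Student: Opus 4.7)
The plan is to prove the claim by strong induction on $n := \max(\ell(E), \ell(E'))$; the base case $n=0$ is immediate, since then $E, E' \in \mesh_0$ and both levels vanish. For the inductive step, without loss of generality assume $\ell(E) = n \geq 1$, and let $P$ be the parent of $E$, so $\ell(P) = n-1$. By the geometry of newest-vertex bisection, the three edges of $E$ fall into three types: (i) one non-refinement edge of $P$, inherited in full by $E$; (ii) one half of the refinement edge $\bm{oe}(P)$; and (iii) the new internal edge from $\bm{nv}(P)$ to $\bm{moe}(P)$. I would then distinguish three cases according to which of these coincides with the shared full edge $e$.

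Case (iii) forces $E'$ to be the sibling of $E$ inside $P$, so $\ell(E') = n$ and the bound holds with equality. Case (ii) is handled via the element $Q \in \mathbb{T}$ opposite to $P$ across $\bm{oe}(P)$: the compatibility of the initial labeling of $\mesh_0$ (of Binev--Dahmen--DeVore/Mitchell type, see \cite{BDD:04,Stevenson:08}) ensures that $\bm{oe}(P)$ is also the refinement edge of $Q$, so $E'$ must be a child of $Q$ whose level satisfies $\ell(E') = \ell(Q) + 1 \geq \ell(P) = n - 1$ by the inductive hypothesis applied to the pair $(P,Q)$, which shares $\bm{oe}(P)$ as a full edge.

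The main obstacle is case (i), in which $e$ is a full edge of $P$. A direct application of the inductive hypothesis to $(P, E')$ yields only the weaker bound $\ell(E') \geq n - 2$, one level too weak. To close the remaining unit gap, I would again exploit the compatibility of the initial labeling: since $e$ is a non-refinement edge of $P$, the corresponding edge on the other side of $e$ is also a non-refinement edge of the matching element $R \in \mathbb{T}$ at level $\ell(R) = \ell(P)$ obtained by induction, and the bisection rules then restrict any $E' \in \mathbb{T}$ sharing $e$ as a full edge to be either $R$ itself (so $\ell(E') = n-1$) or a child of $R$ produced by a matching bisection on the opposite side (so $\ell(E') = n$). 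This final case analysis is the technical heart of the argument, being precisely the step in which the Mitchell-type initial labeling enters in a non-trivial way; the existence of such $R$ at the correct level is the content of the induction applied at a matching pair, and is the step I expect to require the most care.
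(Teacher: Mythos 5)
Your overall strategy (induction on $n=\max(\ell(E),\ell(E'))$ with a case split over the three edges of $E$ inherited from its parent $P$) differs from the paper's argument, which is a direct two-case analysis on whether $\bm{nv}(E)$ and $\bm{nv}(E')$ lie on $e$, relying on the standard structure of newest-vertex bisection with compatible labeling. But your proposal has a genuine gap precisely at the point you flag as the technical heart, case (i). The inductive hypothesis you carry is only the inequality $|\ell(\cdot)-\ell(\cdot)|\leq 1$, and this cannot exclude the borderline configuration $\ell(E')=n-2$: such an $E'$ would share the full edge $e$ with $P$ at level difference exactly one, which your hypothesis permits. What rules it out is a finer structural fact — when two elements share a full edge with a level gap of one, the shared edge is the refinement edge of the finer element, i.e.\ the newest vertex of the coarser one lies on it — and in case (i) $e$ contains $\bm{nv}(P)$, which is what makes a level-$(n-2)$ neighbour impossible. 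Your ``matching element $R$ at level $\ell(P)$ with $e$ a non-refinement edge of $R$, obtained by induction'' is exactly this finer statement, but nothing in the induction as you set it up produces $R$, fixes its level, or shows that the only elements across $e$ having $e$ as a full edge are $R$ and one of its children; note that an ancestor of $R$ having $e$ as a full edge is precisely the level-$(n-2)$ case you must exclude, so asserting that the bisection rules ``restrict'' $E'$ to $R$ or a child of $R$ begs the question. To close this you would have to strengthen the induction hypothesis to one that tracks refinement-edge membership across shared edges — for instance, prove by induction on the generation that the uniform refinements of $\mesh_0$ are conforming and remain compatibly labelled (Mitchell's propagation), or introduce a generation function on edges — and that strengthened statement is essentially the content of the cited classical results, not a consequence of the bare level inequality.

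Two smaller, repairable points. In case (ii) you apply the inductive hypothesis to the pair $(P,Q)$ without knowing $\max(\ell(P),\ell(Q))<n$; this is fixable by taking $Q$ to be the ancestor of $E'$ whose bisection created the vertex $\bm{moe}(P)$ on the far side (its refinement edge is then exactly $\bm{oe}(P)$, by the dyadic-midpoint argument), invoking the hypothesis only in the subcase $\ell(Q)\leq n-2$, and concluding $\ell(E')\geq \ell(Q)+1\geq n-1$; with this repair, compatibility is not even needed there. Also, in cases (ii) and (iii) the identification of $E'$ as a child of $Q$, respectively as the sibling of $E$, silently uses the normalization $\ell(E')\leq n$: deeper descendants of $Q$, respectively of the sibling, can still have $e$ as a full edge, and they are excluded only because their level exceeds $n$. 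These can be patched, but the case (i) issue is a missing idea rather than a missing detail.
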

\begin{proof}
 If neither $\bm{nv}(E)$ nor $\bm{nv}(E')$ belong to $e$, or both $\bm{nv}(E)$ and $\bm{nv}(E')$ belong to $e$, then $\ell(E)=\ell(E')$. On the other hand, if $\bm{nv}(E) \in e$ but $\bm{nv}(E') \not \in e$, then $\ell(E') = \ell(E)+1$, since $E'$ is generated by bisecting an element $\widetilde{E}$ of the same level as $E$.
\end{proof}
}

\begin{lemma}[global index of a hanging node]\label{lem:global-index}
Consider an edge $[\bm{x}',\bm{x}'']$ of the partition $\mesh$. If $\bm{x} \in {\cal H} \cap {\rm int\, }e$ is generated by $m \geq 1$ bisections of $e$, then its global index $\lambda(\bm{x})$ satisfies
$$
\lambda(\bm{x})= \max (\lambda(\bm{x}'), \lambda(\bm{x}'')) +m \,.
$$
\end{lemma}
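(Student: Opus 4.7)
The plan is to prove Lemma \ref{lem:global-index} by induction on $m$, exploiting directly the recursive definition of the global index given in Definition \ref{def:node-index}.

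For the base case $m=1$, the node $\bm{x}$ is produced by a single bisection of $[\bm{x}', \bm{x}'']$, so its immediate ancestors are the endpoints of that very edge, namely ${\mathbf B}(\bm{x})=\{\bm{x}',\bm{x}''\}$. Hence the identity $\lambda(\bm{x})=\max(\lambda(\bm{x}'),\lambda(\bm{x}''))+1$ holds by the definition of $\lambda$.

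For the inductive step I will denote by $\bm{y}$ the midpoint created by the very first bisection of $[\bm{x}', \bm{x}'']$, so that $\bm{x}$ lies in the interior of exactly one of the two halves; without loss of generality, assume $\bm{x}\in {\rm int\,}[\bm{x}',\bm{y}]$. Then $\bm{x}$ is obtained through $m-1$ bisections of $[\bm{x}',\bm{y}]$, and the inductive hypothesis yields $\lambda(\bm{x})=\max(\lambda(\bm{x}'),\lambda(\bm{y}))+(m-1)$. The base case applied to $\bm{y}$ gives $\lambda(\bm{y})=\max(\lambda(\bm{x}'),\lambda(\bm{x}''))+1$, which strictly exceeds $\lambda(\bm{x}')$, so $\max(\lambda(\bm{x}'),\lambda(\bm{y}))=\lambda(\bm{y})$, and the claimed formula follows.

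The main obstacle is to justify that the intermediate midpoint $\bm{y}$ (and, recursively, each subsequent midpoint in the chain leading from $[\bm{x}', \bm{x}'']$ to $\bm{x}$) is actually a hanging node, so that the recursive clause of Definition \ref{def:node-index} can be invoked for it. I will rely on the hypothesis $\bm{x}\in\mathcal{H}$: the element lying on the side of $[\bm{x}', \bm{x}'']$ for which $\bm{x}$ is perceived as a hanging vertex has not been refined along the chain of bisections producing $\bm{x}$, so each intermediate midpoint fails to be a vertex of that element and hence belongs to $\mathcal{H}$. With this geometric observation in place, the induction closes without further work.
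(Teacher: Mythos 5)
Your induction is essentially the paper's argument run in the opposite direction: the paper peels off the \emph{last} bisection (writing $\bm{x}$ as the midpoint of some interval $[\bm{z}',\bm{z}'']\subset e$, applying Definition \ref{def:node-index} once, and invoking the inductive hypothesis on the same edge $e$ for $\bm{z}'$ and $\bm{z}''$, which are generated by $m-1$ and fewer than $m-1$ bisections respectively), whereas you peel off the \emph{first} bisection and apply the inductive hypothesis on the half-edge $[\bm{x}',\bm{y}]$. The arithmetic of your inductive step is correct.

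The weak point is exactly the one you flag, and your justification of it does not hold up. From $\bm{x}\in\mathcal{H}$ alone you cannot conclude that ``the element for which $\bm{x}$ is perceived as a hanging vertex has not been refined along the chain'', nor that the intermediate midpoints fail to be vertices of that element: the element witnessing $\bm{x}\in\mathcal{H}$ may have only a dyadic sub-segment of $e$, say $[\bm{x}',\bm{y}]$, as one of its edges, in which case it \emph{does} have $\bm{y}$ as a vertex, and $\bm{y}$ may well be a proper node (for instance if the triangle originally facing $e$ has itself been bisected once through $\bm{y}$, while the other side of $e$ was refined further). In such a configuration the claimed formula itself fails for $m=2$, since then $\lambda(\bm{x})=\max(\lambda(\bm{x}'),\lambda(\bm{y}))+1=\lambda(\bm{x}')+1$ rather than $\max(\lambda(\bm{x}'),\lambda(\bm{x}''))+2$; so the hanging-ness of the intermediate midpoints cannot be a consequence of $\bm{x}\in\mathcal{H}$, but must come from the hypothesis that $e=[\bm{x}',\bm{x}'']$ is an edge of the current partition $\mesh$, i.e.\ a full edge of some element $E\in\mesh$: every node of $\mesh$ lying in ${\rm int\,}e$ is then interior to an edge of the triangle $E$, hence not one of its vertices, hence hanging (and these midpoints are indeed nodes of $\mesh$, since vertices persist under bisection). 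With this one-line justification your induction goes through, provided the statement you induct on is formulated for segments all of whose interior nodes are hanging --- a property inherited by the half $[\bm{x}',\bm{y}]$ --- because the half-edge need not itself be an edge of an element of $\mesh$. The paper's bottom-up induction stays on the same edge $e$ and so avoids this last reformulation, while leaving the hanging-node issue implicit.
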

\begin{proof} If $m=1$, $\bm{x}=\bm{x}_M$ is the midpoint of $e$, and the formula is just the  {Definition \ref{def:node-index}} of global index. If $m >1$, then $\bm{x}$ is generated by bisecting some interval $[\bm{z}',\bm{z}''] \subset e$, and $\lambda(\bm{x})=\max (\lambda(\bm{z}'), \lambda(\bm{z}''))+1$. Exactly one between $\bm{z}',\bm{z}''$ has been generated by $m-1$ bisections, whereas the other one has been
generated by less than $m-1$ bisections. Hence, one concludes by induction.  \end{proof}

\begin{proposition}[reducing the global index of hanging nodes]\label{prop:reduce-globindex}
Let ${\cal H} \cap {\rm int\, }e$ contain at least the midpoint $\bm{x}_M$ of $e$. Assume that a bisection of some element in $\mesh$ transforms $\bm{x}_M$ into a proper node, and let $\lambda_{\rm new}$ denote the new global-index mapping of the nodes in ${\cal H} \cap {\rm int\, }e$ after the bisection. Then there holds
$$
\lambda_{\rm new}(\bm{x}) \leq \lambda(\bm{x}) -1 \qquad \forall \bm{x} \in {\cal H} \cap {\rm int\, }e \,.
$$
\end{proposition}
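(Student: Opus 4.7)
The plan is to apply Lemma \ref{lem:global-index} in two configurations: once to the full edge $e$ before the bisection, and once to the half-edge containing $\bm{x}$ after the bisection. I would split into two cases according to whether $\bm{x}$ coincides with the midpoint $\bm{x}_M$ of $e$ or not.

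If $\bm{x}=\bm{x}_M$, the hypothesis gives $\lambda_{\mathrm{new}}(\bm{x}_M)=0$, while by Definition \ref{def:node-index} one has $\lambda(\bm{x}_M)=\max(\lambda(\bm{x}'),\lambda(\bm{x}''))+1\ge 1$, so the claim $\lambda_{\mathrm{new}}(\bm{x}_M)\le\lambda(\bm{x}_M)-1$ is immediate.

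If $\bm{x}\in{\cal H}\cap{\rm int\,}e$ with $\bm{x}\ne\bm{x}_M$, let $m\ge 1$ be the number of bisections of $e$ that produced $\bm{x}$ (as in Lemma \ref{lem:global-index}). Since $\bm{x}\ne\bm{x}_M$, one has $m\ge 2$, and the very first bisection of $e$ created $\bm{x}_M$; all remaining bisections take place inside exactly one of the two halves $e_1=[\bm{x}',\bm{x}_M]$ or $e_2=[\bm{x}_M,\bm{x}'']$. Assume without loss of generality $\bm{x}\in{\rm int\,}e_1$, produced by $m-1$ successive bisections of $e_1$ starting from its endpoints $\bm{x}'$ and $\bm{x}_M$. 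Now apply Lemma \ref{lem:global-index} to $e$ before the bisection:
$$
\lambda(\bm{x}) \,=\, \max\bigl(\lambda(\bm{x}'),\lambda(\bm{x}'')\bigr) + m,
$$
and apply it to $e_1$ after the bisection, using that the bisection leaves $\bm{x}'$ and $\bm{x}''$ untouched (so $\lambda_{\mathrm{new}}(\bm{x}')=\lambda(\bm{x}')$) and that $\lambda_{\mathrm{new}}(\bm{x}_M)=0$:
$$
\lambda_{\mathrm{new}}(\bm{x}) \,=\, \max\bigl(\lambda(\bm{x}'),0\bigr) + (m-1) \,=\, \lambda(\bm{x}') + m - 1.
$$
Combining these with the trivial bound $\lambda(\bm{x}')\le\max(\lambda(\bm{x}'),\lambda(\bm{x}''))$ yields $\lambda_{\mathrm{new}}(\bm{x})\le\lambda(\bm{x})-1$, as desired.

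There is no real obstacle here: the only point requiring care is the bookkeeping for the number of bisections, namely recognizing that $\bm{x}_M$ is always the first descendant of $e$ so that $\bm{x}$ lives strictly inside a half-edge and the residual count of bisections drops by exactly one. Once this is observed, the result follows by a direct comparison of the two applications of Lemma \ref{lem:global-index}.
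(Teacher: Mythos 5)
Your proposal is correct and follows essentially the same route as the paper: handle $\bm{x}=\bm{x}_M$ trivially, and for $\bm{x}\neq\bm{x}_M$ observe that $\bm{x}$ is generated by $m-1$ bisections of the half-edge $[\bm{x}',\bm{x}_M]$, then apply Lemma \ref{lem:global-index} before and after the bisection and compare. The only cosmetic difference is that you state $\lambda_{\mathrm{new}}(\bm{x}')=\lambda(\bm{x}')$ where an inequality $\lambda_{\mathrm{new}}(\bm{x}')\le\lambda(\bm{x}')$ would be the safer claim, but since only an upper bound is needed this does not affect the argument (the paper glosses over the same point).
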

\begin{proof}
If $\bm{x}=\bm{x}_M$, then trivially $\lambda_{\rm new}(\bm{x}) = 0 \leq \lambda(\bm{x}) -1$. If $\bm{x} \in {\cal H} \cap {\rm int\, }e$ is contained, say, in $(\bm{x}',\bm{x}_M)$ and has been generated by $m>1$ successive bisections of $e$, then it is generated by $m-1$ successive bisections of $[\bm{x}',\bm{x}_M]$. Thus, by Lemma \ref{lem:global-index}
\begin{eqnarray*}
\lambda_{\rm new}(\bm{x}) &\leq& \max(\lambda_{\rm new}(\bm{x}'), \lambda_{\rm new}(\bm{x}_M) ) + m-1 \\
&=& \max(\lambda(\bm{x}'), 0 ) + m-1 \ = \ \lambda(\bm{x}') + m-1 \\
&\leq& \max((\lambda(\bm{x}'), \lambda(\bm{x}''))+m-1 \ = \ \lambda(\bm{x})-1\,.  \qquad \qquad 
\end{eqnarray*} 
 {This gives the desired estimate.}
\end{proof}

The result just established is the motivation for the proposed refinement strategy. Indeed, it assures that in order to reduce the global index of a hanging node sitting on an edge, it is enough to transform the midpoint of the edge into a proper node.

The following remark will be useful in the sequel.
\begin{remark}[facing element]\label{rem:facing}
{\rm
 {Given a $\Lambda$-admissible mesh $\mesh$ and $E \in \mesh$,} let $\bm{x}=\bm{moe}(E)$ and suppose that $\lambda(\bm{x}) > \Lambda$. Then $\bm{x}$ is not a node of $\mesh$,  {whence the edge ${\bm{oe}}(E)$} cannot contain any hanging node in its interior.  We conclude that there exists a unique adjacent element $\widetilde{E} \in \mesh$, $\widetilde{E} \not = E$, such that  {$E \cap \widetilde{E} = {\bm{oe}}(E)$.} This element will be called the element {\em facing} $E$.
}
\end{remark}

\medskip

Given an element $E \in \mesh$ which has been marked for refinement, we are ready to identify those elements  {in $\mesh$} that need be bisected with $E$ in order to create a $\Lambda$-admissible refinement of $\mesh$.

\begin{definition}[chain of elements to be refined]\label{def:ref-chain}
Define by recurrence the chain of elements 
$$
\RC(E)=\{E_0, E_1, \dots, E_K\}
$$
for some $K \geq 0$, as follows:  {set first $E_0=E$ and, assuming to have defined $E_k$ for $k \geq 0$, then}
\begin{enumerate}[(i)]
\item if $\lambda(\bm{moe}(E_k)) \leq \Lambda$, set $K=k$ and stop;
\item if $\lambda(\bm{moe}(E_k)) = \Lambda+1$ and the facing element $\widetilde{E}_{k}$ is compatible with $E_k$, set $E_{k+1}=\widetilde{E}_{k}$, $K=k+1$ and stop;
\item if $\lambda(\bm{moe}(E_k)) = \Lambda+1$ and the facing element $\widetilde{E}_{k}$ is not compatible with $E_k$, set $E_{k+1}=\widetilde{E}_{k}$ and continue.
\end{enumerate}

\end{definition}

\begin{lemma}[properties of the chain of refinement]\label{L:chain}
 {The chain $\RC(E)$ has at most $K \leq \ell(E)+1$ elements.
Furthermore, the sequence of element levels $\{ \ell(E_k) \}_{k = 0}^K$} is not increasing.
\end{lemma}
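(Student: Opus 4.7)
The plan is to establish both claims by tracking how the level changes along the chain, step by step. First I would check that in cases (ii) and (iii) of Definition \ref{def:ref-chain} the facing element $\widetilde{E}_k$ used to build $E_{k+1}$ is well defined: the assumption $\lambda(\bm{moe}(E_k))=\Lambda+1>\Lambda$ means $\bm{moe}(E_k)\notin\nodes$, so by Remark \ref{rem:facing} there is a unique $\widetilde{E}_k\in\mesh$ sharing the full edge ${\bm{oe}}(E_k)$ with $E_k$. Since $E_k$ and $E_{k+1}=\widetilde{E}_k$ share a full edge, Lemma \ref{lem:levels} immediately yields $|\ell(E_k)-\ell(E_{k+1})|\leq 1$.

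Next I would sharpen this inequality by exploiting the geometry encoded in the notion of compatibility; set $e={\bm{oe}}(E_k)$. Since $e$ is opposite $\bm{nv}(E_k)$ by definition, $e$ is the refinement edge of $E_k$. In case (iii), non-compatibility forces $\bm{nv}(E_{k+1})$ to lie on the line containing $e$, and being a vertex of $E_{k+1}$ lying on $e$ this means $e$ is not opposite $\bm{nv}(E_{k+1})$, hence not the refinement edge of $E_{k+1}$. A standard property of newest-vertex bisection on a properly labeled initial mesh then ensures that two adjacent elements sharing a full edge which is the refinement edge of exactly one of them have levels differing by exactly one, with the higher level attained by the element for which the edge is the refinement edge; this gives $\ell(E_{k+1})=\ell(E_k)-1$. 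In case (ii) (compatible), by definition neither $\bm{nv}(E_k)$ nor $\bm{nv}(E_{k+1})$ lies on the line containing $e$, so $e$ is the refinement edge of both elements, yielding $\ell(E_{k+1})=\ell(E_k)$.

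Combining these two facts, $\ell(E_{k+1})\leq\ell(E_k)$ for every transition occurring in the construction, which proves the monotonicity assertion. For the cardinality bound I would argue as follows: case (iii) is applied at every iteration $k=0,\ldots,t-1$ before termination, for some $t\geq 0$, and after each such iteration the level strictly decreases by one, so $\ell(E_t)=\ell(E)-t$. Since $\ell(E_t)\geq 0$ we get $t\leq\ell(E)$. If iteration $t$ triggers case (i), then $K=t\leq\ell(E)$; if it triggers case (ii), then the chain gains one more element at the same level, giving $K=t+1\leq\ell(E)+1$. In either scenario $K\leq\ell(E)+1$, as claimed.

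The main technical obstacle is invoking the newest-vertex bisection level identity used in case (iii), namely that the level difference across a shared full edge is exactly one whenever that edge is the refinement edge of just one of the two triangles. This is a well-known consequence of the compatible labeling of $\mesh_0$ underlying newest-vertex bisection, and a direct citation to \cite{BDD:04,NSV:09,Stevenson:08} should suffice rather than reproducing the labeling argument here.
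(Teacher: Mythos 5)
There is a genuine gap in the level-comparison step. You apply Lemma \ref{lem:levels} (and the sharper ``refinement edge of exactly one'' property) directly to the pair $(E_k,E_{k+1})$, on the grounds that they ``share a full edge''. But the shared segment $e=E_k\cap E_{k+1}={\bm{oe}}(E_k)$ is only guaranteed to be a full side of $E_k$: Remark \ref{rem:facing} only says that the interior of $e$ contains no nodes, so an endpoint of $e$ may be a hanging node lying in the interior of a side of the facing element, i.e.\ $e$ can be a proper subset of a side of $E_{k+1}$. Lemma \ref{lem:levels} presupposes an edge that is a full side of both triangles, and its conclusion fails otherwise. Concretely: let $T$ have refinement edge $L=AB$ and an unrefined neighbour $\widetilde E$ across $L$; bisecting $T$ (new vertex $M$, the midpoint of $AB$) and then the child $\{A,C,M\}$ (new vertex $N$) produces the grandchild $E_k=\{A,M,N\}$ of level $2$ with ${\bm{oe}}(E_k)=AM$, whose midpoint has global index $2$ since $\lambda(M)=1$; with $\Lambda=1$ the chain construction is triggered and the facing element is $E_{k+1}=\widetilde E$ of level $0$. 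Here $|\ell(E_k)-\ell(E_{k+1})|=2$, so your intermediate claim $|\ell(E_k)-\ell(E_{k+1})|\le 1$ is false, and so are the exact identities $\ell(E_{k+1})=\ell(E_k)-1$ (case (iii)) and $\ell(E_{k+1})=\ell(E_k)$ (case (ii)). Relatedly, non-compatibility only places $\bm{nv}(E_{k+1})$ on the \emph{line} containing $e$, not on $e$ itself, so ``$e$ is not the refinement edge of $E_{k+1}$'' is not even meaningful when $e$ is not a side of $E_{k+1}$.

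What you actually need are only the inequalities $\ell(E_{k+1})\le\ell(E_k)-1$ in case (iii) and $\ell(E_{k+1})\le\ell(E_k)$ in case (ii), and these do hold, but proving them requires the device used in the paper: pass from $E_k$ to the ancestor-or-self $E\in\mathbb{T}$ whose side containing $e$ is a full edge of both $E$ and $E_{k+1}$, so that $\ell(E)\le\ell(E_k)$ and Lemma \ref{lem:levels} applies to $(E,E_{k+1})$. In case (iii), $\bm{nv}(E_{k+1})$ is then an endpoint of that common edge, which rules out the case $\ell(E_{k+1})=\ell(E)+1$; hence either $E=E_k$, where $\bm{nv}(E)\notin e$ gives $\ell(E_{k+1})=\ell(E_k)-1$, or $E\supsetneq E_k$, where $\ell(E_{k+1})\le\ell(E)\le\ell(E_k)-1$. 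With this correction your counting argument for $K\le\ell(E)+1$ and the monotonicity claim go through essentially as in the paper.
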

\begin{proof}
 {We claim that step (iii) in Definition \ref{def:ref-chain} reduces the level by at least one. In fact, $E_k$ coincides with or is a refinement of a triangle $E \in \mathbb{T}$ sharing with $E_{k+1}$ a full edge; thus $\ell(E_k)\ge\ell(E)$. Such triangle $E$ satisfies $\ell(E) = \ell(E_{k+1})+1$ according to Lemma \ref{lem:levels}, whence 
\begin{equation}\label{eq:bound-levels}
\ell(E_{k+1}) = \ell(E) -1 \leq \ell(E_k)-1.
\end{equation}
Therefore, for as long as case (iii) is active, i.e. for all $j<K$, we have $\ell(E_j) \leq \ell(E_0) - j$ and
$$
0 \leq \ell(E_{K-1}) \leq \ell(E_0)-(K-1) \, ,
$$
which gives the first part of the Lemma. The monotonicity of $\{ \ell(E_k) \}_{k = 0}^K$
follows from \eqref{eq:bound-levels} and the fact that $\ell(E_{K-1})=\ell(E_K)$ in case (ii).
}
\end{proof}

\medskip
We are now ready to define the procedure 
$$
[\mesh_*] = \texttt{CREATE\_ADMISSIBLE\_CHAIN}(\mesh, E, \Lambda)
$$
The partition $\mesh_*$ is obtained from $\mesh$ by refining only the elements in $\RC(E)$.
 {More precisely, starting from $E_K$, one goes traverses the chain backwards and, for $K \geq k \geq 1$,
considers the cases}
\begin{itemize}
\item if $E_k$ and $E_{k-1}$ are compatible, then $E_k$ is bisected once (see Fig. \ref{fig:catena}, cases A or B);
\item if $E_k$ and $E_{k-1}$ are not compatible, then $E_k$ is bisected twice  {and, after the first bisection,} the sibling that is facing $E_{k-1}$ is further bisected (see Fig. \ref{fig:catena}, cases C or D);
\item finally, $E_0=E$ is bisected once.
\end{itemize}

\begin{figure}[h!]
\begin{center}
\begin{overpic}[scale=0.25]{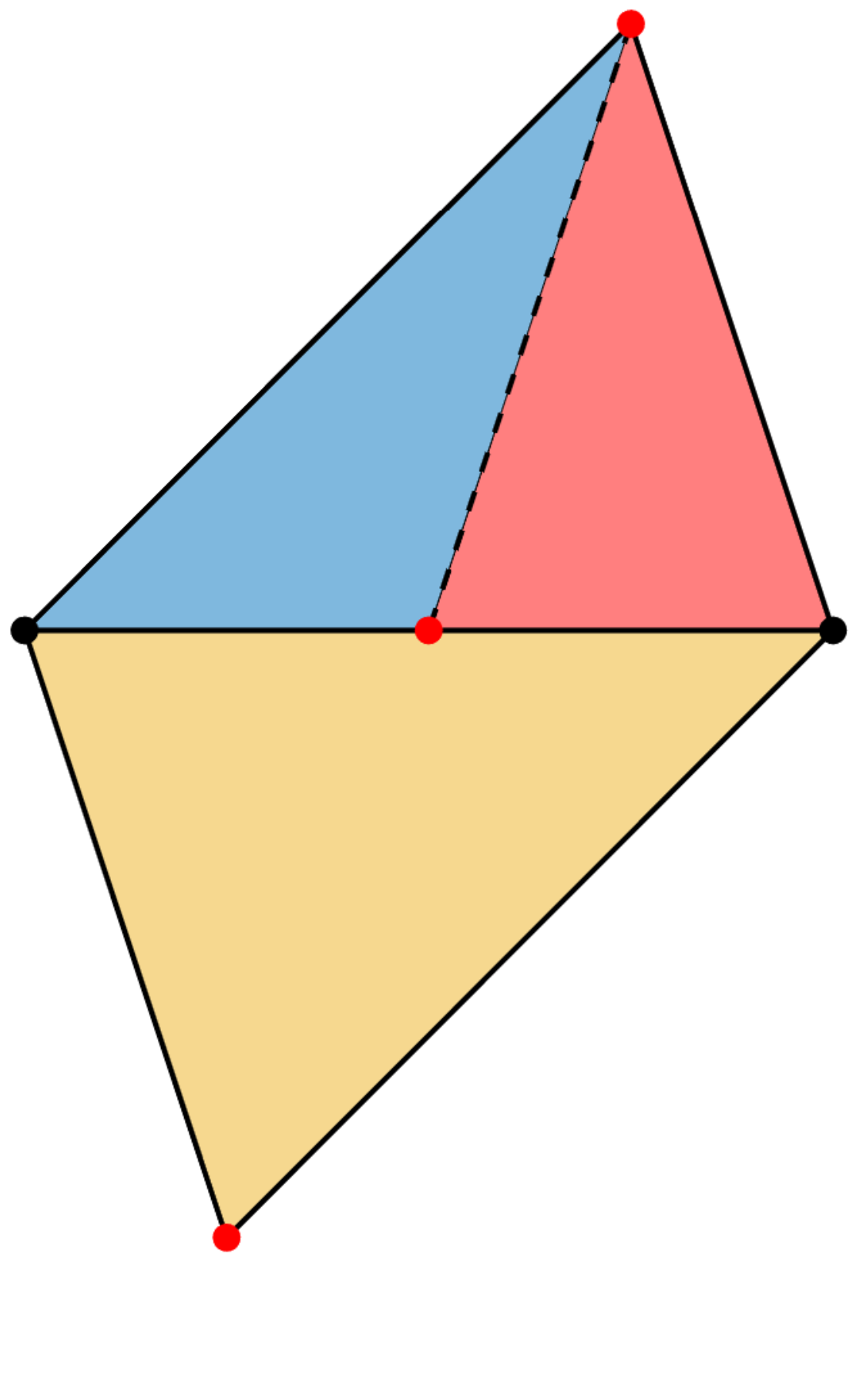}
\put(24,40){$E_{k-1}$}
\put(30,70){$E_k$}
\put(20,10){$\bm{nv}(E_{k-1})$}
\put(50,95){$\bm{nv}(E_k)$}
\put(20,0){\texttt{case A}}
\end{overpic}
\qquad \quad
\begin{overpic}[scale=0.25]{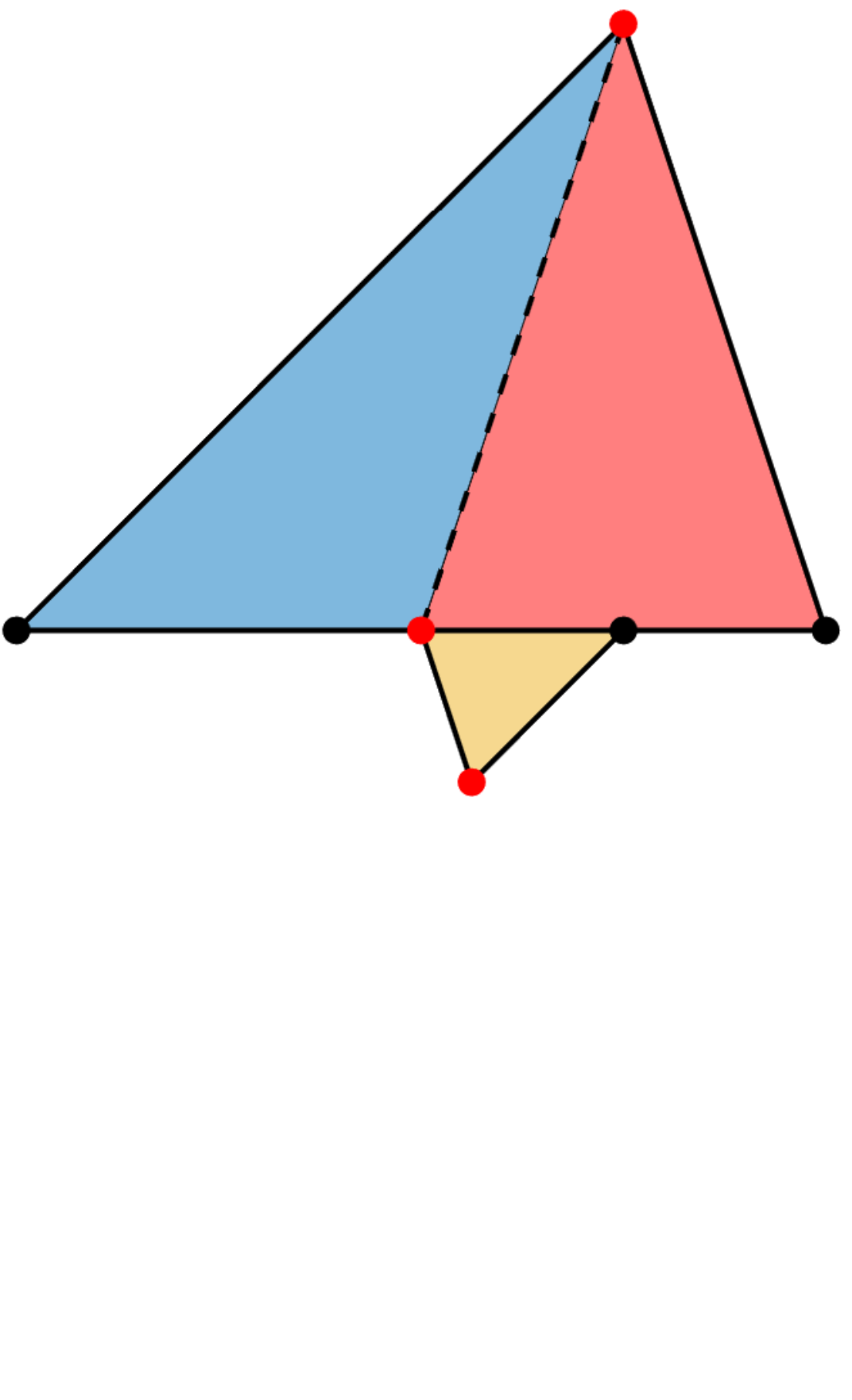}
\put(15,47){$E_{k-1}$}
\put(30,70){$E_k$}
\put(38,42){$\bm{nv}(E_{k-1})$}
\put(50,95){$\bm{nv}(E_k)$}
\put(20,0){\texttt{case B}}
\end{overpic}
\qquad \quad
\begin{overpic}[scale=0.25]{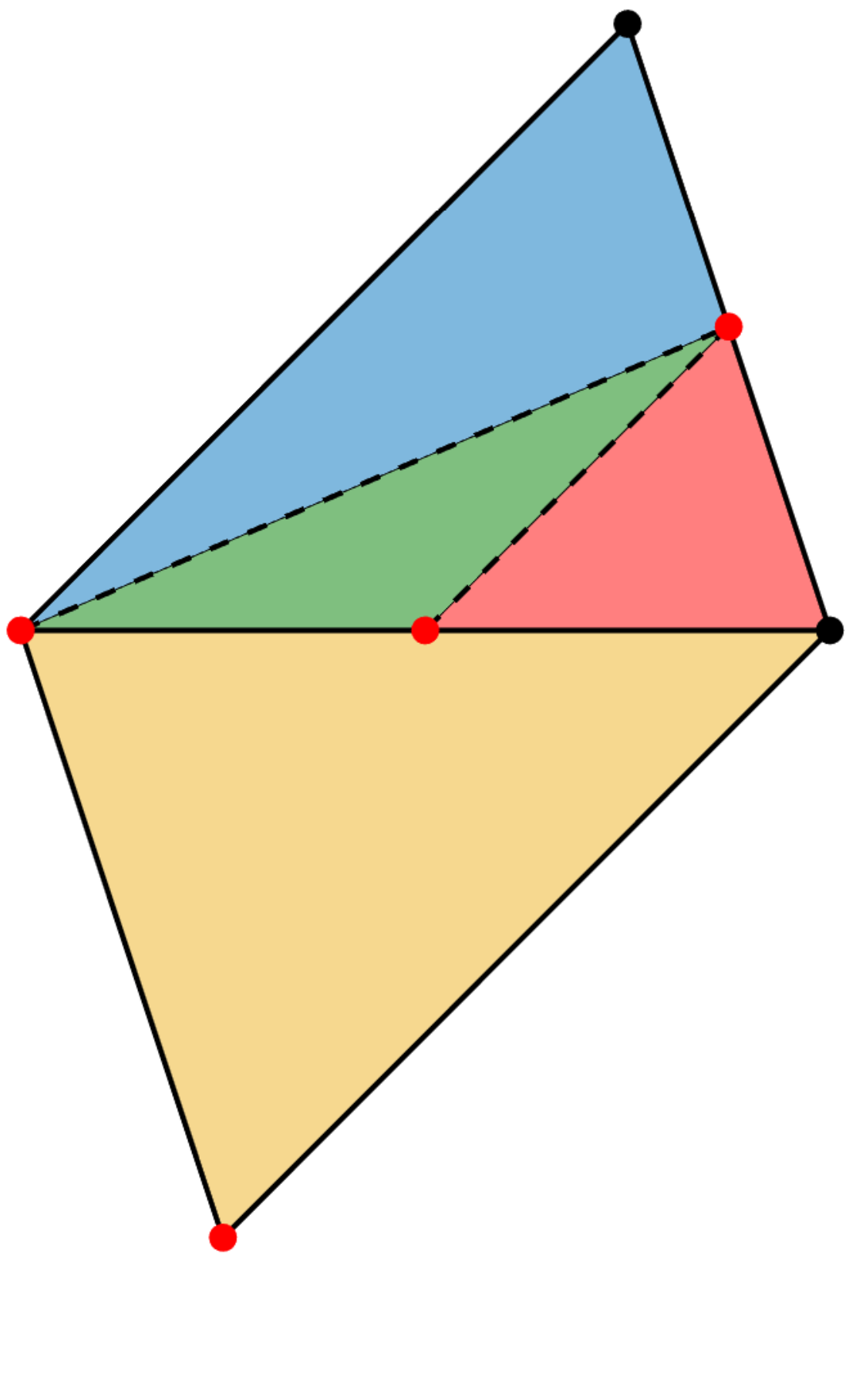}
\put(24,40){$E_{k-1}$}
\put(30,70){$E_k$}
\put(20,10){$\bm{nv}(E_{k-1})$}
\put(-18,62){$\bm{nv}(E_k)$}
\put(20,0){\texttt{case C}}
\end{overpic}
\qquad \quad
\begin{overpic}[scale=0.25]{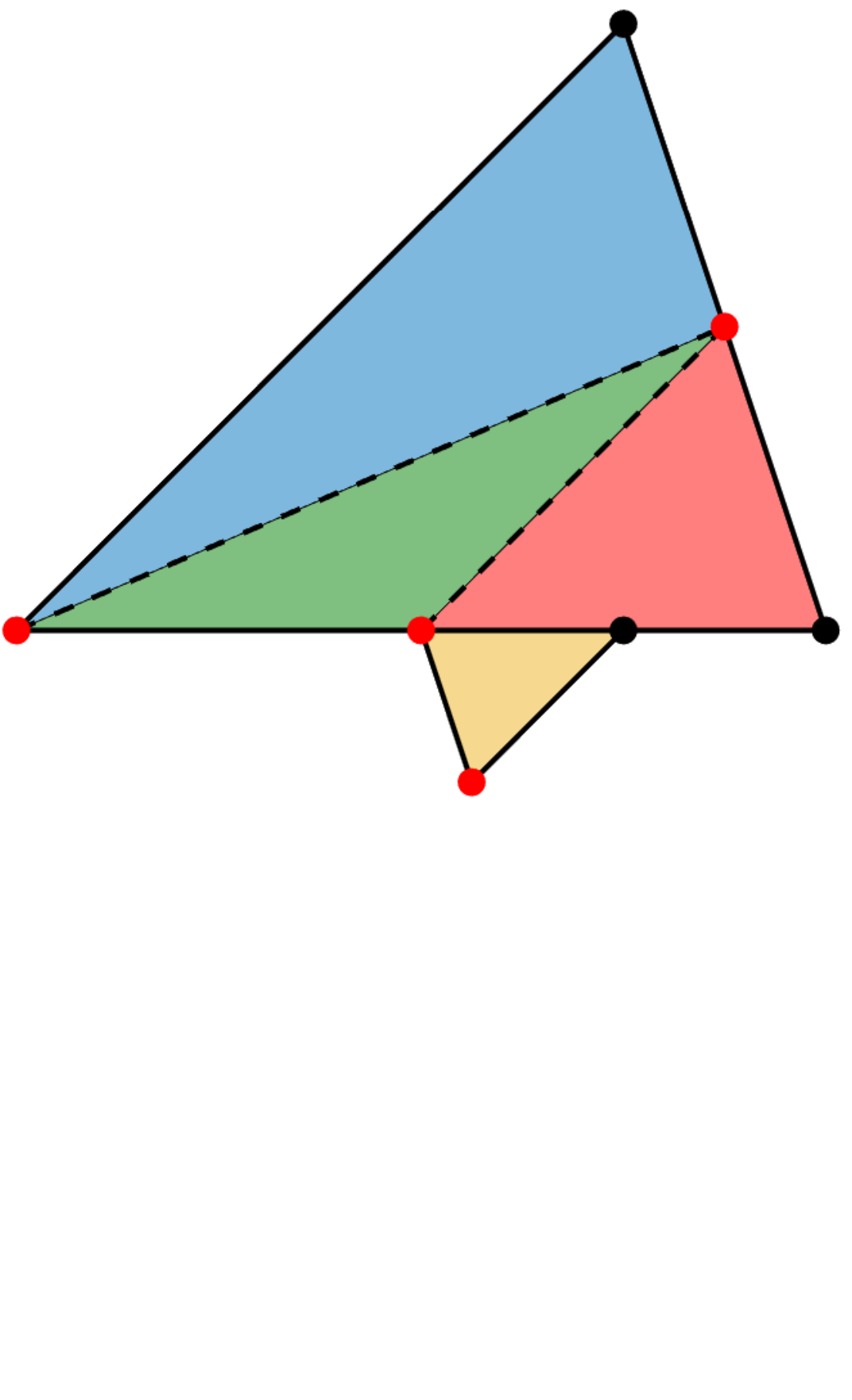}
\put(15,47){$E_{k-1}$}
\put(30,70){$E_k$}
\put(38,42){$\bm{nv}(E_{k-1})$}
\put(-18,62){$\bm{nv}(E_k)$}
\put(20,0){\texttt{case D}}
\end{overpic}
\end{center}
\caption{Two elements $E_{k-1}$ and $E_k$ in the chain $\RC(E)$: $E_{k-1}$ can be bisected in a $\Lambda$-admissible way, only after $E_k$ is refined once (cases A and B), or twice (cases C and D)}
\label{fig:catena}
\end{figure}

 {
\begin{proposition}[properties of \texttt{CREATE\_ADMISSIBLE\_CHAIN}]\label{prop:level-bound}
   {If $\mesh$ is $\Lambda$-admissible, then the call $[\mesh_*] =$ {\tt CREATE\_ADMISSIBLE\_CHAIN} $(\mesh,E, \Lambda)$
    bisects once or twice the elements of the chain $\RC(E)$, whose cardinality is at most $\ell(E)+1$,
    and produces a $\Lambda$-admissible mesh $\mesh_*$ with $E$ bisected once.
    Moreover, every element $E'\in\mesh_*$ generated by this call satisfies}
\begin{equation}\label{eq:levels-chain}
\ell(E') \leq \ell(E)+1 \,.
\end{equation}
\end{proposition}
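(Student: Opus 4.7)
Items (a) and (b) follow directly from the construction of the algorithm and from Lemma \ref{L:chain}, so I focus on the level bound \eqref{eq:levels-chain} and on $\Lambda$-admissibility of $\mesh_*$.

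\textbf{Level bound.} I would examine each bisection separately, using Lemma \ref{L:chain} to control the levels inside the chain. The element $E_0=E$ is always bisected once, producing children of level exactly $\ell(E)+1$. For $1 \leq k \leq K$, the element $E_k$ is obtained from $E_{k-1}$ via case (iii) whenever $k\leq K-1$ (giving $\ell(E_k) \leq \ell(E_{k-1})-1$ by Lemma \ref{L:chain}) or via case (ii) at forward index $K-1$ if $k=K$ (giving $\ell(E_K)=\ell(E_{K-1})$). Iterating, $\ell(E_k) \leq \ell(E) - k$ whenever case (iii) was applied at $k-1$, while $\ell(E_K) \leq \ell(E) - K + 1$ in case (ii). An element bisected twice yields children of level at most $\ell(E_k)+2$ and one bisected once yields children of level $\ell(E_k)+1$. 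Combining these bounds gives $\ell(E') \leq \ell(E)+1$ in every situation.

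\textbf{Admissibility.} This is the core of the argument. The central geometric observation is that, for $0\leq k\leq K-1$, the bisection of $E_{k+1}$ prescribed by the algorithm creates a vertex exactly at the midpoint of the shared edge $E_k \cap E_{k+1} = {\bm{oe}}(E_k)$, namely at $\bm{moe}(E_k)$. In case (ii) this is because compatibility yields ${\bm{oe}}(E_{k+1}) = {\bm{oe}}(E_k)$, so the single bisection of $E_{k+1}$ places its new vertex at $\bm{moe}(E_{k+1}) = \bm{moe}(E_k)$. In case (iii), the first bisection of $E_{k+1}$ creates a new vertex $M$ on ${\bm{oe}}(E_{k+1})$ away from $E_k \cap E_{k+1}$, and the prescribed second bisection acts on the child $T$ facing $E_k$: by the newest-vertex-bisection rule $M$ is the newest vertex of $T$, so its opposite edge is precisely the full shared edge $E_k\cap E_{k+1}$, and the second bisection places a vertex at its midpoint $\bm{moe}(E_k)$. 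Since $E_k$ is subsequently bisected at the same midpoint in the backward sweep, $\bm{moe}(E_k)$ ends up a vertex of every triangle of $\mesh_*$ containing it, hence a proper node.

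Building on this observation, I would verify: (i) the new vertices at $\bm{moe}(E_k)$ for $0\leq k\leq K-1$ are all proper in $\mesh_*$; (ii) the only possibly new hanging node of $\mesh_*$ sits at $\bm{moe}(E_K)$, and it satisfies $\lambda(\bm{moe}(E_K)) \leq \Lambda$, because the chain ends either by case (i) (which enforces this inequality directly) or by case (ii), in which case $\bm{moe}(E_K) = \bm{moe}(E_{K-1})$ by compatibility and is proper by (i); (iii) every hanging node already present in $\mesh$ either survives unchanged (if it lies on an edge untouched by the procedure) or, when it lies on an edge whose midpoint is turned into a proper node, has its global index reduced by at least one by Proposition \ref{prop:reduce-globindex}. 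Together with (i) and (ii), this shows that no node of $\mesh_*$ has global index exceeding $\Lambda$, i.e. $\Lambda_{\mesh_*} \leq \Lambda$.

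\textbf{Main obstacle.} The most delicate step is the geometric verification in the non-compatible case (iii) that the second bisection of $E_{k+1}$ acts precisely on the child facing $E_k$ and produces a vertex at $\bm{moe}(E_k)$. The argument hinges on identifying the newest vertex of the relevant child and exploiting Remark \ref{rem:facing}, which guarantees that $E_k \cap E_{k+1}$ coincides with the full edge ${\bm{oe}}(E_k)$ whenever the midpoint is not yet a node. Once this geometric fact is established, $\Lambda$-admissibility of $\mesh_*$ follows from a careful but essentially mechanical tracking of the new and existing nodes along the backward sweep.
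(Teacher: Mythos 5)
Your proposal is correct and follows essentially the same route as the paper's proof: admissibility via the backward sweep that turns each $\bm{moe}(E_{k-1})$ into a proper node (the paper asserts this directly from the construction and Fig.~\ref{fig:catena}, then invokes Proposition \ref{prop:reduce-globindex}), cardinality via Lemma \ref{L:chain}, and the level bound via the compatible/non-compatible (one vs.\ two bisections) dichotomy combined with \eqref{eq:bound-levels} and the monotonicity of levels. The extra geometric detail you supply for the non-compatible case, and your explicit tracking of the possibly hanging node $\bm{moe}(E_K)$ (including the case where the chain terminates by case (i)), only elaborate on steps the paper takes for granted; note only that $E_K$ may also arise from case (iii), a subcase your final bound still covers.
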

\begin{proof}
Let $\RC(E) = \{E_k\}_{k=0}^K$ and
observe that, for $k \geq 1$, one or two bisections of $E_k$ convert the midpoint of the edge $e$ of $E_k$
shared with $E_{k-1}$ into a proper node. Therefore, Proposition \ref{prop:reduce-globindex} (reducing the
global index of hanging nodes) implies that
the global indices of all interior nodes to $e$ decrease by at least $1$, and makes the
bisection of $E_{k-1}$ $\Lambda$-admissible as desired. This procedure creates $\mesh_*$ upon partitioning
at most $\ell(E)+1$ elements, namely those of $\RC(E)$, according to Lemma \ref{L:chain} \
(properties of the chain refinement).
  
Moreover, to prove \eqref{eq:levels-chain} we take $k \geq 1$ and consider the following two mutually
exclusive cases.
If $E_k$ and $E_{k-1}$ are compatible, then $E_k$ is replaced by two elements $E'\in\mesh_*$ of level
\[
\ell(E') = \ell(E_k)+1 \leq  \ell(E)+1,
\]
according to Lemma \ref{L:chain}. On the other hand, if $E_k$ and $E_{k-1}$ are not compatible, then $E_k$ is replaced by one element of level $\ell(E_k)+1$ and two elements $E'\in\mesh_*$ of level
\[
\ell(E') = \ell(E_k)+2 \leq\ell(E_{k-1})+1 \leq \ell(E)+1
\]
because of \eqref{eq:bound-levels}. Finally, the element $E_0=E$ is replaced by two elements of level $\ell(E)+1$.
\end{proof}

In view of Proposition \ref{prop:level-bound} a bound of the form $\#\mesh_* - \#\mesh \leq C_0$ with
a universal constant $C_0$
is false because $C_0$ may depend on $\ell(E)$  in general. This obstruction to optimal
complexity of {\tt REFINE} was
tackled by Binev, Dahmen and DeVore in their seminal paper \cite{BDD:04}, and further studied in
\cite{BonitoNochetto:10,NSV:09,NochettoVeeser:12,Stevenson:08}. In fact, the cumulative effect
of bisection on conforming meshes obeys the weaker, but yet optimal, equation \eqref{eq:complexity-REFINE}.
The extension of  this to $\Lambda$-admissible non-conforming partitions is precisely guaranteed by the stated Theorem \ref{T:complexity-REFINE}, whose proof follows.

\medskip
{\it Proof of Theorem \ref{T:complexity-REFINE} (complexity of {\tt REFINE})}. We follow
\cite[Section 6.3]{NochettoVeeser:12}, which explains the basic ingredients to derive 
\eqref{eq:complexity-REFINE}. It turns out that two crucial properties of \texttt{CREATE\_ADMISSIBLE\_CHAIN} as
required. The first is \eqref{eq:levels-chain}. The second one relates the level of elements and their
distance to $E$, namely
\[
\textrm{dist }(E,E') \leq C 2^{\frac{\ell(E')}{2}} \quad\forall \, E'\in\mesh_*\backslash\mesh \,;
\]
such property is valid for bisection grids regardless of $\Lambda$-admissibility \cite[Lemma 18]{NochettoVeeser:12}.
This completes the proof. $\square$

}

\subsection{Mesh Overlay and $\Lambda$-admissibility}\label{sec:admissible-overlay}
Given two partitions $\mesh_A$ and $\mesh_B$,   denote by $\mesh_A \oplus \mesh_B$  the {\em overlay} of $\mesh_A$ and $\mesh_B$, i.e., the partition whose associated tree is the union of the trees of $\mesh_A$ and $\mesh_B$.  The following property holds.
 \begin{proposition}
 If $\mesh_A$ and $\mesh_B$ are $\Lambda$-admissible, then $\mesh_A \oplus \mesh_B$ remains $\Lambda$-admissible.
 \end{proposition}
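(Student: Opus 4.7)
The plan is to prove by induction on the bisection depth $d(\bm{x})$ of a node that, for every node $\bm{x}$ of the overlay $\mesh_O := \mesh_A \oplus \mesh_B$, one has
$$
\lambda_{\mesh_O}(\bm{x}) \leq \min\{\lambda_{\mesh_A}(\bm{x}), \lambda_{\mesh_B}(\bm{x})\},
$$
with the convention $\lambda_\mesh(\bm{x}) := +\infty$ when $\bm{x}$ is not a node of $\mesh$. Because every node of $\mesh_O$ is a node of at least one of $\mesh_A$, $\mesh_B$ (its creating bisection belongs to one of the two bisection trees, and newest-vertex bisection preserves the newly created midpoint as a vertex in all subsequent refinements of that tree), the right-hand side is always bounded by $\Lambda$, which will give $\Lambda_{\mesh_O}\leq\Lambda$.

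The key preliminary lemma will be that properness is inherited by refinements: if $\bm{x}$ is proper in $\mesh_A$, then it is proper in $\mesh_O$, and symmetrically for $\mesh_B$. To see this, I would take any $E\in\mesh_O$ with $\bm{x}\in\overline{E}$; since $\mesh_O$ is a refinement of $\mesh_A$, there exists $E_A \in \mesh_A$ with $E \subseteq E_A$, hence $\bm{x}\in\overline{E_A}$ and, by hypothesis, $\bm{x}$ is a vertex of $E_A$. The element $E$ is produced from $E_A$ by a finite chain of newest-vertex bisections; a short induction on the length of this chain, based on the triangle-level fact that any vertex $v$ of a triangle $F$ lying in the closure of a child $F_i$ of a newest-vertex bisection is automatically a vertex of $F_i$, shows that $\bm{x}$ is a vertex of $E$.

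With this in hand the inductive step is straightforward. The base case concerns vertices of $\mesh_0$, which are proper in every admissible refinement and therefore have index zero. For the inductive step, let $\bm{x}$ be a node of $\mesh_O$ with $d(\bm{x}) \geq 1$ and, without loss of generality, assume $\bm{x}$ is a node of $\mesh_A$. If $\bm{x}$ is proper in $\mesh_O$, then $\lambda_{\mesh_O}(\bm{x}) = 0 \leq \lambda_{\mesh_A}(\bm{x})$. Otherwise $\bm{x}$ is hanging in $\mesh_O$ and, by the contrapositive of the preservation lemma, also hanging in $\mesh_A$; since the parent pair $\mathbf{B}(\bm{x}) = \{\bm{x}',\bm{x}''\}$ is determined by the unique bisection that generated $\bm{x}$, it is the same in both meshes. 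Because $d(\bm{x}'), d(\bm{x}'') < d(\bm{x})$ and $\bm{x}',\bm{x}''$ are nodes of $\mesh_A$, Definition \ref{def:node-index} and the inductive hypothesis yield
$$
\lambda_{\mesh_O}(\bm{x}) = \max(\lambda_{\mesh_O}(\bm{x}'), \lambda_{\mesh_O}(\bm{x}''))+1 \leq \max(\lambda_{\mesh_A}(\bm{x}'), \lambda_{\mesh_A}(\bm{x}''))+1 = \lambda_{\mesh_A}(\bm{x}) \leq \Lambda.
$$

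The only real obstacle is the preservation-of-properness statement, which is a purely geometric fact about newest-vertex bisection. Once it is available, both the base case and the inductive step reduce to direct applications of the index formula in Definition \ref{def:node-index}.
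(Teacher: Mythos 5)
Your proof is correct and follows essentially the same route as the paper's: the same key inequality $\lambda_{\mesh_A\oplus\mesh_B}(\bm{x})\leq\min\bigl(\lambda_{\mesh_A}(\bm{x}),\lambda_{\mesh_B}(\bm{x})\bigr)$ with the $+\infty$ convention, the same observation that a node proper in $\mesh_A$ (or $\mesh_B$) stays proper in the overlay, and an induction whose step applies the recursive index formula to the common parent pair $\mathbf{B}(\bm{x})$. The only differences are immaterial: you induct on the bisection depth of the node rather than on the value of the overlay index, and you supply a short geometric justification of the properness-preservation fact, which the paper states as evident.
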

 \begin{proof}
 Denote here by ${\cal N}$ the set of all nodes obtained by newest-vertex bisection from the root partition $\mesh_0$. Let ${\cal N}_0$, ${\cal N}_A$, ${\cal N}_B$, ${\cal N}_{A + B}$, resp., be the set of nodes of the partitions  $\mesh_0$, $\mesh_A$, $\mesh_B$,  $\mesh_A \oplus \mesh_B$, resp.. It is easily seen that for each $\bm{x} \in {\cal N}\setminus {\cal N}_0$ there exists a unique ${\cal B}(\bm{x}) = \{\bm{x}', \bm{x}''\} \subset {\cal N}$ such that $\bm{x}$ is generated by the bisection of the segment $[\bm{x}', \bm{x}'']$. Furthermore, if $\bm{x} \in {\cal N}_{A+B}$ is a proper node of $\mesh_A$ (of $\mesh_B$, resp.), then it is also a proper node of $\mesh_A \oplus \mesh_B$.
 
 Let us denote by $\lambda_A$, $\lambda_B$, $\lambda_{A+B}$, resp., the global-index mappings defined on ${\cal N}_A$, ${\cal N}_B$, ${\cal N}_{A + B}$, resp.. It is convenient to extend the definition of $\lambda_A$ and $\lambda_B$ to the whole ${\cal N}_{A + B}$ by setting 
$$
\lambda_A(\bm{x}) = +\infty \quad \text{if } \bm{x} \in {\cal N}_{A + B}\setminus {\cal N}_A \,, \qquad 
\lambda_B(\bm{x}) = +\infty \quad \text{if } \bm{x} \in {\cal N}_{A + B}\setminus {\cal N}_B \,. 
$$
With these notations at hand, we are going to prove the inequality
\begin{equation}\label{eq:overlay-1}
\lambda_{A+B}(\bm{x}) \leq \min ( \lambda_A(\bm{x}),  \lambda_B(\bm{x}) ) \qquad \forall \bm{x} \in {\cal N}_{A + B}\,,
\end{equation} 
from which the thesis immediately follows.

We proceed by induction on $k=\lambda_{A+B}(\bm{x})$, $\bm{x} \in {\cal N}_{A + B}$. If $k=0$, the inequality is trivial since $\lambda_A(\bm{x}),  \lambda_B(\bm{x}) \geq 0$. So suppose \eqref{eq:overlay-1} hold up to some $k\geq 0$. If $\bm{x} \in {\cal N}_{A + B}$ satisfies $\lambda_{A+B}(\bm{x})= k+1 >0$, then it is a hanging node of $\mesh_A \oplus \mesh_B$ by definition of global index, hence, it is a hanging node of $\mesh_A$ or $\mesh_B$; wlog, suppose it is a hanging node of $\mesh_A$.
If $\bm{x}$ is generated by the bisection of the segment $[\bm{x}', \bm{x}'']$, then again by definition of global index it holds
$$
k+1 = \lambda_{A+B}(\bm{x}) = \max( \lambda_{A+B}(\bm{x'}), \lambda_{A+B}(\bm{x}'') ) + 1 \,,
$$ 
which implies 
$$
\lambda_{A+B}(\bm{x'}) \leq k \,, \qquad  \lambda_{A+B}(\bm{x''}) \leq k \,.
$$
By induction,
$$
\lambda_{A+B}(\bm{x'}) \leq \min ( \lambda_A(\bm{x'}),  \lambda_B(\bm{x'}) ) \,, \qquad \lambda_{A+B}(\bm{x''}) \leq \min ( \lambda_A(\bm{x''}),  \lambda_B(\bm{x''}) ) \,, 
$$
from which we obtain
$$
\lambda_{A+B}(\bm{x}) \leq \max(\lambda_A(\bm{x'}), \lambda_A(\bm{x''}) ) +1 = \lambda_A(\bm{x})
$$ 
since $\bm{x}$ is a hanging node of $\mesh_A$. On the other hand, either $\bm{x} \in {\cal N}_B$ or $\bm{x} \not \in {\cal N}_B$. In the latter case, $\lambda_B(\bm{x}) = +\infty$, and \eqref{eq:overlay-1} is proven. In the former case, necessarily $\bm{x}$ is a hanging node of $\mesh_B$, hence as above
$$
\lambda_{A+B}(\bm{x}) \leq \max(\lambda_B(\bm{x'}), \lambda_B(\bm{x''}) ) +1 = \lambda_B(\bm{x}) \,,
$$
and the thesis  is proven.    
\end{proof}

\section{Conclusions}\label{S:conclusions}

This paper introduces and studies a two-step adaptive virtual element method ({\tt AVEM})
  of lowest order over triangular meshes with hanging nodes in
  2d,  which are treated as polygons. {\tt AVEM} applies to linear symmetric elliptic problems with
  variable data. The main achievements of the paper can be summarized as follows:
 \begin{enumerate}[$\bullet$]

 \item {\tt AVEM} concatenates two modules, {\tt DATA} and {\tt GALERKIN}.
 The former approximates data by piecewise constants to a desired accuracy, while the latter handles
 the adaptive approximation of the problem  
 with piecewise constant data, as described in \cite{paperA}. {\tt AVEM} converges (Proposition \ref{P:convergence-AVEM});   \looseness=-1
   
 \item {\it Complexity of {\tt{GALERKIN}}:} the number of sub-iterations inside the call to {\tt{GALERKIN}} at iteration $k$ of {\tt AVEM} is bounded independently of $k$ (Proposition  \ref{prop:compl_gal});
   
 \item {\it Complexity of {\tt{DATA}}:} the module {\tt{DATA}} is quasi-optimal in terms of accuracy versus mesh cardinality,  under  suitable regularity conditions on the data (Sect. \ref{sec:approx-data});
   
 \item {\it Complexity of {\tt AVEM}:} {\tt AVEM} is quasi-optimal in terms
   of error decay versus degrees of freedom, for solutions and data belonging to appropriate approximation classes (Theorem \ref{Texact:optimality-AVEM});
   
 \item {\it Numerical experiments}: they illustrate
   the interplay between the modules {\tt DATA} and {\tt GALERKIN} and provide computational evidence of the optimality of {\tt AVEM} (Section \ref{sec:experiments}).

 \item {\it Mesh admissibility}: Section \ref{sec:admissible} designs a procedure to keep
   the global index of meshes uniformly bounded for all steps $k$, and proves its optimality in terms
   of degrees of freedom.
   
\end{enumerate}

Although in Remark \ref{rem:avem-afem} we observed that, in the presence of a bound on the maximal index of hanging nodes, the equivalence classes of AVEM and AFEM are the same, the numerical results in Section \ref{sec:experiments} and in \cite{paperA} suggest that the flexibility of VEM may lead to more efficient meshes in complex situations, at least in terms of the involved constants. A deeper investigation of this aspect at the theoretical level may require a more advanced VEM approach, for instance taking inspiration from the a-priori analysis in \cite{BdV-Vacca:2022}.


\bigskip
\begin{center}
{\bf Acknowledgements}
\end{center}
\noindent
LBdV, CC and MV where partially supported by the Italian MIUR through the PRIN grants n. 201744KLJL and n.  20204LN5N5 (LBdV, MV) and n. 201752HKH8 (CC).
 RHN has been supported in part by NSF grant DMS-1908267. These supports are gratefully acknowledged. LBdV, CC, MV and GV are members of the INdAM research group GNCS.


\bibliographystyle{plain}
\bibliography{biblio}

\end{document}